\def\section{\@startsection{section}{1}%
  \z@{1.2\linespacing\@plus\linespacing}{.5\linespacing}%
  {\normalfont\scshape\centering}}
\def\subsection{\@startsection{subsection}{2}%
  \z@{0.9\linespacing\@plus.7\linespacing}{-.5em}%
  {\normalfont\bfseries}}
\global\let\figforTeXisloaded=\relax\fi
\def\ctr@ln@m#1{\ifx#1\undefined\else%
    \immediate\write16{*** Fig4TeX WARNING : \string#1 already defined.}\fi}
\def\ctr@ld@f#1#2{\ctr@ln@m#2#1#2}
\def\ctr@ln@w#1#2{\ctr@ln@m#2\csname#1\endcsname#2}
{\catcode`\/=0 \catcode`/\=12 /ctr@ld@f/gdef/BS@{\}}
\ctr@ld@f\def\ctr@lcsn@m#1{\expandafter\ifx\csname#1\endcsname\relax\else%
    \immediate\write16{*** Fig4TeX WARNING : \BS@\expandafter\string#1\space already defined.}\fi}
\ctr@ld@f\edef\colonc@tcode{\the\catcode`\:}
\ctr@ld@f\edef\semicolonc@tcode{\the\catcode`\;}
\ctr@ld@f\def\t@stc@tcodech@nge{{\let\c@tcodech@nged=\z@%
    \ifnum\colonc@tcode=\the\catcode`\:\else\let\c@tcodech@nged=\@ne\fi%
    \ifnum\semicolonc@tcode=\the\catcode`\;\else\let\c@tcodech@nged=\@ne\fi%
    \ifx\c@tcodech@nged\@ne%
    \immediate\write16{}
    \immediate\write16{!!!=============================================================!!!}
    \immediate\write16{ Fig4TeX WARNING:}
    \immediate\write16{ The category code of some characters has been changed, which will}
    \immediate\write16{ result in an error (message "Runaway argument?").}
    \immediate\write16{ This probably comes from another package that changed the category}
    \immediate\write16{ code after Fig4TeX was loaded. If that proves to be exact, the}
    \immediate\write16{ solution is to exchange the loading commands on top of your file}
    \immediate\write16{ so that Fig4TeX is loaded last. For example, in LaTeX, we should}
    \immediate\write16{ say :}
    \immediate\write16{\BS@ usepackage[french]{babel}}
    \immediate\write16{\BS@ usepackage{fig4tex}}
    \immediate\write16{!!!=============================================================!!!}
    \immediate\write16{}
    \fi}}
\ctr@ld@f\def\FigforTeX{F\kern-.05em i\kern-.05em g\kern-.1em\raise-.14em\hbox{4}\kern-.19em\TeX}
\ctr@ld@f\def\W@rnmesoldA#1{\W@rnmesold}
\ctr@ld@f\def\W@rnmesoldAB#1(#2){\W@rnmesold}
\ctr@ld@f\def\W@rnmesold{%
    \immediate\write16{}
    \immediate\write16{!!!=============================================================!!!}
    \immediate\write16{ Fig4TeX WARNING:}
    \immediate\write16{ The file to be compiled is not compatible with the current version}
    \immediate\write16{ of Fig4TeX. To fix that, upgrade the source file (mainly change \BS@ ps*}
    \immediate\write16{ macros by \BS@ fig* macros), or use fig4tex184.tex instead (\BS@ input fig4tex184}
    \immediate\write16{ or \BS@ usepackage{fig4tex184}).}
    \immediate\write16{!!!=============================================================!!!}
    \immediate\write16{}}
\ctr@ln@m\psbeginfig\let\psbeginfig\W@rnmesoldA
\ctr@ln@m\psset\let\psset\W@rnmesoldAB
\ctr@ln@m\pssetdefault\let\pssetdefault\W@rnmesoldAB
\ctr@ln@m\pssetupdate\let\pssetupdate\W@rnmesoldA
\ctr@ln@w{newdimen}\epsil@n\epsil@n=0.00005pt
\ctr@ln@w{newdimen}\Cepsil@n\Cepsil@n=0.005pt
\ctr@ln@w{newdimen}\dcq@\dcq@=254pt
\ctr@ln@w{newdimen}\PI@\PI@=3.141592pt
\ctr@ln@w{newdimen}\DemiPI@deg\DemiPI@deg=90pt
\ctr@ln@w{newdimen}\PI@deg\PI@deg=180pt
\ctr@ln@w{newdimen}\DePI@deg\DePI@deg=360pt
\ctr@ld@f\chardef\t@n=10
\ctr@ld@f\chardef\c@nt=100
\ctr@ld@f\chardef\@lxxiv=74
\ctr@ld@f\chardef\@xci=91
\ctr@ld@f\mathchardef\@nMnCQn=9949
\ctr@ld@f\chardef\@vi=6
\ctr@ld@f\chardef\@xxx=30
\ctr@ld@f\chardef\@lvi=56
\ctr@ld@f\chardef\@@lxxi=71
\ctr@ld@f\chardef\@lxxxv=85
\ctr@ld@f\mathchardef\@@mmmmlxviii=4068
\ctr@ld@f\mathchardef\@ccclx=360
\ctr@ld@f\mathchardef\@dccxx=720
\ctr@ln@w{newcount}\p@rtent \ctr@ln@w{newcount}\f@ctech \ctr@ln@w{newcount}\result@tent
\ctr@ln@w{newdimen}\v@lmin \ctr@ln@w{newdimen}\v@lmax \ctr@ln@w{newdimen}\v@leur
\ctr@ln@w{newdimen}\result@t\ctr@ln@w{newdimen}\result@@t
\ctr@ln@w{newdimen}\mili@u \ctr@ln@w{newdimen}\c@rre \ctr@ln@w{newdimen}\delt@
\ctr@ld@f\def\degT@rd{0.017453 }  
\ctr@ld@f\def\rdT@deg{57.295779 } 
\ctr@ln@m\v@leurseule
{\catcode`p=12 \catcode`t=12 \gdef\v@leurseule#1pt{#1}}
\ctr@ld@f\def\repdecn@mb#1{\expandafter\v@leurseule\the#1\space}
\ctr@ld@f\def\arct@n#1(#2,#3){{\v@lmin=#2\v@lmax=#3%
    \maxim@m{\mili@u}{-\v@lmin}{\v@lmin}\maxim@m{\c@rre}{-\v@lmax}{\v@lmax}%
    \delt@=\mili@u\m@ech\mili@u%
    \ifdim\c@rre>\@nMnCQn\mili@u\divide\v@lmax\tw@\c@lATAN\v@leur(\z@,\v@lmax)
    \else%
    \maxim@m{\mili@u}{-\v@lmin}{\v@lmin}\maxim@m{\c@rre}{-\v@lmax}{\v@lmax}%
    \m@ech\c@rre%
    \ifdim\mili@u>\@nMnCQn\c@rre\divide\v@lmin\tw@
    \maxim@m{\mili@u}{-\v@lmin}{\v@lmin}\c@lATAN\v@leur(\mili@u,\z@)%
    \else\c@lATAN\v@leur(\delt@,\v@lmax)\fi\fi%
    \ifdim\v@lmin<\z@\v@leur=-\v@leur\ifdim\v@lmax<\z@\advance\v@leur-\PI@%
    \else\advance\v@leur\PI@\fi\fi%
    \global\result@t=\v@leur}#1=\result@t}
\ctr@ld@f\def\m@ech#1{\ifdim#1>1.646pt\divide\mili@u\t@n\divide\c@rre\t@n\m@ech#1\fi}
\ctr@ld@f\def\c@lATAN#1(#2,#3){{\v@lmin=#2\v@lmax=#3\v@leur=\z@\delt@=\tw@ pt%
    \un@iter{0.785398}{\v@lmax<}%
    \un@iter{0.463648}{\v@lmax<}%
    \un@iter{0.244979}{\v@lmax<}%
    \un@iter{0.124355}{\v@lmax<}%
    \un@iter{0.062419}{\v@lmax<}%
    \un@iter{0.031240}{\v@lmax<}%
    \un@iter{0.015624}{\v@lmax<}%
    \un@iter{0.007812}{\v@lmax<}%
    \un@iter{0.003906}{\v@lmax<}%
    \un@iter{0.001953}{\v@lmax<}%
    \un@iter{0.000976}{\v@lmax<}%
    \un@iter{0.000488}{\v@lmax<}%
    \un@iter{0.000244}{\v@lmax<}%
    \un@iter{0.000122}{\v@lmax<}%
    \un@iter{0.000061}{\v@lmax<}%
    \un@iter{0.000030}{\v@lmax<}%
    \un@iter{0.000015}{\v@lmax<}%
    \global\result@t=\v@leur}#1=\result@t}
\ctr@ld@f\def\un@iter#1#2{%
    \divide\delt@\tw@\edef\dpmn@{\repdecn@mb{\delt@}}%
    \mili@u=\v@lmin%
    \ifdim#2\z@%
      \advance\v@lmin-\dpmn@\v@lmax\advance\v@lmax\dpmn@\mili@u%
      \advance\v@leur-#1pt%
    \else%
      \advance\v@lmin\dpmn@\v@lmax\advance\v@lmax-\dpmn@\mili@u%
      \advance\v@leur#1pt%
    \fi}
\ctr@ld@f\def\c@ssin#1#2#3{\expandafter\ifx\csname COS@\number#3\endcsname\relax\c@lCS{#3pt}%
    \expandafter\xdef\csname COS@\number#3\endcsname{\repdecn@mb\result@t}%
    \expandafter\xdef\csname SIN@\number#3\endcsname{\repdecn@mb\result@@t}\fi%
    \edef#1{\csname COS@\number#3\endcsname}\edef#2{\csname SIN@\number#3\endcsname}}
\ctr@ld@f\def\c@lCS#1{{\mili@u=#1\p@rtent=\@ne%
    \relax\ifdim\mili@u<\z@\red@ng<-\else\red@ng>+\fi\f@ctech=\p@rtent%
    \relax\ifdim\mili@u<\z@\mili@u=-\mili@u\f@ctech=-\f@ctech\fi\c@@lCS}}
\ctr@ld@f\def\c@@lCS{\v@lmin=\mili@u\c@rre=-\mili@u\advance\c@rre\DemiPI@deg\v@lmax=\c@rre%
    \mili@u\@@lxxi\mili@u\divide\mili@u\@@mmmmlxviii%
    \edef\v@larg{\repdecn@mb{\mili@u}}\mili@u=-\v@larg\mili@u%
    \edef\v@lmxde{\repdecn@mb{\mili@u}}%
    \c@rre\@@lxxi\c@rre\divide\c@rre\@@mmmmlxviii%
    \edef\v@largC{\repdecn@mb{\c@rre}}\c@rre=-\v@largC\c@rre%
    \edef\v@lmxdeC{\repdecn@mb{\c@rre}}%
    \fctc@s\mili@u\v@lmin\global\result@t\p@rtent\v@leur%
    \let\t@mp=\v@larg\let\v@larg=\v@largC\let\v@largC=\t@mp%
    \let\t@mp=\v@lmxde\let\v@lmxde=\v@lmxdeC\let\v@lmxdeC=\t@mp%
    \fctc@s\c@rre\v@lmax\global\result@@t\f@ctech\v@leur}
\ctr@ld@f\def\fctc@s#1#2{\v@leur=#1\relax\ifdim#2<\@lxxxv\p@\cosser@h\else\sinser@t\fi}
\ctr@ld@f\def\cosser@h{\advance\v@leur\@lvi\p@\divide\v@leur\@lvi%
    \v@leur=\v@lmxde\v@leur\advance\v@leur\@xxx\p@%
    \v@leur=\v@lmxde\v@leur\advance\v@leur\@ccclx\p@%
    \v@leur=\v@lmxde\v@leur\advance\v@leur\@dccxx\p@\divide\v@leur\@dccxx}
\ctr@ld@f\def\sinser@t{\v@leur=\v@lmxdeC\p@\advance\v@leur\@vi\p@%
    \v@leur=\v@largC\v@leur\divide\v@leur\@vi}
\ctr@ld@f\def\red@ng#1#2{\relax\ifdim\mili@u#1#2\DemiPI@deg\advance\mili@u#2-\PI@deg%
    \p@rtent=-\p@rtent\red@ng#1#2\fi}
\ctr@ld@f\def\pr@c@lCS#1#2#3{\ctr@lcsn@m{COS@\number#3 }%
    \expandafter\xdef\csname COS@\number#3\endcsname{#1}%
    \expandafter\xdef\csname SIN@\number#3\endcsname{#2}}
\pr@c@lCS{1}{0}{0}
\pr@c@lCS{0.7071}{0.7071}{45}\pr@c@lCS{0.7071}{-0.7071}{-45}
\pr@c@lCS{0}{1}{90}          \pr@c@lCS{0}{-1}{-90}
\pr@c@lCS{-1}{0}{180}        \pr@c@lCS{-1}{0}{-180}
\pr@c@lCS{0}{-1}{270}        \pr@c@lCS{0}{1}{-270}
\ctr@ld@f\def\invers@#1#2{{\v@leur=#2\maxim@m{\v@lmax}{-\v@leur}{\v@leur}%
    \f@ctech=\@ne\m@inv@rs%
    \multiply\v@leur\f@ctech\edef\v@lv@leur{\repdecn@mb{\v@leur}}%
    \p@rtentiere{\p@rtent}{\v@leur}\v@lmin=\p@\divide\v@lmin\p@rtent%
    \inv@rs@\multiply\v@lmax\f@ctech\global\result@t=\v@lmax}#1=\result@t}
\ctr@ld@f\def\m@inv@rs{\ifdim\v@lmax<\p@\multiply\v@lmax\t@n\multiply\f@ctech\t@n\m@inv@rs\fi}
\ctr@ld@f\def\inv@rs@{\v@lmax=-\v@lmin\v@lmax=\v@lv@leur\v@lmax%
    \advance\v@lmax\tw@ pt\v@lmax=\repdecn@mb{\v@lmin}\v@lmax%
    \delt@=\v@lmax\advance\delt@-\v@lmin\ifdim\delt@<\z@\delt@=-\delt@\fi%
    \ifdim\delt@>\epsil@n\v@lmin=\v@lmax\inv@rs@\fi}
\ctr@ld@f\def\minim@m#1#2#3{\relax\ifdim#2<#3#1=#2\else#1=#3\fi}
\ctr@ld@f\def\maxim@m#1#2#3{\relax\ifdim#2>#3#1=#2\else#1=#3\fi}
\ctr@ld@f\def\p@rtentiere#1#2{#1=#2\divide#1by65536 }
\ctr@ld@f\def\r@undint#1#2{{\v@leur=#2\divide\v@leur\t@n\p@rtentiere{\p@rtent}{\v@leur}%
    \v@leur=\p@rtent pt\global\result@t=\t@n\v@leur}#1=\result@t}
\ctr@ld@f\def\sqrt@#1#2{{\v@leur=#2%
    \minim@m{\v@lmin}{\p@}{\v@leur}\maxim@m{\v@lmax}{\p@}{\v@leur}%
    \f@ctech=\@ne\m@sqrt@\sqrt@@%
    \mili@u=\v@lmin\advance\mili@u\v@lmax\divide\mili@u\tw@\multiply\mili@u\f@ctech%
    \global\result@t=\mili@u}#1=\result@t}
\ctr@ld@f\def\m@sqrt@{\ifdim\v@leur>\dcq@\divide\v@leur\c@nt\v@lmax=\v@leur%
    \multiply\f@ctech\t@n\m@sqrt@\fi}
\ctr@ld@f\def\sqrt@@{\mili@u=\v@lmin\advance\mili@u\v@lmax\divide\mili@u\tw@%
    \c@rre=\repdecn@mb{\mili@u}\mili@u%
    \ifdim\c@rre<\v@leur\v@lmin=\mili@u\else\v@lmax=\mili@u\fi%
    \delt@=\v@lmax\advance\delt@-\v@lmin\ifdim\delt@>\epsil@n\sqrt@@\fi}
\ctr@ld@f\def\extrairelepremi@r#1\de#2{\expandafter\lepremi@r#2@#1#2}
\ctr@ld@f\def\lepremi@r#1,#2@#3#4{\def#3{#1}\def#4{#2}\ignorespaces}
\ctr@ld@f\def\@cfor#1:=#2\do#3{%
  \edef\@fortemp{#2}%
  \ifx\@fortemp\empty\else\@cforloop#2,\@nil,\@nil\@@#1{#3}\fi}
\ctr@ln@m\@nextwhile
\ctr@ld@f\def\@cforloop#1,#2\@@#3#4{%
  \def#3{#1}%
  \ifx#3\Fig@nnil\let\@nextwhile=\Fig@fornoop\else#4\relax\let\@nextwhile=\@cforloop\fi%
  \@nextwhile#2\@@#3{#4}}

\ctr@ld@f\def\@ecfor#1:=#2\do#3{%
  \def\@@cfor{\@cfor#1:=}%
  \edef\@@@cfor{#2}%
  \expandafter\@@cfor\@@@cfor\do{#3}}
\ctr@ld@f\def\Fig@nnil{\@nil}
\ctr@ld@f\def\Fig@fornoop#1\@@#2#3{}
\ctr@ln@m\list@@rg
\ctr@ld@f\def\trtlis@rg#1#2{\def\list@@rg{#1}%
    \@ecfor\p@rv@l:=\list@@rg\do{\expandafter#2\p@rv@l|}}
\ctr@ld@f\def\trtlis@rgtok#1{\let@xte={}\let\n@xt\addt@t@xt\addt@t@xt #1}
\ctr@ln@m\M@cro
\ctr@ln@m\n@xt
\ctr@ld@f\def\addt@t@xt#1{\if#1|\let\n@xt\relax\else%
    \if#1,\expandafter\M@cro\the\let@xte|\let@xte={}%
    \else\let@xte=\expandafter{\the\let@xte #1}\fi\fi\n@xt}
\ctr@ln@w{newbox}\b@xvisu
\ctr@ln@w{newtoks}\let@xte
\ctr@ln@w{newif}\ifitis@K
\ctr@ln@w{newcount}\s@mme
\ctr@ln@w{newcount}\l@mbd@un \ctr@ln@w{newcount}\l@mbd@de
\ctr@ln@w{newcount}\superc@ntr@l\superc@ntr@l=\@ne        
\ctr@ln@w{newcount}\typec@ntr@l\typec@ntr@l=\superc@ntr@l 
\ctr@ln@w{newdimen}\v@lX  \ctr@ln@w{newdimen}\v@lY  \ctr@ln@w{newdimen}\v@lZ
\ctr@ln@w{newdimen}\v@lXa \ctr@ln@w{newdimen}\v@lYa \ctr@ln@w{newdimen}\v@lZa
\ctr@ln@w{newdimen}\unit@\unit@=\p@ 
\ctr@ld@f\def\unit@util{pt}
\ctr@ld@f\def\ptT@ptps{0.996264 }
\ctr@ld@f\def\ptpsT@pt{1.00375 }
\ctr@ld@f\def\ptT@unit@{1} 
\ctr@ld@f\def\setunit@#1{\def\unit@util{#1}\setunit@@#1:\invers@{\result@t}{\unit@}%
    \edef\ptT@unit@{\repdecn@mb\result@t}}
\ctr@ld@f\def\setunit@@#1#2:{\ifcat#1a\unit@=\@ne#1#2\else\unit@=#1#2\fi}
\ctr@ld@f\def\d@fm@cdim#1#2{{\v@leur=#2\v@leur=\ptT@unit@\v@leur\xdef#1{\repdecn@mb\v@leur}}}
\ctr@ln@w{newif}\ifBdingB@x\BdingB@xtrue
\ctr@ln@w{newdimen}\c@@rdXmin \ctr@ln@w{newdimen}\c@@rdYmin  
\ctr@ln@w{newdimen}\c@@rdXmax \ctr@ln@w{newdimen}\c@@rdYmax
\ctr@ld@f\def\b@undb@x#1#2{\ifBdingB@x%
    \relax\ifdim#1<\c@@rdXmin\global\c@@rdXmin=#1\fi%
    \relax\ifdim#2<\c@@rdYmin\global\c@@rdYmin=#2\fi%
    \relax\ifdim#1>\c@@rdXmax\global\c@@rdXmax=#1\fi%
    \relax\ifdim#2>\c@@rdYmax\global\c@@rdYmax=#2\fi\fi}
\ctr@ld@f\def\b@undb@xP#1{{\Figg@tXY{#1}\b@undb@x{\v@lX}{\v@lY}}}
\ctr@ld@f\def\ellBB@x#1;#2,#3(#4,#5,#6){{\s@uvc@ntr@l\et@tellBB@x%
    \setc@ntr@l{2}\figptell-2::#1;#2,#3(#4,#6)\b@undb@xP{-2}%
    \figptell-2::#1;#2,#3(#5,#6)\b@undb@xP{-2}%
    \c@ssin{\C@}{\S@}{#6}\v@lmin=\C@ pt\v@lmax=\S@ pt%
    \mili@u=#3\v@lmin\delt@=#2\v@lmax\arct@n\v@leur(\delt@,\mili@u)%
    \mili@u=-#3\v@lmax\delt@=#2\v@lmin\arct@n\c@rre(\delt@,\mili@u)%
    \v@leur=\rdT@deg\v@leur\advance\v@leur-\DePI@deg%
    \c@rre=\rdT@deg\c@rre\advance\c@rre-\DePI@deg%
    \v@lmin=#4pt\v@lmax=#5pt%
    \loop\ifdim\v@leur<\v@lmax\ifdim\v@leur>\v@lmin%
    \edef\@ngle{\repdecn@mb\v@leur}\figptell-2::#1;#2,#3(\@ngle,#6)%
    \b@undb@xP{-2}\fi\advance\v@leur\PI@deg\repeat%
    \loop\ifdim\c@rre<\v@lmax\ifdim\c@rre>\v@lmin%
    \edef\@ngle{\repdecn@mb\c@rre}\figptell-2::#1;#2,#3(\@ngle,#6)%
    \b@undb@xP{-2}\fi\advance\c@rre\PI@deg\repeat%
    \resetc@ntr@l\et@tellBB@x}\ignorespaces}
\ctr@ld@f\def\initb@undb@x{\c@@rdXmin=\maxdimen\c@@rdYmin=\maxdimen%
    \c@@rdXmax=-\maxdimen\c@@rdYmax=-\maxdimen}
\ctr@ld@f\def\c@ntr@lnum#1{%
    \relax\ifnum\typec@ntr@l=\@ne%
    \ifnum#1<\z@%
    \immediate\write16{*** Forbidden point number (#1). Abort.}\end\fi\fi%
    \set@bjc@de{#1}}
\ctr@ln@m\objc@de
\ctr@ld@f\def\set@bjc@de#1{\edef\objc@de{@BJ\ifnum#1<\z@ M\romannumeral-#1\else\romannumeral#1\fi}}
\s@mme=\m@ne\loop\ifnum\s@mme>-19
  \set@bjc@de{\s@mme}\ctr@lcsn@m\objc@de\ctr@lcsn@m{\objc@de T}
\advance\s@mme\m@ne\repeat
\s@mme=\@ne\loop\ifnum\s@mme<6
  \set@bjc@de{\s@mme}\ctr@lcsn@m\objc@de\ctr@lcsn@m{\objc@de T}
\advance\s@mme\@ne\repeat
\ctr@ld@f\def\setc@ntr@l#1{\ifnum\superc@ntr@l>#1\typec@ntr@l=\superc@ntr@l%
    \else\typec@ntr@l=#1\fi}
\ctr@ld@f\def\resetc@ntr@l#1{\global\superc@ntr@l=#1\setc@ntr@l{#1}}
\ctr@ld@f\def\s@uvc@ntr@l#1{\edef#1{\the\superc@ntr@l}}
\ctr@ln@m\c@lproscal
\ctr@ld@f\def\c@lproscalDD#1[#2,#3]{{\Figg@tXY{#2}%
    \edef\Xu@{\repdecn@mb{\v@lX}}\edef\Yu@{\repdecn@mb{\v@lY}}\Figg@tXY{#3}%
    \global\result@t=\Xu@\v@lX\global\advance\result@t\Yu@\v@lY}#1=\result@t}
\ctr@ld@f\def\c@lproscalTD#1[#2,#3]{{\Figg@tXY{#2}\edef\Xu@{\repdecn@mb{\v@lX}}%
    \edef\Yu@{\repdecn@mb{\v@lY}}\edef\Zu@{\repdecn@mb{\v@lZ}}%
    \Figg@tXY{#3}\global\result@t=\Xu@\v@lX\global\advance\result@t\Yu@\v@lY%
    \global\advance\result@t\Zu@\v@lZ}#1=\result@t}
\ctr@ld@f\def\c@lprovec#1{%
    \det@rmC\v@lZa(\v@lX,\v@lY,\v@lmin,\v@lmax)%
    \det@rmC\v@lXa(\v@lY,\v@lZ,\v@lmax,\v@leur)%
    \det@rmC\v@lYa(\v@lZ,\v@lX,\v@leur,\v@lmin)%
    \Figv@ctCreg#1(\v@lXa,\v@lYa,\v@lZa)}
\ctr@ld@f\def\det@rm#1[#2,#3]{{\Figg@tXY{#2}\Figg@tXYa{#3}%
    \delt@=\repdecn@mb{\v@lX}\v@lYa\advance\delt@-\repdecn@mb{\v@lY}\v@lXa%
    \global\result@t=\delt@}#1=\result@t}
\ctr@ld@f\def\det@rmC#1(#2,#3,#4,#5){{\global\result@t=\repdecn@mb{#2}#5%
    \global\advance\result@t-\repdecn@mb{#3}#4}#1=\result@t}
\ctr@ld@f\def\getredf@ctDD#1(#2,#3){{\maxim@m{\v@lXa}{-#2}{#2}\maxim@m{\v@lYa}{-#3}{#3}%
    \maxim@m{\v@lXa}{\v@lXa}{\v@lYa}
    \ifdim\v@lXa>\@xci pt\divide\v@lXa\@xci%
    \p@rtentiere{\p@rtent}{\v@lXa}\advance\p@rtent\@ne\else\p@rtent=\@ne\fi%
    \global\result@tent=\p@rtent}#1=\result@tent\ignorespaces}
\ctr@ld@f\def\getredf@ctTD#1(#2,#3,#4){{\maxim@m{\v@lXa}{-#2}{#2}\maxim@m{\v@lYa}{-#3}{#3}%
    \maxim@m{\v@lZa}{-#4}{#4}\maxim@m{\v@lXa}{\v@lXa}{\v@lYa}%
    \maxim@m{\v@lXa}{\v@lXa}{\v@lZa}
    \ifdim\v@lXa>\@lxxiv pt\divide\v@lXa\@lxxiv%
    \p@rtentiere{\p@rtent}{\v@lXa}\advance\p@rtent\@ne\else\p@rtent=\@ne\fi%
    \global\result@tent=\p@rtent}#1=\result@tent\ignorespaces}
\ctr@ln@m\getredf@ctB
\ctr@ld@f\def\getredf@ctBDD#1{\getredf@ctDD#1(\v@lX,\v@lY)}
\ctr@ld@f\def\getredf@ctBTD#1{\getredf@ctTD#1(\v@lX,\v@lY,\v@lZ)}
\ctr@ld@f\def\FigptintercircB@zDD#1:#2:#3,#4[#5,#6,#7,#8]{{\s@uvc@ntr@l\et@tfigptintercircB@zDD%
    \setc@ntr@l{2}\figvectPDD-1[#5,#8]\Figg@tXY{-1}\getredf@ctDD\f@ctech(\v@lX,\v@lY)%
    \mili@u=#4\unit@\divide\mili@u\f@ctech\c@rre=\repdecn@mb{\mili@u}\mili@u%
    \figptBezierDD-5::#3[#5,#6,#7,#8]%
    \v@lmin=#3\p@\v@lmax=\v@lmin\advance\v@lmax0.1\p@%
    \loop\edef\T@{\repdecn@mb{\v@lmax}}\figptBezierDD-2::\T@[#5,#6,#7,#8]%
    \figvectPDD-1[-5,-2]\n@rmeucCDD{\delt@}{-1}\ifdim\delt@<\c@rre\v@lmin=\v@lmax%
    \advance\v@lmax0.1\p@\repeat%
    \loop\mili@u=\v@lmin\advance\mili@u\v@lmax%
    \divide\mili@u\tw@\edef\T@{\repdecn@mb{\mili@u}}\figptBezierDD-2::\T@[#5,#6,#7,#8]%
    \figvectPDD-1[-5,-2]\n@rmeucCDD{\delt@}{-1}\ifdim\delt@>\c@rre\v@lmax=\mili@u%
    \else\v@lmin=\mili@u\fi\v@leur=\v@lmax\advance\v@leur-\v@lmin%
    \ifdim\v@leur>\epsil@n\repeat\figptcopyDD#1:#2/-2/%
    \resetc@ntr@l\et@tfigptintercircB@zDD}\ignorespaces}
\ctr@ln@m\figptinterlines
\ctr@ld@f\def\inters@cDD#1:#2[#3,#4;#5,#6]{{\s@uvc@ntr@l\et@tinters@cDD%
    \setc@ntr@l{2}\vecunit@{-1}{#4}\vecunit@{-2}{#6}%
    \Figg@tXY{-1}\setc@ntr@l{1}\Figg@tXYa{#3}%
    \edef\A@{\repdecn@mb{\v@lX}}\edef\B@{\repdecn@mb{\v@lY}}%
    \v@lmin=\B@\v@lXa\advance\v@lmin-\A@\v@lYa%
    \Figg@tXYa{#5}\setc@ntr@l{2}\Figg@tXY{-2}%
    \edef\C@{\repdecn@mb{\v@lX}}\edef\D@{\repdecn@mb{\v@lY}}%
    \v@lmax=\D@\v@lXa\advance\v@lmax-\C@\v@lYa%
    \delt@=\A@\v@lY\advance\delt@-\B@\v@lX%
    \invers@{\v@leur}{\delt@}\edef\v@ldelta{\repdecn@mb{\v@leur}}%
    \v@lXa=\A@\v@lmax\advance\v@lXa-\C@\v@lmin%
    \v@lYa=\B@\v@lmax\advance\v@lYa-\D@\v@lmin%
    \v@lXa=\v@ldelta\v@lXa\v@lYa=\v@ldelta\v@lYa%
    \setc@ntr@l{1}\Figp@intregDD#1:{#2}(\v@lXa,\v@lYa)%
    \resetc@ntr@l\et@tinters@cDD}\ignorespaces}
\ctr@ld@f\def\inters@cTD#1:#2[#3,#4;#5,#6]{{\s@uvc@ntr@l\et@tinters@cTD%
    \setc@ntr@l{2}\figvectNVTD-1[#4,#6]\figvectNVTD-2[#6,-1]\figvectPTD-1[#3,#5]%
    \r@pPSTD\v@leur[-2,-1,#4]\edef\v@lcoef{\repdecn@mb{\v@leur}}%
    \figpttraTD#1:{#2}=#3/\v@lcoef,#4/\resetc@ntr@l\et@tinters@cTD}\ignorespaces}
\ctr@ld@f\def\r@pPSTD#1[#2,#3,#4]{{\Figg@tXY{#2}\edef\Xu@{\repdecn@mb{\v@lX}}%
    \edef\Yu@{\repdecn@mb{\v@lY}}\edef\Zu@{\repdecn@mb{\v@lZ}}%
    \Figg@tXY{#3}\v@lmin=\Xu@\v@lX\advance\v@lmin\Yu@\v@lY\advance\v@lmin\Zu@\v@lZ%
    \Figg@tXY{#4}\v@lmax=\Xu@\v@lX\advance\v@lmax\Yu@\v@lY\advance\v@lmax\Zu@\v@lZ%
    \invers@{\v@leur}{\v@lmax}\global\result@t=\repdecn@mb{\v@leur}\v@lmin}%
    #1=\result@t}
\ctr@ln@m\n@rminf
\ctr@ld@f\def\n@rminfDD#1#2{{\Figg@tXY{#2}\maxim@m{\v@lX}{\v@lX}{-\v@lX}%
    \maxim@m{\v@lY}{\v@lY}{-\v@lY}\maxim@m{\global\result@t}{\v@lX}{\v@lY}}%
    #1=\result@t}
\ctr@ld@f\def\n@rminfTD#1#2{{\Figg@tXY{#2}\maxim@m{\v@lX}{\v@lX}{-\v@lX}%
    \maxim@m{\v@lY}{\v@lY}{-\v@lY}\maxim@m{\v@lZ}{\v@lZ}{-\v@lZ}%
    \maxim@m{\v@lX}{\v@lX}{\v@lY}\maxim@m{\global\result@t}{\v@lX}{\v@lZ}}%
    #1=\result@t}
\ctr@ln@m\n@rmeucC
\ctr@ld@f\def\n@rmeucCDD#1#2{\Figg@tXY{#2}\divide\v@lX\f@ctech\divide\v@lY\f@ctech%
    #1=\repdecn@mb{\v@lX}\v@lX\v@lX=\repdecn@mb{\v@lY}\v@lY\advance#1\v@lX}
\ctr@ld@f\def\n@rmeucCTD#1#2{\Figg@tXY{#2}%
    \divide\v@lX\f@ctech\divide\v@lY\f@ctech\divide\v@lZ\f@ctech%
    #1=\repdecn@mb{\v@lX}\v@lX\v@lX=\repdecn@mb{\v@lY}\v@lY\advance#1\v@lX%
    \v@lX=\repdecn@mb{\v@lZ}\v@lZ\advance#1\v@lX}
\ctr@ln@m\n@rmeucSV
\ctr@ld@f\def\n@rmeucSVDD#1#2{{\Figg@tXY{#2}%
    \v@lXa=\repdecn@mb{\v@lX}\v@lX\v@lYa=\repdecn@mb{\v@lY}\v@lY%
    \advance\v@lXa\v@lYa\sqrt@{\global\result@t}{\v@lXa}}#1=\result@t}
\ctr@ld@f\def\n@rmeucSVTD#1#2{{\Figg@tXY{#2}\v@lXa=\repdecn@mb{\v@lX}\v@lX%
    \v@lYa=\repdecn@mb{\v@lY}\v@lY\v@lZa=\repdecn@mb{\v@lZ}\v@lZ%
    \advance\v@lXa\v@lYa\advance\v@lXa\v@lZa\sqrt@{\global\result@t}{\v@lXa}}#1=\result@t}
\ctr@ln@m\n@rmeuc
\ctr@ld@f\def\n@rmeucDD#1#2{{\Figg@tXY{#2}\getredf@ctDD\f@ctech(\v@lX,\v@lY)%
    \divide\v@lX\f@ctech\divide\v@lY\f@ctech%
    \v@lXa=\repdecn@mb{\v@lX}\v@lX\v@lYa=\repdecn@mb{\v@lY}\v@lY%
    \advance\v@lXa\v@lYa\sqrt@{\global\result@t}{\v@lXa}%
    \global\multiply\result@t\f@ctech}#1=\result@t}
\ctr@ld@f\def\n@rmeucTD#1#2{{\Figg@tXY{#2}\getredf@ctTD\f@ctech(\v@lX,\v@lY,\v@lZ)%
    \divide\v@lX\f@ctech\divide\v@lY\f@ctech\divide\v@lZ\f@ctech%
    \v@lXa=\repdecn@mb{\v@lX}\v@lX%
    \v@lYa=\repdecn@mb{\v@lY}\v@lY\v@lZa=\repdecn@mb{\v@lZ}\v@lZ%
    \advance\v@lXa\v@lYa\advance\v@lXa\v@lZa\sqrt@{\global\result@t}{\v@lXa}%
    \global\multiply\result@t\f@ctech}#1=\result@t}
\ctr@ln@m\vecunit@
\ctr@ld@f\def\vecunit@DD#1#2{{\Figg@tXY{#2}\getredf@ctDD\f@ctech(\v@lX,\v@lY)%
    \divide\v@lX\f@ctech\divide\v@lY\f@ctech%
    \Figv@ctCreg#1(\v@lX,\v@lY)\n@rmeucSV{\v@lYa}{#1}%
    \invers@{\v@lXa}{\v@lYa}\edef\v@lv@lXa{\repdecn@mb{\v@lXa}}%
    \v@lX=\v@lv@lXa\v@lX\v@lY=\v@lv@lXa\v@lY%
    \Figv@ctCreg#1(\v@lX,\v@lY)\multiply\v@lYa\f@ctech\global\result@t=\v@lYa}}
\ctr@ld@f\def\vecunit@TD#1#2{{\Figg@tXY{#2}\getredf@ctTD\f@ctech(\v@lX,\v@lY,\v@lZ)%
    \divide\v@lX\f@ctech\divide\v@lY\f@ctech\divide\v@lZ\f@ctech%
    \Figv@ctCreg#1(\v@lX,\v@lY,\v@lZ)\n@rmeucSV{\v@lYa}{#1}%
    \invers@{\v@lXa}{\v@lYa}\edef\v@lv@lXa{\repdecn@mb{\v@lXa}}%
    \v@lX=\v@lv@lXa\v@lX\v@lY=\v@lv@lXa\v@lY\v@lZ=\v@lv@lXa\v@lZ%
    \Figv@ctCreg#1(\v@lX,\v@lY,\v@lZ)\multiply\v@lYa\f@ctech\global\result@t=\v@lYa}}
\ctr@ld@f\def\vecunitC@TD[#1,#2]{\Figg@tXYa{#1}\Figg@tXY{#2}%
    \advance\v@lX-\v@lXa\advance\v@lY-\v@lYa\advance\v@lZ-\v@lZa\c@lvecunitTD}
\ctr@ld@f\def\vecunitCV@TD#1{\Figg@tXY{#1}\c@lvecunitTD}
\ctr@ld@f\def\c@lvecunitTD{\getredf@ctTD\f@ctech(\v@lX,\v@lY,\v@lZ)%
    \divide\v@lX\f@ctech\divide\v@lY\f@ctech\divide\v@lZ\f@ctech%
    \v@lXa=\repdecn@mb{\v@lX}\v@lX%
    \v@lYa=\repdecn@mb{\v@lY}\v@lY\v@lZa=\repdecn@mb{\v@lZ}\v@lZ%
    \advance\v@lXa\v@lYa\advance\v@lXa\v@lZa\sqrt@{\v@lYa}{\v@lXa}%
    \invers@{\v@lXa}{\v@lYa}\edef\v@lv@lXa{\repdecn@mb{\v@lXa}}%
    \v@lX=\v@lv@lXa\v@lX\v@lY=\v@lv@lXa\v@lY\v@lZ=\v@lv@lXa\v@lZ}
\ctr@ln@m\figgetangle
\ctr@ld@f\def\figgetangleDD#1[#2,#3,#4]{\ifGR@cri{\s@uvc@ntr@l\et@tfiggetangleDD\setc@ntr@l{2}%
    \figvectPDD-1[#2,#3]\figvectPDD-2[#2,#4]\vecunit@{-1}{-1}%
    \c@lproscalDD\delt@[-2,-1]\figvectNVDD-1[-1]\c@lproscalDD\v@leur[-2,-1]%
    \arct@n\v@lmax(\delt@,\v@leur)\v@lmax=\rdT@deg\v@lmax%
    \ifdim\v@lmax<\z@\advance\v@lmax\DePI@deg\fi\xdef#1{\repdecn@mb{\v@lmax}}%
    \resetc@ntr@l\et@tfiggetangleDD}\ignorespaces\fi}
\ctr@ld@f\def\figgetangleTD#1[#2,#3,#4,#5]{\ifGR@cri{\s@uvc@ntr@l\et@tfiggetangleTD\setc@ntr@l{2}%
    \figvectPTD-1[#2,#3]\figvectPTD-2[#2,#5]\figvectNVTD-3[-1,-2]%
    \figvectPTD-2[#2,#4]\figvectNVTD-4[-3,-1]%
    \vecunit@{-1}{-1}\c@lproscalTD\delt@[-2,-1]\c@lproscalTD\v@leur[-2,-4]%
    \arct@n\v@lmax(\delt@,\v@leur)\v@lmax=\rdT@deg\v@lmax%
    \ifdim\v@lmax<\z@\advance\v@lmax\DePI@deg\fi\xdef#1{\repdecn@mb{\v@lmax}}%
    \resetc@ntr@l\et@tfiggetangleTD}\ignorespaces\fi}    
\ctr@ld@f\def\figgetdist#1[#2,#3]{\ifGR@cri{\s@uvc@ntr@l\et@tfiggetdist\setc@ntr@l{2}%
    \figvectP-1[#2,#3]\n@rmeuc{\v@lX}{-1}\v@lX=\ptT@unit@\v@lX\xdef#1{\repdecn@mb{\v@lX}}%
    \resetc@ntr@l\et@tfiggetdist}\ignorespaces\fi}
\ctr@ld@f\def\figget#1=#2[#3]{\keln@mun#1|%
    \def\n@mref{a}\ifx\l@debut\n@mref\figgetangle#2[#3]\else
    \def\n@mref{d}\ifx\l@debut\n@mref\figgetdist#2[#3]\else
    \W@rnmeskwd{figget}{#1}\fi\fi\ignorespaces}
\ctr@ld@f\def\Figg@tT#1{\c@ntr@lnum{#1}%
    {\expandafter\expandafter\expandafter\extr@ctT\csname\objc@de\endcsname:%
     \ifnum\B@@ltxt=\z@\ptn@me{#1}\else\csname\objc@de T\endcsname\fi}}
\ctr@ld@f\def\extr@ctT#1,#2,#3/#4:{\def\B@@ltxt{#3}}
\ctr@ld@f\def\Figg@tXY#1{\c@ntr@lnum{#1}%
    \expandafter\expandafter\expandafter\extr@ctC\csname\objc@de\endcsname:}
\ctr@ln@m\extr@ctC
\ctr@ld@f\def\extr@ctCDD#1/#2,#3,#4:{\v@lX=#2\v@lY=#3}
\ctr@ld@f\def\extr@ctCTD#1/#2,#3,#4:{\v@lX=#2\v@lY=#3\v@lZ=#4}
\ctr@ld@f\def\Figg@tXYa#1{\c@ntr@lnum{#1}%
    \expandafter\expandafter\expandafter\extr@ctCa\csname\objc@de\endcsname:}
\ctr@ln@m\extr@ctCa
\ctr@ld@f\def\extr@ctCaDD#1/#2,#3,#4:{\v@lXa=#2\v@lYa=#3}
\ctr@ld@f\def\extr@ctCaTD#1/#2,#3,#4:{\v@lXa=#2\v@lYa=#3\v@lZa=#4}
\ctr@ln@m\t@xt@
\ctr@ld@f\def\figinit#1{\t@stc@tcodech@nge\initpr@lim\Figinit@#1,:\initpss@ttings\ignorespaces}
\ctr@ld@f\def\Figinit@#1,#2:{\setunit@{#1}\def\t@xt@{#2}\ifx\t@xt@\empty\else\Figinit@@#2:\fi}
\ctr@ld@f\def\Figinit@@#1#2:{\if#12 \else\Figs@tproj{#1}\initTD@\fi}
\ctr@ln@w{newif}\ifTr@isDim
\ctr@ld@f\def\UnD@fined{UNDEFINED}
\ctr@ln@m\@utoFN
\ctr@ln@m\@utoFInDone
\ctr@ln@m\disob@unit
\ctr@ld@f\def\initpr@lim{\initb@undb@x\figsetmark{}\figsetptname{$A_{##1}$}\def\Sc@leFact{1}%
    \initDD@\figsetroundcoord{yes}\GR@critrue\expandafter\setupd@te\D@FTupdate:%
    \edef\disob@unit{\UnD@fined}\edef\t@rgetpt{\UnD@fined}\gdef\@utoFInDone{1}\gdef\@utoFN{0}}
\ctr@ld@f\def\initDD@{\Tr@isDimfalse%
    \ifPDFm@ke%
     \let\Ps@rcerc=\Ps@rcercBz%
     \let\Ps@rell=\Ps@rellBz%
    \fi
    \let\c@lDCUn=\c@lDCUnDD%
    \let\c@lDCDeux=\c@lDCDeuxDD%
    \let\c@ldefproj=\relax%
    \let\c@lproscal=\c@lproscalDD%
    \let\c@lprojSP=\relax%
    \let\extr@ctC=\extr@ctCDD%
    \let\extr@ctCa=\extr@ctCaDD%
    \let\extr@ctCF=\extr@ctCFDD%
    \let\Figp@intreg=\Figp@intregDD%
    \let\Figpts@xes=\Figpts@xesDD%
    \let\getredf@ctB=\getredf@ctBDD%
    \let\n@rmeucSV=\n@rmeucSVDD\let\n@rmeuc=\n@rmeucDD\let\n@rmeucC\n@rmeucCDD\let\n@rminf=\n@rminfDD%
    \let\pr@dMatV=\pr@dMatVDD%
    \let\Q@@xes=\Q@@xesDD%
    \let\vecunit@=\vecunit@DD%
    \let\figcoord=\figcoordDD%
    \let\figgetangle=\figgetangleDD%
    \let\figpt=\figptDD%
    \let\figptBezier=\figptBezierDD%
    \let\figptbary=\figptbaryDD%
    \let\figptcirc=\figptcircDD%
    \let\figptcircumcenter=\figptcircumcenterDD%
    \let\figptcopy=\figptcopyDD%
    \let\figptcurvcenter=\figptcurvcenterDD%
    \let\figptell=\figptellDD%
    \let\figptendnormal=\figptendnormalDD%
    \let\figptinterlineplane=\figptinterlineplaneDD%
    \let\figptinterlines=\inters@cDD%
    \let\figptorthocenter=\figptorthocenterDD%
    \let\figptorthoprojline=\figptorthoprojlineDD%
    \let\figptorthoprojplane=\figptorthoprojplaneDD%
    \let\figptrot=\figptrotDD%
    \let\figptscontrol=\figptscontrolDD%
    \let\figptsintercirc=\figptsintercircDD%
    \let\figptsinterlinell=\figptsinterlinellDD%
    \let\figptsorthoprojline=\figptsorthoprojlineDD%
    \let\figptorthoprojplane=\figptorthoprojplaneDD%
    \let\figptsrot=\figptsrotDD%
    \let\figptssym=\figptssymDD%
    \let\figptstra=\figptstraDD%
    \let\figptsym=\figptsymDD%
    \let\figpttraC=\figpttraCDD%
    \let\figpttra=\figpttraDD%
    \let\figptvisilimSL=\figptvisilimSLDD%
    \let\figsetobdist=\figsetobdistDD%
    \let\figsettarget=\figsettargetDD%
    \let\figsetview=\figsetviewDD%
    \let\figvectDBezier=\figvectDBezierDD%
    \let\figvectN=\figvectNDD%
    \let\figvectNV=\figvectNVDD%
    \let\figvectP=\figvectPDD%
    \let\figvectU=\figvectUDD%
    \let\figdrawarccircP=\Q@arccircPDD%
    \let\figdrawarccirc=\Q@arccircDD%
    \let\figdrawarcell=\Q@arcellDD%
    \let\figdrawarcellPA=\Q@arcellPADD%
    \let\figdrawarrowBezier=\Q@arrowBezierDD%
    \let\figdrawarrowcircP=\Q@arrowcircPDD%
    \let\figdrawarrowcirc=\Q@arrowcircDD%
    \let\figdrawarrowhead=\Q@arrowheadDD%
    \let\figdrawarrow=\Q@arrowDD%
    \let\figdrawBezier=\Q@BezierDD%
    \let\figdrawcirc=\Q@circDD%
    \let\figdrawcurve=\Q@curveDD%
    \let\figdrawnormal=\Q@normalDD%
    }
\ctr@ld@f\def\initTD@{\Tr@isDimtrue\initb@undb@xTD\newt@rgetptfalse\newdis@bfalse%
    \let\c@lDCUn=\c@lDCUnTD%
    \let\c@lDCDeux=\c@lDCDeuxTD%
    \let\c@ldefproj=\c@ldefprojTD%
    \let\c@lproscal=\c@lproscalTD%
    \let\extr@ctC=\extr@ctCTD%
    \let\extr@ctCa=\extr@ctCaTD%
    \let\extr@ctCF=\extr@ctCFTD%
    \let\Figp@intreg=\Figp@intregTD%
    \let\Figpts@xes=\Figpts@xesTD%
    \let\getredf@ctB=\getredf@ctBTD%
    \let\n@rmeucSV=\n@rmeucSVTD\let\n@rmeuc=\n@rmeucTD\let\n@rmeucC\n@rmeucCTD\let\n@rminf=\n@rminfTD%
    \let\pr@dMatV=\pr@dMatVTD%
    \let\Q@@xes=\Q@@xesTD%
    \let\vecunit@=\vecunit@TD%
    \let\figcoord=\figcoordTD%
    \let\figgetangle=\figgetangleTD%
    \let\figpt=\figptTD%
    \let\figptBezier=\figptBezierTD%
    \let\figptbary=\figptbaryTD%
    \let\figptcirc=\figptcircTD%
    \let\figptcircumcenter=\figptcircumcenterTD%
    \let\figptcopy=\figptcopyTD%
    \let\figptcurvcenter=\figptcurvcenterTD%
    \let\figptinterlineplane=\figptinterlineplaneTD%
    \let\figptinterlines=\inters@cTD%
    \let\figptorthocenter=\figptorthocenterTD%
    \let\figptorthoprojline=\figptorthoprojlineTD%
    \let\figptorthoprojplane=\figptorthoprojplaneTD%
    \let\figptrot=\figptrotTD%
    \let\figptscontrol=\figptscontrolTD%
    \let\figptsintercirc=\figptsintercircTD%
    \let\figptsorthoprojline=\figptsorthoprojlineTD%
    \let\figptsorthoprojplane=\figptsorthoprojplaneTD%
    \let\figptsrot=\figptsrotTD%
    \let\figptssym=\figptssymTD%
    \let\figptstra=\figptstraTD%
    \let\figptsym=\figptsymTD%
    \let\figpttraC=\figpttraCTD%
    \let\figpttra=\figpttraTD%
    \let\figptvisilimSL=\figptvisilimSLTD%
    \let\figsetobdist=\figsetobdistTD%
    \let\figsettarget=\figsettargetTD%
    \let\figsetview=\figsetviewTD%
    \let\figvectDBezier=\figvectDBezierTD%
    \let\figvectN=\figvectNTD%
    \let\figvectNV=\figvectNVTD%
    \let\figvectP=\figvectPTD%
    \let\figvectU=\figvectUTD%
    \let\figdrawarccircP=\Q@arccircPTD%
    \let\figdrawarccirc=\Q@arccircTD%
    \let\figdrawarcell=\Q@arcellTD%
    \let\figdrawarcellPA=\Q@arcellPATD%
    \let\figdrawarrowBezier=\Q@arrowBezierTD%
    \let\figdrawarrowcircP=\Q@arrowcircPTD%
    \let\figdrawarrowcirc=\Q@arrowcircTD%
    \let\figdrawarrowhead=\Q@arrowheadTD%
    \let\figdrawarrow=\Q@arrowTD%
    \let\figdrawBezier=\Q@BezierTD%
    \let\figdrawcirc=\Q@circTD%
    \let\figdrawcurve=\Q@curveTD%
    }
\ctr@ld@f\def\un@v@ilable#1{\immediate\write16{*** The macro #1 is not available in the current context.}}
\ctr@ld@f\def\figinsert#1{{\def\t@xt@{#1}\relax%
    \ifx\t@xt@\empty\ifnum\@utoFInDone>\z@\Figinsert@\DefGIfilen@me,:\fi%
    \else\expandafter\FiginsertNu@#1 :\fi}\ignorespaces}
\ctr@ld@f\def\FiginsertNu@#1 #2:{\def\t@xt@{#1}\relax\ifx\t@xt@\empty\def\t@xt@{#2}%
    \ifx\t@xt@\empty\ifnum\@utoFInDone>\z@\Figinsert@\DefGIfilen@me,:\fi%
    \else\FiginsertNu@#2:\fi\else\expandafter\FiginsertNd@#1 #2:\fi}
\ctr@ld@f\def\FiginsertNd@#1#2:{\ifcat#1a\Figinsert@#1#2,:\else%
    \ifnum\@utoFInDone>\z@\Figinsert@\DefGIfilen@me,#1#2,:\fi\fi}
\ctr@ln@m\Sc@leFact
\ctr@ld@f\def\Figinsert@#1,#2:{\def\t@xt@{#2}\ifx\t@xt@\empty\xdef\Sc@leFact{1}\else%
    \X@rgdeux@#2\xdef\Sc@leFact{\@rgdeux}\fi%
    \Figdisc@rdLTS{#1}{\t@xt@}\@psfgetbb{\t@xt@}%
    \v@lX=\@psfllx\p@\v@lX=\ptpsT@pt\v@lX\v@lX=\Sc@leFact\v@lX%
    \v@lY=\@psflly\p@\v@lY=\ptpsT@pt\v@lY\v@lY=\Sc@leFact\v@lY%
    \b@undb@x{\v@lX}{\v@lY}%
    \v@lX=\@psfurx\p@\v@lX=\ptpsT@pt\v@lX\v@lX=\Sc@leFact\v@lX%
    \v@lY=\@psfury\p@\v@lY=\ptpsT@pt\v@lY\v@lY=\Sc@leFact\v@lY%
    \b@undb@x{\v@lX}{\v@lY}%
    \ifPDFm@ke\Figinclud@PDF{\t@xt@}{\Sc@leFact}\else%
    \v@lX=\c@nt pt\v@lX=\Sc@leFact\v@lX\edef\F@ct{\repdecn@mb{\v@lX}}%
    \ifx\TeXturesonMacOSltX\special{postscriptfile #1 vscale=\F@ct\space hscale=\F@ct}%
    \else\includegraphics{#1}\fi\fi%
    \message{[\t@xt@]}\ignorespaces}
\ctr@ld@f\def\Figdisc@rdLTS#1#2{\expandafter\Figdisc@rdLTS@#1 :#2}
\ctr@ld@f\def\Figdisc@rdLTS@#1 #2:#3{\def#3{#1}\relax\ifx#3\empty\expandafter\Figdisc@rdLTS@#2:#3\fi}
\ctr@ld@f\def\figinsertE#1{\FiginsertE@#1,:\ignorespaces}
\ctr@ld@f\def\FiginsertE@#1,#2:{{\def\t@xt@{#2}\ifx\t@xt@\empty\xdef\Sc@leFact{1}\else%
    \X@rgdeux@#2\xdef\Sc@leFact{\@rgdeux}\fi%
    \Figdisc@rdLTS{#1}{\t@xt@}\pdfximage{\t@xt@}%
    \setbox\Gb@x=\hbox{\pdfrefximage\pdflastximage}%
    \v@lX=\z@\v@lY=-\Sc@leFact\dp\Gb@x\b@undb@x{\v@lX}{\v@lY}%
    \advance\v@lX\Sc@leFact\wd\Gb@x\advance\v@lY\Sc@leFact\dp\Gb@x%
    \advance\v@lY\Sc@leFact\ht\Gb@x\b@undb@x{\v@lX}{\v@lY}%
    \v@lX=\Sc@leFact\wd\Gb@x\pdfximage width \v@lX {\t@xt@}%
    \rlap{\pdfrefximage\pdflastximage}\message{[\t@xt@]}}\ignorespaces}
\ctr@ld@f\def\X@rgdeux@#1,{\edef\@rgdeux{#1}}
\ctr@ln@m\figpt
\ctr@ld@f\def\figptDD#1:#2(#3,#4){\ifGR@cri\c@ntr@lnum{#1}%
    {\v@lX=#3\unit@\v@lY=#4\unit@\Fig@dmpt{#2}{\z@}}\ignorespaces\fi}
\ctr@ld@f\def\Fig@dmpt#1#2{\def\t@xt@{#1}\ifx\t@xt@\empty\def\B@@ltxt{\z@}%
    \else\expandafter\gdef\csname\objc@de T\endcsname{#1}\def\B@@ltxt{\@ne}\fi%
    \expandafter\xdef\csname\objc@de\endcsname{\ifitis@vect@r\C@dCl@svect%
    \else\C@dCl@spt\fi,\z@,\B@@ltxt/\the\v@lX,\the\v@lY,#2}}
\ctr@ld@f\def\C@dCl@spt{P}
\ctr@ld@f\def\C@dCl@svect{V}
\ctr@ln@m\c@@rdYZ
\ctr@ln@m\c@@rdY
\ctr@ld@f\def\figptTD#1:#2(#3,#4){\ifGR@cri\c@ntr@lnum{#1}%
    \def\c@@rdYZ{#4,0,0}\extrairelepremi@r\c@@rdY\de\c@@rdYZ%
    \extrairelepremi@r\c@@rdZ\de\c@@rdYZ%
    {\v@lX=#3\unit@\v@lY=\c@@rdY\unit@\v@lZ=\c@@rdZ\unit@\Fig@dmpt{#2}{\the\v@lZ}%
    \b@undb@xTD{\v@lX}{\v@lY}{\v@lZ}}\ignorespaces\fi}
\ctr@ln@m\Figp@intreg
\ctr@ld@f\def\Figp@intregDD#1:#2(#3,#4){\c@ntr@lnum{#1}%
    {\result@t=#4\v@lX=#3\v@lY=\result@t\Fig@dmpt{#2}{\z@}}\ignorespaces}
\ctr@ld@f\def\Figp@intregTD#1:#2(#3,#4){\c@ntr@lnum{#1}%
    \def\c@@rdYZ{#4,\z@,\z@}\extrairelepremi@r\c@@rdY\de\c@@rdYZ%
    \extrairelepremi@r\c@@rdZ\de\c@@rdYZ%
    {\v@lX=#3\v@lY=\c@@rdY\v@lZ=\c@@rdZ\Fig@dmpt{#2}{\the\v@lZ}%
    \b@undb@xTD{\v@lX}{\v@lY}{\v@lZ}}\ignorespaces}
\ctr@ln@m\figptBezier
\ctr@ld@f\def\figptBezierDD#1:#2:#3[#4,#5,#6,#7]{\ifGR@cri{\s@uvc@ntr@l\et@tfigptBezierDD%
    \FigptBezier@#3[#4,#5,#6,#7]\Figp@intregDD#1:{#2}(\v@lX,\v@lY)%
    \resetc@ntr@l\et@tfigptBezierDD}\ignorespaces\fi}
\ctr@ld@f\def\figptBezierTD#1:#2:#3[#4,#5,#6,#7]{\ifGR@cri{\s@uvc@ntr@l\et@tfigptBezierTD%
    \FigptBezier@#3[#4,#5,#6,#7]\Figp@intregTD#1:{#2}(\v@lX,\v@lY,\v@lZ)%
    \resetc@ntr@l\et@tfigptBezierTD}\ignorespaces\fi}
\ctr@ld@f\def\FigptBezier@#1[#2,#3,#4,#5]{\setc@ntr@l{2}%
    \edef\T@{#1}\v@leur=\p@\advance\v@leur-#1pt\edef\UNmT@{\repdecn@mb{\v@leur}}%
    \figptcopy-4:/#2/\figptcopy-3:/#3/\figptcopy-2:/#4/\figptcopy-1:/#5/%
    \l@mbd@un=-4 \l@mbd@de=-\thr@@\p@rtent=\m@ne\c@lDecast%
    \l@mbd@un=-4 \l@mbd@de=-\thr@@\p@rtent=-\tw@\c@lDecast%
    \l@mbd@un=-4 \l@mbd@de=-\thr@@\p@rtent=-\thr@@\c@lDecast\Figg@tXY{-4}}
\ctr@ln@m\c@lDCUn
\ctr@ld@f\def\c@lDCUnDD#1#2{\Figg@tXY{#1}\v@lX=\UNmT@\v@lX\v@lY=\UNmT@\v@lY%
    \Figg@tXYa{#2}\advance\v@lX\T@\v@lXa\advance\v@lY\T@\v@lYa%
    \Figp@intregDD#1:(\v@lX,\v@lY)}
\ctr@ld@f\def\c@lDCUnTD#1#2{\Figg@tXY{#1}\v@lX=\UNmT@\v@lX\v@lY=\UNmT@\v@lY\v@lZ=\UNmT@\v@lZ%
    \Figg@tXYa{#2}\advance\v@lX\T@\v@lXa\advance\v@lY\T@\v@lYa\advance\v@lZ\T@\v@lZa%
    \Figp@intregTD#1:(\v@lX,\v@lY,\v@lZ)}
\ctr@ld@f\def\c@lDecast{\relax\ifnum\l@mbd@un<\p@rtent\c@lDCUn{\l@mbd@un}{\l@mbd@de}%
    \advance\l@mbd@un\@ne\advance\l@mbd@de\@ne\c@lDecast\fi}
\ctr@ld@f\def\figptmap#1:#2=#3/#4/#5/{\ifGR@cri{\s@uvc@ntr@l\et@tfigptmap%
    \setc@ntr@l{2}\figvectP-1[#4,#3]\Figg@tXY{-1}%
    \pr@dMatV/#5/\figpttra#1:{#2}=#4/1,-1/%
    \resetc@ntr@l\et@tfigptmap}\ignorespaces\fi}
\ctr@ln@m\pr@dMatV
\ctr@ld@f\def\pr@dMatVDD/#1,#2;#3,#4/{\v@lXa=#1\v@lX\advance\v@lXa#2\v@lY%
    \v@lYa=#3\v@lX\advance\v@lYa#4\v@lY\Figv@ctCreg-1(\v@lXa,\v@lYa)}
\ctr@ld@f\def\pr@dMatVTD/#1,#2,#3;#4,#5,#6;#7,#8,#9/{%
    \v@lXa=#1\v@lX\advance\v@lXa#2\v@lY\advance\v@lXa#3\v@lZ%
    \v@lYa=#4\v@lX\advance\v@lYa#5\v@lY\advance\v@lYa#6\v@lZ%
    \v@lZa=#7\v@lX\advance\v@lZa#8\v@lY\advance\v@lZa#9\v@lZ%
    \Figv@ctCreg-1(\v@lXa,\v@lYa,\v@lZa)}
\ctr@ln@m\figptbary
\ctr@ld@f\def\figptbaryDD#1:#2[#3;#4]{\ifGR@cri{\edef\list@num{#3}\extrairelepremi@r\p@int\de\list@num%
    \s@mme=\z@\@ecfor\c@ef:=#4\do{\advance\s@mme\c@ef}%
    \edef\listec@ef{#4,0}\extrairelepremi@r\c@ef\de\listec@ef%
    \Figg@tXY{\p@int}\divide\v@lX\s@mme\divide\v@lY\s@mme%
    \multiply\v@lX\c@ef\multiply\v@lY\c@ef%
    \@ecfor\p@int:=\list@num\do{\extrairelepremi@r\c@ef\de\listec@ef%
           \Figg@tXYa{\p@int}\divide\v@lXa\s@mme\divide\v@lYa\s@mme%
           \multiply\v@lXa\c@ef\multiply\v@lYa\c@ef%
           \advance\v@lX\v@lXa\advance\v@lY\v@lYa}%
    \Figp@intregDD#1:{#2}(\v@lX,\v@lY)}\ignorespaces\fi}
\ctr@ld@f\def\figptbaryTD#1:#2[#3;#4]{\ifGR@cri{\edef\list@num{#3}\extrairelepremi@r\p@int\de\list@num%
    \s@mme=\z@\@ecfor\c@ef:=#4\do{\advance\s@mme\c@ef}%
    \edef\listec@ef{#4,0}\extrairelepremi@r\c@ef\de\listec@ef%
    \Figg@tXY{\p@int}\divide\v@lX\s@mme\divide\v@lY\s@mme\divide\v@lZ\s@mme%
    \multiply\v@lX\c@ef\multiply\v@lY\c@ef\multiply\v@lZ\c@ef%
    \@ecfor\p@int:=\list@num\do{\extrairelepremi@r\c@ef\de\listec@ef%
           \Figg@tXYa{\p@int}\divide\v@lXa\s@mme\divide\v@lYa\s@mme\divide\v@lZa\s@mme%
           \multiply\v@lXa\c@ef\multiply\v@lYa\c@ef\multiply\v@lZa\c@ef%
           \advance\v@lX\v@lXa\advance\v@lY\v@lYa\advance\v@lZ\v@lZa}%
    \Figp@intregTD#1:{#2}(\v@lX,\v@lY,\v@lZ)}\ignorespaces\fi}
\ctr@ld@f\def\figptbaryR#1:#2[#3;#4]{\ifGR@cri{%
    \v@leur=\z@\@ecfor\c@ef:=#4\do{\maxim@m{\v@lmax}{\c@ef pt}{-\c@ef pt}%
    \ifdim\v@lmax>\v@leur\v@leur=\v@lmax\fi}%
    \ifdim\v@leur<\p@\f@ctech=\@M\else\ifdim\v@leur<\t@n\p@\f@ctech=\@m\else%
    \ifdim\v@leur<\c@nt\p@\f@ctech=\c@nt\else\ifdim\v@leur<\@m\p@\f@ctech=\t@n\else%
    \f@ctech=\@ne\fi\fi\fi\fi%
    \def\listec@ef{0}%
    \@ecfor\c@ef:=#4\do{\sc@lec@nvRI{\c@ef pt}\edef\listec@ef{\listec@ef,\the\s@mme}}%
    \extrairelepremi@r\c@ef\de\listec@ef\figptbary#1:#2[#3;\listec@ef]}\ignorespaces\fi}
\ctr@ld@f\def\sc@lec@nvRI#1{\v@leur=#1\p@rtentiere{\s@mme}{\v@leur}\advance\v@leur-\s@mme\p@%
    \multiply\v@leur\f@ctech\p@rtentiere{\p@rtent}{\v@leur}%
    \multiply\s@mme\f@ctech\advance\s@mme\p@rtent}
\ctr@ln@m\figptcirc
\ctr@ld@f\def\figptcircDD#1:#2:#3;#4(#5){\ifGR@cri{\s@uvc@ntr@l\et@tfigptcircDD%
    \c@lptellDD#1:{#2}:#3;#4,#4(#5)\resetc@ntr@l\et@tfigptcircDD}\ignorespaces\fi}
\ctr@ld@f\def\figptcircTD#1:#2:#3,#4,#5;#6(#7){\ifGR@cri{\s@uvc@ntr@l\et@tfigptcircTD%
    \setc@ntr@l{2}\c@lExtAxes#3,#4,#5(#6)\figptellP#1:{#2}:#3,-4,-5(#7)%
    \resetc@ntr@l\et@tfigptcircTD}\ignorespaces\fi}
\ctr@ln@m\figptcircumcenter
\ctr@ld@f\def\figptcircumcenterDD#1:#2[#3,#4,#5]{\ifGR@cri{\s@uvc@ntr@l\et@tfigptcircumcenterDD%
    \setc@ntr@l{2}\figvectNDD-5[#3,#4]\figptbaryDD-3:[#3,#4;1,1]%
                  \figvectNDD-6[#4,#5]\figptbaryDD-4:[#4,#5;1,1]%
    \resetc@ntr@l{2}\inters@cDD#1:{#2}[-3,-5;-4,-6]%
    \resetc@ntr@l\et@tfigptcircumcenterDD}\ignorespaces\fi}
\ctr@ld@f\def\figptcircumcenterTD#1:#2[#3,#4,#5]{\ifGR@cri{\s@uvc@ntr@l\et@tfigptcircumcenterTD%
    \setc@ntr@l{2}\figvectNTD-1[#3,#4,#5]%
    \figvectPTD-3[#3,#4]\figvectNVTD-5[-1,-3]\figptbaryTD-3:[#3,#4;1,1]%
    \figvectPTD-4[#4,#5]\figvectNVTD-6[-1,-4]\figptbaryTD-4:[#4,#5;1,1]%
    \resetc@ntr@l{2}\inters@cTD#1:{#2}[-3,-5;-4,-6]%
    \resetc@ntr@l\et@tfigptcircumcenterTD}\ignorespaces\fi}
\ctr@ln@m\figptcopy
\ctr@ld@f\def\figptcopyDD#1:#2/#3/{\ifGR@cri{\Figg@tXY{#3}%
    \Figp@intregDD#1:{#2}(\v@lX,\v@lY)}\ignorespaces\fi}
\ctr@ld@f\def\figptcopyTD#1:#2/#3/{\ifGR@cri{\Figg@tXY{#3}%
    \Figp@intregTD#1:{#2}(\v@lX,\v@lY,\v@lZ)}\ignorespaces\fi}
\ctr@ln@m\figptcurvcenter
\ctr@ld@f\def\figptcurvcenterDD#1:#2:#3[#4,#5,#6,#7]{\ifGR@cri{\s@uvc@ntr@l\et@tfigptcurvcenterDD%
    \setc@ntr@l{2}\c@lcurvradDD#3[#4,#5,#6,#7]\edef\Sprim@{\repdecn@mb{\result@t}}%
    \figptBezierDD-1::#3[#4,#5,#6,#7]\figpttraDD#1:{#2}=-1/\Sprim@,-5/%
    \resetc@ntr@l\et@tfigptcurvcenterDD}\ignorespaces\fi}
\ctr@ld@f\def\figptcurvcenterTD#1:#2:#3[#4,#5,#6,#7]{\ifGR@cri{\s@uvc@ntr@l\et@tfigptcurvcenterTD%
    \setc@ntr@l{2}\figvectDBezierTD -5:1,#3[#4,#5,#6,#7]%
    \figvectDBezierTD -6:2,#3[#4,#5,#6,#7]\vecunit@TD{-5}{-5}%
    \edef\Sprim@{\repdecn@mb{\result@t}}\figvectNVTD-1[-6,-5]%
    \figvectNVTD-5[-5,-1]\c@lproscalTD\v@leur[-6,-5]%
    \invers@{\v@leur}{\v@leur}\v@leur=\Sprim@\v@leur\v@leur=\Sprim@\v@leur%
    \figptBezierTD-1::#3[#4,#5,#6,#7]\edef\Sprim@{\repdecn@mb{\v@leur}}%
    \figpttraTD#1:{#2}=-1/\Sprim@,-5/\resetc@ntr@l\et@tfigptcurvcenterTD}\ignorespaces\fi}
\ctr@ld@f\def\c@lcurvradDD#1[#2,#3,#4,#5]{{\figvectDBezierDD -5:1,#1[#2,#3,#4,#5]%
    \figvectDBezierDD -6:2,#1[#2,#3,#4,#5]\vecunit@DD{-5}{-5}%
    \edef\Sprim@{\repdecn@mb{\result@t}}\figvectNVDD-5[-5]\c@lproscalDD\v@leur[-6,-5]%
    \invers@{\v@leur}{\v@leur}\v@leur=\Sprim@\v@leur\v@leur=\Sprim@\v@leur%
    \global\result@t=\v@leur}}
\ctr@ln@m\figptell
\ctr@ld@f\def\figptellDD#1:#2:#3;#4,#5(#6,#7){\ifGR@cri{\s@uvc@ntr@l\et@tfigptell%
    \c@lptellDD#1::#3;#4,#5(#6)\figptrotDD#1:{#2}=#1/#3,#7/%
    \resetc@ntr@l\et@tfigptell}\ignorespaces\fi}
\ctr@ld@f\def\c@lptellDD#1:#2:#3;#4,#5(#6){\c@ssin{\C@}{\S@}{#6}\v@lmin=\C@ pt\v@lmax=\S@ pt%
    \v@lmin=#4\v@lmin\v@lmax=#5\v@lmax%
    \edef\Xc@mp{\repdecn@mb{\v@lmin}}\edef\Yc@mp{\repdecn@mb{\v@lmax}}%
    \setc@ntr@l{2}\figvectC-1(\Xc@mp,\Yc@mp)\figpttraDD#1:{#2}=#3/1,-1/}
\ctr@ld@f\def\figptellP#1:#2:#3,#4,#5(#6){\ifGR@cri{\s@uvc@ntr@l\et@tfigptellP%
    \setc@ntr@l{2}\figvectP-1[#3,#4]\figvectP-2[#3,#5]%
    \v@leur=#6pt\c@lptellP{#3}{-1}{-2}\figptcopy#1:{#2}/-3/%
    \resetc@ntr@l\et@tfigptellP}\ignorespaces\fi}
\ctr@ln@m\@ngle
\ctr@ld@f\def\c@lptellP#1#2#3{\edef\@ngle{\repdecn@mb\v@leur}\c@ssin{\C@}{\S@}{\@ngle}%
    \figpttra-3:=#1/\C@,#2/\figpttra-3:=-3/\S@,#3/}
\ctr@ln@m\figptendnormal
\ctr@ld@f\def\figptendnormalDD#1:#2:#3,#4[#5,#6]{\ifGR@cri{\s@uvc@ntr@l\et@tfigptendnormal%
    \Figg@tXYa{#5}\Figg@tXY{#6}%
    \advance\v@lX-\v@lXa\advance\v@lY-\v@lYa%
    \setc@ntr@l{2}\Figv@ctCreg-1(\v@lX,\v@lY)\vecunit@{-1}{-1}\Figg@tXY{-1}%
    \delt@=#3\unit@\maxim@m{\delt@}{\delt@}{-\delt@}\edef\l@ngueur{\repdecn@mb{\delt@}}%
    \v@lX=\l@ngueur\v@lX\v@lY=\l@ngueur\v@lY%
    \delt@=\p@\advance\delt@-#4pt\edef\l@ngueur{\repdecn@mb{\delt@}}%
    \figptbaryR-1:[#5,#6;#4,\l@ngueur]\Figg@tXYa{-1}%
    \advance\v@lXa\v@lY\advance\v@lYa-\v@lX%
    \setc@ntr@l{1}\Figp@intregDD#1:{#2}(\v@lXa,\v@lYa)\resetc@ntr@l\et@tfigptendnormal}%
    \ignorespaces\fi}
\ctr@ld@f\def\figptexcenter#1:#2[#3,#4,#5]{\ifGR@cri{\let@xte={-}
    \Figptexinsc@nter#1:#2[#3,#4,#5]}\ignorespaces\fi}
\ctr@ld@f\def\figptincenter#1:#2[#3,#4,#5]{\ifGR@cri{\let@xte={}
    \Figptexinsc@nter#1:#2[#3,#4,#5]}\ignorespaces\fi}
\ctr@ld@f
\ctr@ld@f\def\Figptexinsc@nter#1:#2[#3,#4,#5]{%
    \figgetdist\LA@[#4,#5]\figgetdist\LB@[#3,#5]\figgetdist\LC@[#3,#4]%
    \figptbaryR#1:{#2}[#3,#4,#5;\the\let@xte\LA@,\LB@,\LC@]}
\ctr@ln@m\figptinterlineplane
\ctr@ld@f\def\figptinterlineplaneDD{\un@v@ilable{figptinterlineplane}}
\ctr@ld@f\def\figptinterlineplaneTD#1:#2[#3,#4;#5,#6]{\ifGR@cri{\s@uvc@ntr@l\et@tfigptinterlineplane%
    \setc@ntr@l{2}\figvectPTD-1[#3,#5]\vecunit@TD{-2}{#6}%
    \r@pPSTD\v@leur[-2,-1,#4]\edef\v@lcoef{\repdecn@mb{\v@leur}}%
    \figpttraTD#1:{#2}=#3/\v@lcoef,#4/\resetc@ntr@l\et@tfigptinterlineplane}\ignorespaces\fi}
\ctr@ln@m\figptorthocenter
\ctr@ld@f\def\figptorthocenterDD#1:#2[#3,#4,#5]{\ifGR@cri{\s@uvc@ntr@l\et@tfigptorthocenterDD%
    \setc@ntr@l{2}\figvectNDD-3[#3,#4]\figvectNDD-4[#4,#5]%
    \resetc@ntr@l{2}\inters@cDD#1:{#2}[#5,-3;#3,-4]%
    \resetc@ntr@l\et@tfigptorthocenterDD}\ignorespaces\fi}
\ctr@ld@f\def\figptorthocenterTD#1:#2[#3,#4,#5]{\ifGR@cri{\s@uvc@ntr@l\et@tfigptorthocenterTD%
    \setc@ntr@l{2}\figvectNTD-1[#3,#4,#5]%
    \figvectPTD-2[#3,#4]\figvectNVTD-3[-1,-2]%
    \figvectPTD-2[#4,#5]\figvectNVTD-4[-1,-2]%
    \resetc@ntr@l{2}\inters@cTD#1:{#2}[#5,-3;#3,-4]%
    \resetc@ntr@l\et@tfigptorthocenterTD}\ignorespaces\fi}
\ctr@ln@m\figptorthoprojline
\ctr@ld@f\def\figptorthoprojlineDD#1:#2=#3/#4,#5/{\ifGR@cri{\s@uvc@ntr@l\et@tfigptorthoprojlineDD%
    \setc@ntr@l{2}\figvectPDD-3[#4,#5]\figvectNVDD-4[-3]\resetc@ntr@l{2}%
    \inters@cDD#1:{#2}[#3,-4;#4,-3]\resetc@ntr@l\et@tfigptorthoprojlineDD}\ignorespaces\fi}
\ctr@ld@f\def\figptorthoprojlineTD#1:#2=#3/#4,#5/{\ifGR@cri{\s@uvc@ntr@l\et@tfigptorthoprojlineTD%
    \setc@ntr@l{2}\figvectPTD-1[#4,#3]\figvectPTD-2[#4,#5]\vecunit@TD{-2}{-2}%
    \c@lproscalTD\v@leur[-1,-2]\edef\v@lcoef{\repdecn@mb{\v@leur}}%
    \figpttraTD#1:{#2}=#4/\v@lcoef,-2/\resetc@ntr@l\et@tfigptorthoprojlineTD}\ignorespaces\fi}
\ctr@ln@m\figptorthoprojplane
\ctr@ld@f\def\figptorthoprojplaneDD{\un@v@ilable{figptorthoprojplane}}
\ctr@ld@f\def\figptorthoprojplaneTD#1:#2=#3/#4,#5/{\ifGR@cri{\s@uvc@ntr@l\et@tfigptorthoprojplane%
    \setc@ntr@l{2}\figvectPTD-1[#3,#4]\vecunit@TD{-2}{#5}%
    \c@lproscalTD\v@leur[-1,-2]\edef\v@lcoef{\repdecn@mb{\v@leur}}%
    \figpttraTD#1:{#2}=#3/\v@lcoef,-2/\resetc@ntr@l\et@tfigptorthoprojplane}\ignorespaces\fi}
\ctr@ld@f\def\figpthom#1:#2=#3/#4,#5/{\ifGR@cri{\s@uvc@ntr@l\et@tfigpthom%
    \setc@ntr@l{2}\figvectP-1[#4,#3]\figpttra#1:{#2}=#4/#5,-1/%
    \resetc@ntr@l\et@tfigpthom}\ignorespaces\fi}
\ctr@ld@f\def\figptinv#1:#2=#3/#4,#5/{\ifGR@cri{\s@uvc@ntr@l\et@tfigptinv%
    \setc@ntr@l{2}\figvectP-1[#4,#3]\Figg@tXY{-1}%
    \getredf@ctB\f@ctech\n@rmeucC{\delt@}{-1}%
    \delt@=\ptT@unit@\delt@\delt@=\ptT@unit@\delt@%
    \invers@{\delt@}{\delt@}\multiply\f@ctech\f@ctech\divide\delt@\f@ctech%
    \delt@=#5\delt@\edef\v@lcoef{\repdecn@mb{\delt@}}\figpttra#1:{#2}=#4/\v@lcoef,-1/%
    \resetc@ntr@l\et@tfigptinv}\ignorespaces\fi}
\ctr@ln@m\figptrot
\ctr@ld@f\def\figptrotDD#1:#2=#3/#4,#5/{\ifGR@cri{\s@uvc@ntr@l\et@tfigptrotDD%
    \c@ssin{\C@}{\S@}{#5}\setc@ntr@l{2}\figvectPDD-1[#4,#3]\Figg@tXY{-1}%
    \v@lXa=\C@\v@lX\advance\v@lXa-\S@\v@lY%
    \v@lYa=\S@\v@lX\advance\v@lYa\C@\v@lY%
    \Figv@ctCreg-1(\v@lXa,\v@lYa)\figpttraDD#1:{#2}=#4/1,-1/%
    \resetc@ntr@l\et@tfigptrotDD}\ignorespaces\fi}
\ctr@ld@f\def\figptrotTD#1:#2=#3/#4,#5,#6/{\ifGR@cri{\s@uvc@ntr@l\et@tfigptrotTD%
    \c@ssin{\C@}{\S@}{#5}%
    \setc@ntr@l{2}\figptorthoprojplaneTD-3:=#4/#3,#6/\figvectPTD-2[-3,#3]%
    \n@rmeucTD\v@leur{-2}\ifdim\v@leur<\Cepsil@n\Figg@tXYa{#3}\else%
    \edef\v@lcoef{\repdecn@mb{\v@leur}}\figvectNVTD-1[#6,-2]%
    \Figg@tXYa{-1}\v@lXa=\v@lcoef\v@lXa\v@lYa=\v@lcoef\v@lYa\v@lZa=\v@lcoef\v@lZa%
    \v@lXa=\S@\v@lXa\v@lYa=\S@\v@lYa\v@lZa=\S@\v@lZa\Figg@tXY{-2}%
    \advance\v@lXa\C@\v@lX\advance\v@lYa\C@\v@lY\advance\v@lZa\C@\v@lZ%
    \Figg@tXY{-3}\advance\v@lXa\v@lX\advance\v@lYa\v@lY\advance\v@lZa\v@lZ\fi%
    \Figp@intregTD#1:{#2}(\v@lXa,\v@lYa,\v@lZa)\resetc@ntr@l\et@tfigptrotTD}\ignorespaces\fi}
\ctr@ln@m\figptsym
\ctr@ld@f\def\figptsymDD#1:#2=#3/#4,#5/{\ifGR@cri{\s@uvc@ntr@l\et@tfigptsymDD%
    \resetc@ntr@l{2}\figptorthoprojlineDD-5:=#3/#4,#5/\figvectPDD-2[#3,-5]%
    \figpttraDD#1:{#2}=#3/2,-2/\resetc@ntr@l\et@tfigptsymDD}\ignorespaces\fi}
\ctr@ld@f\def\figptsymTD#1:#2=#3/#4,#5/{\ifGR@cri{\s@uvc@ntr@l\et@tfigptsymTD%
    \resetc@ntr@l{2}\figptorthoprojplaneTD-3:=#3/#4,#5/\figvectPTD-2[#3,-3]%
    \figpttraTD#1:{#2}=#3/2,-2/\resetc@ntr@l\et@tfigptsymTD}\ignorespaces\fi}
\ctr@ln@m\figpttra
\ctr@ld@f\def\figpttraDD#1:#2=#3/#4,#5/{\ifGR@cri{\Figg@tXYa{#5}\v@lXa=#4\v@lXa\v@lYa=#4\v@lYa%
    \Figg@tXY{#3}\advance\v@lX\v@lXa\advance\v@lY\v@lYa%
    \Figp@intregDD#1:{#2}(\v@lX,\v@lY)}\ignorespaces\fi}
\ctr@ld@f\def\figpttraTD#1:#2=#3/#4,#5/{\ifGR@cri{\Figg@tXYa{#5}\v@lXa=#4\v@lXa\v@lYa=#4\v@lYa%
    \v@lZa=#4\v@lZa\Figg@tXY{#3}\advance\v@lX\v@lXa\advance\v@lY\v@lYa%
    \advance\v@lZ\v@lZa\Figp@intregTD#1:{#2}(\v@lX,\v@lY,\v@lZ)}\ignorespaces\fi}
\ctr@ln@m\figpttraC
\ctr@ld@f\def\figpttraCDD#1:#2=#3/#4,#5/{\ifGR@cri{\v@lXa=#4\unit@\v@lYa=#5\unit@%
    \Figg@tXY{#3}\advance\v@lX\v@lXa\advance\v@lY\v@lYa%
    \Figp@intregDD#1:{#2}(\v@lX,\v@lY)}\ignorespaces\fi}
\ctr@ld@f\def\figpttraCTD#1:#2=#3/#4,#5,#6/{\ifGR@cri{\v@lXa=#4\unit@\v@lYa=#5\unit@\v@lZa=#6\unit@%
    \Figg@tXY{#3}\advance\v@lX\v@lXa\advance\v@lY\v@lYa\advance\v@lZ\v@lZa%
    \Figp@intregTD#1:{#2}(\v@lX,\v@lY,\v@lZ)}\ignorespaces\fi}
\ctr@ld@f\def\figptsaxes#1:#2(#3){\ifGR@cri{\an@lys@xes#3,:\ifx\t@xt@\empty%
    \ifTr@isDim\Figpts@xes#1:#2(0,#3,0,#3,0,#3)\else\Figpts@xes#1:#2(0,#3,0,#3)\fi%
    \else\Figpts@xes#1:#2(#3)\fi}\ignorespaces\fi}
\ctr@ln@m\Figpts@xes
\ctr@ld@f\def\Figpts@xesDD#1:#2(#3,#4,#5,#6){%
    \s@mme=#1\figpttraC\the\s@mme:$x$=#2/#4,0/%
    \advance\s@mme\@ne\figpttraC\the\s@mme:$y$=#2/0,#6/}
\ctr@ld@f\def\Figpts@xesTD#1:#2(#3,#4,#5,#6,#7,#8){%
    \s@mme=#1\figpttraC\the\s@mme:$x$=#2/#4,0,0/%
    \advance\s@mme\@ne\figpttraC\the\s@mme:$y$=#2/0,#6,0/%
    \advance\s@mme\@ne\figpttraC\the\s@mme:$z$=#2/0,0,#8/}
\ctr@ld@f\def\figptsmap#1=#2/#3/#4/{\ifGR@cri{\s@uvc@ntr@l\et@tfigptsmap%
    \setc@ntr@l{2}\def\list@num{#2}\s@mme=#1%
    \@ecfor\p@int:=\list@num\do{\figvectP-1[#3,\p@int]\Figg@tXY{-1}%
    \pr@dMatV/#4/\figpttra\the\s@mme:=#3/1,-1/\advance\s@mme\@ne}%
    \resetc@ntr@l\et@tfigptsmap}\ignorespaces\fi}
\ctr@ln@m\figptscontrol
\ctr@ld@f\def\figptscontrolDD#1[#2,#3,#4,#5]{\ifGR@cri{\s@uvc@ntr@l\et@tfigptscontrolDD\setc@ntr@l{2}%
    \v@lX=\z@\v@lY=\z@\Figtr@nptDD{-5}{#2}\Figtr@nptDD{2}{#5}%
    \divide\v@lX\@vi\divide\v@lY\@vi%
    \Figtr@nptDD{3}{#3}\Figtr@nptDD{-1.5}{#4}\Figp@intregDD-1:(\v@lX,\v@lY)%
    \v@lX=\z@\v@lY=\z@\Figtr@nptDD{2}{#2}\Figtr@nptDD{-5}{#5}%
    \divide\v@lX\@vi\divide\v@lY\@vi\Figtr@nptDD{-1.5}{#3}\Figtr@nptDD{3}{#4}%
    \s@mme=#1\advance\s@mme\@ne\Figp@intregDD\the\s@mme:(\v@lX,\v@lY)%
    \figptcopyDD#1:/-1/\resetc@ntr@l\et@tfigptscontrolDD}\ignorespaces\fi}
\ctr@ld@f\def\figptscontrolTD#1[#2,#3,#4,#5]{\ifGR@cri{\s@uvc@ntr@l\et@tfigptscontrolTD\setc@ntr@l{2}%
    \v@lX=\z@\v@lY=\z@\v@lZ=\z@\Figtr@nptTD{-5}{#2}\Figtr@nptTD{2}{#5}%
    \divide\v@lX\@vi\divide\v@lY\@vi\divide\v@lZ\@vi%
    \Figtr@nptTD{3}{#3}\Figtr@nptTD{-1.5}{#4}\Figp@intregTD-1:(\v@lX,\v@lY,\v@lZ)%
    \v@lX=\z@\v@lY=\z@\v@lZ=\z@\Figtr@nptTD{2}{#2}\Figtr@nptTD{-5}{#5}%
    \divide\v@lX\@vi\divide\v@lY\@vi\divide\v@lZ\@vi\Figtr@nptTD{-1.5}{#3}\Figtr@nptTD{3}{#4}%
    \s@mme=#1\advance\s@mme\@ne\Figp@intregTD\the\s@mme:(\v@lX,\v@lY,\v@lZ)%
    \figptcopyTD#1:/-1/\resetc@ntr@l\et@tfigptscontrolTD}\ignorespaces\fi}
\ctr@ld@f\def\Figtr@nptDD#1#2{\Figg@tXYa{#2}\v@lXa=#1\v@lXa\v@lYa=#1\v@lYa%
    \advance\v@lX\v@lXa\advance\v@lY\v@lYa}
\ctr@ld@f\def\Figtr@nptTD#1#2{\Figg@tXYa{#2}\v@lXa=#1\v@lXa\v@lYa=#1\v@lYa\v@lZa=#1\v@lZa%
    \advance\v@lX\v@lXa\advance\v@lY\v@lYa\advance\v@lZ\v@lZa}
\ctr@ld@f\def\figptscontrolcurve#1,#2[#3]{\ifGR@cri{\s@uvc@ntr@l\et@tfigptscontrolcurve%
    \def\list@num{#3}\extrairelepremi@r\Ak@\de\list@num%
    \extrairelepremi@r\Ai@\de\list@num\extrairelepremi@r\Aj@\de\list@num%
    \s@mme=#1\figptcopy\the\s@mme:/\Ai@/%
    \setc@ntr@l{2}\figvectP -1[\Ak@,\Aj@]%
    \@ecfor\Ak@:=\list@num\do{\advance\s@mme\@ne\figpttra\the\s@mme:=\Ai@/\curv@roundness,-1/%
       \figvectP -1[\Ai@,\Ak@]\advance\s@mme\@ne\figpttra\the\s@mme:=\Aj@/-\curv@roundness,-1/%
       \advance\s@mme\@ne\figptcopy\the\s@mme:/\Aj@/%
       \edef\Ai@{\Aj@}\edef\Aj@{\Ak@}}\advance\s@mme-#1\divide\s@mme\thr@@%
       \xdef#2{\the\s@mme}%
    \resetc@ntr@l\et@tfigptscontrolcurve}\ignorespaces\fi}
\ctr@ln@m\figptsintercirc
\ctr@ld@f\def\figptsintercircDD#1[#2,#3;#4,#5]{\ifGR@cri{\s@uvc@ntr@l\et@tfigptsintercircDD%
    \setc@ntr@l{2}\let\c@lNVintc=\c@lNVintcDD\Figptsintercirc@#1[#2,#3;#4,#5]%
    \resetc@ntr@l\et@tfigptsintercircDD}\ignorespaces\fi}
\ctr@ld@f\def\figptsintercircTD#1[#2,#3;#4,#5;#6]{\ifGR@cri{\s@uvc@ntr@l\et@tfigptsintercircTD%
    \setc@ntr@l{2}\let\c@lNVintc=\c@lNVintcTD\vecunitC@TD[#2,#6]%
    \Figv@ctCreg-3(\v@lX,\v@lY,\v@lZ)\Figptsintercirc@#1[#2,#3;#4,#5]%
    \resetc@ntr@l\et@tfigptsintercircTD}\ignorespaces\fi}
\ctr@ld@f\def\Figptsintercirc@#1[#2,#3;#4,#5]{\figvectP-1[#2,#4]%
    \vecunit@{-1}{-1}\delt@=\result@t\f@ctech=\result@tent%
    \s@mme=#1\advance\s@mme\@ne\figptcopy#1:/#2/\figptcopy\the\s@mme:/#4/%
    \ifdim\delt@=\z@\else%
    \v@lmin=#3\unit@\v@lmax=#5\unit@\v@leur=\v@lmin\advance\v@leur\v@lmax%
    \ifdim\v@leur>\delt@%
    \v@leur=\v@lmin\advance\v@leur-\v@lmax\maxim@m{\v@leur}{\v@leur}{-\v@leur}%
    \ifdim\v@leur<\delt@%
    \divide\v@lmin\f@ctech\divide\v@lmax\f@ctech\divide\delt@\f@ctech%
    \v@lmin=\repdecn@mb{\v@lmin}\v@lmin\v@lmax=\repdecn@mb{\v@lmax}\v@lmax%
    \invers@{\v@leur}{\delt@}\advance\v@lmax-\v@lmin%
    \v@lmax=-\repdecn@mb{\v@leur}\v@lmax\advance\delt@\v@lmax\delt@=.5\delt@%
    \v@lmax=\delt@\multiply\v@lmax\f@ctech%
    \edef\t@ille{\repdecn@mb{\v@lmax}}\figpttra-2:=#2/\t@ille,-1/%
    \delt@=\repdecn@mb{\delt@}\delt@\advance\v@lmin-\delt@%
    \sqrt@{\v@leur}{\v@lmin}\multiply\v@leur\f@ctech\edef\t@ille{\repdecn@mb{\v@leur}}%
    \c@lNVintc\figpttra#1:=-2/-\t@ille,-1/\figpttra\the\s@mme:=-2/\t@ille,-1/\fi\fi\fi}
\ctr@ld@f\def\c@lNVintcDD{\Figg@tXY{-1}\Figv@ctCreg-1(-\v@lY,\v@lX)} 
\ctr@ld@f\def\c@lNVintcTD{{\Figg@tXY{-3}\v@lmin=\v@lX\v@lmax=\v@lY\v@leur=\v@lZ%
    \Figg@tXY{-1}\c@lprovec{-3}\vecunit@{-3}{-3}
    \Figg@tXY{-1}\v@lmin=\v@lX\v@lmax=\v@lY%
    \v@leur=\v@lZ\Figg@tXY{-3}\c@lprovec{-1}}} 
\ctr@ln@m\figptsinterlinell
\ctr@ld@f\def\figptsinterlinellDD#1[#2,#3,#4,#5;#6,#7]{\ifGR@cri{\s@uvc@ntr@l\et@tfigptsinterlinellDD%
    \figptcopy#1:/#6/\s@mme=#1\advance\s@mme\@ne\figptcopy\the\s@mme:/#7/%
    \v@lmin=#3\unit@\v@lmax=#4\unit@
    \setc@ntr@l{2}\figptbaryDD-4:[#6,#7;1,1]\figptsrotDD-3=-4,#7/#2,-#5/
    \Figg@tXY{-3}\Figg@tXYa{#2}\advance\v@lX-\v@lXa\advance\v@lY-\v@lYa
    \figvectP-1[-3,-2]\Figg@tXYa{-1}\figvectP-3[-4,#7]\Figptsint@rLE{#1}
    \resetc@ntr@l\et@tfigptsinterlinellDD}\ignorespaces\fi}
\ctr@ld@f\def\figptsinterlinellP#1[#2,#3,#4;#5,#6]{\ifGR@cri{\s@uvc@ntr@l\et@tfigptsinterlinellP%
    \figptcopy#1:/#5/\s@mme=#1\advance\s@mme\@ne\figptcopy\the\s@mme:/#6/\setc@ntr@l{2}%
    \figvectP-1[#2,#3]\vecunit@{-1}{-1}\v@lmin=\result@t
    \figvectP-2[#2,#4]\vecunit@{-2}{-2}\v@lmax=\result@t
    \figptbary-4:[#5,#6;1,1]
    \figvectP-3[#2,-4]\c@lproscal\v@lX[-3,-1]\c@lproscal\v@lY[-3,-2]
    \figvectP-3[-4,#6]\c@lproscal\v@lXa[-3,-1]\c@lproscal\v@lYa[-3,-2]
    \Figptsint@rLE{#1}\resetc@ntr@l\et@tfigptsinterlinellP}\ignorespaces\fi}
\ctr@ld@f\def\Figptsint@rLE#1{%
    \getredf@ctDD\f@ctech(\v@lmin,\v@lmax)%
    \getredf@ctDD\p@rtent(\v@lX,\v@lY)\ifnum\p@rtent>\f@ctech\f@ctech=\p@rtent\fi%
    \getredf@ctDD\p@rtent(\v@lXa,\v@lYa)\ifnum\p@rtent>\f@ctech\f@ctech=\p@rtent\fi%
    \divide\v@lmin\f@ctech\divide\v@lmax\f@ctech\divide\v@lX\f@ctech\divide\v@lY\f@ctech%
    \divide\v@lXa\f@ctech\divide\v@lYa\f@ctech%
    \c@rre=\repdecn@mb\v@lXa\v@lmax\mili@u=\repdecn@mb\v@lYa\v@lmin%
    \getredf@ctDD\f@ctech(\c@rre,\mili@u)%
    \c@rre=\repdecn@mb\v@lX\v@lmax\mili@u=\repdecn@mb\v@lY\v@lmin%
    \getredf@ctDD\p@rtent(\c@rre,\mili@u)\ifnum\p@rtent>\f@ctech\f@ctech=\p@rtent\fi%
    \divide\v@lmin\f@ctech\divide\v@lmax\f@ctech\divide\v@lX\f@ctech\divide\v@lY\f@ctech%
    \divide\v@lXa\f@ctech\divide\v@lYa\f@ctech%
    \v@lmin=\repdecn@mb{\v@lmin}\v@lmin\v@lmax=\repdecn@mb{\v@lmax}\v@lmax%
    \edef\G@xde{\repdecn@mb\v@lmin}\edef\P@xde{\repdecn@mb\v@lmax}%
    \c@rre=-\v@lmax\v@leur=\repdecn@mb\v@lY\v@lY\advance\c@rre\v@leur\c@rre=\G@xde\c@rre%
    \v@leur=\repdecn@mb\v@lX\v@lX\v@leur=\P@xde\v@leur\advance\c@rre\v@leur
    \v@lmin=\repdecn@mb\v@lYa\v@lmin\v@lmax=\repdecn@mb\v@lXa\v@lmax%
    \mili@u=\repdecn@mb\v@lX\v@lmax\advance\mili@u\repdecn@mb\v@lY\v@lmin
    \v@lmax=\repdecn@mb\v@lXa\v@lmax\advance\v@lmax\repdecn@mb\v@lYa\v@lmin
    \ifdim\v@lmax>\epsil@n%
    \maxim@m{\v@leur}{\c@rre}{-\c@rre}\maxim@m{\v@lmin}{\mili@u}{-\mili@u}%
    \maxim@m{\v@leur}{\v@leur}{\v@lmin}\maxim@m{\v@lmin}{\v@lmax}{-\v@lmax}%
    \maxim@m{\v@leur}{\v@leur}{\v@lmin}\p@rtentiere{\p@rtent}{\v@leur}\advance\p@rtent\@ne%
    \divide\c@rre\p@rtent\divide\mili@u\p@rtent\divide\v@lmax\p@rtent%
    \delt@=\repdecn@mb{\mili@u}\mili@u\v@leur=\repdecn@mb{\v@lmax}\c@rre%
    \advance\delt@-\v@leur\ifdim\delt@<\z@\else\sqrt@\delt@\delt@%
    \invers@\v@lmax\v@lmax\edef\Uns@rAp{\repdecn@mb\v@lmax}%
    \v@leur=-\mili@u\advance\v@leur-\delt@\v@leur=\Uns@rAp\v@leur%
    \edef\t@ille{\repdecn@mb\v@leur}\figpttra#1:=-4/\t@ille,-3/\s@mme=#1\advance\s@mme\@ne%
    \v@leur=-\mili@u\advance\v@leur\delt@\v@leur=\Uns@rAp\v@leur%
    \edef\t@ille{\repdecn@mb\v@leur}\figpttra\the\s@mme:=-4/\t@ille,-3/\fi\fi}
\ctr@ln@m\figptsorthoprojline
\ctr@ld@f\def\figptsorthoprojlineDD#1=#2/#3,#4/{\ifGR@cri{\s@uvc@ntr@l\et@tfigptsorthoprojlineDD%
    \setc@ntr@l{2}\figvectPDD-3[#3,#4]\figvectNVDD-4[-3]\resetc@ntr@l{2}%
    \def\list@num{#2}\s@mme=#1\@ecfor\p@int:=\list@num\do{%
    \inters@cDD\the\s@mme:[\p@int,-4;#3,-3]\advance\s@mme\@ne}%
    \resetc@ntr@l\et@tfigptsorthoprojlineDD}\ignorespaces\fi}
\ctr@ld@f\def\figptsorthoprojlineTD#1=#2/#3,#4/{\ifGR@cri{\s@uvc@ntr@l\et@tfigptsorthoprojlineTD%
    \setc@ntr@l{2}\figvectPTD-2[#3,#4]\vecunit@TD{-2}{-2}%
    \def\list@num{#2}\s@mme=#1\@ecfor\p@int:=\list@num\do{%
    \figvectPTD-1[#3,\p@int]\c@lproscalTD\v@leur[-1,-2]%
    \edef\v@lcoef{\repdecn@mb{\v@leur}}\figpttraTD\the\s@mme:=#3/\v@lcoef,-2/%
    \advance\s@mme\@ne}\resetc@ntr@l\et@tfigptsorthoprojlineTD}\ignorespaces\fi}
\ctr@ln@m\figptsorthoprojplane
\ctr@ld@f\def\figptsorthoprojplaneDD{\un@v@ilable{figptsorthoprojplane}}
\ctr@ld@f\def\figptsorthoprojplaneTD#1=#2/#3,#4/{\ifGR@cri{\s@uvc@ntr@l\et@tfigptsorthoprojplane%
    \setc@ntr@l{2}\vecunit@TD{-2}{#4}%
    \def\list@num{#2}\s@mme=#1\@ecfor\p@int:=\list@num\do{\figvectPTD-1[\p@int,#3]%
    \c@lproscalTD\v@leur[-1,-2]\edef\v@lcoef{\repdecn@mb{\v@leur}}%
    \figpttraTD\the\s@mme:=\p@int/\v@lcoef,-2/\advance\s@mme\@ne}%
    \resetc@ntr@l\et@tfigptsorthoprojplane}\ignorespaces\fi}
\ctr@ld@f\def\figptshom#1=#2/#3,#4/{\ifGR@cri{\s@uvc@ntr@l\et@tfigptshom%
    \setc@ntr@l{2}\def\list@num{#2}\s@mme=#1%
    \@ecfor\p@int:=\list@num\do{\figvectP-1[#3,\p@int]%
    \figpttra\the\s@mme:=#3/#4,-1/\advance\s@mme\@ne}%
    \resetc@ntr@l\et@tfigptshom}\ignorespaces\fi}
\ctr@ld@f\def\figptsinv#1=#2/#3,#4/{\ifGR@cri{\s@uvc@ntr@l\et@tfigptsinv%
    \setc@ntr@l{2}\def\list@num{#2}\s@mme=#1%
    \@ecfor\p@int:=\list@num\do{\figvectP-1[#3,\p@int]\Figg@tXY{-1}%
    \getredf@ctB\f@ctech\n@rmeucC{\delt@}{-1}%
    \delt@=\ptT@unit@\delt@\delt@=\ptT@unit@\delt@%
    \invers@{\delt@}{\delt@}\multiply\f@ctech\f@ctech\divide\delt@\f@ctech%
    \delt@=#4\delt@\edef\v@lcoef{\repdecn@mb{\delt@}}\figpttra\the\s@mme:=#3/\v@lcoef,-1/%
    \advance\s@mme\@ne}\resetc@ntr@l\et@tfigptsinv}\ignorespaces\fi}
\ctr@ln@m\figptsrot
\ctr@ld@f\def\figptsrotDD#1=#2/#3,#4/{\ifGR@cri{\s@uvc@ntr@l\et@tfigptsrotDD%
    \c@ssin{\C@}{\S@}{#4}\setc@ntr@l{2}\def\list@num{#2}\s@mme=#1%
    \@ecfor\p@int:=\list@num\do{\figvectPDD-1[#3,\p@int]\Figg@tXY{-1}%
    \v@lXa=\C@\v@lX\advance\v@lXa-\S@\v@lY%
    \v@lYa=\S@\v@lX\advance\v@lYa\C@\v@lY%
    \Figv@ctCreg-1(\v@lXa,\v@lYa)\figpttraDD\the\s@mme:=#3/1,-1/\advance\s@mme\@ne}%
    \resetc@ntr@l\et@tfigptsrotDD}\ignorespaces\fi}
\ctr@ld@f\def\figptsrotTD#1=#2/#3,#4,#5/{\ifGR@cri{\s@uvc@ntr@l\et@tfigptsrotTD%
    \c@ssin{\C@}{\S@}{#4}%
    \setc@ntr@l{2}\def\list@num{#2}\s@mme=#1%
    \@ecfor\p@int:=\list@num\do{\figptorthoprojplaneTD-3:=#3/\p@int,#5/%
    \figvectPTD-2[-3,\p@int]%
    \figvectNVTD-1[#5,-2]\n@rmeucTD\v@leur{-2}\edef\v@lcoef{\repdecn@mb{\v@leur}}%
    \Figg@tXYa{-1}\v@lXa=\v@lcoef\v@lXa\v@lYa=\v@lcoef\v@lYa\v@lZa=\v@lcoef\v@lZa%
    \v@lXa=\S@\v@lXa\v@lYa=\S@\v@lYa\v@lZa=\S@\v@lZa\Figg@tXY{-2}%
    \advance\v@lXa\C@\v@lX\advance\v@lYa\C@\v@lY\advance\v@lZa\C@\v@lZ%
    \Figg@tXY{-3}\advance\v@lXa\v@lX\advance\v@lYa\v@lY\advance\v@lZa\v@lZ%
    \Figp@intregTD\the\s@mme:(\v@lXa,\v@lYa,\v@lZa)\advance\s@mme\@ne}%
    \resetc@ntr@l\et@tfigptsrotTD}\ignorespaces\fi}
\ctr@ln@m\figptssym
\ctr@ld@f\def\figptssymDD#1=#2/#3,#4/{\ifGR@cri{\s@uvc@ntr@l\et@tfigptssymDD%
    \setc@ntr@l{2}\figvectPDD-3[#3,#4]\Figg@tXY{-3}\Figv@ctCreg-4(-\v@lY,\v@lX)%
    \resetc@ntr@l{2}\def\list@num{#2}\s@mme=#1%
    \@ecfor\p@int:=\list@num\do{\inters@cDD-5:[#3,-3;\p@int,-4]\figvectPDD-2[\p@int,-5]%
    \figpttraDD\the\s@mme:=\p@int/2,-2/\advance\s@mme\@ne}%
    \resetc@ntr@l\et@tfigptssymDD}\ignorespaces\fi}
\ctr@ld@f\def\figptssymTD#1=#2/#3,#4/{\ifGR@cri{\s@uvc@ntr@l\et@tfigptssymTD%
    \setc@ntr@l{2}\vecunit@TD{-2}{#4}\def\list@num{#2}\s@mme=#1%
    \@ecfor\p@int:=\list@num\do{\figvectPTD-1[\p@int,#3]%
    \c@lproscalTD\v@leur[-1,-2]\v@leur=2\v@leur\edef\v@lcoef{\repdecn@mb{\v@leur}}%
    \figpttraTD\the\s@mme:=\p@int/\v@lcoef,-2/\advance\s@mme\@ne}%
    \resetc@ntr@l\et@tfigptssymTD}\ignorespaces\fi}
\ctr@ln@m\figptstra
\ctr@ld@f\def\figptstraDD#1=#2/#3,#4/{\ifGR@cri{\Figg@tXYa{#4}\v@lXa=#3\v@lXa\v@lYa=#3\v@lYa%
    \def\list@num{#2}\s@mme=#1\@ecfor\p@int:=\list@num\do{\Figg@tXY{\p@int}%
    \advance\v@lX\v@lXa\advance\v@lY\v@lYa%
    \Figp@intregDD\the\s@mme:(\v@lX,\v@lY)\advance\s@mme\@ne}}\ignorespaces\fi}
\ctr@ld@f\def\figptstraTD#1=#2/#3,#4/{\ifGR@cri{\Figg@tXYa{#4}\v@lXa=#3\v@lXa\v@lYa=#3\v@lYa%
    \v@lZa=#3\v@lZa\def\list@num{#2}\s@mme=#1\@ecfor\p@int:=\list@num\do{\Figg@tXY{\p@int}%
    \advance\v@lX\v@lXa\advance\v@lY\v@lYa\advance\v@lZ\v@lZa%
    \Figp@intregTD\the\s@mme:(\v@lX,\v@lY,\v@lZ)\advance\s@mme\@ne}}\ignorespaces\fi}
\ctr@ln@m\figptvisilimSL
\ctr@ld@f\def\figptvisilimSLDD{\un@v@ilable{figptvisilimSL}}
\ctr@ld@f\def\figptvisilimSLTD#1:#2[#3,#4;#5,#6]{\ifGR@cri{\s@uvc@ntr@l\et@tfigptvisilimSLTD%
    \setc@ntr@l{2}\figvectP-1[#3,#4]\n@rminf{\delt@}{-1}%
    \ifcase\CUR@proj\v@lX=\cxa@\p@\v@lY=-\p@\v@lZ=\cxb@\p@
    \Figv@ctCreg-2(\v@lX,\v@lY,\v@lZ)\figvectP-3[#5,#6]\figvectNV-1[-2,-3]%
    \or\figvectP-1[#5,#6]\vecunitCV@TD{-1}\v@lmin=\v@lX\v@lmax=\v@lY
    \v@leur=\v@lZ\v@lX=\cza@\p@\v@lY=\czb@\p@\v@lZ=\czc@\p@\c@lprovec{-1}%
    \or\c@ley@pt{-2}\figvectN-1[#5,#6,-2]\fi
    \edef\Ai@{#3}\edef\Aj@{#4}\figvectP-2[#5,\Ai@]\c@lproscal\v@leur[-1,-2]%
    \ifdim\v@leur>\z@\p@rtent=\@ne\else\p@rtent=\m@ne\fi%
    \figvectP-2[#5,\Aj@]\c@lproscal\v@leur[-1,-2]%
    \ifdim\p@rtent\v@leur>\z@\figptcopy#1:#2/#3/%
    \message{*** \BS@ figptvisilimSL: points are on the same side.}\else%
    \figptcopy-3:/#3/\figptcopy-4:/#4/%
    \loop\figptbary-5:[-3,-4;1,1]\figvectP-2[#5,-5]\c@lproscal\v@leur[-1,-2]%
    \ifdim\p@rtent\v@leur>\z@\figptcopy-3:/-5/\else\figptcopy-4:/-5/\fi%
    \divide\delt@\tw@\ifdim\delt@>\epsil@n\repeat%
    \figptbary#1:#2[-3,-4;1,1]\fi\resetc@ntr@l\et@tfigptvisilimSLTD}\ignorespaces\fi}
\ctr@ld@f\def\c@ley@pt#1{\t@stp@r\ifitis@K\v@lX=\cza@\p@\v@lY=\czb@\p@\v@lZ=\czc@\p@%
    \Figv@ctCreg-1(\v@lX,\v@lY,\v@lZ)\Figp@intreg-2:(\wd\Bt@rget,\ht\Bt@rget,\dp\Bt@rget)%
    \figpttra#1:=-2/-\disob@intern,-1/\else\end\fi}
\ctr@ld@f\def\t@stp@r{\itis@Ktrue\ifnewt@rgetpt\else\itis@Kfalse%
    \message{*** \BS@ figptvisilimXX: target point undefined.}\fi\ifnewdis@b\else%
    \itis@Kfalse\message{*** \BS@ figptvisilimXX: observation distance undefined.}\fi%
    \ifitis@K\else\message{*** This macro must be called after \BS@ figdrawbegin or after
    having set the missing parameter(s) with \BS@ figset proj()}\fi}
\ctr@ld@f\def\figscan#1(#2,#3){{\s@uvc@ntr@l\et@tfigscan\@psfgetbb{#1}\if@psfbbfound\else%
    \def\@psfllx{0}\def\@psflly{20}\def\@psfurx{540}\def\@psfury{640}\fi\figscan@{#2}{#3}%
    \resetc@ntr@l\et@tfigscan}\ignorespaces}
\ctr@ld@f\def\figscan@#1#2{%
    \unit@=\@ne bp\setc@ntr@l{2}\figsetmark{}%
    \def\minst@p{20pt}%
    \v@lX=\@psfllx\p@\v@lX=\Sc@leFact\v@lX\r@undint\v@lX\v@lX%
    \v@lY=\@psflly\p@\v@lY=\Sc@leFact\v@lY\ifdim\v@lY>\z@\r@undint\v@lY\v@lY\fi%
    \delt@=\@psfury\p@\delt@=\Sc@leFact\delt@%
    \advance\delt@-\v@lY\v@lXa=\@psfurx\p@\v@lXa=\Sc@leFact\v@lXa\v@leur=\minst@p%
    \edef\valv@lY{\repdecn@mb{\v@lY}}\edef\LgTr@it{\the\delt@}%
    \loop\ifdim\v@lX<\v@lXa\edef\valv@lX{\repdecn@mb{\v@lX}}%
    \figptDD -1:(\valv@lX,\valv@lY)\figwriten -1:\hbox{\vrule height\LgTr@it}(0)%
    \ifdim\v@leur<\minst@p\else\figsetmark{\raise-8bp\hbox{$\scriptscriptstyle\triangle$}}%
    \figwrites -1:\@ffichnb{0}{\valv@lX}(6)\v@leur=\z@\figsetmark{}\fi%
    \advance\v@leur#1pt\advance\v@lX#1pt\repeat%
    \def\minst@p{10pt}%
    \v@lX=\@psfllx\p@\v@lX=\Sc@leFact\v@lX\ifdim\v@lX>\z@\r@undint\v@lX\v@lX\fi%
    \v@lY=\@psflly\p@\v@lY=\Sc@leFact\v@lY\r@undint\v@lY\v@lY%
    \delt@=\@psfurx\p@\delt@=\Sc@leFact\delt@%
    \advance\delt@-\v@lX\v@lYa=\@psfury\p@\v@lYa=\Sc@leFact\v@lYa\v@leur=\minst@p%
    \edef\valv@lX{\repdecn@mb{\v@lX}}\edef\LgTr@it{\the\delt@}%
    \loop\ifdim\v@lY<\v@lYa\edef\valv@lY{\repdecn@mb{\v@lY}}%
    \figptDD -1:(\valv@lX,\valv@lY)\figwritee -1:\vbox{\hrule width\LgTr@it}(0)%
    \ifdim\v@leur<\minst@p\else\figsetmark{$\triangleright$\kern4bp}%
    \figwritew -1:\@ffichnb{0}{\valv@lY}(6)\v@leur=\z@\figsetmark{}\fi%
    \advance\v@leur#2pt\advance\v@lY#2pt\repeat}
\ctr@ld@f
\ctr@ld@f\def\figscan@E#1(#2,#3){{\s@uvc@ntr@l\et@tfigscan@E%
    \Figdisc@rdLTS{#1}{\t@xt@}\pdfximage{\t@xt@}%
    \setbox\Gb@x=\hbox{\pdfrefximage\pdflastximage}%
    \edef\@psfllx{0}\v@lY=-\dp\Gb@x\edef\@psflly{\repdecn@mb{\v@lY}}%
    \edef\@psfurx{\repdecn@mb{\wd\Gb@x}}%
    \v@lY=\dp\Gb@x\advance\v@lY\ht\Gb@x\edef\@psfury{\repdecn@mb{\v@lY}}%
    \figscan@{#2}{#3}\resetc@ntr@l\et@tfigscan@E}\ignorespaces}
\ctr@ld@f\def\figshowpts[#1,#2]{{\figsetmark{$\bullet$}\figsetptname{\bf ##1}%
    \p@rtent=#2\relax\ifnum\p@rtent<\z@\p@rtent=\z@\fi%
    \s@mme=#1\relax\ifnum\s@mme<\z@\s@mme=\z@\fi%
    \loop\ifnum\s@mme<\p@rtent\pt@rvect{\s@mme}%
    \ifitis@K\figwriten{\the\s@mme}:(4pt)\fi\advance\s@mme\@ne\repeat%
    \pt@rvect{\s@mme}\ifitis@K\figwriten{\the\s@mme}:(4pt)\fi}\ignorespaces}
\ctr@ld@f\def\pt@rvect#1{\set@bjc@de{#1}%
    \expandafter\expandafter\expandafter\inqpt@rvec\csname\objc@de\endcsname:}
\ctr@ld@f\def\inqpt@rvec#1#2:{\if#1\C@dCl@spt\itis@Ktrue\else\itis@Kfalse\fi}
\ctr@ld@f\def\figshowsettings{{%
    \immediate\write16{====================================================================}%
    \immediate\write16{ Current settings are (DDV means "with dynamic default value"):}%
    \immediate\write16{ --- GENERAL ---}%
    \immediate\write16{Scale factor and Unit = \unit@util\space (\the\unit@)
     \space -> \BS@ figinit{ScaleFactorUnit}}%
    \immediate\write16{Update mode = \ifGRupdatem@de yes\else no\fi
     \space-> \BS@ figset(update=yes/no) or \BS@ figsetdefault(update=yes/no)}%
    \immediate\write16{ --- WRITING ---}%
    \immediate\write16{Implicit point name = \ptn@me{i} \space-> \BS@ figset write(ptname={Name})}%
    \immediate\write16{Point marker = \the\c@nsymb \space -> \BS@ figset write(mark=Mark)}%
    \immediate\write16{Print rounded coordinates = \ifr@undcoord yes\else no\fi
     \space-> \BS@ figset write(roundcoord=yes/no)}%
    \immediate\write16{ --- GRAPHICAL (general) ---}%
    \immediate\write16{Color = \CUR@color \space-> \BS@ figset(color=ColorDefinition)}%
    \immediate\write16{Filling mode = \iffillm@de yes\else no\fi
     \space-> \BS@ figset(fillmode=yes/no)}%
    \immediate\write16{Line join = \CUR@join \space-> \BS@ figset(join=miter/round/bevel)}%
    \immediate\write16{Line style = \CUR@dash \space-> \BS@ figset(dash=Index/Pattern)}%
    \immediate\write16{Line width = \CUR@width
     \space-> \BS@ figset(width=real in PostScript units)}%
    \immediate\write16{ --- GRAPHICAL (specific) ---}%
    \immediate\write16{Altitude (all the following attributes are DDV):}%
    \immediate\write16{ Base line color =
     \ifx\DDV@blcolor\D@FTref general color\else\DDV@blcolor\fi
     \space-> \BS@ figset altitude(blcolor=ColorDefinition)}%
    \immediate\write16{ Base line style =
     \ifx\DDV@bldash\D@FTref general style\else\DDV@bldash\fi
     \space-> \BS@ figset altitude(bldash=Index/Pattern)}%
    \immediate\write16{ Base line width =
     \ifx\DDV@blwidth\D@FTref general width\else\DDV@blwidth\fi
     \space-> \BS@ figset altitude(blwidth=real in PostScript units)}%
    \immediate\write16{ Square line color =
     \ifx\DDV@sqcolor\D@FTref general color\else\DDV@sqcolor\fi
     \space-> \BS@ figset altitude(sqcolor=ColorDefinition)}%
    \immediate\write16{ Square line style =
     \ifx\DDV@sqdash\D@FTref general style\else\DDV@sqdash\fi
     \space-> \BS@ figset altitude(sqdash=Index/Pattern)}%
    \immediate\write16{ Square line width =
     \ifx\DDV@sqwidth\D@FTref general width\else\DDV@sqwidth\fi
     \space-> \BS@ figset altitude(sqwidth=real in PostScript units)}%
    \immediate\write16{Arrowhead:}%
    \immediate\write16{ (half-)Angle = \@rrowheadangle
     \space-> \BS@ figset arrowhead(angle=real in degrees)}%
    \immediate\write16{ Filling mode = \if@rrowhfill yes\else no\fi
     \space-> \BS@ figset arrowhead(fillmode=yes/no)}%
    \immediate\write16{ "Outside" = \if@rrowhout yes\else no\fi
     \space-> \BS@ figset arrowhead(out=yes/no)}%
    \immediate\write16{ Length = \@rrowheadlength
     \if@rrowratio\space(not active)\else\space(active)\fi
     \space-> \BS@ figset arrowhead(length=real in user coord.)}%
    \immediate\write16{ Ratio = \@rrowheadratio
     \if@rrowratio\space(active)\else\space(not active)\fi
     \space-> \BS@ figset arrowhead(ratio=real in [0,1])}%
    \immediate\write16{Curve:}%
    \immediate\write16{ Roundness = \curv@roundness
     \space-> \BS@ figset curve(roundness=real in [0,0.5])}%
    \immediate\write16{Flow chart:}%
    \immediate\write16{ Arrow position = \@rrowp@s
     \space-> \BS@ figset flowchart(arrowposition=real in [0,1])}%
    \immediate\write16{ Arrow reference point = \ifcase\@rrowr@fpt start\else end\fi
     \space-> \BS@ figset flowchart(arrowrefpt = start/end)}%
    \immediate\write16{ Background color = \fcbgc@lor
     \space-> \BS@ figset flowchart(bgcolor=ColorDefinition)}%
    \immediate\write16{ Line type = \ifcase\fclin@typ@ curve\else polygon\fi
     \space-> \BS@ figset flowchart(line=polygon/curve)}%
    \immediate\write16{ Padding = (\Xp@dd, \Yp@dd)
     \space-> \BS@ figset flowchart(padding = real in user coord.)}%
    \immediate\write16{\space\space\space\space(or
     \BS@ figset flowchart(xpadding=real, ypadding=real) )}%
    \immediate\write16{ Radius = \fclin@r@d
     \space-> \BS@ figset flowchart(radius=positive real in user coord.)}%
    \immediate\write16{ Shape = \fcsh@pe
     \space-> \BS@ figset flowchart(shape = rectangle, ellipse or lozenge)}%
    \immediate\write16{ Thickness color (DDV) = 
     \ifx\DDV@thickcolor\D@FTref general color\else\DDV@thickcolor\fi
     \space-> \BS@ figset flowchart(thickcolor=ColorDefinition)}%
    \immediate\write16{ Thickness = \thickn@ss
     \space-> \BS@ figset flowchart(thickness = real in user coord.)}%
    \immediate\write16{Mesh:}%
    \immediate\write16{ Diagonal = \c@ntrolmesh
     \space-> \BS@ figset mesh(diag=integer in {-1,0,1})}%
    \immediate\write16{ Lines color (DDV) =
     \ifx\DDV@meshcolor\D@FTref general color\else\DDV@meshcolor\fi
     \space-> \BS@ figset mesh(color=ColorDefinition)}%
    \immediate\write16{ Lines style (DDV) =
     \ifx\DDV@meshdash\D@FTref general style\else\DDV@meshdash\fi
     \space-> \BS@ figset mesh(dash=Index/Pattern)}%
    \immediate\write16{ Lines width (DDV) =
     \ifx\DDV@meshwidth\D@FTref general width\else\DDV@meshwidth\fi
     \space-> \BS@ figset mesh(width=real in PostScript units)}%
    \immediate\write16{Trimesh:}%
    \immediate\write16{ Lines color (DDV) =
     \ifx\DDV@tmeshcolor\D@FTref general color\else\DDV@tmeshcolor\fi
     \space-> \BS@ figset trimesh(color=ColorDefinition)}%
    \immediate\write16{ Lines style (DDV) =
     \ifx\DDV@tmeshdash\D@FTref general style\else\DDV@tmeshdash\fi
     \space-> \BS@ figset trimesh(dash=Index/Pattern)}%
    \immediate\write16{ Lines width (DDV) =
     \ifx\DDV@tmeshwidth\D@FTref general width\else\DDV@tmeshwidth\fi
     \space-> \BS@ figset trimesh(width=real in PostScript units)}%
    \ifTr@isDim%
    \immediate\write16{ --- 3D to 2D PROJECTION ---}%
    \immediate\write16{Projection : \typ@proj \space-> \BS@ figinit{ScaleFactorUnit, ProjType}}%
    \immediate\write16{Longitude (psi) = \v@lPsi \space-> \BS@ figset proj(psi=real in degrees)}%
    \ifcase\CUR@proj\immediate\write16{Depth coeff. (Lambda)
     \space = \v@lTheta \space-> \BS@ figset proj(lambda=real in [0,1])}%
    \else\immediate\write16{Latitude (theta)
     \space = \v@lTheta \space-> \BS@ figset proj(theta=real in degrees)}%
    \fi%
    \ifnum\CUR@proj=\tw@%
    \immediate\write16{Observation distance = \disob@unit
     \space-> \BS@ figset proj(dist=real in user coord.)}%
    \immediate\write16{Target point = \t@rgetpt \space-> \BS@ figset proj(targetpt=pt number)}%
     \v@lX=\ptT@unit@\wd\Bt@rget\v@lY=\ptT@unit@\ht\Bt@rget\v@lZ=\ptT@unit@\dp\Bt@rget%
    \immediate\write16{ Its coordinates are
     (\repdecn@mb{\v@lX}, \repdecn@mb{\v@lY}, \repdecn@mb{\v@lZ})}%
    \fi%
    \fi%
    \immediate\write16{====================================================================}%
    \ignorespaces}}
\ctr@ln@w{newif}\ifitis@vect@r
\ctr@ld@f\def\figvectC#1(#2,#3){{\itis@vect@rtrue\figpt#1:(#2,#3)}\ignorespaces}
\ctr@ld@f\def\Figv@ctCreg#1(#2,#3){{\itis@vect@rtrue\Figp@intreg#1:(#2,#3)}\ignorespaces}
\ctr@ln@m\figvectDBezier
\ctr@ld@f\def\figvectDBezierDD#1:#2,#3[#4,#5,#6,#7]{\ifGR@cri{\s@uvc@ntr@l\et@tfigvectDBezierDD%
    \FigvectDBezier@#2,#3[#4,#5,#6,#7]\v@lX=\c@ef\v@lX\v@lY=\c@ef\v@lY%
    \Figv@ctCreg#1(\v@lX,\v@lY)\resetc@ntr@l\et@tfigvectDBezierDD}\ignorespaces\fi}
\ctr@ld@f\def\figvectDBezierTD#1:#2,#3[#4,#5,#6,#7]{\ifGR@cri{\s@uvc@ntr@l\et@tfigvectDBezierTD%
    \FigvectDBezier@#2,#3[#4,#5,#6,#7]\v@lX=\c@ef\v@lX\v@lY=\c@ef\v@lY\v@lZ=\c@ef\v@lZ%
    \Figv@ctCreg#1(\v@lX,\v@lY,\v@lZ)\resetc@ntr@l\et@tfigvectDBezierTD}\ignorespaces\fi}
\ctr@ld@f\def\FigvectDBezier@#1,#2[#3,#4,#5,#6]{\setc@ntr@l{2}%
    \edef\T@{#2}\v@leur=\p@\advance\v@leur-#2pt\edef\UNmT@{\repdecn@mb{\v@leur}}%
    \ifnum#1=\tw@\def\c@ef{6}\else\def\c@ef{3}\fi%
    \figptcopy-4:/#3/\figptcopy-3:/#4/\figptcopy-2:/#5/\figptcopy-1:/#6/%
    \l@mbd@un=-4 \l@mbd@de=-\thr@@\p@rtent=\m@ne\c@lDecast%
    \ifnum#1=\tw@\c@lDCDeux{-4}{-3}\c@lDCDeux{-3}{-2}\c@lDCDeux{-4}{-3}\else%
    \l@mbd@un=-4 \l@mbd@de=-\thr@@\p@rtent=-\tw@\c@lDecast%
    \c@lDCDeux{-4}{-3}\fi\Figg@tXY{-4}}
\ctr@ln@m\c@lDCDeux
\ctr@ld@f\def\c@lDCDeuxDD#1#2{\Figg@tXY{#2}\Figg@tXYa{#1}%
    \advance\v@lX-\v@lXa\advance\v@lY-\v@lYa\Figp@intregDD#1:(\v@lX,\v@lY)}
\ctr@ld@f\def\c@lDCDeuxTD#1#2{\Figg@tXY{#2}\Figg@tXYa{#1}\advance\v@lX-\v@lXa%
    \advance\v@lY-\v@lYa\advance\v@lZ-\v@lZa\Figp@intregTD#1:(\v@lX,\v@lY,\v@lZ)}
\ctr@ln@m\figvectN
\ctr@ld@f\def\figvectNDD#1[#2,#3]{\ifGR@cri{\Figg@tXYa{#2}\Figg@tXY{#3}%
    \advance\v@lX-\v@lXa\advance\v@lY-\v@lYa%
    \Figv@ctCreg#1(-\v@lY,\v@lX)}\ignorespaces\fi}
\ctr@ld@f\def\figvectNTD#1[#2,#3,#4]{\ifGR@cri{\vecunitC@TD[#2,#4]\v@lmin=\v@lX\v@lmax=\v@lY%
    \v@leur=\v@lZ\vecunitC@TD[#2,#3]\c@lprovec{#1}}\ignorespaces\fi}
\ctr@ln@m\figvectNV
\ctr@ld@f\def\figvectNVDD#1[#2]{\ifGR@cri{\Figg@tXY{#2}\Figv@ctCreg#1(-\v@lY,\v@lX)}\ignorespaces\fi}
\ctr@ld@f\def\figvectNVTD#1[#2,#3]{\ifGR@cri{\vecunitCV@TD{#3}\v@lmin=\v@lX\v@lmax=\v@lY%
    \v@leur=\v@lZ\vecunitCV@TD{#2}\c@lprovec{#1}}\ignorespaces\fi}
\ctr@ln@m\figvectP
\ctr@ld@f\def\figvectPDD#1[#2,#3]{\ifGR@cri{\Figg@tXYa{#2}\Figg@tXY{#3}%
    \advance\v@lX-\v@lXa\advance\v@lY-\v@lYa%
    \Figv@ctCreg#1(\v@lX,\v@lY)}\ignorespaces\fi}
\ctr@ld@f\def\figvectPTD#1[#2,#3]{\ifGR@cri{\Figg@tXYa{#2}\Figg@tXY{#3}%
    \advance\v@lX-\v@lXa\advance\v@lY-\v@lYa\advance\v@lZ-\v@lZa%
    \Figv@ctCreg#1(\v@lX,\v@lY,\v@lZ)}\ignorespaces\fi}
\ctr@ln@m\figvectU
\ctr@ld@f\def\figvectUDD#1[#2]{\ifGR@cri{\n@rmeuc\v@leur{#2}\invers@\v@leur\v@leur%
    \delt@=\repdecn@mb{\v@leur}\unit@\edef\v@ldelt@{\repdecn@mb{\delt@}}%
    \Figg@tXY{#2}\v@lX=\v@ldelt@\v@lX\v@lY=\v@ldelt@\v@lY%
    \Figv@ctCreg#1(\v@lX,\v@lY)}\ignorespaces\fi}
\ctr@ld@f\def\figvectUTD#1[#2]{\ifGR@cri{\n@rmeuc\v@leur{#2}\invers@\v@leur\v@leur%
    \delt@=\repdecn@mb{\v@leur}\unit@\edef\v@ldelt@{\repdecn@mb{\delt@}}%
    \Figg@tXY{#2}\v@lX=\v@ldelt@\v@lX\v@lY=\v@ldelt@\v@lY\v@lZ=\v@ldelt@\v@lZ%
    \Figv@ctCreg#1(\v@lX,\v@lY,\v@lZ)}\ignorespaces\fi}
\ctr@ld@f\def\figvisu#1#2#3{\c@ldefproj\initb@undb@x\xdef\figforTeXFigno{\figforTeXnextFigno}%
    \s@mme=\figforTeXnextFigno\advance\s@mme\@ne\xdef\figforTeXnextFigno{\number\s@mme}%
    \setbox\b@xvisu=\hbox{\ifnum\@utoFN>\z@\figinsert{}\gdef\@utoFInDone{0}\fi\ignorespaces#3}%
    \gdef\@utoFInDone{1}\gdef\@utoFN{0}%
    \v@lXa=-\c@@rdYmin\v@lYa=\c@@rdYmax\advance\v@lYa-\c@@rdYmin%
    \v@lX=\c@@rdXmax\advance\v@lX-\c@@rdXmin%
    \setbox#1=\hbox{#2}\v@lY=-\v@lX\maxim@m{\v@lX}{\v@lX}{\wd#1}%
    \advance\v@lY\v@lX\divide\v@lY\tw@\advance\v@lY-\c@@rdXmin%
    \setbox#1=\vbox{\parindent\z@\hsize=\v@lX\vskip\v@lYa%
    \rlap{\hskip\v@lY\smash{\raise\v@lXa\box\b@xvisu}}%
    \def\t@xt@{#2}\ifx\t@xt@\empty\else\medskip\centerline{#2}\fi}\wd#1=\v@lX}
\ctr@ld@f\def\figDecrementFigno{{\xdef\figforTeXnextFigno{\figforTeXFigno}%
    \s@mme=\figforTeXFigno\advance\s@mme\m@ne\xdef\figforTeXFigno{\number\s@mme}}}
\ctr@ln@w{newbox}\Bt@rget\setbox\Bt@rget=\null
\ctr@ln@w{newbox}\BminTD@\setbox\BminTD@=\null
\ctr@ln@w{newbox}\BmaxTD@\setbox\BmaxTD@=\null
\ctr@ln@w{newif}\ifnewt@rgetpt\ctr@ln@w{newif}\ifnewdis@b
\ctr@ld@f\def\b@undb@xTD#1#2#3{%
    \relax\ifdim#1<\wd\BminTD@\global\wd\BminTD@=#1\fi%
    \relax\ifdim#2<\ht\BminTD@\global\ht\BminTD@=#2\fi%
    \relax\ifdim#3<\dp\BminTD@\global\dp\BminTD@=#3\fi%
    \relax\ifdim#1>\wd\BmaxTD@\global\wd\BmaxTD@=#1\fi%
    \relax\ifdim#2>\ht\BmaxTD@\global\ht\BmaxTD@=#2\fi%
    \relax\ifdim#3>\dp\BmaxTD@\global\dp\BmaxTD@=#3\fi}
\ctr@ld@f\def\c@ldefdisob{{\ifdim\wd\BminTD@<\maxdimen\v@leur=\wd\BmaxTD@\advance\v@leur-\wd\BminTD@%
    \delt@=\ht\BmaxTD@\advance\delt@-\ht\BminTD@\maxim@m{\v@leur}{\v@leur}{\delt@}%
    \delt@=\dp\BmaxTD@\advance\delt@-\dp\BminTD@\maxim@m{\v@leur}{\v@leur}{\delt@}%
    \v@leur=5\v@leur\else\v@leur=800pt\fi\c@ldefdisob@{\v@leur}}}
\ctr@ln@m\disob@intern
\ctr@ln@m\disob@
\ctr@ln@m\divf@ctproj
\ctr@ld@f\def\c@ldefdisob@#1{{\v@leur=#1\ifdim\v@leur<\p@\v@leur=800pt\fi%
    \xdef\disob@intern{\repdecn@mb{\v@leur}}%
    \delt@=\ptT@unit@\v@leur\xdef\disob@unit{\repdecn@mb{\delt@}}%
    \f@ctech=\@ne\loop\ifdim\v@leur>\t@n pt\divide\v@leur\t@n\multiply\f@ctech\t@n\repeat%
    \xdef\disob@{\repdecn@mb{\v@leur}}\xdef\divf@ctproj{\the\f@ctech}}%
    \global\newdis@btrue}
\ctr@ln@m\t@rgetpt
\ctr@ld@f\def\c@ldeft@rgetpt{\newt@rgetpttrue\def\t@rgetpt{CenterBoundBox}{%
    \delt@=\wd\BmaxTD@\advance\delt@-\wd\BminTD@\divide\delt@\tw@%
    \v@leur=\wd\BminTD@\advance\v@leur\delt@\global\wd\Bt@rget=\v@leur%
    \delt@=\ht\BmaxTD@\advance\delt@-\ht\BminTD@\divide\delt@\tw@%
    \v@leur=\ht\BminTD@\advance\v@leur\delt@\global\ht\Bt@rget=\v@leur%
    \delt@=\dp\BmaxTD@\advance\delt@-\dp\BminTD@\divide\delt@\tw@%
    \v@leur=\dp\BminTD@\advance\v@leur\delt@\global\dp\Bt@rget=\v@leur}}
\ctr@ln@m\c@ldefproj
\ctr@ld@f\def\c@ldefprojTD{\ifnewt@rgetpt\else\c@ldeft@rgetpt\fi\ifnewdis@b\else\c@ldefdisob\fi}
\ctr@ld@f\def\c@lprojcav{
    \v@lZa=\cxa@\v@lY\advance\v@lX\v@lZa%
    \v@lZa=\cxb@\v@lY\v@lY=\v@lZ\advance\v@lY\v@lZa\ignorespaces}
\ctr@ln@m\v@lcoef
\ctr@ld@f\def\c@lprojrea{
    \advance\v@lX-\wd\Bt@rget\advance\v@lY-\ht\Bt@rget\advance\v@lZ-\dp\Bt@rget%
    \v@lZa=\cza@\v@lX\advance\v@lZa\czb@\v@lY\advance\v@lZa\czc@\v@lZ%
    \divide\v@lZa\divf@ctproj\advance\v@lZa\disob@ pt\invers@{\v@lZa}{\v@lZa}%
    \v@lZa=\disob@\v@lZa\edef\v@lcoef{\repdecn@mb{\v@lZa}}%
    \v@lXa=\cxa@\v@lX\advance\v@lXa\cxb@\v@lY\v@lXa=\v@lcoef\v@lXa%
    \v@lY=\cyb@\v@lY\advance\v@lY\cya@\v@lX\advance\v@lY\cyc@\v@lZ%
    \v@lY=\v@lcoef\v@lY\v@lX=\v@lXa\ignorespaces}
\ctr@ld@f\def\c@lprojort{
    \v@lXa=\cxa@\v@lX\advance\v@lXa\cxb@\v@lY%
    \v@lY=\cyb@\v@lY\advance\v@lY\cya@\v@lX\advance\v@lY\cyc@\v@lZ%
    \v@lX=\v@lXa\ignorespaces}
\ctr@ld@f\def\Figptpr@j#1:#2/#3/{{\Figg@tXY{#3}\superc@lprojSP%
    \Figp@intregDD#1:{#2}(\v@lX,\v@lY)}\ignorespaces}
\ctr@ln@m\figsetobdist
\ctr@ld@f\def\figsetobdistDD{\un@v@ilable{figsetobdist}}
\ctr@ld@f\def\figsetobdistTD(#1){{\ifCUR@PS\W@rnmesIgn{figset proj(dist=...)}%
    \else\v@leur=#1\unit@\c@ldefdisob@{\v@leur}\fi}\ignorespaces}
\ctr@ln@m\c@lprojSP
\ctr@ln@m\CUR@proj
\ctr@ln@m\typ@proj
\ctr@ln@m\superc@lprojSP
\ctr@ld@f\def\Figs@tproj#1{%
    \if#13 \def@ultproj\else\if#1c\def@ultproj%
    \else\if#1o\xdef\CUR@proj{1}\xdef\typ@proj{orthogonal}%
         \figsetviewTD(\def@ultpsi,\def@ulttheta)%
         \global\let\c@lprojSP=\c@lprojort\global\let\superc@lprojSP=\c@lprojort%
    \else\if#1r\xdef\CUR@proj{2}\xdef\typ@proj{realistic}%
         \figsetviewTD(\def@ultpsi,\def@ulttheta)%
         \global\let\c@lprojSP=\c@lprojrea\global\let\superc@lprojSP=\c@lprojrea%
    \else\def@ultproj\message{*** Unknown projection. Cavalier projection assumed.}%
    \fi\fi\fi\fi}
\ctr@ld@f\def\def@ultproj{\xdef\CUR@proj{0}\xdef\typ@proj{cavalier}\figsetviewTD(\def@ultpsi,0.5)%
         \global\let\c@lprojSP=\c@lprojcav\global\let\superc@lprojSP=\c@lprojcav}
\ctr@ln@m\figsettarget
\ctr@ld@f\def\figsettargetDD{\un@v@ilable{figsettarget}}
\ctr@ld@f\def\figsettargetTD[#1]{{\ifCUR@PS\W@rnmesIgn{figset proj(targetpt=...)}%
    \else\global\newt@rgetpttrue\xdef\t@rgetpt{#1}\Figg@tXY{#1}\global\wd\Bt@rget=\v@lX%
    \global\ht\Bt@rget=\v@lY\global\dp\Bt@rget=\v@lZ\fi}\ignorespaces}
\ctr@ln@m\figsetview
\ctr@ld@f\def\figsetviewDD{\un@v@ilable{figsetview}}
\ctr@ld@f\def\figsetviewTD(#1){\ifCUR@PS\W@rnmesIgn{figset proj(Psi|Theta|Lambda=...)}%
     \else\Figsetview@#1,:\fi\ignorespaces}
\ctr@ld@f\def\Figsetview@#1,#2:{{\xdef\v@lPsi{#1}\def\t@xt@{#2}%
    \ifx\t@xt@\empty\def\@rgdeux{\v@lTheta}\else\X@rgdeux@#2\fi%
    \c@ssin{\costhet@}{\sinthet@}{#1}\v@lmin=\costhet@ pt\v@lmax=\sinthet@ pt%
    \ifcase\CUR@proj%
    \v@leur=\@rgdeux\v@lmin\xdef\cxa@{\repdecn@mb{\v@leur}}%
    \v@leur=\@rgdeux\v@lmax\xdef\cxb@{\repdecn@mb{\v@leur}}\v@leur=\@rgdeux pt%
    \relax\ifdim\v@leur>\p@\message{*** Lambda too large ! See \BS@ figset proj() !}\fi%
    \else%
    \v@lmax=-\v@lmax\xdef\cxa@{\repdecn@mb{\v@lmax}}\xdef\cxb@{\costhet@}%
    \ifx\t@xt@\empty\edef\@rgdeux{\def@ulttheta}\fi\c@ssin{\C@}{\S@}{\@rgdeux}%
    \v@lmax=-\S@ pt%
    \v@leur=\v@lmax\v@leur=\costhet@\v@leur\xdef\cya@{\repdecn@mb{\v@leur}}%
    \v@leur=\v@lmax\v@leur=\sinthet@\v@leur\xdef\cyb@{\repdecn@mb{\v@leur}}%
    \xdef\cyc@{\C@}\v@lmin=-\C@ pt%
    \v@leur=\v@lmin\v@leur=\costhet@\v@leur\xdef\cza@{\repdecn@mb{\v@leur}}%
    \v@leur=\v@lmin\v@leur=\sinthet@\v@leur\xdef\czb@{\repdecn@mb{\v@leur}}%
    \xdef\czc@{\repdecn@mb{\v@lmax}}\fi%
    \xdef\v@lTheta{\@rgdeux}}}
\ctr@ld@f\def\def@ultpsi{40}
\ctr@ld@f\def\def@ulttheta{25}
\ctr@ln@m\l@debut
\ctr@ln@m\n@mref
\ctr@ld@f\def\Figsetpr@j#1=#2|{\keln@mtr#1|%
    \def\n@mref{dep}\ifx\l@debut\n@mref\Figsetd@p{#2}\else
    \def\n@mref{dis}\ifx\l@debut\n@mref%
     \ifnum\CUR@proj=\tw@\figsetobdist(#2)\else\Figset@rr\fi\else
    \def\n@mref{lam}\ifx\l@debut\n@mref\Figsetd@p{#2}\else
    \def\n@mref{lat}\ifx\l@debut\n@mref\Figsetth@{#2}\else
    \def\n@mref{lon}\ifx\l@debut\n@mref\figsetview(#2)\else
    \def\n@mref{psi}\ifx\l@debut\n@mref\figsetview(#2)\else
    \def\n@mref{tar}\ifx\l@debut\n@mref%
     \ifnum\CUR@proj=\tw@\figsettarget[#2]\else\Figset@rr\fi\else
    \def\n@mref{the}\ifx\l@debut\n@mref\Figsetth@{#2}\else
    \W@rnmesAttr{figset proj}{#1}\fi\fi\fi\fi\fi\fi\fi\fi}
\ctr@ld@f\def\Figsetd@p#1{\ifnum\CUR@proj=\z@\figsetview(\v@lPsi,#1)\else\Figset@rr\fi}
\ctr@ld@f\def\Figsetth@#1{\ifnum\CUR@proj=\z@\Figset@rr\else\figsetview(\v@lPsi,#1)\fi}
\ctr@ld@f\def\Figset@rr{\message{*** \BS@ figset proj(): Attribute "\n@mref" ignored, incompatible
    with current projection}}
\ctr@ld@f\def\initb@undb@xTD{\wd\BminTD@=\maxdimen\ht\BminTD@=\maxdimen\dp\BminTD@=\maxdimen%
    \wd\BmaxTD@=-\maxdimen\ht\BmaxTD@=-\maxdimen\dp\BmaxTD@=-\maxdimen}
\ctr@ln@w{newbox}\Gb@x      
\ctr@ln@w{newbox}\Gb@xSC    
\ctr@ln@w{newtoks}\c@nsymb  
\ctr@ln@w{newif}\ifr@undcoord\ctr@ln@w{newif}\ifunitpr@sent
\ctr@ld@f\def\unssqrttw@{0.707106 }
\ctr@ld@f\def\figAst{\raise-1.15ex\hbox{$\ast$}}
\ctr@ld@f\def\figBullet{\raise-1.15ex\hbox{$\bullet$}}
\ctr@ld@f\def\figCirc{\raise-1.15ex\hbox{$\circ$}}
\ctr@ld@f\def\figDiamond{\raise-1.15ex\hbox{$\diamond$}}%
\ctr@ld@f\def\boxit#1#2{\leavevmode\hbox{\vrule\vbox{\hrule\vglue#1%
    \vtop{\hbox{\kern#1{#2}\kern#1}\vglue#1\hrule}}\vrule}}
\ctr@ld@f
\ctr@ld@f
\ctr@ld@f\def\c@nterpt{\ignorespaces%
    \kern-.5\wd\Gb@xSC%
    \raise-.5\ht\Gb@xSC\rlap{\hbox{\raise.5\dp\Gb@xSC\hbox{\copy\Gb@xSC}}}%
    \kern .5\wd\Gb@xSC\ignorespaces}
\ctr@ld@f\def\b@undb@xSC#1#2{{\v@lXa=#1\v@lYa=#2%
    \v@leur=\ht\Gb@xSC\advance\v@leur\dp\Gb@xSC%
    \advance\v@lXa-.5\wd\Gb@xSC\advance\v@lYa-.5\v@leur\b@undb@x{\v@lXa}{\v@lYa}%
    \advance\v@lXa\wd\Gb@xSC\advance\v@lYa\v@leur\b@undb@x{\v@lXa}{\v@lYa}}}
\ctr@ln@m\Dist@n
\ctr@ln@m\l@suite
\ctr@ld@f\def\@keldist#1#2{\edef\Dist@n{#2}\y@tiunit{\Dist@n}%
    \ifunitpr@sent#1=\Dist@n\else#1=\Dist@n\unit@\fi}
\ctr@ld@f\def\y@tiunit#1{\unitpr@sentfalse\expandafter\y@tiunit@#1:}
\ctr@ld@f\def\y@tiunit@#1#2:{\ifcat#1a\unitpr@senttrue\else\def\l@suite{#2}%
    \ifx\l@suite\empty\else\y@tiunit@#2:\fi\fi}
\ctr@ln@m\figcoord
\ctr@ld@f\def\figcoordDD#1{{\v@lX=\ptT@unit@\v@lX\v@lY=\ptT@unit@\v@lY%
    \ifr@undcoord\ifcase#1\v@leur=0.5pt\or\v@leur=0.05pt\or\v@leur=0.005pt%
    \or\v@leur=0.0005pt\else\v@leur=\z@\fi%
    \ifdim\v@lX<\z@\advance\v@lX-\v@leur\else\advance\v@lX\v@leur\fi%
    \ifdim\v@lY<\z@\advance\v@lY-\v@leur\else\advance\v@lY\v@leur\fi\fi%
    (\@ffichnb{#1}{\repdecn@mb{\v@lX}},\ifmmode\else\thinspace\fi%
    \@ffichnb{#1}{\repdecn@mb{\v@lY}})}}
\ctr@ld@f\def\@ffichnb#1#2{{\def\@@ffich{\@ffich#1(}\edef\n@mbre{#2}%
    \expandafter\@@ffich\n@mbre)}}
\ctr@ld@f\def\@ffich#1(#2.#3){{#2\ifnum#1>\z@.\fi\def\dig@ts{#3}\s@mme=\z@%
    \loop\ifnum\s@mme<#1\expandafter\@ffichdec\dig@ts:\advance\s@mme\@ne\repeat}}
\ctr@ld@f\def\@ffichdec#1#2:{\relax#1\def\dig@ts{#20}}
\ctr@ld@f\def\figcoordTD#1{{\v@lX=\ptT@unit@\v@lX\v@lY=\ptT@unit@\v@lY\v@lZ=\ptT@unit@\v@lZ%
    \ifr@undcoord\ifcase#1\v@leur=0.5pt\or\v@leur=0.05pt\or\v@leur=0.005pt%
    \or\v@leur=0.0005pt\else\v@leur=\z@\fi%
    \ifdim\v@lX<\z@\advance\v@lX-\v@leur\else\advance\v@lX\v@leur\fi%
    \ifdim\v@lY<\z@\advance\v@lY-\v@leur\else\advance\v@lY\v@leur\fi%
    \ifdim\v@lZ<\z@\advance\v@lZ-\v@leur\else\advance\v@lZ\v@leur\fi\fi%
    (\@ffichnb{#1}{\repdecn@mb{\v@lX}},\ifmmode\else\thinspace\fi%
     \@ffichnb{#1}{\repdecn@mb{\v@lY}},\ifmmode\else\thinspace\fi%
     \@ffichnb{#1}{\repdecn@mb{\v@lZ}})}}
\ctr@ld@f\def\figsetroundcoord#1{\expandafter\Figsetr@undcoord#1:\ignorespaces}
\ctr@ld@f\def\Figsetr@undcoord#1#2:{\if#1n\r@undcoordfalse\else\r@undcoordtrue\fi}
\ctr@ld@f\def\Figsetwr@te#1=#2|{\keln@mun#1|%
    \def\n@mref{m}\ifx\l@debut\n@mref\figsetmark{#2}\else
    \def\n@mref{p}\ifx\l@debut\n@mref\figsetptname{#2}\else
    \def\n@mref{r}\ifx\l@debut\n@mref\figsetroundcoord{#2}\else
    \W@rnmesAttr{figset write}{#1}\fi\fi\fi}
\ctr@ld@f\def\figsetmark#1{\c@nsymb={#1}\setbox\Gb@xSC=\hbox{\the\c@nsymb}\ignorespaces}
\ctr@ln@m\ptn@me
\ctr@ld@f\def\figsetptname#1{\def\ptn@me##1{#1}\ignorespaces}
\ctr@ld@f\def\FigWrit@L#1:#2(#3,#4){\ignorespaces\@keldist\v@leur{#3}\@keldist\delt@{#4}%
    \C@rp@r@m\def\list@num{#1}\@ecfor\p@int:=\list@num\do{\FigWrit@pt{\p@int}{#2}}}
\ctr@ld@f\def\FigWrit@pt#1#2{\FigWp@r@m{#1}{#2}\Vc@rrect\figWp@si%
    \ifdim\wd\Gb@xSC>\z@\b@undb@xSC{\v@lX}{\v@lY}\fi\figWBB@x}
\ctr@ld@f\def\FigWp@r@m#1#2{\Figg@tXY{#1}%
    \setbox\Gb@x=\hbox{\def\t@xt@{#2}\ifx\t@xt@\empty\Figg@tT{#1}\else#2\fi}\c@lprojSP}
\ctr@ld@f\let\Vc@rrect=\relax
\ctr@ld@f\let\C@rp@r@m=\relax
\ctr@ld@f\def\figwrite[#1]#2{{\ignorespaces\def\list@num{#1}\@ecfor\p@int:=\list@num\do{%
    \setbox\Gb@x=\hbox{\def\t@xt@{#2}\ifx\t@xt@\empty\Figg@tT{\p@int}\else#2\fi}%
    \Figwrit@{\p@int}}}\ignorespaces}
\ctr@ld@f\def\Figwrit@#1{\Figg@tXY{#1}\c@lprojSP%
    \rlap{\kern\v@lX\raise\v@lY\hbox{\unhcopy\Gb@x}}\v@leur=\v@lY%
    \advance\v@lY\ht\Gb@x\b@undb@x{\v@lX}{\v@lY}\advance\v@lX\wd\Gb@x%
    \v@lY=\v@leur\advance\v@lY-\dp\Gb@x\b@undb@x{\v@lX}{\v@lY}}
\ctr@ld@f\def\figwritec[#1]#2{{\ignorespaces\def\list@num{#1}%
    \@ecfor\p@int:=\list@num\do{\Figwrit@c{\p@int}{#2}}}\ignorespaces}
\ctr@ld@f\def\Figwrit@c#1#2{\FigWp@r@m{#1}{#2}%
    \rlap{\kern\v@lX\raise\v@lY\hbox{\rlap{\kern-.5\wd\Gb@x%
    \raise-.5\ht\Gb@x\hbox{\raise.5\dp\Gb@x\hbox{\unhcopy\Gb@x}}}}}%
    \v@leur=\ht\Gb@x\advance\v@leur\dp\Gb@x%
    \advance\v@lX-.5\wd\Gb@x\advance\v@lY-.5\v@leur\b@undb@x{\v@lX}{\v@lY}%
    \advance\v@lX\wd\Gb@x\advance\v@lY\v@leur\b@undb@x{\v@lX}{\v@lY}}
\ctr@ld@f\def\figwritep[#1]{{\ignorespaces\def\list@num{#1}\setbox\Gb@x=\hbox{\c@nterpt}%
    \@ecfor\p@int:=\list@num\do{\Figwrit@{\p@int}}}\ignorespaces}
\ctr@ld@f\def\figwritew#1:#2(#3){\figwritegcw#1:{#2}(#3,0pt)}
\ctr@ld@f\def\figwritee#1:#2(#3){\figwritegce#1:{#2}(#3,0pt)}
\ctr@ld@f\def\figwriten#1:#2(#3){{\def\Vc@rrect{\v@lZ=\v@leur\advance\v@lZ\dp\Gb@x}%
    \Figwrit@NS#1:{#2}(#3)}\ignorespaces}
\ctr@ld@f\def\figwrites#1:#2(#3){{\def\Vc@rrect{\v@lZ=-\v@leur\advance\v@lZ-\ht\Gb@x}%
    \Figwrit@NS#1:{#2}(#3)}\ignorespaces}
\ctr@ld@f\def\Figwrit@NS#1:#2(#3){\let\figWp@si=\FigWp@siNS\let\figWBB@x=\FigWBB@xNS%
    \FigWrit@L#1:{#2}(#3,0pt)}
\ctr@ld@f\def\FigWp@siNS{\rlap{\kern\v@lX\raise\v@lY\hbox{\rlap{\kern-.5\wd\Gb@x%
    \raise\v@lZ\hbox{\unhcopy\Gb@x}}\c@nterpt}}}
\ctr@ld@f\def\FigWBB@xNS{\advance\v@lY\v@lZ%
    \advance\v@lY-\dp\Gb@x\advance\v@lX-.5\wd\Gb@x\b@undb@x{\v@lX}{\v@lY}%
    \advance\v@lY\ht\Gb@x\advance\v@lY\dp\Gb@x%
    \advance\v@lX\wd\Gb@x\b@undb@x{\v@lX}{\v@lY}}
\ctr@ld@f\def\figwritenw#1:#2(#3){{\let\figWp@si=\FigWp@sigW\let\figWBB@x=\FigWBB@xgWE%
    \def\C@rp@r@m{\v@leur=\unssqrttw@\v@leur\delt@=\v@leur%
    \ifdim\delt@=\z@\delt@=\epsil@n\fi}\let@xte={-}\FigWrit@L#1:{#2}(#3,0pt)}\ignorespaces}
\ctr@ld@f\def\figwritesw#1:#2(#3){{\let\figWp@si=\FigWp@sigW\let\figWBB@x=\FigWBB@xgWE%
    \def\C@rp@r@m{\v@leur=\unssqrttw@\v@leur\delt@=-\v@leur%
    \ifdim\delt@=\z@\delt@=-\epsil@n\fi}\let@xte={-}\FigWrit@L#1:{#2}(#3,0pt)}\ignorespaces}
\ctr@ld@f\def\figwritene#1:#2(#3){{\let\figWp@si=\FigWp@sigE\let\figWBB@x=\FigWBB@xgWE%
    \def\C@rp@r@m{\v@leur=\unssqrttw@\v@leur\delt@=\v@leur%
    \ifdim\delt@=\z@\delt@=\epsil@n\fi}\let@xte={}\FigWrit@L#1:{#2}(#3,0pt)}\ignorespaces}
\ctr@ld@f\def\figwritese#1:#2(#3){{\let\figWp@si=\FigWp@sigE\let\figWBB@x=\FigWBB@xgWE%
    \def\C@rp@r@m{\v@leur=\unssqrttw@\v@leur\delt@=-\v@leur%
    \ifdim\delt@=\z@\delt@=-\epsil@n\fi}\let@xte={}\FigWrit@L#1:{#2}(#3,0pt)}\ignorespaces}
\ctr@ld@f\def\figwritegw#1:#2(#3,#4){{\let\figWp@si=\FigWp@sigW\let\figWBB@x=\FigWBB@xgWE%
    \let@xte={-}\FigWrit@L#1:{#2}(#3,#4)}\ignorespaces}
\ctr@ld@f\def\figwritege#1:#2(#3,#4){{\let\figWp@si=\FigWp@sigE\let\figWBB@x=\FigWBB@xgWE%
    \let@xte={}\FigWrit@L#1:{#2}(#3,#4)}\ignorespaces}
\ctr@ld@f\def\FigWp@sigW{\v@lXa=\z@\v@lYa=\ht\Gb@x\advance\v@lYa\dp\Gb@x%
    \ifdim\delt@>\z@\relax%
    \rlap{\kern\v@lX\raise\v@lY\hbox{\rlap{\kern-\wd\Gb@x\kern-\v@leur%
          \raise\delt@\hbox{\raise\dp\Gb@x\hbox{\unhcopy\Gb@x}}}\c@nterpt}}%
    \else\ifdim\delt@<\z@\relax\v@lYa=-\v@lYa%
    \rlap{\kern\v@lX\raise\v@lY\hbox{\rlap{\kern-\wd\Gb@x\kern-\v@leur%
          \raise\delt@\hbox{\raise-\ht\Gb@x\hbox{\unhcopy\Gb@x}}}\c@nterpt}}%
    \else\v@lXa=-.5\v@lYa%
    \rlap{\kern\v@lX\raise\v@lY\hbox{\rlap{\kern-\wd\Gb@x\kern-\v@leur%
          \raise-.5\ht\Gb@x\hbox{\raise.5\dp\Gb@x\hbox{\unhcopy\Gb@x}}}\c@nterpt}}%
    \fi\fi}
\ctr@ld@f\def\FigWp@sigE{\v@lXa=\z@\v@lYa=\ht\Gb@x\advance\v@lYa\dp\Gb@x%
    \ifdim\delt@>\z@\relax%
    \rlap{\kern\v@lX\raise\v@lY\hbox{\c@nterpt\kern\v@leur%
          \raise\delt@\hbox{\raise\dp\Gb@x\hbox{\unhcopy\Gb@x}}}}%
    \else\ifdim\delt@<\z@\relax\v@lYa=-\v@lYa%
    \rlap{\kern\v@lX\raise\v@lY\hbox{\c@nterpt\kern\v@leur%
          \raise\delt@\hbox{\raise-\ht\Gb@x\hbox{\unhcopy\Gb@x}}}}%
    \else\v@lXa=-.5\v@lYa%
    \rlap{\kern\v@lX\raise\v@lY\hbox{\c@nterpt\kern\v@leur%
          \raise-.5\ht\Gb@x\hbox{\raise.5\dp\Gb@x\hbox{\unhcopy\Gb@x}}}}%
    \fi\fi}
\ctr@ld@f\def\FigWBB@xgWE{\advance\v@lY\delt@%
    \advance\v@lX\the\let@xte\v@leur\advance\v@lY\v@lXa\b@undb@x{\v@lX}{\v@lY}%
    \advance\v@lX\the\let@xte\wd\Gb@x\advance\v@lY\v@lYa\b@undb@x{\v@lX}{\v@lY}}
\ctr@ld@f\def\figwritegcw#1:#2(#3,#4){{\let\figWp@si=\FigWp@sigcW\let\figWBB@x=\FigWBB@xgcWE%
    \let@xte={-}\FigWrit@L#1:{#2}(#3,#4)}\ignorespaces}
\ctr@ld@f\def\figwritegce#1:#2(#3,#4){{\let\figWp@si=\FigWp@sigcE\let\figWBB@x=\FigWBB@xgcWE%
    \let@xte={}\FigWrit@L#1:{#2}(#3,#4)}\ignorespaces}
\ctr@ld@f\def\FigWp@sigcW{\rlap{\kern\v@lX\raise\v@lY\hbox{\rlap{\kern-\wd\Gb@x\kern-\v@leur%
     \raise-.5\ht\Gb@x\hbox{\raise\delt@\hbox{\raise.5\dp\Gb@x\hbox{\unhcopy\Gb@x}}}}%
     \c@nterpt}}}
\ctr@ld@f\def\FigWp@sigcE{\rlap{\kern\v@lX\raise\v@lY\hbox{\c@nterpt\kern\v@leur%
    \raise-.5\ht\Gb@x\hbox{\raise\delt@\hbox{\raise.5\dp\Gb@x\hbox{\unhcopy\Gb@x}}}}}}
\ctr@ld@f\def\FigWBB@xgcWE{\v@lZ=\ht\Gb@x\advance\v@lZ\dp\Gb@x%
    \advance\v@lX\the\let@xte\v@leur\advance\v@lY\delt@\advance\v@lY.5\v@lZ%
    \b@undb@x{\v@lX}{\v@lY}%
    \advance\v@lX\the\let@xte\wd\Gb@x\advance\v@lY-\v@lZ\b@undb@x{\v@lX}{\v@lY}}
\ctr@ld@f\def\figwritebn#1:#2(#3){{\def\Vc@rrect{\v@lZ=\v@leur}\Figwrit@NS#1:{#2}(#3)}\ignorespaces}
\ctr@ld@f\def\figwritebs#1:#2(#3){{\def\Vc@rrect{\v@lZ=-\v@leur}\Figwrit@NS#1:{#2}(#3)}\ignorespaces}
\ctr@ld@f\def\figwritebw#1:#2(#3){{\let\figWp@si=\FigWp@sibW\let\figWBB@x=\FigWBB@xbWE%
    \let@xte={-}\FigWrit@L#1:{#2}(#3,0pt)}\ignorespaces}
\ctr@ld@f\def\figwritebe#1:#2(#3){{\let\figWp@si=\FigWp@sibE\let\figWBB@x=\FigWBB@xbWE%
    \let@xte={}\FigWrit@L#1:{#2}(#3,0pt)}\ignorespaces}
\ctr@ld@f\def\FigWp@sibW{\rlap{\kern\v@lX\raise\v@lY\hbox{\rlap{\kern-\wd\Gb@x\kern-\v@leur%
          \hbox{\unhcopy\Gb@x}}\c@nterpt}}}
\ctr@ld@f\def\FigWp@sibE{\rlap{\kern\v@lX\raise\v@lY\hbox{\c@nterpt\kern\v@leur%
          \hbox{\unhcopy\Gb@x}}}}
\ctr@ld@f\def\FigWBB@xbWE{\v@lZ=\ht\Gb@x\advance\v@lZ\dp\Gb@x%
    \advance\v@lX\the\let@xte\v@leur\advance\v@lY\ht\Gb@x\b@undb@x{\v@lX}{\v@lY}%
    \advance\v@lX\the\let@xte\wd\Gb@x\advance\v@lY-\v@lZ\b@undb@x{\v@lX}{\v@lY}}
\ctr@ln@w{newread}\frf@g  \ctr@ln@w{newwrite}\fwf@g
\ctr@ln@w{newif}\ifCUR@PS
\ctr@ln@w{newif}\ifGR@cri
\ctr@ln@w{newif}\ifUse@llipse
\ctr@ln@w{newif}\ifGRdebugm@de \GRdebugm@defalse 
\ctr@ln@w{newif}\ifPDFm@ke
\ifx\pdfliteral\undefined\else\ifnum\pdfoutput>\z@\PDFm@ketrue\fi\fi
\ctr@ld@f\def\initPDF@rDVI{%
\ifPDFm@ke
 \let\figscan=\figscan@E
 \let\newGr@FN=\newGr@FNPDF
 \ctr@ld@f\def\c@mcurveto{c}
 \ctr@ld@f\def\c@mfill{f}
 \ctr@ld@f\def\c@mgsave{q}
 \ctr@ld@f\def\c@mgrestore{Q}
 \ctr@ld@f\def\c@mlineto{l}
 \ctr@ld@f\def\c@mmoveto{m}
 \ctr@ld@f\def\c@msetgray{g}     \ctr@ld@f\def\c@msetgrayStroke{G}
 \ctr@ld@f\def\c@msetcmykcolor{k}\ctr@ld@f\def\c@msetcmykcolorStroke{K}
 \ctr@ld@f\def\c@msetrgbcolor{rg}\ctr@ld@f\def\c@msetrgbcolorStroke{RG}
 \ctr@ld@f\def\d@fprimarC@lor{\CUR@color\space\CUR@colorc@md%
               \space\CUR@color\space\CUR@colorc@mdStroke}
 \ctr@ld@f\def\c@msetdash{d}
 \ctr@ld@f\def\c@msetlinejoin{j}
 \ctr@ld@f\def\c@msetlinewidth{w}
 \ctr@ld@f\def\f@gclosestroke{\immediate\write\fwf@g{s}}
 \ctr@ld@f\def\f@gfill{\immediate\write\fwf@g{\fillc@md}}
 \ctr@ld@f\def\f@gnewpath{}
 \ctr@ld@f\def\f@gstroke{\immediate\write\fwf@g{S}}
\else
 \let\figinsertE=\figinsert
 \let\newGr@FN=\newGr@FNDVI
 \ctr@ld@f\def\c@mcurveto{curveto}
 \ctr@ld@f\def\c@mfill{fill}
 \ctr@ld@f\def\c@mgsave{gsave}
 \ctr@ld@f\def\c@mgrestore{grestore}
 \ctr@ld@f\def\c@mlineto{lineto}
 \ctr@ld@f\def\c@mmoveto{moveto}
 \ctr@ld@f\def\c@msetgray{setgray}          \ctr@ld@f\def\c@msetgrayStroke{}
 \ctr@ld@f\def\c@msetcmykcolor{setcmykcolor}\ctr@ld@f\def\c@msetcmykcolorStroke{}
 \ctr@ld@f\def\c@msetrgbcolor{setrgbcolor}  \ctr@ld@f\def\c@msetrgbcolorStroke{}
 \ctr@ld@f\def\d@fprimarC@lor{\CUR@color\space\CUR@colorc@md}
 \ctr@ld@f\def\c@msetdash{setdash}
 \ctr@ld@f\def\c@msetlinejoin{setlinejoin}
 \ctr@ld@f\def\c@msetlinewidth{setlinewidth}
 \ctr@ld@f\def\f@gclosestroke{\immediate\write\fwf@g{closepath\space stroke}}
 \ctr@ld@f\def\f@gfill{\immediate\write\fwf@g{\fillc@md}}
 \ctr@ld@f\def\f@gnewpath{\immediate\write\fwf@g{newpath}}
 \ctr@ld@f\def\f@gstroke{\immediate\write\fwf@g{stroke}}
\fi}
\ctr@ld@f\def\c@pypsfile#1#2{\c@pyfil@{\immediate\write#1}{#2}}
\ctr@ld@f\def\Figinclud@PDF#1#2{\openin\frf@g=#1\pdfliteral{q #2 0 0 #2 0 0 cm}%
    \c@pyfil@{\pdfliteral}{\frf@g}\pdfliteral{Q}\closein\frf@g}
\ctr@ln@w{newif}\ifmored@ta
\ctr@ln@m\bl@nkline
\ctr@ld@f\def\c@pyfil@#1#2{\def\bl@nkline{\par}{\catcode`\%=12
    \loop\ifeof#2\mored@tafalse\else\mored@tatrue\immediate\read#2 to\tr@c
    \ifx\tr@c\bl@nkline\else#1{\tr@c}\fi\fi\ifmored@ta\repeat}}
\ctr@ld@f\def\keln@mun#1#2|{\def\l@debut{#1}\def\l@suite{#2}}
\ctr@ld@f\def\keln@mde#1#2#3|{\def\l@debut{#1#2}\def\l@suite{#3}}
\ctr@ld@f\def\keln@mtr#1#2#3#4|{\def\l@debut{#1#2#3}\def\l@suite{#4}}
\ctr@ld@f\def\keln@mqu#1#2#3#4#5|{\def\l@debut{#1#2#3#4}\def\l@suite{#5}}
\ctr@ld@f\let\@psffilein=\frf@g 
\ctr@ln@w{newif}\if@psffileok    
\ctr@ln@w{newif}\if@psfbbfound   
\ctr@ln@w{newif}\if@psfverbose   
\@psfverbosetrue
\ctr@ln@m\@psfllx \ctr@ln@m\@psflly
\ctr@ln@m\@psfurx \ctr@ln@m\@psfury
\ctr@ln@m\resetcolonc@tcode
\ctr@ld@f\def\@psfgetbb#1{\global\@psfbbfoundfalse%
\global\def\@psfllx{0}\global\def\@psflly{0}%
\global\def\@psfurx{30}\global\def\@psfury{30}%
\openin\@psffilein=#1\relax
\ifeof\@psffilein\errmessage{I couldn't open #1, will ignore it}\else
   \edef\resetcolonc@tcode{\catcode`\noexpand\:\the\catcode`\:\relax}%
   {\@psffileoktrue \chardef\other=12
    \def\do##1{\catcode`##1=\other}\dospecials \catcode`\ =10 \resetcolonc@tcode
    \loop
       \read\@psffilein to \@psffileline
       \ifeof\@psffilein\@psffileokfalse\else
          \expandafter\@psfaux\@psffileline:. \\%
       \fi
   \if@psffileok\repeat
   \if@psfbbfound\else
    \if@psfverbose\message{No bounding box comment in #1; using defaults}\fi\fi
   }\closein\@psffilein\fi}%
\ctr@ln@m\@psfbblit
\ctr@ln@m\@psfpercent
{\catcode`\%=12 \global\let\@psfpercent=
\ctr@ln@m\@psfaux
\long\def\@psfaux#1#2:#3\\{\ifx#1\@psfpercent
   \def\testit{#2}\ifx\testit\@psfbblit
      \@psfgrab #3 . . . \\%
      \@psffileokfalse
      \global\@psfbbfoundtrue
   \fi\else\ifx#1\par\else\@psffileokfalse\fi\fi}%
\ctr@ld@f\def\@psfempty{}%
\ctr@ld@f\def\@psfgrab #1 #2 #3 #4 #5\\{%
\global\def\@psfllx{#1}\ifx\@psfllx\@psfempty
      \@psfgrab #2 #3 #4 #5 .\\\else
   \global\def\@psflly{#2}%
   \global\def\@psfurx{#3}\global\def\@psfury{#4}\fi}%
\ctr@ld@f\def\PSwrit@cmd#1#2#3{{\Figg@tXY{#1}\c@lprojSP\b@undb@x{\v@lX}{\v@lY}%
    \v@lX=\ptT@ptps\v@lX\v@lY=\ptT@ptps\v@lY%
    \immediate\write#3{\repdecn@mb{\v@lX}\space\repdecn@mb{\v@lY}\space#2}}}
\ctr@ld@f\def\PSwrit@cmdS#1#2#3#4#5{{\Figg@tXY{#1}\c@lprojSP\b@undb@x{\v@lX}{\v@lY}%
    \global\result@t=\v@lX\global\result@@t=\v@lY%
    \v@lX=\ptT@ptps\v@lX\v@lY=\ptT@ptps\v@lY%
    \immediate\write#3{\repdecn@mb{\v@lX}\space\repdecn@mb{\v@lY}\space#2}}%
    \edef#4{\the\result@t}\edef#5{\the\result@@t}}
\ctr@ld@f\def\update@ttr#1#2#3{\Figdisc@rdLTS{#3}{\n@mref}%
    \ifx\n@mref\D@FTref#2{#1}\else#2{#3}\fi}
\ctr@ld@f\def\D@FTref{default}
\ctr@ld@f\def\W@rnmesAttr#1#2{%
    \immediate\write16{*** Unknown attribute: \BS@ #1(..., #2=...)}}
\ctr@ld@f\def\W@rnmeskwd#1#2{%
    \immediate\write16{*** Unknown keyword #2 in \BS@ #1}}
\ctr@ld@f\def\W@rnmesIgn#1{\immediate\write16{*** \BS@ #1 is ignored inside a
     \BS@ figdrawbegin-\BS@ figdrawend block.}}
\ctr@ld@f\def\Psset@lti#1=#2|{\keln@mtr#1|%
    \def\n@mref{blc}\ifx\l@debut\n@mref\update@ttr\D@FTref\P@setblcolor{#2}\else
    \def\n@mref{bld}\ifx\l@debut\n@mref\update@ttr\D@FTref\P@setbldash{#2}\else
    \def\n@mref{blw}\ifx\l@debut\n@mref\update@ttr\D@FTref\P@setblwidth{#2}\else
    \def\n@mref{sqc}\ifx\l@debut\n@mref\update@ttr\D@FTref\P@setsqcolor{#2}\else
    \def\n@mref{sqd}\ifx\l@debut\n@mref\update@ttr\D@FTref\P@setsqdash{#2}\else
    \def\n@mref{sqw}\ifx\l@debut\n@mref\update@ttr\D@FTref\P@setsqwidth{#2}\else
    \W@rnmesAttr{figset altitude}{#1}\fi\fi\fi\fi\fi\fi}
\ctr@ln@m\DDV@blcolor
\ctr@ld@f\def\P@setblcolor#1{\edef\DDV@blcolor{#1}}
\ctr@ln@m\DDV@bldash
\ctr@ld@f\def\P@setbldash#1{\edef\DDV@bldash{#1}}
\ctr@ln@m\DDV@blwidth
\ctr@ld@f\def\P@setblwidth#1{\edef\DDV@blwidth{#1}}
\ctr@ln@m\DDV@sqcolor
\ctr@ld@f\def\P@setsqcolor#1{\edef\DDV@sqcolor{#1}}
\ctr@ln@m\DDV@sqdash
\ctr@ld@f\def\P@setsqdash#1{\edef\DDV@sqdash{#1}}
\ctr@ln@m\DDV@sqwidth
\ctr@ld@f\def\P@setsqwidth#1{\edef\DDV@sqwidth{#1}}
\ctr@ld@f\def\figdrawaltitude#1[#2,#3,#4]{{\ifCUR@PS\ifGR@cri%
    \PSc@mment{altitude Square Dim=#1, Triangle=[#2 / #3,#4]}%
    \s@uvc@ntr@l\et@tpsaltitude\resetc@ntr@l{2}\figptorthoprojline-5:=#2/#3,#4/%
    \figvectP -1[#3,#4]\n@rminf{\v@leur}{-1}\vecunit@{-3}{-1}%
    \figvectP -1[-5,#3]\n@rminf{\v@lmin}{-1}\figvectP -2[-5,#4]\n@rminf{\v@lmax}{-2}%
    \ifdim\v@lmin<\v@lmax\s@mme=#3\else\v@lmax=\v@lmin\s@mme=#4\fi%
    \figvectP -4[-5,#2]\vecunit@{-4}{-4}\delt@=#1\unit@%
    \edef\t@ille{\repdecn@mb{\delt@}}\figpttra-1:=-5/\t@ille,-3/%
    \figptstra-3=-5,-1/\t@ille,-4/\figdrawline[#2,-5]%
    \Pss@tspecifSt{color=\DDV@sqcolor,dash=\DDV@sqdash,width=\DDV@sqwidth}%
    \figdrawline[-1,-2,-3]%
    \Psrest@reSt{color=\DDV@sqcolor,dash=\DDV@sqdash,width=\DDV@sqwidth}%
    \ifdim\v@leur<\v@lmax%
    \Pss@tspecifSt{color=\DDV@blcolor,dash=\DDV@bldash,width=\DDV@blwidth}%
    \figdrawline[-5,\the\s@mme]%
    \Psrest@reSt{color=\DDV@blcolor,dash=\DDV@bldash,width=\DDV@blwidth}%
    \fi\PSc@mment{End altitude}\resetc@ntr@l\et@tpsaltitude\fi\fi}}
\ctr@ld@f\def\Ps@rcerc#1;#2(#3,#4){\ellBB@x#1;#2,#2(#3,#4,0)%
    \f@gnewpath{\delt@=#2\unit@\delt@=\ptT@ptps\delt@%
    \BdingB@xfalse%
    \PSwrit@cmd{#1}{\repdecn@mb{\delt@}\space #3\space #4\space arc}{\fwf@g}}}
\ctr@ln@m\figdrawarccirc
\ctr@ld@f\def\Q@arccircDD#1;#2(#3,#4){\ifCUR@PS\ifGR@cri%
    \PSc@mment{arccircDD Center=#1 ; Radius=#2 (Ang1=#3, Ang2=#4)}%
    \iffillm@de\Ps@rcerc#1;#2(#3,#4)%
    \f@gfill%
    \else\Ps@rcerc#1;#2(#3,#4)\f@gstroke\fi%
    \PSc@mment{End arccircDD}\fi\fi}
\ctr@ld@f\def\Q@arccircTD#1,#2,#3;#4(#5,#6){{\ifCUR@PS\ifGR@cri\s@uvc@ntr@l\et@tpsarccircTD%
    \PSc@mment{arccircTD Center=#1,P1=#2,P2=#3 ; Radius=#4 (Ang1=#5, Ang2=#6)}%
    \setc@ntr@l{2}\c@lExtAxes#1,#2,#3(#4)\Q@arcellPATD#1,-4,-5(#5,#6)%
    \PSc@mment{End arccircTD}\resetc@ntr@l\et@tpsarccircTD\fi\fi}}
\ctr@ld@f\def\c@lExtAxes#1,#2,#3(#4){%
    \figvectPTD-5[#1,#2]\vecunit@{-5}{-5}\figvectNTD-4[#1,#2,#3]\vecunit@{-4}{-4}%
    \figvectNVTD-3[-4,-5]\delt@=#4\unit@\edef\r@yon{\repdecn@mb{\delt@}}%
    \figpttra-4:=#1/\r@yon,-5/\figpttra-5:=#1/\r@yon,-3/}
\ctr@ln@m\figdrawarccircP
\ctr@ld@f\def\Q@arccircPDD#1;#2[#3,#4]{{\ifCUR@PS\ifGR@cri\s@uvc@ntr@l\et@tpsarccircPDD%
    \PSc@mment{arccircPDD Center=#1; Radius=#2, [P1=#3, P2=#4]}%
    \Ps@ngleparam#1;#2[#3,#4]\ifdim\v@lmin>\v@lmax\advance\v@lmax\DePI@deg\fi%
    \edef\@ngdeb{\repdecn@mb{\v@lmin}}\edef\@ngfin{\repdecn@mb{\v@lmax}}%
    \figdrawarccirc#1;\r@dius(\@ngdeb,\@ngfin)%
    \PSc@mment{End arccircPDD}\resetc@ntr@l\et@tpsarccircPDD\fi\fi}}
\ctr@ld@f\def\Q@arccircPTD#1;#2[#3,#4,#5]{{\ifCUR@PS\ifGR@cri\s@uvc@ntr@l\et@tpsarccircPTD%
    \PSc@mment{arccircPTD Center=#1; Radius=#2, [P1=#3, P2=#4, P3=#5]}%
    \setc@ntr@l{2}\c@lExtAxes#1,#3,#5(#2)\figdrawarcellPP#1,-4,-5[#3,#4]%
    \PSc@mment{End arccircPTD}\resetc@ntr@l\et@tpsarccircPTD\fi\fi}}
\ctr@ld@f\def\Ps@ngleparam#1;#2[#3,#4]{\setc@ntr@l{2}%
    \figvectPDD-1[#1,#3]\vecunit@{-1}{-1}\Figg@tXY{-1}\arct@n\v@lmin(\v@lX,\v@lY)%
    \figvectPDD-2[#1,#4]\vecunit@{-2}{-2}\Figg@tXY{-2}\arct@n\v@lmax(\v@lX,\v@lY)%
    \v@lmin=\rdT@deg\v@lmin\v@lmax=\rdT@deg\v@lmax%
    \v@leur=#2pt\maxim@m{\mili@u}{-\v@leur}{\v@leur}%
    \edef\r@dius{\repdecn@mb{\mili@u}}}
\ctr@ld@f\def\Ps@rcercBz#1;#2(#3,#4){\Ps@rellBz#1;#2,#2(#3,#4,0)}
\ctr@ld@f\def\Ps@rellBz#1;#2,#3(#4,#5,#6){%
    \ellBB@x#1;#2,#3(#4,#5,#6)\BdingB@xfalse%
    \c@lNbarcs{#4}{#5}\v@leur=#4pt\setc@ntr@l{2}\figptell-13::#1;#2,#3(#4,#6)%
    \f@gnewpath\PSwrit@cmd{-13}{\c@mmoveto}{\fwf@g}%
    \s@mme=\z@\bcl@rellBz#1;#2,#3(#6)\BdingB@xtrue}
\ctr@ld@f\def\bcl@rellBz#1;#2,#3(#4){\relax%
    \ifnum\s@mme<\p@rtent\advance\s@mme\@ne%
    \advance\v@leur\delt@\edef\@ngle{\repdecn@mb\v@leur}\figptell-14::#1;#2,#3(\@ngle,#4)%
    \advance\v@leur\delt@\edef\@ngle{\repdecn@mb\v@leur}\figptell-15::#1;#2,#3(\@ngle,#4)%
    \advance\v@leur\delt@\edef\@ngle{\repdecn@mb\v@leur}\figptell-16::#1;#2,#3(\@ngle,#4)%
    \figptscontrolDD-18[-13,-14,-15,-16]%
    \PSwrit@cmd{-18}{}{\fwf@g}\PSwrit@cmd{-17}{}{\fwf@g}%
    \PSwrit@cmd{-16}{\c@mcurveto}{\fwf@g}%
    \figptcopyDD-13:/-16/\bcl@rellBz#1;#2,#3(#4)\fi}
\ctr@ld@f\def\Ps@rell#1;#2,#3(#4,#5,#6){\ellBB@x#1;#2,#3(#4,#5,#6)%
    \f@gnewpath{\v@lmin=#2\unit@\v@lmin=\ptT@ptps\v@lmin%
    \v@lmax=#3\unit@\v@lmax=\ptT@ptps\v@lmax\BdingB@xfalse%
    \PSwrit@cmd{#1}%
    {#6\space\repdecn@mb{\v@lmin}\space\repdecn@mb{\v@lmax}\space #4\space #5\space ellipse}{\fwf@g}}%
    \global\Use@llipsetrue}
\ctr@ln@m\figdrawarcell
\ctr@ld@f\def\Q@arcellDD#1;#2,#3(#4,#5,#6){{\ifCUR@PS\ifGR@cri%
    \PSc@mment{arcellDD Center=#1 ; XRad=#2, YRad=#3 (Ang1=#4, Ang2=#5, Inclination=#6)}%
    \iffillm@de\Ps@rell#1;#2,#3(#4,#5,#6)%
    \f@gfill%
    \else\Ps@rell#1;#2,#3(#4,#5,#6)\f@gstroke\fi%
    \PSc@mment{End arcellDD}\fi\fi}}
\ctr@ld@f\def\Q@arcellTD#1;#2,#3(#4,#5,#6){{\ifCUR@PS\ifGR@cri\s@uvc@ntr@l\et@tpsarcellTD%
    \PSc@mment{arcellTD Center=#1 ; XRad=#2, YRad=#3 (Ang1=#4, Ang2=#5, Inclination=#6)}%
    \setc@ntr@l{2}\figpttraC -8:=#1/#2,0,0/\figpttraC -7:=#1/0,#3,0/%
    \figvectC -4(0,0,1)\figptsrot -8=-8,-7/#1,#6,-4/\Q@arcellPATD#1,-8,-7(#4,#5)%
    \PSc@mment{End arcellTD}\resetc@ntr@l\et@tpsarcellTD\fi\fi}}
\ctr@ln@m\figdrawarcellPA
\ctr@ld@f\def\Q@arcellPADD#1,#2,#3(#4,#5){{\ifCUR@PS\ifGR@cri\s@uvc@ntr@l\et@tpsarcellPADD%
    \PSc@mment{arcellPADD Center=#1,PtAxis1=#2,PtAxis2=#3 (Ang1=#4, Ang2=#5)}%
    \setc@ntr@l{2}\figvectPDD-1[#1,#2]\vecunit@DD{-1}{-1}\v@lX=\ptT@unit@\result@t%
    \edef\XR@d{\repdecn@mb{\v@lX}}\Figg@tXY{-1}\arct@n\v@lmin(\v@lX,\v@lY)%
    \v@lmin=\rdT@deg\v@lmin\edef\Inclin@{\repdecn@mb{\v@lmin}}%
    \figgetdist\YR@d[#1,#3]\Q@arcellDD#1;\XR@d,\YR@d(#4,#5,\Inclin@)%
    \PSc@mment{End arcellPADD}\resetc@ntr@l\et@tpsarcellPADD\fi\fi}}
\ctr@ld@f\def\Q@arcellPATD#1,#2,#3(#4,#5){{\ifCUR@PS\ifGR@cri\s@uvc@ntr@l\et@tpsarcellPATD%
    \PSc@mment{arcellPATD Center=#1,PtAxis1=#2,PtAxis2=#3 (Ang1=#4, Ang2=#5)}%
    \iffillm@de\Ps@rellPATD#1,#2,#3(#4,#5)%
    \f@gfill%
    \else\Ps@rellPATD#1,#2,#3(#4,#5)\f@gstroke\fi%
    \PSc@mment{End arcellPATD}\resetc@ntr@l\et@tpsarcellPATD\fi\fi}}
\ctr@ld@f\def\Ps@rellPATD#1,#2,#3(#4,#5){\let\c@lprojSP=\relax%
    \setc@ntr@l{2}\figvectPTD-1[#1,#2]\figvectPTD-2[#1,#3]\c@lNbarcs{#4}{#5}%
    \v@leur=#4pt\c@lptellP{#1}{-1}{-2}\Figptpr@j-5:/-3/%
    \f@gnewpath\PSwrit@cmdS{-5}{\c@mmoveto}{\fwf@g}{\X@un}{\Y@un}%
    \edef\C@nt@r{#1}\s@mme=\z@\bcl@rellPATD}
\ctr@ld@f\def\bcl@rellPATD{\relax%
    \ifnum\s@mme<\p@rtent\advance\s@mme\@ne%
    \advance\v@leur\delt@\c@lptellP{\C@nt@r}{-1}{-2}\Figptpr@j-4:/-3/%
    \advance\v@leur\delt@\c@lptellP{\C@nt@r}{-1}{-2}\Figptpr@j-6:/-3/%
    \advance\v@leur\delt@\c@lptellP{\C@nt@r}{-1}{-2}\Figptpr@j-3:/-3/%
    \v@lX=\z@\v@lY=\z@\Figtr@nptDD{-5}{-5}\Figtr@nptDD{2}{-3}%
    \divide\v@lX\@vi\divide\v@lY\@vi%
    \Figtr@nptDD{3}{-4}\Figtr@nptDD{-1.5}{-6}\v@lmin=\v@lX\v@lmax=\v@lY%
    \v@lX=\z@\v@lY=\z@\Figtr@nptDD{2}{-5}\Figtr@nptDD{-5}{-3}%
    \divide\v@lX\@vi\divide\v@lY\@vi\Figtr@nptDD{-1.5}{-4}\Figtr@nptDD{3}{-6}%
    \BdingB@xfalse%
    \Figp@intregDD-4:(\v@lmin,\v@lmax)\PSwrit@cmdS{-4}{}{\fwf@g}{\X@de}{\Y@de}%
    \Figp@intregDD-4:(\v@lX,\v@lY)\PSwrit@cmdS{-4}{}{\fwf@g}{\X@tr}{\Y@tr}%
    \BdingB@xtrue\PSwrit@cmdS{-3}{\c@mcurveto}{\fwf@g}{\X@qu}{\Y@qu}%
    \B@zierBB@x{1}{\Y@un}(\X@un,\X@de,\X@tr,\X@qu)%
    \B@zierBB@x{2}{\X@un}(\Y@un,\Y@de,\Y@tr,\Y@qu)%
    \edef\X@un{\X@qu}\edef\Y@un{\Y@qu}\figptcopyDD-5:/-3/\bcl@rellPATD\fi}
\ctr@ld@f\def\c@lNbarcs#1#2{%
    \delt@=#2pt\advance\delt@-#1pt\maxim@m{\v@lmax}{\delt@}{-\delt@}%
    \v@leur=\v@lmax\divide\v@leur45 \p@rtentiere{\p@rtent}{\v@leur}\advance\p@rtent\@ne%
    \s@mme=\p@rtent\multiply\s@mme\thr@@\divide\delt@\s@mme}
\ctr@ld@f\def\figdrawarcellPP#1,#2,#3[#4,#5]{{\ifCUR@PS\ifGR@cri\s@uvc@ntr@l\et@tpsarcellPP%
    \PSc@mment{arcellPP Center=#1,PtAxis1=#2,PtAxis2=#3 [Point1=#4, Point2=#5]}%
    \setc@ntr@l{2}\figvectP-2[#1,#3]\vecunit@{-2}{-2}\v@lmin=\result@t%
    \invers@{\v@lmax}{\v@lmin}%
    \figvectP-1[#1,#2]\vecunit@{-1}{-1}\v@leur=\result@t%
    \v@leur=\repdecn@mb{\v@lmax}\v@leur\edef\AsB@{\repdecn@mb{\v@leur}}
    \c@lAngle{#1}{#4}{\v@lmin}\edef\@ngdeb{\repdecn@mb{\v@lmin}}%
    \c@lAngle{#1}{#5}{\v@lmax}\ifdim\v@lmin>\v@lmax\advance\v@lmax\DePI@deg\fi%
    \edef\@ngfin{\repdecn@mb{\v@lmax}}\figdrawarcellPA#1,#2,#3(\@ngdeb,\@ngfin)%
    \PSc@mment{End arcellPP}\resetc@ntr@l\et@tpsarcellPP\fi\fi}}
\ctr@ld@f\def\c@lAngle#1#2#3{\figvectP-3[#1,#2]%
    \c@lproscal\delt@[-3,-1]\c@lproscal\v@leur[-3,-2]%
    \v@leur=\AsB@\v@leur\arct@n#3(\delt@,\v@leur)#3=\rdT@deg#3}
\ctr@ln@w{newif}\if@rrowratio\@rrowratiotrue
\ctr@ln@w{newif}\if@rrowhfill
\ctr@ln@w{newif}\if@rrowhout
\ctr@ld@f\def\Psset@rrowhe@d#1=#2|{\keln@mun#1|%
    \def\n@mref{a}\ifx\l@debut\n@mref\update@ttr\D@FTarrowheadangle\Q@s@tarrowheadangle{#2}\else
    \def\n@mref{f}\ifx\l@debut\n@mref\update@ttr\D@FTarrowheadfill\Q@s@tarrowheadfill{#2}\else
    \def\n@mref{l}\ifx\l@debut\n@mref\update@ttr\D@FTarrowheadlength\Q@s@tarrowheadlength{#2}\else
    \def\n@mref{o}\ifx\l@debut\n@mref\update@ttr\D@FTarrowheadout\Q@s@tarrowheadout{#2}\else
    \def\n@mref{r}\ifx\l@debut\n@mref\update@ttr\D@FTarrowheadratio\Q@s@tarrowheadratio{#2}\else
    \W@rnmesAttr{figset arrowhead}{#1}\fi\fi\fi\fi\fi}
\ctr@ln@m\@rrowheadangle
\ctr@ln@m\C@AHANG \ctr@ln@m\S@AHANG \ctr@ln@m\UNSS@N
\ctr@ld@f\def\Q@s@tarrowheadangle#1{\edef\@rrowheadangle{#1}{\c@ssin{\C@}{\S@}{#1}%
    \xdef\C@AHANG{\C@}\xdef\S@AHANG{\S@}\v@lmax=\S@ pt%
    \invers@{\v@leur}{\v@lmax}\maxim@m{\v@leur}{\v@leur}{-\v@leur}%
    \xdef\UNSS@N{\the\v@leur}}}
\ctr@ld@f\def\Q@s@tarrowheadfill#1{\expandafter\set@rrowhfill#1:}
\ctr@ld@f\def\set@rrowhfill#1#2:{\if#1n\@rrowhfillfalse\else\@rrowhfilltrue\fi}
\ctr@ld@f\def\Q@s@tarrowheadout#1{\expandafter\set@rrowhout#1:}
\ctr@ld@f\def\set@rrowhout#1#2:{\if#1n\@rrowhoutfalse\else\@rrowhouttrue\fi}
\ctr@ln@m\@rrowheadlength
\ctr@ld@f\def\Q@s@tarrowheadlength#1{\edef\@rrowheadlength{#1}\@rrowratiofalse}
\ctr@ln@m\@rrowheadratio
\ctr@ld@f\def\Q@s@tarrowheadratio#1{\edef\@rrowheadratio{#1}\@rrowratiotrue}
\ctr@ln@m\D@FTarrowheadlength
\ctr@ld@f\def\figresetarrowhead{%
    \Q@s@tarrowheadangle{\D@FTarrowheadangle}%
    \Q@s@tarrowheadfill{\D@FTarrowheadfill}%
    \Q@s@tarrowheadout{\D@FTarrowheadout}%
    \Q@s@tarrowheadratio{\D@FTarrowheadratio}%
    \d@fm@cdim\D@FTarrowheadlength{\D@FTh@rdahlength}
    \Q@s@tarrowheadlength{\D@FTarrowheadlength}}
\ctr@ld@f\def\D@FTarrowheadratio{0.1}
\ctr@ld@f\def\D@FTarrowheadangle{20}
\ctr@ld@f\def\D@FTarrowheadfill{no}
\ctr@ld@f\def\D@FTarrowheadout{no}
\ctr@ld@f\def\D@FTh@rdahlength{8pt}
\ctr@ln@m\figdrawarrow
\ctr@ld@f\def\Q@arrowDD[#1,#2]{{\ifCUR@PS\ifGR@cri\s@uvc@ntr@l\et@tpsarrow%
    \PSc@mment{arrowDD [Pt1,Pt2]=[#1,#2]}\Q@s@tfillmode{no}%
    \Q@arrowheadDD[#1,#2]\setc@ntr@l{2}\figdrawline[#1,-3]%
    \PSc@mment{End arrowDD}\resetc@ntr@l\et@tpsarrow\fi\fi}}
\ctr@ld@f\def\Q@arrowTD[#1,#2]{{\ifCUR@PS\ifGR@cri\s@uvc@ntr@l\et@tpsarrowTD%
    \PSc@mment{arrowTD [Pt1,Pt2]=[#1,#2]}\resetc@ntr@l{2}%
    \Figptpr@j-5:/#1/\Figptpr@j-6:/#2/\let\c@lprojSP=\relax\Q@arrowDD[-5,-6]%
    \PSc@mment{End arrowTD}\resetc@ntr@l\et@tpsarrowTD\fi\fi}}
\ctr@ln@m\figdrawarrowhead
\ctr@ld@f\def\Q@arrowheadDD[#1,#2]{{\ifCUR@PS\ifGR@cri\s@uvc@ntr@l\et@tpsarrowheadDD%
    \if@rrowhfill\def\@hangle{-\@rrowheadangle}\else\def\@hangle{\@rrowheadangle}\fi%
    \if@rrowratio%
    \if@rrowhout\def\@hratio{-\@rrowheadratio}\else\def\@hratio{\@rrowheadratio}\fi%
    \PSc@mment{arrowheadDD Ratio=\@hratio, Angle=\@hangle, [Pt1,Pt2]=[#1,#2]}%
    \Ps@rrowhead\@hratio,\@hangle[#1,#2]%
    \else%
    \if@rrowhout\def\@hlength{-\@rrowheadlength}\else\def\@hlength{\@rrowheadlength}\fi%
    \PSc@mment{arrowheadDD Length=\@hlength, Angle=\@hangle, [Pt1,Pt2]=[#1,#2]}%
    \Ps@rrowheadfd\@hlength,\@hangle[#1,#2]%
    \fi%
    \PSc@mment{End arrowheadDD}\resetc@ntr@l\et@tpsarrowheadDD\fi\fi}}
\ctr@ld@f\def\Q@arrowheadTD[#1,#2]{{\ifCUR@PS\ifGR@cri\s@uvc@ntr@l\et@tpsarrowheadTD%
    \PSc@mment{arrowheadTD [Pt1,Pt2]=[#1,#2]}\resetc@ntr@l{2}%
    \Figptpr@j-5:/#1/\Figptpr@j-6:/#2/\let\c@lprojSP=\relax\Q@arrowheadDD[-5,-6]%
    \PSc@mment{End arrowheadTD}\resetc@ntr@l\et@tpsarrowheadTD\fi\fi}}
\ctr@ld@f\def\Ps@rrowhead#1,#2[#3,#4]{\v@leur=#1\p@\maxim@m{\v@leur}{\v@leur}{-\v@leur}%
    \ifdim\v@leur>\Cepsil@n{
    \PSc@mment{@rrowhead Ratio=#1, Angle=#2, [Pt1,Pt2]=[#3,#4]}\v@leur=\UNSS@N%
    \v@leur=\CUR@width\v@leur\v@leur=\ptpsT@pt\v@leur\delt@=.5\v@leur
    \setc@ntr@l{2}\figvectPDD-3[#4,#3]%
    \Figg@tXY{-3}\v@lX=#1\v@lX\v@lY=#1\v@lY\Figv@ctCreg-3(\v@lX,\v@lY)%
    \vecunit@{-4}{-3}\mili@u=\result@t%
    \ifdim#2pt>\z@\v@lXa=-\C@AHANG\delt@%
     \edef\c@ef{\repdecn@mb{\v@lXa}}\figpttraDD-3:=-3/\c@ef,-4/\fi%
    \edef\c@ef{\repdecn@mb{\delt@}}%
    \v@lXa=\mili@u\v@lXa=\C@AHANG\v@lXa%
    \v@lYa=\ptpsT@pt\p@\v@lYa=\CUR@width\v@lYa\v@lYa=\sDcc@ngle\v@lYa%
    \advance\v@lXa-\v@lYa\gdef\sDcc@ngle{0}%
    \ifdim\v@lXa>\v@leur\edef\c@efendpt{\repdecn@mb{\v@leur}}%
    \else\edef\c@efendpt{\repdecn@mb{\v@lXa}}\fi%
    \Figg@tXY{-3}\v@lmin=\v@lX\v@lmax=\v@lY%
    \v@lXa=\C@AHANG\v@lmin\v@lYa=\S@AHANG\v@lmax\advance\v@lXa\v@lYa%
    \v@lYa=-\S@AHANG\v@lmin\v@lX=\C@AHANG\v@lmax\advance\v@lYa\v@lX%
    \setc@ntr@l{1}\Figg@tXY{#4}\advance\v@lX\v@lXa\advance\v@lY\v@lYa%
    \setc@ntr@l{2}\Figp@intregDD-2:(\v@lX,\v@lY)%
    \v@lXa=\C@AHANG\v@lmin\v@lYa=-\S@AHANG\v@lmax\advance\v@lXa\v@lYa%
    \v@lYa=\S@AHANG\v@lmin\v@lX=\C@AHANG\v@lmax\advance\v@lYa\v@lX%
    \setc@ntr@l{1}\Figg@tXY{#4}\advance\v@lX\v@lXa\advance\v@lY\v@lYa%
    \setc@ntr@l{2}\Figp@intregDD-1:(\v@lX,\v@lY)%
    \ifdim#2pt<\z@\fillm@detrue\figdrawline[-2,#4,-1]
    \else\figptstraDD-3=#4,-2,-1/\c@ef,-4/\s@uvdash{\typ@dash}\Q@s@tdash{\D@FTdash}%
    \figdrawline[-2,-3,-1]\Q@s@tdash{\typ@dash}\fi
    \ifdim#1pt>\z@\figpttraDD-3:=#4/\c@efendpt,-4/\else\figptcopyDD-3:/#4/\fi%
    \PSc@mment{End @rrowhead}}\fi}
\ctr@ld@f\def\sDcc@ngle{0}
\ctr@ld@f\def\Ps@rrowheadfd#1,#2[#3,#4]{{%
    \PSc@mment{@rrowheadfd Length=#1, Angle=#2, [Pt1,Pt2]=[#3,#4]}%
    \setc@ntr@l{2}\figvectPDD-1[#3,#4]\n@rmeucDD{\v@leur}{-1}\v@leur=\ptT@unit@\v@leur%
    \invers@{\v@leur}{\v@leur}\v@leur=#1\v@leur\edef\R@tio{\repdecn@mb{\v@leur}}%
    \Ps@rrowhead\R@tio,#2[#3,#4]\PSc@mment{End @rrowheadfd}}}
\ctr@ln@m\figdrawarrowBezier
\ctr@ld@f\def\Q@arrowBezierDD[#1,#2,#3,#4]{{\ifCUR@PS\ifGR@cri\s@uvc@ntr@l\et@tpsarrowBezierDD%
    \PSc@mment{arrowBezierDD Control points=#1,#2,#3,#4}\setc@ntr@l{2}%
    \if@rrowratio\c@larclengthDD\v@leur,10[#1,#2,#3,#4]\else\v@leur=\z@\fi%
    \Ps@rrowB@zDD\v@leur[#1,#2,#3,#4]%
    \PSc@mment{End arrowBezierDD}\resetc@ntr@l\et@tpsarrowBezierDD\fi\fi}}
\ctr@ld@f\def\Q@arrowBezierTD[#1,#2,#3,#4]{{\ifCUR@PS\ifGR@cri\s@uvc@ntr@l\et@tpsarrowBezierTD%
    \PSc@mment{arrowBezierTD Control points=#1,#2,#3,#4}\resetc@ntr@l{2}%
    \Figptpr@j-7:/#1/\Figptpr@j-8:/#2/\Figptpr@j-9:/#3/\Figptpr@j-10:/#4/%
    \let\c@lprojSP=\relax\ifnum\CUR@proj<\tw@\Q@arrowBezierDD[-7,-8,-9,-10]%
    \else\f@gnewpath\PSwrit@cmd{-7}{\c@mmoveto}{\fwf@g}%
    \if@rrowratio\c@larclengthDD\mili@u,10[-7,-8,-9,-10]\else\mili@u=\z@\fi%
    \p@rtent=\NBz@rcs\advance\p@rtent\m@ne\subB@zierTD\p@rtent[#1,#2,#3,#4]%
    \f@gstroke%
    \advance\v@lmin\p@rtent\delt@
    \v@leur=\v@lmin\advance\v@leur0.33333 \delt@\edef\unti@rs{\repdecn@mb{\v@leur}}%
    \v@leur=\v@lmin\advance\v@leur0.66666 \delt@\edef\deti@rs{\repdecn@mb{\v@leur}}%
    \figptcopyDD-8:/-10/\c@lsubBzarc\unti@rs,\deti@rs[#1,#2,#3,#4]%
    \figptcopyDD-8:/-4/\figptcopyDD-9:/-3/\Ps@rrowB@zDD\mili@u[-7,-8,-9,-10]\fi%
    \PSc@mment{End arrowBezierTD}\resetc@ntr@l\et@tpsarrowBezierTD\fi\fi}}
\ctr@ld@f\def\c@larclengthDD#1,#2[#3,#4,#5,#6]{{\p@rtent=#2\figptcopyDD-5:/#3/%
    \delt@=\p@\divide\delt@\p@rtent\c@rre=\z@\v@leur=\z@\s@mme=\z@%
    \loop\ifnum\s@mme<\p@rtent\advance\s@mme\@ne\advance\v@leur\delt@%
    \edef\T@{\repdecn@mb{\v@leur}}\figptBezierDD-6::\T@[#3,#4,#5,#6]%
    \figvectPDD-1[-5,-6]\n@rmeucDD{\mili@u}{-1}\advance\c@rre\mili@u%
    \figptcopyDD-5:/-6/\repeat\global\result@t=\ptT@unit@\c@rre}#1=\result@t}
\ctr@ld@f\def\Ps@rrowB@zDD#1[#2,#3,#4,#5]{{\Q@s@tfillmode{no}%
    \if@rrowratio\delt@=\@rrowheadratio#1\else\delt@=\@rrowheadlength pt\fi%
    \v@leur=\C@AHANG\delt@\edef\R@dius{\repdecn@mb{\v@leur}}%
    \FigptintercircB@zDD-5::0,\R@dius[#5,#4,#3,#2]%
    \Q@s@tarrowheadlength{\repdecn@mb{\delt@}}\Q@arrowheadDD[-5,#5]%
    \let\n@rmeuc=\n@rmeucDD\figgetdist\R@dius[#5,-3]%
    \FigptintercircB@zDD-6::0,\R@dius[#5,#4,#3,#2]%
    \figptBezierDD-5::0.33333[#5,#4,#3,#2]\figptBezierDD-3::0.66666[#5,#4,#3,#2]%
    \figptscontrolDD-5[-6,-5,-3,#2]\Q@BezierDD1[-6,-5,-4,#2]}}
\ctr@ln@m\figdrawarrowcirc
\ctr@ld@f\def\Q@arrowcircDD#1;#2(#3,#4){{\ifCUR@PS\ifGR@cri\s@uvc@ntr@l\et@tpsarrowcircDD%
    \PSc@mment{arrowcircDD Center=#1 ; Radius=#2 (Ang1=#3,Ang2=#4)}%
    \Q@s@tfillmode{no}\Pscirc@rrowhead#1;#2(#3,#4)%
    \setc@ntr@l{2}\figvectPDD -4[#1,-3]\vecunit@{-4}{-4}%
    \Figg@tXY{-4}\arct@n\v@lmin(\v@lX,\v@lY)%
    \v@lmin=\rdT@deg\v@lmin\v@leur=#4pt\advance\v@leur-\v@lmin%
    \maxim@m{\v@leur}{\v@leur}{-\v@leur}%
    \ifdim\v@leur>\DemiPI@deg\relax\ifdim\v@lmin<#4pt\advance\v@lmin\DePI@deg%
    \else\advance\v@lmin-\DePI@deg\fi\fi\edef\ar@ngle{\repdecn@mb{\v@lmin}}%
    \ifdim#3pt<#4pt\figdrawarccirc#1;#2(#3,\ar@ngle)\else\figdrawarccirc#1;#2(\ar@ngle,#3)\fi%
    \PSc@mment{End arrowcircDD}\resetc@ntr@l\et@tpsarrowcircDD\fi\fi}}
\ctr@ld@f\def\Q@arrowcircTD#1,#2,#3;#4(#5,#6){{\ifCUR@PS\ifGR@cri\s@uvc@ntr@l\et@tpsarrowcircTD%
    \PSc@mment{arrowcircTD Center=#1,P1=#2,P2=#3 ; Radius=#4 (Ang1=#5, Ang2=#6)}%
    \resetc@ntr@l{2}\c@lExtAxes#1,#2,#3(#4)\let\c@lprojSP=\relax%
    \figvectPTD-11[#1,-4]\figvectPTD-12[#1,-5]\c@lNbarcs{#5}{#6}%
    \if@rrowratio\v@lmax=\degT@rd\v@lmax\edef\D@lpha{\repdecn@mb{\v@lmax}}\fi%
    \advance\p@rtent\m@ne\mili@u=\z@%
    \v@leur=#5pt\c@lptellP{#1}{-11}{-12}\Figptpr@j-9:/-3/%
    \f@gnewpath\PSwrit@cmdS{-9}{\c@mmoveto}{\fwf@g}{\X@un}{\Y@un}%
    \edef\C@nt@r{#1}\s@mme=\z@\bcl@rcircTD\f@gstroke%
    \advance\v@leur\delt@\c@lptellP{#1}{-11}{-12}\Figptpr@j-5:/-3/%
    \advance\v@leur\delt@\c@lptellP{#1}{-11}{-12}\Figptpr@j-6:/-3/%
    \advance\v@leur\delt@\c@lptellP{#1}{-11}{-12}\Figptpr@j-10:/-3/%
    \figptscontrolDD-8[-9,-5,-6,-10]%
    \if@rrowratio\c@lcurvradDD0.5[-9,-8,-7,-10]\advance\mili@u\result@t%
    \maxim@m{\mili@u}{\mili@u}{-\mili@u}\mili@u=\ptT@unit@\mili@u%
    \mili@u=\D@lpha\mili@u\advance\p@rtent\@ne\divide\mili@u\p@rtent\fi%
    \Ps@rrowB@zDD\mili@u[-9,-8,-7,-10]%
    \PSc@mment{End arrowcircTD}\resetc@ntr@l\et@tpsarrowcircTD\fi\fi}}
\ctr@ld@f\def\bcl@rcircTD{\relax%
    \ifnum\s@mme<\p@rtent\advance\s@mme\@ne%
    \advance\v@leur\delt@\c@lptellP{\C@nt@r}{-11}{-12}\Figptpr@j-5:/-3/%
    \advance\v@leur\delt@\c@lptellP{\C@nt@r}{-11}{-12}\Figptpr@j-6:/-3/%
    \advance\v@leur\delt@\c@lptellP{\C@nt@r}{-11}{-12}\Figptpr@j-10:/-3/%
    \figptscontrolDD-8[-9,-5,-6,-10]\BdingB@xfalse%
    \PSwrit@cmdS{-8}{}{\fwf@g}{\X@de}{\Y@de}\PSwrit@cmdS{-7}{}{\fwf@g}{\X@tr}{\Y@tr}%
    \BdingB@xtrue\PSwrit@cmdS{-10}{\c@mcurveto}{\fwf@g}{\X@qu}{\Y@qu}%
    \if@rrowratio\c@lcurvradDD0.5[-9,-8,-7,-10]\advance\mili@u\result@t\fi%
    \B@zierBB@x{1}{\Y@un}(\X@un,\X@de,\X@tr,\X@qu)%
    \B@zierBB@x{2}{\X@un}(\Y@un,\Y@de,\Y@tr,\Y@qu)%
    \edef\X@un{\X@qu}\edef\Y@un{\Y@qu}\figptcopyDD-9:/-10/\bcl@rcircTD\fi}
\ctr@ld@f\def\Pscirc@rrowhead#1;#2(#3,#4){{%
    \PSc@mment{circ@rrowhead Center=#1 ; Radius=#2 (Ang1=#3,Ang2=#4)}%
    \v@leur=#2\unit@\edef\s@glen{\repdecn@mb{\v@leur}}\v@lY=\z@\v@lX=\v@leur%
    \resetc@ntr@l{2}\Figv@ctCreg-3(\v@lX,\v@lY)\figpttraDD-5:=#1/1,-3/%
    \figptrotDD-5:=-5/#1,#4/%
    \figvectPDD-3[#1,-5]\Figg@tXY{-3}\v@leur=\v@lX%
    \ifdim#3pt<#4pt\v@lX=\v@lY\v@lY=-\v@leur\else\v@lX=-\v@lY\v@lY=\v@leur\fi%
    \Figv@ctCreg-3(\v@lX,\v@lY)\vecunit@{-3}{-3}%
    \if@rrowratio\v@leur=#4pt\advance\v@leur-#3pt\maxim@m{\mili@u}{-\v@leur}{\v@leur}%
    \mili@u=\degT@rd\mili@u\v@leur=\s@glen\mili@u\edef\s@glen{\repdecn@mb{\v@leur}}%
    \mili@u=#2\mili@u\mili@u=\@rrowheadratio\mili@u\else\mili@u=\@rrowheadlength pt\fi%
    \figpttraDD-6:=-5/\s@glen,-3/\v@leur=#2pt\v@leur=2\v@leur%
    \invers@{\v@leur}{\v@leur}\c@rre=\repdecn@mb{\v@leur}\mili@u
    \mili@u=\c@rre\mili@u=\repdecn@mb{\c@rre}\mili@u%
    \v@leur=\p@\advance\v@leur-\mili@u
    \invers@{\mili@u}{2\v@leur}\delt@=\c@rre\delt@=\repdecn@mb{\mili@u}\delt@%
    \xdef\sDcc@ngle{\repdecn@mb{\delt@}}
    \sqrt@{\mili@u}{\v@leur}\arct@n\v@leur(\mili@u,\c@rre)%
    \v@leur=\rdT@deg\v@leur
    \ifdim#3pt<#4pt\v@leur=-\v@leur\fi%
    \if@rrowhout\v@leur=-\v@leur\fi\edef\cor@ngle{\repdecn@mb{\v@leur}}%
    \figptrotDD-6:=-6/-5,\cor@ngle/\Q@arrowheadDD[-6,-5]%
    \PSc@mment{End circ@rrowhead}}}
\ctr@ln@m\figdrawarrowcircP
\ctr@ld@f\def\Q@arrowcircPDD#1;#2[#3,#4]{{\ifCUR@PS\ifGR@cri%
    \PSc@mment{arrowcircPDD Center=#1; Radius=#2, [P1=#3,P2=#4]}%
    \s@uvc@ntr@l\et@tpsarrowcircPDD\Ps@ngleparam#1;#2[#3,#4]%
    \ifdim\v@leur>\z@\ifdim\v@lmin>\v@lmax\advance\v@lmax\DePI@deg\fi%
    \else\ifdim\v@lmin<\v@lmax\advance\v@lmin\DePI@deg\fi\fi%
    \edef\@ngdeb{\repdecn@mb{\v@lmin}}\edef\@ngfin{\repdecn@mb{\v@lmax}}%
    \figdrawarrowcirc#1;\r@dius(\@ngdeb,\@ngfin)%
    \PSc@mment{End arrowcircPDD}\resetc@ntr@l\et@tpsarrowcircPDD\fi\fi}}
\ctr@ld@f\def\Q@arrowcircPTD#1;#2[#3,#4,#5]{{\ifCUR@PS\ifGR@cri\s@uvc@ntr@l\et@tpsarrowcircPTD%
    \PSc@mment{arrowcircPTD Center=#1; Radius=#2, [P1=#3,P2=#4,P3=#5]}%
    \figgetangleTD\@ngfin[#1,#3,#4,#5]\v@leur=#2pt%
    \maxim@m{\mili@u}{-\v@leur}{\v@leur}\edef\r@dius{\repdecn@mb{\mili@u}}%
    \ifdim\v@leur<\z@\v@lmax=\@ngfin pt\advance\v@lmax-\DePI@deg%
    \edef\@ngfin{\repdecn@mb{\v@lmax}}\fi\Q@arrowcircTD#1,#3,#5;\r@dius(0,\@ngfin)%
    \PSc@mment{End arrowcircPTD}\resetc@ntr@l\et@tpsarrowcircPTD\fi\fi}}
\ctr@ld@f\def\figdrawaxes#1(#2){{\ifCUR@PS\ifGR@cri\s@uvc@ntr@l\et@tpsaxes%
    \PSc@mment{axes Origin=#1 Range=(#2)}\an@lys@xes#2,:\resetc@ntr@l{2}%
    \ifx\t@xt@\empty\ifTr@isDim\Q@@xes#1(0,#2,0,#2,0,#2)\else\Q@@xes#1(0,#2,0,#2)\fi%
    \else\Q@@xes#1(#2)\fi\PSc@mment{End axes}\resetc@ntr@l\et@tpsaxes\fi\fi}}
\ctr@ld@f\def\an@lys@xes#1,#2:{\def\t@xt@{#2}}
\ctr@ln@m\Q@@xes
\ctr@ld@f\def\Q@@xesDD#1(#2,#3,#4,#5){%
    \figpttraC-5:=#1/#2,0/\figpttraC-6:=#1/#3,0/\Q@arrowDD[-5,-6]%
    \figpttraC-5:=#1/0,#4/\figpttraC-6:=#1/0,#5/\Q@arrowDD[-5,-6]}
\ctr@ld@f\def\Q@@xesTD#1(#2,#3,#4,#5,#6,#7){%
    \figpttraC-7:=#1/#2,0,0/\figpttraC-8:=#1/#3,0,0/\Q@arrowTD[-7,-8]%
    \figpttraC-7:=#1/0,#4,0/\figpttraC-8:=#1/0,#5,0/\Q@arrowTD[-7,-8]%
    \figpttraC-7:=#1/0,0,#6/\figpttraC-8:=#1/0,0,#7/\Q@arrowTD[-7,-8]}
\ctr@ln@m\newGr@FN
\ctr@ld@f\def\newGr@FNPDF#1{\s@mme=\Gr@FNb\advance\s@mme\@ne\xdef\Gr@FNb{\number\s@mme}}
\ctr@ld@f\def\newGr@FNDVI#1{\newGr@FNPDF{}\xdef#1{\jobname GI\Gr@FNb.anx}}
\ctr@ld@f\def\figdrawbegin#1{\newGr@FN\DefGIfilen@me\gdef\@utoFN{0}%
    \def\t@xt@{#1}\relax\ifx\t@xt@\empty\GRupdatem@detrue%
    \gdef\@utoFN{1}\Psb@ginfig\DefGIfilen@me\else\expandafter\Psb@ginfigNu@#1 :\fi}
\ctr@ld@f\def\Psb@ginfigNu@#1 #2:{\def\t@xt@{#1}\relax\ifx\t@xt@\empty\def\t@xt@{#2}%
    \ifx\t@xt@\empty\GRupdatem@detrue\gdef\@utoFN{1}\Psb@ginfig\DefGIfilen@me%
    \else\Psb@ginfigNu@#2:\fi\else\Psb@ginfig{#1}\fi}
\ctr@ln@m\PSfilen@me \ctr@ln@m\auxfilen@me
\ctr@ld@f\def\Psb@ginfig#1{\ifCUR@PS\else%
    \edef\PSfilen@me{#1}\edef\auxfilen@me{\jobname.anx}%
    \ifGRupdatem@de\GR@critrue\else\openin\frf@g=\PSfilen@me\relax%
    \ifeof\frf@g\GR@critrue\else\GR@crifalse\fi\closein\frf@g\fi%
    \CUR@PStrue\c@ldefproj\expandafter\setupd@te\D@FTupdate:%
    \ifGR@cri\initb@undb@x%
    \immediate\openout\fwf@g=\auxfilen@me\initpss@ttings\fi%
    \fi}
\ctr@ld@f\def\Gr@FNb{0}
\ctr@ld@f\def\figforTeXFileno{\Gr@FNb}
\ctr@ld@f\def\figforTeXFigno{0 }
\ctr@ld@f\def\figforTeXnextFigno{1 }
\ctr@ld@f\edef\DefGIfilen@me{\jobname GI.anx}
\ctr@ld@f\def\initpss@ttings{\figreset{altitude,arrowhead,curve,general,flowchart,mesh,trimesh}%
    \Use@llipsefalse}
\ctr@ld@f\def\B@zierBB@x#1#2(#3,#4,#5,#6){{\c@rre=\t@n\epsil@n
    \v@lmax=#4\advance\v@lmax-#5\v@lmax=\thr@@\v@lmax\advance\v@lmax#6\advance\v@lmax-#3%
    \mili@u=#4\mili@u=-\tw@\mili@u\advance\mili@u#3\advance\mili@u#5%
    \v@lmin=#4\advance\v@lmin-#3\maxim@m{\v@leur}{-\v@lmax}{\v@lmax}%
    \maxim@m{\delt@}{-\mili@u}{\mili@u}\maxim@m{\v@leur}{\v@leur}{\delt@}%
    \maxim@m{\delt@}{-\v@lmin}{\v@lmin}\maxim@m{\v@leur}{\v@leur}{\delt@}%
    \ifdim\v@leur>\c@rre\invers@{\v@leur}{\v@leur}\edef\Uns@rM@x{\repdecn@mb{\v@leur}}%
    \v@lmax=\Uns@rM@x\v@lmax\mili@u=\Uns@rM@x\mili@u\v@lmin=\Uns@rM@x\v@lmin%
    \maxim@m{\v@leur}{-\v@lmax}{\v@lmax}\ifdim\v@leur<\c@rre%
    \maxim@m{\v@leur}{-\mili@u}{\mili@u}\ifdim\v@leur<\c@rre\else%
    \invers@{\mili@u}{\mili@u}\v@leur=-0.5\v@lmin%
    \v@leur=\repdecn@mb{\mili@u}\v@leur\m@jBBB@x{\v@leur}{#1}{#2}(#3,#4,#5,#6)\fi%
    \else\delt@=\repdecn@mb{\mili@u}\mili@u\v@leur=\repdecn@mb{\v@lmax}\v@lmin%
    \advance\delt@-\v@leur\ifdim\delt@<\z@\else\invers@{\v@lmax}{\v@lmax}%
    \edef\Uns@rAp{\repdecn@mb{\v@lmax}}\sqrt@{\delt@}{\delt@}%
    \v@leur=-\mili@u\advance\v@leur\delt@\v@leur=\Uns@rAp\v@leur%
    \m@jBBB@x{\v@leur}{#1}{#2}(#3,#4,#5,#6)%
    \v@leur=-\mili@u\advance\v@leur-\delt@\v@leur=\Uns@rAp\v@leur%
    \m@jBBB@x{\v@leur}{#1}{#2}(#3,#4,#5,#6)\fi\fi\fi}}
\ctr@ld@f\def\m@jBBB@x#1#2#3(#4,#5,#6,#7){{\relax\ifdim#1>\z@\ifdim#1<\p@%
    \edef\T@{\repdecn@mb{#1}}\v@lX=\p@\advance\v@lX-#1\edef\UNmT@{\repdecn@mb{\v@lX}}%
    \v@lX=#4\v@lY=#5\v@lZ=#6\v@lXa=#7\v@lX=\UNmT@\v@lX\advance\v@lX\T@\v@lY%
    \v@lY=\UNmT@\v@lY\advance\v@lY\T@\v@lZ\v@lZ=\UNmT@\v@lZ\advance\v@lZ\T@\v@lXa%
    \v@lX=\UNmT@\v@lX\advance\v@lX\T@\v@lY\v@lY=\UNmT@\v@lY\advance\v@lY\T@\v@lZ%
    \v@lX=\UNmT@\v@lX\advance\v@lX\T@\v@lY%
    \ifcase#2\or\v@lY=#3\or\v@lY=\v@lX\v@lX=#3\fi\b@undb@x{\v@lX}{\v@lY}\fi\fi}}
\ctr@ld@f\def\PsB@zier#1[#2]{{\f@gnewpath%
    \s@mme=\z@\def\list@num{#2,0}\extrairelepremi@r\p@int\de\list@num%
    \PSwrit@cmdS{\p@int}{\c@mmoveto}{\fwf@g}{\X@un}{\Y@un}\p@rtent=#1\bclB@zier}}
\ctr@ld@f\def\bclB@zier{\relax%
    \ifnum\s@mme<\p@rtent\advance\s@mme\@ne\BdingB@xfalse%
    \extrairelepremi@r\p@int\de\list@num\PSwrit@cmdS{\p@int}{}{\fwf@g}{\X@de}{\Y@de}%
    \extrairelepremi@r\p@int\de\list@num\PSwrit@cmdS{\p@int}{}{\fwf@g}{\X@tr}{\Y@tr}%
    \BdingB@xtrue%
    \extrairelepremi@r\p@int\de\list@num\PSwrit@cmdS{\p@int}{\c@mcurveto}{\fwf@g}{\X@qu}{\Y@qu}%
    \B@zierBB@x{1}{\Y@un}(\X@un,\X@de,\X@tr,\X@qu)%
    \B@zierBB@x{2}{\X@un}(\Y@un,\Y@de,\Y@tr,\Y@qu)%
    \edef\X@un{\X@qu}\edef\Y@un{\Y@qu}\bclB@zier\fi}
\ctr@ln@m\figdrawBezier
\ctr@ld@f\def\Q@BezierDD#1[#2]{\ifCUR@PS\ifGR@cri%
    \PSc@mment{BezierDD N arcs=#1, Control points=#2}%
    \iffillm@de\PsB@zier#1[#2]%
    \f@gfill%
    \else\PsB@zier#1[#2]\f@gstroke\fi%
    \PSc@mment{End BezierDD}\fi\fi}
\ctr@ln@m\et@tpsBezierTD
\ctr@ld@f\def\Q@BezierTD#1[#2]{\ifCUR@PS\ifGR@cri\s@uvc@ntr@l\et@tpsBezierTD%
    \PSc@mment{BezierTD N arcs=#1, Control points=#2}%
    \iffillm@de\PsB@zierTD#1[#2]%
    \f@gfill%
    \else\PsB@zierTD#1[#2]\f@gstroke\fi%
    \PSc@mment{End BezierTD}\resetc@ntr@l\et@tpsBezierTD\fi\fi}
\ctr@ld@f\def\PsB@zierTD#1[#2]{\ifnum\CUR@proj<\tw@\PsB@zier#1[#2]\else\PsB@zier@TD#1[#2]\fi}
\ctr@ld@f\def\PsB@zier@TD#1[#2]{{\f@gnewpath%
    \s@mme=\z@\def\list@num{#2,0}\extrairelepremi@r\p@int\de\list@num%
    \let\c@lprojSP=\relax\setc@ntr@l{2}\Figptpr@j-7:/\p@int/%
    \PSwrit@cmd{-7}{\c@mmoveto}{\fwf@g}%
    \loop\ifnum\s@mme<#1\advance\s@mme\@ne\extrairelepremi@r\p@intun\de\list@num%
    \extrairelepremi@r\p@intde\de\list@num\extrairelepremi@r\p@inttr\de\list@num%
    \subB@zierTD\NBz@rcs[\p@int,\p@intun,\p@intde,\p@inttr]\edef\p@int{\p@inttr}\repeat}}
\ctr@ld@f\def\subB@zierTD#1[#2,#3,#4,#5]{\delt@=\p@\divide\delt@\NBz@rcs\v@lmin=\z@%
    {\Figg@tXY{-7}\edef\X@un{\the\v@lX}\edef\Y@un{\the\v@lY}%
    \s@mme=\z@\loop\ifnum\s@mme<#1\advance\s@mme\@ne%
    \v@leur=\v@lmin\advance\v@leur0.33333 \delt@\edef\unti@rs{\repdecn@mb{\v@leur}}%
    \v@leur=\v@lmin\advance\v@leur0.66666 \delt@\edef\deti@rs{\repdecn@mb{\v@leur}}%
    \advance\v@lmin\delt@\edef\trti@rs{\repdecn@mb{\v@lmin}}%
    \figptBezierTD-8::\trti@rs[#2,#3,#4,#5]\Figptpr@j-8:/-8/%
    \c@lsubBzarc\unti@rs,\deti@rs[#2,#3,#4,#5]\BdingB@xfalse%
    \PSwrit@cmdS{-4}{}{\fwf@g}{\X@de}{\Y@de}\PSwrit@cmdS{-3}{}{\fwf@g}{\X@tr}{\Y@tr}%
    \BdingB@xtrue\PSwrit@cmdS{-8}{\c@mcurveto}{\fwf@g}{\X@qu}{\Y@qu}%
    \B@zierBB@x{1}{\Y@un}(\X@un,\X@de,\X@tr,\X@qu)%
    \B@zierBB@x{2}{\X@un}(\Y@un,\Y@de,\Y@tr,\Y@qu)%
    \edef\X@un{\X@qu}\edef\Y@un{\Y@qu}\figptcopyDD-7:/-8/\repeat}}
\ctr@ld@f\def\NBz@rcs{2}
\ctr@ld@f\def\c@lsubBzarc#1,#2[#3,#4,#5,#6]{\figptBezierTD-5::#1[#3,#4,#5,#6]%
    \figptBezierTD-6::#2[#3,#4,#5,#6]\Figptpr@j-4:/-5/\Figptpr@j-5:/-6/%
    \figptscontrolDD-4[-7,-4,-5,-8]}
\ctr@ln@m\figdrawcirc
\ctr@ld@f\def\Q@circDD#1(#2){\ifCUR@PS\ifGR@cri\PSc@mment{circDD Center=#1 (Radius=#2)}%
    \Q@arccircDD#1;#2(0,360)\PSc@mment{End circDD}\fi\fi}
\ctr@ld@f\def\Q@circTD#1,#2,#3(#4){\ifCUR@PS\ifGR@cri%
    \PSc@mment{circTD Center=#1,P1=#2,P2=#3 (Radius=#4)}%
    \Q@arccircTD#1,#2,#3;#4(0,360)\PSc@mment{End circTD}\fi\fi}
\ctr@ln@m\p@urcent
{\catcode`\%=12\gdef\p@urcent{
\ctr@ld@f\def\PSc@mment#1{\ifGRdebugm@de\immediate\write\fwf@g{\p@urcent\space#1}\fi}
\ctr@ln@m\acc@louv \ctr@ln@m\acc@lfer
{\catcode`\[=1\catcode`\{=12\gdef\acc@louv[{}}
{\catcode`\]=2\catcode`\}=12\gdef\acc@lfer{}]]
\ctr@ld@f\def\PSdict@{\ifUse@llipse%
    \immediate\write\fwf@g{/ellipsedict 9 dict def ellipsedict /mtrx matrix put}%
    \immediate\write\fwf@g{/ellipse \acc@louv ellipsedict begin}%
    \immediate\write\fwf@g{ /endangle exch def /startangle exch def}%
    \immediate\write\fwf@g{ /yrad exch def /xrad exch def}%
    \immediate\write\fwf@g{ /rotangle exch def /y exch def /x exch def}%
    \immediate\write\fwf@g{ /savematrix mtrx currentmatrix def}%
    \immediate\write\fwf@g{ x y translate rotangle rotate xrad yrad scale}%
    \immediate\write\fwf@g{ 0 0 1 startangle endangle arc}%
    \immediate\write\fwf@g{ savematrix setmatrix end\acc@lfer def}%
    \fi\PShe@der{EndProlog}}
\ctr@ld@f\def\Pssetc@rve#1=#2|{\keln@mun#1|%
    \def\n@mref{r}\ifx\l@debut\n@mref\update@ttr\D@FTroundness\Q@s@troundness{#2}\else
    \W@rnmesAttr{figset curve}{#1}\fi}
\ctr@ln@m\curv@roundness
\ctr@ld@f\def\Q@s@troundness#1{\edef\curv@roundness{#1}}
\ctr@ld@f\def\D@FTroundness{0.2} 
\ctr@ln@m\figdrawcurve
\ctr@ld@f\def\Q@curveDD[#1]{{\ifCUR@PS\ifGR@cri\PSc@mment{curveDD Points=#1}%
    \s@uvc@ntr@l\et@tpscurveDD%
    \iffillm@de\Psc@rveDD\curv@roundness[#1]%
    \f@gfill%
    \else\Psc@rveDD\curv@roundness[#1]\f@gstroke\fi%
    \PSc@mment{End curveDD}\resetc@ntr@l\et@tpscurveDD\fi\fi}}
\ctr@ld@f\def\Q@curveTD[#1]{{\ifCUR@PS\ifGR@cri%
    \PSc@mment{curveTD Points=#1}\s@uvc@ntr@l\et@tpscurveTD\let\c@lprojSP=\relax%
    \iffillm@de\Psc@rveTD\curv@roundness[#1]%
    \f@gfill%
    \else\Psc@rveTD\curv@roundness[#1]\f@gstroke\fi%
    \PSc@mment{End curveTD}\resetc@ntr@l\et@tpscurveTD\fi\fi}}
\ctr@ld@f\def\Psc@rveDD#1[#2]{%
    \def\list@num{#2}\extrairelepremi@r\Ak@\de\list@num%
    \extrairelepremi@r\Ai@\de\list@num\extrairelepremi@r\Aj@\de\list@num%
    \f@gnewpath\PSwrit@cmdS{\Ai@}{\c@mmoveto}{\fwf@g}{\X@un}{\Y@un}%
    \setc@ntr@l{2}\figvectPDD -1[\Ak@,\Aj@]%
    \@ecfor\Ak@:=\list@num\do{\figpttraDD-2:=\Ai@/#1,-1/\BdingB@xfalse%
       \PSwrit@cmdS{-2}{}{\fwf@g}{\X@de}{\Y@de}%
       \figvectPDD -1[\Ai@,\Ak@]\figpttraDD-2:=\Aj@/-#1,-1/%
       \PSwrit@cmdS{-2}{}{\fwf@g}{\X@tr}{\Y@tr}\BdingB@xtrue%
       \PSwrit@cmdS{\Aj@}{\c@mcurveto}{\fwf@g}{\X@qu}{\Y@qu}%
       \B@zierBB@x{1}{\Y@un}(\X@un,\X@de,\X@tr,\X@qu)%
       \B@zierBB@x{2}{\X@un}(\Y@un,\Y@de,\Y@tr,\Y@qu)%
       \edef\X@un{\X@qu}\edef\Y@un{\Y@qu}\edef\Ai@{\Aj@}\edef\Aj@{\Ak@}}}
\ctr@ld@f\def\Psc@rveTD#1[#2]{\ifnum\CUR@proj<\tw@\Psc@rvePPTD#1[#2]\else\Psc@rveCPTD#1[#2]\fi}
\ctr@ld@f\def\Psc@rvePPTD#1[#2]{\setc@ntr@l{2}%
    \def\list@num{#2}\extrairelepremi@r\Ak@\de\list@num\Figptpr@j-5:/\Ak@/%
    \extrairelepremi@r\Ai@\de\list@num\Figptpr@j-3:/\Ai@/%
    \extrairelepremi@r\Aj@\de\list@num\Figptpr@j-4:/\Aj@/%
    \f@gnewpath\PSwrit@cmdS{-3}{\c@mmoveto}{\fwf@g}{\X@un}{\Y@un}%
    \figvectPDD -1[-5,-4]%
    \@ecfor\Ak@:=\list@num\do{\Figptpr@j-5:/\Ak@/\figpttraDD-2:=-3/#1,-1/%
       \BdingB@xfalse\PSwrit@cmdS{-2}{}{\fwf@g}{\X@de}{\Y@de}%
       \figvectPDD -1[-3,-5]\figpttraDD-2:=-4/-#1,-1/%
       \PSwrit@cmdS{-2}{}{\fwf@g}{\X@tr}{\Y@tr}\BdingB@xtrue%
       \PSwrit@cmdS{-4}{\c@mcurveto}{\fwf@g}{\X@qu}{\Y@qu}%
       \B@zierBB@x{1}{\Y@un}(\X@un,\X@de,\X@tr,\X@qu)%
       \B@zierBB@x{2}{\X@un}(\Y@un,\Y@de,\Y@tr,\Y@qu)%
       \edef\X@un{\X@qu}\edef\Y@un{\Y@qu}\figptcopyDD-3:/-4/\figptcopyDD-4:/-5/}}
\ctr@ld@f\def\Psc@rveCPTD#1[#2]{\setc@ntr@l{2}%
    \def\list@num{#2}\extrairelepremi@r\Ak@\de\list@num%
    \extrairelepremi@r\Ai@\de\list@num\extrairelepremi@r\Aj@\de\list@num%
    \Figptpr@j-7:/\Ai@/%
    \f@gnewpath\PSwrit@cmd{-7}{\c@mmoveto}{\fwf@g}%
    \figvectPTD -9[\Ak@,\Aj@]%
    \@ecfor\Ak@:=\list@num\do{\figpttraTD-10:=\Ai@/#1,-9/%
       \figvectPTD -9[\Ai@,\Ak@]\figpttraTD-11:=\Aj@/-#1,-9/%
       \subB@zierTD\NBz@rcs[\Ai@,-10,-11,\Aj@]\edef\Ai@{\Aj@}\edef\Aj@{\Ak@}}}
\ctr@ld@f\def\figdrawend{\ifCUR@PS\ifGR@cri\immediate\closeout\fwf@g%
    \immediate\openout\fwf@g=\PSfilen@me\relax%
    \ifPDFm@ke\PSBdingB@x\else%
    \immediate\write\fwf@g{\p@urcent\string!PS-Adobe-2.0 EPSF-2.0}%
    \PShe@der{Creator\string: TeX (fig4tex.tex)}%
    \PShe@der{Title\string: \PSfilen@me}%
    \PShe@der{CreationDate\string: \the\day/\the\month/\the\year}%
    \PSBdingB@x%
    \PShe@der{EndComments}\PSdict@\fi%
    \immediate\write\fwf@g{\c@mgsave}%
    \openin\frf@g=\auxfilen@me\c@pypsfile\fwf@g\frf@g\closein\frf@g%
    \immediate\write\fwf@g{\c@mgrestore}%
    \PSc@mment{End of file.}\immediate\closeout\fwf@g%
    \immediate\openout\fwf@g=\auxfilen@me\immediate\closeout\fwf@g%
    \immediate\write16{File \PSfilen@me\space created.}\fi\fi\CUR@PSfalse\GR@critrue}
\ctr@ld@f\def\PShe@der#1{\immediate\write\fwf@g{\p@urcent\p@urcent#1}}
\ctr@ld@f\def\PSBdingB@x{{\v@lX=\ptT@ptps\c@@rdXmin\v@lY=\ptT@ptps\c@@rdYmin%
     \v@lXa=\ptT@ptps\c@@rdXmax\v@lYa=\ptT@ptps\c@@rdYmax%
     \PShe@der{BoundingBox\string: \repdecn@mb{\v@lX}\space\repdecn@mb{\v@lY}%
     \space\repdecn@mb{\v@lXa}\space\repdecn@mb{\v@lYa}}}}
\ctr@ld@f\def\figdrawfcconnect[#1]{{\ifCUR@PS\ifGR@cri\PSc@mment{fcconnect Points=#1}%
    \Q@s@tfillmode{no}\s@uvc@ntr@l\et@tpsfcconnect\resetc@ntr@l{2}%
    \fcc@nnect@[#1]\resetc@ntr@l\et@tpsfcconnect\PSc@mment{End fcconnect}\fi\fi}}
\ctr@ld@f\def\fcc@nnect@[#1]{\let\N@rm=\n@rmeucDD\def\list@num{#1}%
    \extrairelepremi@r\Ai@\de\list@num\edef\pr@m{\Ai@}\v@leur=\z@\p@rtent=\@ne\c@llgtot%
    \ifcase\fclin@typ@\edef\list@num{[\pr@m,#1,\Ai@}\expandafter\figdrawcurve\list@num]%
    \else\ifdim\fclin@r@d\p@>\z@\Pslin@conge[#1]\else\figdrawline[#1]\fi\fi%
    \v@leur=\@rrowp@s\v@leur\edef\list@num{#1,\Ai@,0}%
    \extrairelepremi@r\Ai@\de\list@num\mili@u=\epsil@n\c@llgpart%
    \advance\mili@u-\epsil@n\advance\mili@u-\delt@\advance\v@leur-\mili@u%
    \ifcase\fclin@typ@\invers@\mili@u\delt@%
    \ifnum\@rrowr@fpt>\z@\advance\delt@-\v@leur\v@leur=\delt@\fi%
    \v@leur=\repdecn@mb\v@leur\mili@u\edef\v@lt{\repdecn@mb\v@leur}%
    \extrairelepremi@r\Ak@\de\list@num%
    \figvectPDD-1[\pr@m,\Aj@]\figpttraDD-6:=\Ai@/\curv@roundness,-1/%
    \figvectPDD-1[\Ak@,\Ai@]\figpttraDD-7:=\Aj@/\curv@roundness,-1/%
    \delt@=\@rrowheadlength\p@\delt@=\C@AHANG\delt@\edef\R@dius{\repdecn@mb{\delt@}}%
    \ifcase\@rrowr@fpt%
    \FigptintercircB@zDD-8::\v@lt,\R@dius[\Ai@,-6,-7,\Aj@]\Q@arrowheadDD[-5,-8]\else%
    \FigptintercircB@zDD-8::\v@lt,\R@dius[\Aj@,-7,-6,\Ai@]\Q@arrowheadDD[-8,-5]\fi%
    \else\advance\delt@-\v@leur%
    \p@rtentiere{\p@rtent}{\delt@}\edef\C@efun{\the\p@rtent}%
    \p@rtentiere{\p@rtent}{\v@leur}\edef\C@efde{\the\p@rtent}%
    \figptbaryDD-5:[\Ai@,\Aj@;\C@efun,\C@efde]\ifcase\@rrowr@fpt%
    \delt@=\@rrowheadlength\unit@\delt@=\C@AHANG\delt@\edef\t@ille{\repdecn@mb{\delt@}}%
    \figvectPDD-2[\Ai@,\Aj@]\vecunit@{-2}{-2}\figpttraDD-5:=-5/\t@ille,-2/\fi%
    \Q@arrowheadDD[\Ai@,-5]\fi}
\ctr@ld@f\def\c@llgtot{\@ecfor\Aj@:=\list@num\do{\figvectP-1[\Ai@,\Aj@]\N@rm\delt@{-1}%
    \advance\v@leur\delt@\advance\p@rtent\@ne\edef\Ai@{\Aj@}}}
\ctr@ld@f\def\c@llgpart{\extrairelepremi@r\Aj@\de\list@num\figvectP-1[\Ai@,\Aj@]\N@rm\delt@{-1}%
    \advance\mili@u\delt@\ifdim\mili@u<\v@leur\edef\pr@m{\Ai@}\edef\Ai@{\Aj@}\c@llgpart\fi}
\ctr@ld@f\def\Pslin@conge[#1]{\ifnum\p@rtent>\tw@{\def\list@num{#1}%
    \extrairelepremi@r\Ai@\de\list@num\extrairelepremi@r\Aj@\de\list@num%
    \figptcopy-6:/\Ai@/\figvectP-3[\Ai@,\Aj@]\vecunit@{-3}{-3}\v@lmax=\result@t%
    \@ecfor\Ak@:=\list@num\do{\figvectP-4[\Aj@,\Ak@]\vecunit@{-4}{-4}%
    \minim@m\v@lmin\v@lmax\result@t\v@lmax=\result@t%
    \det@rm\delt@[-3,-4]\maxim@m\mili@u{\delt@}{-\delt@}\ifdim\mili@u>\Cepsil@n%
    \ifdim\delt@>\z@\figgetangleDD\Angl@[\Aj@,\Ak@,\Ai@]\else%
    \figgetangleDD\Angl@[\Aj@,\Ai@,\Ak@]\fi%
    \v@leur=\PI@deg\advance\v@leur-\Angl@\p@\divide\v@leur\tw@%
    \edef\Angl@{\repdecn@mb\v@leur}\c@ssin{\C@}{\S@}{\Angl@}\v@leur=\fclin@r@d\unit@%
    \v@leur=\S@\v@leur\mili@u=\C@\p@\invers@\mili@u\mili@u%
    \v@leur=\repdecn@mb{\mili@u}\v@leur%
    \minim@m\v@leur\v@leur\v@lmin\edef\t@ille{\repdecn@mb{\v@leur}}%
    \figpttra-5:=\Aj@/-\t@ille,-3/\figdrawline[-6,-5]\figpttra-6:=\Aj@/\t@ille,-4/%
    \figvectNVDD-3[-3]\figvectNVDD-8[-4]\inters@cDD-7:[-5,-3;-6,-8]%
    \ifdim\delt@>\z@\figdrawarccircP-7;\fclin@r@d[-5,-6]\else\figdrawarccircP-7;\fclin@r@d[-6,-5]\fi%
    \else\figdrawline[-6,\Aj@]\figptcopy-6:/\Aj@/\fi
    \edef\Ai@{\Aj@}\edef\Aj@{\Ak@}\figptcopy-3:/-4/}\figdrawline[-6,\Aj@]}\else\figdrawline[#1]\fi}
\ctr@ld@f\def\figdrawfcnode[#1]#2{{\ifCUR@PS\ifGR@cri\PSc@mment{fcnode Points=#1}%
    \s@uvc@ntr@l\et@tpsfcnode\resetc@ntr@l{2}%
    \def\t@xt@{#2}\ifx\t@xt@\empty\def\g@tt@xt{\setbox\Gb@x=\hbox{\Figg@tT{\p@int}}}%
    \else\def\g@tt@xt{\setbox\Gb@x=\hbox{#2}}\fi%
    \v@lmin=\h@rdfcXp@dd\advance\v@lmin\Xp@dd\unit@\multiply\v@lmin\tw@%
    \v@lmax=\h@rdfcYp@dd\advance\v@lmax\Yp@dd\unit@\multiply\v@lmax\tw@%
    \Figv@ctCreg-8(\unit@,-\unit@)\def\list@num{#1}%
    \delt@=\CUR@width bp\divide\delt@\tw@%
    \fcn@de\PSc@mment{End fcnode}\resetc@ntr@l\et@tpsfcnode\fi\fi}}
\ctr@ld@f\def\d@butn@de{\g@tt@xt\v@lX=\wd\Gb@x%
    \v@lY=\ht\Gb@x\advance\v@lY\dp\Gb@x\advance\v@lX\v@lmin\advance\v@lY\v@lmax}
\ctr@ld@f\def\fcn@deE{%
    \@ecfor\p@int:=\list@num\do{\d@butn@de\v@lX=\unssqrttw@\v@lX\v@lY=\unssqrttw@\v@lY%
    \ifdim\thickn@ss\p@>\z@
    \v@lXa=\v@lX\advance\v@lXa\delt@\v@lXa=\ptT@unit@\v@lXa\edef\XR@d{\repdecn@mb\v@lXa}%
    \v@lYa=\v@lY\advance\v@lYa\delt@\v@lYa=\ptT@unit@\v@lYa\edef\YR@d{\repdecn@mb\v@lYa}%
    \arct@n\v@leur(\v@lXa,\v@lYa)\v@leur=\rdT@deg\v@leur\edef\@nglde{\repdecn@mb\v@leur}%
    {\c@lptellDD-2::\p@int;\XR@d,\YR@d(\@nglde)}
    \advance\v@leur-\PI@deg\edef\@nglun{\repdecn@mb\v@leur}%
    {\c@lptellDD-3::\p@int;\XR@d,\YR@d(\@nglun)}%
    \figptstra-6=-3,-2,\p@int/\thickn@ss,-8/\Q@s@tfillmode{yes}%
    \Pss@tspecifSt{color=\DDV@thickcolor}%
    \figdrawline[-2,-3,-6,-5]\figdrawarcell-4;\XR@d,\YR@d(\@nglun,\@nglde,0)%
    \Psrest@reSt{color=\DDV@thickcolor}\fi
    \v@lX=\ptT@unit@\v@lX\v@lY=\ptT@unit@\v@lY%
    \edef\XR@d{\repdecn@mb\v@lX}\edef\YR@d{\repdecn@mb\v@lY}%
    \Q@s@tfillmode{yes}\Pss@tspecifSt{color=\fcbgc@lor}%
    \figdrawarcell\p@int;\XR@d,\YR@d(0,360,0)%
    \Q@s@tfillmode{no}\Psrest@reSt{color=\fcbgc@lor}\figdrawarcell\p@int;\XR@d,\YR@d(0,360,0)}}
\ctr@ld@f\def\fcn@deL{\delt@=\ptT@unit@\delt@\edef\t@ille{\repdecn@mb\delt@}%
    \@ecfor\p@int:=\list@num\do{\Figg@tXYa{\p@int}\d@butn@de%
    \ifdim\v@lX>\v@lY\itis@Ktrue\else\itis@Kfalse\fi%
    \advance\v@lXa-\v@lX\Figp@intreg-1:(\v@lXa,\v@lYa)%
    \advance\v@lXa\v@lX\advance\v@lYa-\v@lY\Figp@intreg-2:(\v@lXa,\v@lYa)%
    \advance\v@lXa\v@lX\advance\v@lYa\v@lY\Figp@intreg-3:(\v@lXa,\v@lYa)%
    \advance\v@lXa-\v@lX\advance\v@lYa\v@lY\Figp@intreg-4:(\v@lXa,\v@lYa)%
    \ifdim\thickn@ss\p@>\z@
    \Figg@tXYa{\p@int}\Q@s@tfillmode{yes}\Pss@tspecifSt{color=\DDV@thickcolor}%
    \c@lpt@xt{-1}{-4}\c@lpt@xt@\v@lXa\v@lYa\v@lX\v@lY\c@rre\delt@%
    \Figp@intregDD-9:(\v@lZ,\v@lYa)\Figp@intregDD-11:(\v@lZa,\v@lYa)%
    \c@lpt@xt{-4}{-3}\c@lpt@xt@\v@lYa\v@lXa\v@lY\v@lX\delt@\c@rre%
    \Figp@intregDD-12:(\v@lXa,\v@lZ)\Figp@intregDD-10:(\v@lXa,\v@lZa)%
    \ifitis@K\figptstra-7=-9,-10,-11/\thickn@ss,-8/\figdrawline[-9,-11,-5,-6,-7]\else%
    \figptstra-7=-10,-11,-12/\thickn@ss,-8/\figdrawline[-10,-12,-5,-6,-7]\fi%
    \Psrest@reSt{color=\DDV@thickcolor}\fi
    \Q@s@tfillmode{yes}\Pss@tspecifSt{color=\fcbgc@lor}\figdrawline[-1,-2,-3,-4]%
    \Q@s@tfillmode{no}\Psrest@reSt{color=\fcbgc@lor}\figdrawline[-1,-2,-3,-4,-1]}}
\ctr@ld@f\def\c@lpt@xt#1#2{\figvectN-7[#1,#2]\vecunit@{-7}{-7}\figpttra-5:=#1/\t@ille,-7/%
    \figvectP-7[#1,#2]\Figg@tXY{-7}\c@rre=\v@lX\delt@=\v@lY\Figg@tXY{-5}}
\ctr@ld@f\def\c@lpt@xt@#1#2#3#4#5#6{\v@lZ=#6\invers@{\v@lZ}{\v@lZ}\v@leur=\repdecn@mb{#5}\v@lZ%
    \v@lZ=#2\advance\v@lZ-#4\mili@u=\repdecn@mb{\v@leur}\v@lZ%
    \v@lZ=#3\advance\v@lZ\mili@u\v@lZa=-\v@lZ\advance\v@lZa\tw@#1}
\ctr@ld@f\def\fcn@deR{\@ecfor\p@int:=\list@num\do{\Figg@tXYa{\p@int}\d@butn@de%
    \advance\v@lXa-0.5\v@lX\advance\v@lYa-0.5\v@lY\Figp@intreg-1:(\v@lXa,\v@lYa)%
    \advance\v@lXa\v@lX\Figp@intreg-2:(\v@lXa,\v@lYa)%
    \advance\v@lYa\v@lY\Figp@intreg-3:(\v@lXa,\v@lYa)%
    \advance\v@lXa-\v@lX\Figp@intreg-4:(\v@lXa,\v@lYa)%
    \ifdim\thickn@ss\p@>\z@
    \Q@s@tfillmode{yes}\Pss@tspecifSt{color=\DDV@thickcolor}%
    \Figv@ctCreg-5(-\delt@,-\delt@)\figpttra-9:=-1/1,-5/%
    \Figv@ctCreg-5(\delt@,-\delt@)\figpttra-10:=-2/1,-5/%
    \Figv@ctCreg-5(\delt@,\delt@)\figpttra-11:=-3/1,-5/%
    \figptstra-7=-9,-10,-11/\thickn@ss,-8/\figdrawline[-9,-11,-5,-6,-7]%
    \Psrest@reSt{color=\DDV@thickcolor}\fi
    \Q@s@tfillmode{yes}\Pss@tspecifSt{color=\fcbgc@lor}\figdrawline[-1,-2,-3,-4]%
    \Q@s@tfillmode{no}\Psrest@reSt{color=\fcbgc@lor}\figdrawline[-1,-2,-3,-4,-1]}}
\ctr@ld@f\def\Pssetfl@wchart#1=#2|{\keln@mtr#1|%
    \def\n@mref{arr}\ifx\l@debut\n@mref\expandafter\keln@mtr\l@suite|%
     \def\n@mref{owp}\ifx\l@debut\n@mref\update@ttr\D@FTfcarrowposition\P@setfcarrowposition{#2}\else
     \def\n@mref{owr}\ifx\l@debut\n@mref\update@ttr\D@FTfcarrowrefpt\P@setfcarrowrefpt{#2}\else
     \W@rnmesAttr{figset flowchart}{#1}\fi\fi\else%
    \def\n@mref{bgc}\ifx\l@debut\n@mref\update@ttr\D@FTfcbgcolor\P@setfcbgcolor{#2}\else
    \def\n@mref{lin}\ifx\l@debut\n@mref\update@ttr\D@FTfcline\P@setfcline{#2}\else
    \def\n@mref{pad}\ifx\l@debut\n@mref\update@ttr\D@FTfcxpadding\P@setfcxpadding{#2}%
                                       \update@ttr\D@FTfcypadding\P@setfcypadding{#2}\else
    \def\n@mref{rad}\ifx\l@debut\n@mref\update@ttr\D@FTfcradius\P@setfcradius{#2}\else
    \def\n@mref{sha}\ifx\l@debut\n@mref\update@ttr\D@FTfcshape\P@setfcshape{#2}\else
    \def\n@mref{thi}\ifx\l@debut\n@mref\expandafter\keln@mtr\l@suite|%
     \def\n@mref{ckc}\ifx\l@debut\n@mref\update@ttr\D@FTref\P@setfcthickcolor{#2}\else
     \def\n@mref{ckn}\ifx\l@debut\n@mref\update@ttr\D@FTfcthickness\P@setfcthickness{#2}\else
     \W@rnmesAttr{figset flowchart}{#1}\fi\fi\else%
    \def\n@mref{xpa}\ifx\l@debut\n@mref\update@ttr\D@FTfcxpadding\P@setfcxpadding{#2}\else
    \def\n@mref{ypa}\ifx\l@debut\n@mref\update@ttr\D@FTfcypadding\P@setfcypadding{#2}\else
    \W@rnmesAttr{figset flowchart}{#1}\fi\fi\fi\fi\fi\fi\fi\fi\fi}
\ctr@ln@m\@rrowp@s
\ctr@ld@f\def\P@setfcarrowposition#1{\edef\@rrowp@s{#1}}
\ctr@ln@m\@rrowr@fpt
\ctr@ld@f\def\P@setfcarrowrefpt#1{\setfcr@fpt#1|}
\ctr@ld@f\def\setfcr@fpt#1#2|{\if#1e\def\@rrowr@fpt{1}\else\def\@rrowr@fpt{0}\fi}
\ctr@ln@m\fcbgc@lor
\ctr@ld@f\def\P@setfcbgcolor#1{\edef\fcbgc@lor{#1}}
\ctr@ln@m\fclin@typ@
\ctr@ld@f\def\P@setfcline#1{\setfccurv@#1|}
\ctr@ld@f\def\setfccurv@#1#2|{\if#1c\def\fclin@typ@{0}\else\def\fclin@typ@{1}\fi}
\ctr@ln@m\fclin@r@d
\ctr@ld@f\def\P@setfcradius#1{\edef\fclin@r@d{#1}}
\ctr@ln@m\fcn@de \ctr@ln@m\fcsh@pe
\ctr@ln@m\h@rdfcXp@dd \ctr@ln@m\h@rdfcYp@dd
\ctr@ld@f\def\P@setfcshape#1{\setfcshap@#1|}
\ctr@ld@f\def\setfcshap@#1#2|{%
    \if#1e\let\fcn@de=\fcn@deE\def\h@rdfcXp@dd{4pt}\def\h@rdfcYp@dd{4pt}%
     \edef\fcsh@pe{ellipse}\else%
    \if#1l\let\fcn@de=\fcn@deL\def\h@rdfcXp@dd{4pt}\def\h@rdfcYp@dd{4pt}%
     \edef\fcsh@pe{lozenge}\else%
          \let\fcn@de=\fcn@deR\def\h@rdfcXp@dd{6pt}\def\h@rdfcYp@dd{6pt}%
     \edef\fcsh@pe{rectangle}\fi\fi}
\ctr@ln@m\DDV@thickcolor
\ctr@ld@f\def\P@setfcthickcolor#1{\edef\DDV@thickcolor{#1}}
\ctr@ln@m\thickn@ss
\ctr@ld@f\def\P@setfcthickness#1{\edef\thickn@ss{#1}}
\ctr@ln@m\Xp@dd
\ctr@ld@f\def\P@setfcxpadding#1{\edef\Xp@dd{#1}}
\ctr@ln@m\Yp@dd
\ctr@ld@f\def\P@setfcypadding#1{\edef\Yp@dd{#1}}
\ctr@ld@f\def\figdrawline[#1]{{\ifCUR@PS\ifGR@cri\PSc@mment{line Points=#1}%
    \let\figdrawlign@=\Pslign@P\Pslin@{#1}\PSc@mment{End line}\fi\fi}}
\ctr@ld@f\def\figdrawlineF#1{{\ifCUR@PS\ifGR@cri\PSc@mment{lineF Filename=#1}%
    \let\figdrawlign@=\Pslign@F\Pslin@{#1}\PSc@mment{End lineF}\fi\fi}}
\ctr@ld@f\def\figdrawlineC(#1){{\ifCUR@PS\ifGR@cri\PSc@mment{lineC}%
    \let\figdrawlign@=\Pslign@C\Pslin@{#1}\PSc@mment{End lineC}\fi\fi}}
\ctr@ld@f\def\Pslin@#1{\iffillm@de\figdrawlign@{#1}%
    \f@gfill%
    \else\figdrawlign@{#1}\ifx\derp@int\premp@int%
    \f@gclosestroke%
    \else\f@gstroke\fi\fi}
\ctr@ld@f\def\Pslign@P#1{\def\list@num{#1}\extrairelepremi@r\p@int\de\list@num%
    \edef\premp@int{\p@int}\f@gnewpath%
    \PSwrit@cmd{\p@int}{\c@mmoveto}{\fwf@g}%
    \@ecfor\p@int:=\list@num\do{\PSwrit@cmd{\p@int}{\c@mlineto}{\fwf@g}%
    \edef\derp@int{\p@int}}}
\ctr@ld@f\def\Pslign@F#1{\s@uvc@ntr@l\et@tPslign@F\setc@ntr@l{2}\openin\frf@g=#1\relax%
    \ifeof\frf@g\message{*** File #1 not found !}\end\else%
    \read\frf@g to\tr@c\edef\premp@int{\tr@c}\expandafter\extr@ctCF\tr@c:%
    \f@gnewpath\PSwrit@cmd{-1}{\c@mmoveto}{\fwf@g}%
    \loop\read\frf@g to\tr@c\ifeof\frf@g\mored@tafalse\else\mored@tatrue\fi%
    \ifmored@ta\expandafter\extr@ctCF\tr@c:\PSwrit@cmd{-1}{\c@mlineto}{\fwf@g}%
    \edef\derp@int{\tr@c}\repeat\fi\closein\frf@g\resetc@ntr@l\et@tPslign@F}
\ctr@ln@m\extr@ctCF
\ctr@ld@f\def\extr@ctCFDD#1 #2:{\v@lX=#1\unit@\v@lY=#2\unit@\Figp@intregDD-1:(\v@lX,\v@lY)}
\ctr@ld@f\def\extr@ctCFTD#1 #2 #3:{\v@lX=#1\unit@\v@lY=#2\unit@\v@lZ=#3\unit@%
    \Figp@intregTD-1:(\v@lX,\v@lY,\v@lZ)}
\ctr@ld@f\def\Pslign@C#1{\s@uvc@ntr@l\et@tPslign@C\setc@ntr@l{2}%
    \def\list@num{#1}\extrairelepremi@r\p@int\de\list@num%
    \edef\premp@int{\p@int}\f@gnewpath%
    \expandafter\Pslign@C@\p@int:\PSwrit@cmd{-1}{\c@mmoveto}{\fwf@g}%
    \@ecfor\p@int:=\list@num\do{\expandafter\Pslign@C@\p@int:%
    \PSwrit@cmd{-1}{\c@mlineto}{\fwf@g}\edef\derp@int{\p@int}}%
    \resetc@ntr@l\et@tPslign@C}
\ctr@ld@f\def\Pslign@C@#1 #2:{{\def\t@xt@{#1}\ifx\t@xt@\empty\Pslign@C@#2:
    \else\extr@ctCF#1 #2:\fi}}
\ctr@ld@f\def\Pssetm@sh#1=#2|{\keln@mde#1|%
    \def\n@mref{co}\ifx\l@debut\n@mref\update@ttr\D@FTref\P@setmeshcolor{#2}\else
    \def\n@mref{da}\ifx\l@debut\n@mref\update@ttr\D@FTref\P@setmeshdash{#2}\else
    \def\n@mref{di}\ifx\l@debut\n@mref\update@ttr\D@FTmeshdiag\Q@s@tmeshdiag{#2}\else
    \def\n@mref{wi}\ifx\l@debut\n@mref\update@ttr\D@FTref\P@setmeshwidth{#2}\else
    \W@rnmesAttr{figset mesh}{#1}\fi\fi\fi\fi}
\ctr@ln@m\c@ntrolmesh
\ctr@ld@f\def\Q@s@tmeshdiag#1{\edef\c@ntrolmesh{#1}}
\ctr@ld@f\def\D@FTmeshdiag{0}    
\ctr@ln@m\DDV@meshcolor
\ctr@ld@f\def\P@setmeshcolor#1{\edef\DDV@meshcolor{#1}}
\ctr@ln@m\DDV@meshdash
\ctr@ld@f\def\P@setmeshdash#1{\edef\DDV@meshdash{#1}}
\ctr@ln@m\DDV@meshwidth
\ctr@ld@f\def\P@setmeshwidth#1{\edef\DDV@meshwidth{#1}}
\ctr@ld@f\def\figdrawmesh#1,#2[#3,#4,#5,#6]{{\ifCUR@PS\ifGR@cri%
    \PSc@mment{mesh N1=#1, N2=#2, Quadrangle=[#3,#4,#5,#6]}\s@uvc@ntr@l\et@tpsmesh%
    \Pss@tspecifSt{color=\DDV@meshcolor,dash=\DDV@meshdash,width=\DDV@meshwidth}%
    \setc@ntr@l{2}%
    \ifnum#1>\@ne\Psmeshp@rt#1[#3,#4,#5,#6]\fi%
    \ifnum#2>\@ne\Psmeshp@rt#2[#4,#5,#6,#3]\fi%
    \ifnum\c@ntrolmesh>\z@\Psmeshdi@g#1,#2[#3,#4,#5,#6]\fi%
    \ifnum\c@ntrolmesh<\z@\Psmeshdi@g#2,#1[#4,#5,#6,#3]\fi%
    \Psrest@reSt{color=\DDV@meshcolor,dash=\DDV@meshdash,width=\DDV@meshwidth}%
    \figdrawline[#3,#4,#5,#6,#3]\PSc@mment{End mesh}\resetc@ntr@l\et@tpsmesh\fi\fi}}
\ctr@ld@f\def\Psmeshp@rt#1[#2,#3,#4,#5]{{\l@mbd@un=\@ne\l@mbd@de=#1\loop%
    \ifnum\l@mbd@un<#1\advance\l@mbd@de\m@ne\figptbary-1:[#2,#3;\l@mbd@de,\l@mbd@un]%
    \figptbary-2:[#5,#4;\l@mbd@de,\l@mbd@un]\figdrawline[-1,-2]\advance\l@mbd@un\@ne\repeat}}
\ctr@ld@f\def\Psmeshdi@g#1,#2[#3,#4,#5,#6]{\figptcopy-2:/#3/\figptcopy-3:/#6/%
    \l@mbd@un=\z@\l@mbd@de=#1\loop\ifnum\l@mbd@un<#1%
    \advance\l@mbd@un\@ne\advance\l@mbd@de\m@ne\figptcopy-1:/-2/\figptcopy-4:/-3/%
    \figptbary-2:[#3,#4;\l@mbd@de,\l@mbd@un]%
    \figptbary-3:[#6,#5;\l@mbd@de,\l@mbd@un]\Psmeshdi@gp@rt#2[-1,-2,-3,-4]\repeat}
\ctr@ld@f\def\Psmeshdi@gp@rt#1[#2,#3,#4,#5]{{\l@mbd@un=\z@\l@mbd@de=#1\loop%
    \ifnum\l@mbd@un<#1\figptbary-5:[#2,#5;\l@mbd@de,\l@mbd@un]%
    \advance\l@mbd@de\m@ne\advance\l@mbd@un\@ne%
    \figptbary-6:[#3,#4;\l@mbd@de,\l@mbd@un]\figdrawline[-5,-6]\repeat}}
\ctr@ln@m\figdrawnormal
\ctr@ld@f\def\Q@normalDD#1,#2[#3,#4]{{\ifCUR@PS\ifGR@cri%
    \PSc@mment{normal Length=#1, Lambda=#2 [Pt1,Pt2]=[#3,#4]}%
    \s@uvc@ntr@l\et@tpsnormal\resetc@ntr@l{2}\figptendnormal-6::#1,#2[#3,#4]%
    \figptcopyDD-5:/-1/\figdrawarrow[-5,-6]%
    \PSc@mment{End normal}\resetc@ntr@l\et@tpsnormal\fi\fi}}
\ctr@ld@f\def\figreset#1{\trtlis@rg{#1}{\Psreset@}}
\ctr@ld@f\def\Psreset@#1|{\def\t@xt@{#1}\ifx\t@xt@\empty\P@resetg@n
    \else\keln@mde#1|%
    \def\n@mref{al}\ifx\l@debut\n@mref%
        \figset altitude(blcolor=default,bldash=default,blwidth=default,%
        sqcolor=default,sqdash=default,sqwidth=default)\else
    \def\n@mref{ar}\ifx\l@debut\n@mref\figresetarrowhead\else
    \def\n@mref{cu}\ifx\l@debut\n@mref\figset curve(roundness=\D@FTroundness)\else
    \def\n@mref{ge}\ifx\l@debut\n@mref\P@resetg@n\else
    \def\n@mref{fl}\ifx\l@debut\n@mref%
        \figset flowchart(arrowp=\D@FTfcarrowposition,arrowr=\D@FTfcarrowrefpt,%
	bgcolor=\D@FTfcbgcolor,line=\D@FTfcline,radius=\D@FTfcradius,%
	shape=\D@FTfcshape,thickcolor=default,thickness=\D@FTfcthickness,%
	xpadd=\D@FTfcxpadding,ypadd=\D@FTfcypadding)\else
    \def\n@mref{me}\ifx\l@debut\n@mref\figset mesh(diag=\D@FTmeshdiag,%
        color=default,dash=default,width=default)\else
    \def\n@mref{tr}\ifx\l@debut\n@mref%
        \figset trimesh(color=default,dash=default,width=default)\else
    \W@rnmeskwd{figreset}{#1}\fi\fi\fi\fi\fi\fi\fi\fi}
\ctr@ld@f\def\P@resetg@n{\figset (color=\D@FTcolor,dash=\D@FTdash,fill=\D@FTfill,%
    join=\D@FTjoin,width=\D@FTwidth)}
\ctr@ld@f\def\figset#1(#2){\def\t@xt@{#1}\ifx\t@xt@\empty\trtlis@rg{#2}{\Pssetg@n}
    \else\keln@mde#1|%
    \def\n@mref{al}\ifx\l@debut\n@mref\trtlis@rg{#2}{\Psset@lti}\else
    \def\n@mref{ar}\ifx\l@debut\n@mref\trtlis@rg{#2}{\Psset@rrowhe@d}\else
    \def\n@mref{cu}\ifx\l@debut\n@mref\trtlis@rg{#2}{\Pssetc@rve}\else
    \def\n@mref{fl}\ifx\l@debut\n@mref\trtlis@rg{#2}{\Pssetfl@wchart}\else
    \def\n@mref{ge}\ifx\l@debut\n@mref\trtlis@rg{#2}{\Pssetg@n}\else
    \def\n@mref{me}\ifx\l@debut\n@mref\trtlis@rg{#2}{\Pssetm@sh}\else
    \def\n@mref{pr}\ifx\l@debut\n@mref\ifCUR@PS\W@rnmesIgn{figset proj(...)}%
     \else\trtlis@rg{#2}{\Figsetpr@j}\fi\else
    \def\n@mref{tr}\ifx\l@debut\n@mref\trtlis@rg{#2}{\Pssettrim@sh}\else
    \def\n@mref{wr}\ifx\l@debut\n@mref\let\M@cro=\Figsetwr@te\trtlis@rgtok{#2,|}\else
    \W@rnmeskwd{figset}{#1}\fi\fi\fi\fi\fi\fi\fi\fi\fi\fi\ignorespaces}
\ctr@ld@f\def\figsetdefault#1(#2){\ifCUR@PS\W@rnmesIgn{figsetdefault}\else%
    \def\t@xt@{#1}\ifx\t@xt@\empty\trtlis@rg{#2}{\Pssd@g@n}\else\keln@mun#1|
    \def\n@mref{a}\ifx\l@debut\n@mref\trtlis@rg{#2}{\Pssd@@rrowhe@d}\else
    \def\n@mref{c}\ifx\l@debut\n@mref\trtlis@rg{#2}{\Pssd@c@rve}\else
    \def\n@mref{g}\ifx\l@debut\n@mref\trtlis@rg{#2}{\Pssd@g@n}\else
    \def\n@mref{f}\ifx\l@debut\n@mref\trtlis@rg{#2}{\Pssd@fl@wchart}\else
    \def\n@mref{m}\ifx\l@debut\n@mref\trtlis@rg{#2}{\Pssd@m@sh}\else
    \W@rnmeskwd{figsetdefault}{#1}\fi\fi\fi\fi\fi\fi\initpss@ttings\fi}
\ctr@ld@f\def\Pssd@g@n#1=#2|{\keln@mun#1|%
    \def\n@mref{c}\ifx\l@debut\n@mref\edef\D@FTcolor{#2}\else
    \def\n@mref{d}\ifx\l@debut\n@mref\edef\D@FTdash{#2}\else
    \def\n@mref{f}\ifx\l@debut\n@mref\edef\D@FTfill{#2}\else
    \def\n@mref{j}\ifx\l@debut\n@mref\edef\D@FTjoin{#2}\else
    \def\n@mref{u}\ifx\l@debut\n@mref\edef\D@FTupdate{#2}\Q@s@tupdate{#2}\else
    \def\n@mref{w}\ifx\l@debut\n@mref\edef\D@FTwidth{#2}\else
    \W@rnmesAttr{figsetdefault}{#1}\fi\fi\fi\fi\fi\fi}
\ctr@ld@f\def\Pssd@@rrowhe@d#1=#2|{\keln@mun#1|%
    \def\n@mref{a}\ifx\l@debut\n@mref\edef\D@FTarrowheadangle{#2}\else
    \def\n@mref{f}\ifx\l@debut\n@mref\edef\D@FTarrowheadfill{#2}\else
    \def\n@mref{l}\ifx\l@debut\n@mref\y@tiunit{#2}\ifunitpr@sent%
     \edef\D@FTh@rdahlength{#2}\else\edef\D@FTh@rdahlength{#2pt}%
     \message{*** \BS@ figsetdefault (..., #1=#2, ...) : unit is missing, pt is assumed.}%
     \fi\else
    \def\n@mref{o}\ifx\l@debut\n@mref\edef\D@FTarrowheadout{#2}\else
    \def\n@mref{r}\ifx\l@debut\n@mref\edef\D@FTarrowheadratio{#2}\else
    \W@rnmesAttr{figsetdefault arrowhead}{#1}\fi\fi\fi\fi\fi}
\ctr@ld@f\def\Pssd@c@rve#1=#2|{\keln@mun#1|%
    \def\n@mref{r}\ifx\l@debut\n@mref\edef\D@FTroundness{#2}\else%
    \W@rnmesAttr{figsetdefault curve}{#1}\fi}
\ctr@ld@f\def\Pssd@fl@wchart#1=#2|{\keln@mtr#1|%
    \def\n@mref{arr}\ifx\l@debut\n@mref\expandafter\keln@mtr\l@suite|%
     \def\n@mref{owp}\ifx\l@debut\n@mref\edef\D@FTfcarrowposition{#2}\else
     \def\n@mref{owr}\ifx\l@debut\n@mref\edef\D@FTfcarrowrefpt{#2}\else
                     \W@rnmesAttr{figsetdefault flowchart}{#1}\fi\fi\else%
    \def\n@mref{bgc}\ifx\l@debut\n@mref\edef\D@FTfcbgcolor{#2}\else
    \def\n@mref{lin}\ifx\l@debut\n@mref\edef\D@FTfcline{#2}\else
    \def\n@mref{pad}\ifx\l@debut\n@mref\edef\D@FTfcxpadding{#2}%
                    \edef\D@FTfcypadding{#2}\else
    \def\n@mref{rad}\ifx\l@debut\n@mref\edef\D@FTfcradius{#2}\else
    \def\n@mref{sha}\ifx\l@debut\n@mref\edef\D@FTfcshape{#2}\else
    \def\n@mref{thi}\ifx\l@debut\n@mref\expandafter\keln@mtr\l@suite|%
     \def\n@mref{ckn}\ifx\l@debut\n@mref\edef\D@FTfcthickness{#2}\else
                     \W@rnmesAttr{figsetdefault flowchart}{#1}\fi\else%
    \def\n@mref{xpa}\ifx\l@debut\n@mref\edef\D@FTfcxpadding{#2}\else
    \def\n@mref{ypa}\ifx\l@debut\n@mref\edef\D@FTfcypadding{#2}\else
    \W@rnmesAttr{figsetdefault flowchart}{#1}\fi\fi\fi\fi\fi\fi\fi\fi\fi}
\ctr@ld@f\def\D@FTfcarrowposition{0.5}
\ctr@ld@f\def\D@FTfcarrowrefpt{start}
\ctr@ld@f\def\D@FTfcbgcolor{1}
\ctr@ld@f\def\D@FTfcline{polygon}
\ctr@ld@f\def\D@FTfcradius{0}
\ctr@ld@f\def\D@FTfcshape{rectangle}
\ctr@ld@f\def\D@FTfcthickness{0}
\ctr@ld@f\def\D@FTfcxpadding{0}
\ctr@ld@f\def\D@FTfcypadding{0}
\ctr@ld@f\def\Pssd@m@sh#1=#2|{\keln@mun#1|%
    \def\n@mref{d}\ifx\l@debut\n@mref\edef\D@FTmeshdiag{#2}\else%
    \W@rnmesAttr{figsetdefault mesh}{#1}\fi}
\ctr@ln@w{newif}\iffillm@de
\ctr@ld@f\def\Q@s@tfillmode#1{\expandafter\setfillm@de#1:}
\ctr@ld@f\def\setfillm@de#1#2:{\if#1n\fillm@defalse\else\fillm@detrue\fi}
\ctr@ld@f\def\D@FTfill{no}     
\ctr@ln@w{newif}\ifGRupdatem@de
\ctr@ld@f\def\Q@s@tupdate#1{\ifCUR@PS\W@rnmesIgn{figset (update=...)}%
    \else\expandafter\setupd@te#1:\fi}
\ctr@ld@f\def\setupd@te#1#2:{\if#1n\GRupdatem@defalse\else\GRupdatem@detrue\fi}
\ctr@ld@f\def\D@FTupdate{no}     
\ctr@ln@m\CUR@color \ctr@ln@m\CUR@colorc@md
\ctr@ld@f\def\s@uvcolor#1{\edef#1{\CUR@color}}
\ctr@ld@f\def\D@FTcolor{0}       
\ctr@ld@f\def\Pssetc@lor#1{\ifGR@cri\result@tent=\@ne\expandafter\c@lnbV@l#1 :%
    \def\CUR@color{}\def\CUR@colorc@md{}%
    \ifcase\result@tent\or\Q@s@tgray{#1}\or\or\Q@s@trgb{#1}\or\Q@s@tcmyk{#1}\fi\fi}
\ctr@ln@m\CUR@colorc@mdStroke
\ctr@ld@f\def\Q@s@tcmyk#1{\ifGR@cri\def\CUR@color{#1}\def\CUR@colorc@md{\c@msetcmykcolor}%
    \def\CUR@colorc@mdStroke{\c@msetcmykcolorStroke}%
    \ifCUR@PS\PSc@mment{setcmyk Color=#1}\us@primarC@lor\fi\fi}
\ctr@ld@f\def\Q@s@trgb#1{\ifGR@cri\def\CUR@color{#1}\def\CUR@colorc@md{\c@msetrgbcolor}%
    \def\CUR@colorc@mdStroke{\c@msetrgbcolorStroke}%
    \ifCUR@PS\PSc@mment{setrgb Color=#1}\us@primarC@lor\fi\fi}
\ctr@ld@f\def\Q@s@tgray#1{\ifGR@cri\def\CUR@color{#1}\def\CUR@colorc@md{\c@msetgray}%
    \def\CUR@colorc@mdStroke{\c@msetgrayStroke}%
    \ifCUR@PS\PSc@mment{setgray Gray level=#1}\us@primarC@lor\fi\fi}
\ctr@ln@m\fillc@md
\ctr@ld@f\def\us@primarC@lor{\immediate\write\fwf@g{\d@fprimarC@lor}%
    \let\fillc@md=\prfillc@md}
\ctr@ld@f\def\prfillc@md{\d@fprimarC@lor\space\c@mfill}
\ctr@ld@f\def\c@lnbV@l#1 #2:{\def\t@xt@{#1}\relax\ifx\t@xt@\empty\c@lnbV@l#2:
    \else\c@lnbV@l@#1 #2:\fi}
\ctr@ld@f\def\c@lnbV@l@#1 #2:{\def\t@xt@{#2}\ifx\t@xt@\empty%
    \def\t@xt@{#1}\ifx\t@xt@\empty\advance\result@tent\m@ne\fi
    \else\advance\result@tent\@ne\c@lnbV@l@#2:\fi}
\ctr@ld@f\def\Blackcmyk{0 0 0 1}
\ctr@ld@f\def\Whitecmyk{0 0 0 0}
\ctr@ld@f\def\Cyancmyk{1 0 0 0}
\ctr@ld@f\def\Magentacmyk{0 1 0 0}
\ctr@ld@f\def\Yellowcmyk{0 0 1 0}
\ctr@ld@f\def\Redcmyk{0 1 1 0}
\ctr@ld@f\def\Greencmyk{1 0 1 0}
\ctr@ld@f\def\Bluecmyk{1 1 0 0}
\ctr@ld@f\def\Graycmyk{0 0 0 0.50}
\ctr@ld@f\def\BrickRedcmyk{0 0.89 0.94 0.28} 
\ctr@ld@f\def\Browncmyk{0 0.81 1 0.60} 
\ctr@ld@f\def\ForestGreencmyk{0.91 0 0.88 0.12} 
\ctr@ld@f\def\Goldenrodcmyk{ 0 0.10 0.84 0} 
\ctr@ld@f\def\Marooncmyk{0 0.87 0.68 0.32} 
\ctr@ld@f\def\Orangecmyk{0 0.61 0.87 0} 
\ctr@ld@f\def\Purplecmyk{0.45 0.86 0 0} 
\ctr@ld@f\def\RoyalBluecmyk{1. 0.50 0 0} 
\ctr@ld@f\def\Violetcmyk{0.79 0.88 0 0} 
\ctr@ld@f\def\Blackrgb{0 0 0}
\ctr@ld@f\def\Whitergb{1 1 1}
\ctr@ld@f\def\Redrgb{1 0 0}
\ctr@ld@f\def\Greenrgb{0 1 0}
\ctr@ld@f\def\Bluergb{0 0 1}
\ctr@ld@f\def\Cyanrgb{0 1 1}
\ctr@ld@f\def\Magentargb{1 0 1}
\ctr@ld@f\def\Yellowrgb{1 1 0}
\ctr@ld@f\def\Grayrgb{0.5 0.5 0.5}
\ctr@ld@f\def\Chocolatergb{0.824 0.412 0.118}
\ctr@ld@f\def\DarkGoldenrodrgb{0.722 0.525 0.043}
\ctr@ld@f\def\DarkOrangergb{1 0.549 0}
\ctr@ld@f\def\Firebrickrgb{0.698 0.133 0.133}
\ctr@ld@f\def\ForestGreenrgb{0.133 0.545 0.133}
\ctr@ld@f\def\Goldrgb{1 0.843 0}
\ctr@ld@f\def\HotPinkrgb{1 0.412 0.706}
\ctr@ld@f\def\Maroonrgb{0.690 0.188 0.376}
\ctr@ld@f\def\Pinkrgb{1 0.753 0.796}
\ctr@ld@f\def\RoyalBluergb{0.255 0.412 0.882}
\ctr@ld@f\def\Pssetg@n#1=#2|{\keln@mun#1|%
    \def\n@mref{c}\ifx\l@debut\n@mref\update@ttr\D@FTcolor\Pssetc@lor{#2}\else
    \def\n@mref{d}\ifx\l@debut\n@mref\update@ttr\D@FTdash\Q@s@tdash{#2}\else
    \def\n@mref{f}\ifx\l@debut\n@mref\update@ttr\D@FTfill\Q@s@tfillmode{#2}\else
    \def\n@mref{j}\ifx\l@debut\n@mref\update@ttr\D@FTjoin\Q@s@tjoin{#2}\else
    \def\n@mref{u}\ifx\l@debut\n@mref\update@ttr\D@FTupdate\Q@s@tupdate{#2}\else
    \def\n@mref{w}\ifx\l@debut\n@mref\update@ttr\D@FTwidth\Q@s@twidth{#2}\else
    \W@rnmesAttr{figset}{#1}\fi\fi\fi\fi\fi\fi}
\ctr@ln@m\CUR@dash
\ctr@ld@f\def\s@uvdash#1{\edef#1{\CUR@dash}}
\ctr@ld@f\def\D@FTdash{1}        
\ctr@ld@f\def\Q@s@tdash#1{\ifGR@cri\edef\CUR@dash{#1}\ifCUR@PS\expandafter\Pssetd@sh#1 :\fi\fi}
\ctr@ld@f\def\Pssetd@shI#1{\PSc@mment{setdash Index=#1}\ifcase#1%
    \or\immediate\write\fwf@g{[] 0 \c@msetdash}
    \or\immediate\write\fwf@g{[6 2] 0 \c@msetdash}
    \or\immediate\write\fwf@g{[4 2] 0 \c@msetdash}
    \or\immediate\write\fwf@g{[2 2] 0 \c@msetdash}
    \or\immediate\write\fwf@g{[1 2] 0 \c@msetdash}
    \or\immediate\write\fwf@g{[2 4] 0 \c@msetdash}
    \or\immediate\write\fwf@g{[3 5] 0 \c@msetdash}
    \or\immediate\write\fwf@g{[3 3] 0 \c@msetdash}
    \or\immediate\write\fwf@g{[3 5 1 5] 0 \c@msetdash}
    \or\immediate\write\fwf@g{[6 4 2 4] 0 \c@msetdash}
    \fi}
\ctr@ld@f\def\Pssetd@sh#1 #2:{{\def\t@xt@{#1}\ifx\t@xt@\empty\Pssetd@sh#2:
    \else\def\t@xt@{#2}\ifx\t@xt@\empty\Pssetd@shI{#1}\else\s@mme=\@ne\def\debutp@t{#1}%
    \an@lysd@sh#2:\ifodd\s@mme\edef\debutp@t{\debutp@t\space\finp@t}\def\finp@t{0}\fi%
    \PSc@mment{setdash Pattern=#1 #2}%
    \immediate\write\fwf@g{[\debutp@t] \finp@t\space\c@msetdash}\fi\fi}}
\ctr@ld@f\def\an@lysd@sh#1 #2:{\def\t@xt@{#2}\ifx\t@xt@\empty\def\finp@t{#1}\else%
    \edef\debutp@t{\debutp@t\space#1}\advance\s@mme\@ne\an@lysd@sh#2:\fi}
\ctr@ln@m\CUR@width
\ctr@ld@f\def\s@uvwidth#1{\edef#1{\CUR@width}}
\ctr@ld@f\def\D@FTwidth{0.4}     
\ctr@ld@f\def\Q@s@twidth#1{\ifGR@cri\edef\CUR@width{#1}\ifCUR@PS%
    \PSc@mment{setwidth Width=#1}\immediate\write\fwf@g{#1 \c@msetlinewidth}\fi\fi}
\ctr@ln@m\CUR@join
\ctr@ld@f\def\s@uvjoin#1{\edef#1{\CUR@join}}
\ctr@ld@f\def\D@FTjoin{miter}   
\ctr@ld@f\def\Q@s@tjoin#1{\ifGR@cri\edef\CUR@join{#1}\ifCUR@PS\expandafter\Pssetj@in#1:\fi\fi}
\ctr@ld@f\def\Pssetj@in#1#2:{\PSc@mment{setjoin join=#1}%
    \if#1r\def\t@xt@{1}\else\if#1b\def\t@xt@{2}\else\def\t@xt@{0}\fi\fi%
    \immediate\write\fwf@g{\t@xt@\space\c@msetlinejoin}}
\ctr@ld@f\def\Pss@tspecifSt#1{\trtlis@rg{#1}{\Pss@tspecifSt@}}
\ctr@ld@f\def\Pss@tspecifSt@#1=#2|{\keln@mun#1|%
    \def\n@mref{c}\ifx\l@debut\n@mref\def\n@mref{#2}\ifx\n@mref\D@FTref\else%
     \s@uvcolor{\typ@color}\Pssetc@lor{#2}\fi\else
    \def\n@mref{d}\ifx\l@debut\n@mref\def\n@mref{#2}\ifx\n@mref\D@FTref\else%
     \s@uvdash{\typ@dash}\Q@s@tdash{#2}\fi\else
    \def\n@mref{j}\ifx\l@debut\n@mref\def\n@mref{#2}\ifx\n@mref\D@FTref\else%
     \s@uvjoin{\typ@join}\Q@s@tjoin{#2}\fi\else
    \def\n@mref{w}\ifx\l@debut\n@mref\def\n@mref{#2}\ifx\n@mref\D@FTref\else%
     \s@uvwidth{\typ@width}\Q@s@twidth{#2}\fi\else
    \W@rnmeskwd{Pss@tspecifSt}{#1}\fi\fi\fi\fi}
\ctr@ld@f\def\Psrest@reSt#1{\trtlis@rg{#1}{\Psrest@reSt@}}
\ctr@ld@f\def\Psrest@reSt@#1=#2|{\keln@mun#1|%
    \def\n@mref{c}\ifx\l@debut\n@mref\def\n@mref{#2}\ifx\n@mref\D@FTref\else%
     \Pssetc@lor{\typ@color}\fi\else
    \def\n@mref{d}\ifx\l@debut\n@mref\def\n@mref{#2}\ifx\n@mref\D@FTref\else%
     \Q@s@tdash{\typ@dash}\fi\else
    \def\n@mref{j}\ifx\l@debut\n@mref\def\n@mref{#2}\ifx\n@mref\D@FTref\else%
     \Q@s@tjoin{\typ@join}\fi\else
    \def\n@mref{w}\ifx\l@debut\n@mref\def\n@mref{#2}\ifx\n@mref\D@FTref\else%
     \Q@s@twidth{\typ@width}\fi\else
    \W@rnmeskwd{Psrest@reSt}{#1}\fi\fi\fi\fi}
\ctr@ld@f\def\Pssettrim@sh#1=#2|{\keln@mde#1|%
    \def\n@mref{co}\ifx\l@debut\n@mref\update@ttr\D@FTref\P@settmeshcolor{#2}\else
    \def\n@mref{da}\ifx\l@debut\n@mref\update@ttr\D@FTref\P@settmeshdash{#2}\else
    \def\n@mref{wi}\ifx\l@debut\n@mref\update@ttr\D@FTref\P@settmeshwidth{#2}\else
    \W@rnmesAttr{figset trimesh}{#1}\fi\fi\fi}
\ctr@ln@m\DDV@tmeshcolor
\ctr@ld@f\def\P@settmeshcolor#1{\edef\DDV@tmeshcolor{#1}}
\ctr@ln@m\DDV@tmeshdash
\ctr@ld@f\def\P@settmeshdash#1{\edef\DDV@tmeshdash{#1}}
\ctr@ln@m\DDV@tmeshwidth
\ctr@ld@f\def\P@settmeshwidth#1{\edef\DDV@tmeshwidth{#1}}
\ctr@ld@f\def\figdrawtrimesh#1[#2,#3,#4]{{\ifCUR@PS\ifGR@cri%
    \PSc@mment{trimesh Type=#1, Triangle=[#2,#3,#4]}%
    \s@uvc@ntr@l\et@tpstrimesh\ifnum#1>\@ne%
    \Pss@tspecifSt{color=\DDV@tmeshcolor,dash=\DDV@tmeshdash,width=\DDV@tmeshwidth}%
    \setc@ntr@l{2}%
    \Pstrimeshp@rt#1[#2,#3,#4]\Pstrimeshp@rt#1[#3,#4,#2]\Pstrimeshp@rt#1[#4,#2,#3]%
    \Psrest@reSt{color=\DDV@tmeshcolor,dash=\DDV@tmeshdash,width=\DDV@tmeshwidth}%
    \fi\figdrawline[#2,#3,#4,#2]%
    \PSc@mment{End trimesh}\resetc@ntr@l\et@tpstrimesh\fi\fi}}
\ctr@ld@f\def\Pstrimeshp@rt#1[#2,#3,#4]{{\l@mbd@un=\@ne\l@mbd@de=#1\loop\ifnum\l@mbd@de>\@ne%
    \advance\l@mbd@de\m@ne\figptbary-1:[#2,#3;\l@mbd@de,\l@mbd@un]%
    \figptbary-2:[#2,#4;\l@mbd@de,\l@mbd@un]\figdrawline[-1,-2]%
    \advance\l@mbd@un\@ne\repeat}}
\initpr@lim\initpss@ttings\initPDF@rDVI
\ctr@ln@w{newbox}\figBoxA
\ctr@ln@w{newbox}\figBoxB
\ctr@ln@w{newbox}\figBoxC
\catcode`\@=12

\pssetdefault(update=no)
\newbox\figbox

\usepackage{color}
\definecolor{gr}{rgb}   {0.,   0.69,   0.23 } 
\definecolor{mg}{rgb}   {0.85,  0.,    0.85} 
\newcommand{\Gr}{\color{gr}}
\newcommand{\Mg}{\color{mg}}
\newcommand{\Bk}{\color{black}}
\newcommand{\Rd}{\color{red}}                                                                                                  
\newcommand{\Bl}{\color{blue}}

\definecolor{marin}{rgb}   {0.,   0.65,   0.25}
\definecolor{rouge}{rgb}   {0.8,   0.,   0.}
\def\CoulB{1 0.4 0}

\newtheorem{theorem}{Theorem}[section]
\newtheorem{lemma}[theorem]{Lemma}

\theoremstyle{definition}

\newtheorem{notation}[theorem]{Notation}

\theoremstyle{remark}

\newtheorem{example}[theorem]{Example}

\numberwithin{equation}{section}


\newcommand{\ee}{\hskip0.15ex}
\newcommand{\me}{\hskip-0.15ex}
\newcommand{\dd}[1]{_{\raise-0.3ex\hbox{$\scriptstyle #1$}}}

\newcommand{\vpha}{\left.\vphantom{T^{j_0}_{j_0}}\!\!\right.}

\newcommand {\Norm}[2]{ \mathchoice 
    {|\ee #1\ee|\dd{#2}}
    {| #1 |_{#2}}
    {| #1 |_{#2}}
    {| #1 |_{#2}} }
\newcommand {\DNorm}[2]{ \mathchoice 
    {\|\ee #1\ee\|\dd{#2}}
    {\| #1 \|_{#2}}
    {\| #1 \|_{#2}}
    {\| #1 \|_{#2}} }
\newcommand {\Normc}[2]{ \mathchoice 
    {|\ee #1\ee|\dd{#2}^2}
    {| #1 |_{#2}^2}
    {| #1 |_{#2}^2}
    {| #1 |_{#2}^2} }
\newcommand {\DNormc}[2]{ \mathchoice 
    {\|\ee #1\ee\|\dd{#2}^2}
    {\| #1 \|_{#2}^2}
    {\| #1 \|_{#2}^2}
    {\| #1 \|_{#2}^2} }

\newcommand\rd{{\mathrm d}}
\newcommand\re{{\mathrm e}}

\renewcommand{\div}{\operatorname{\rm div}}
\newcommand{\grad}{\operatorname{\textbf{grad}}}
\newcommand{\curl}{\operatorname{\textbf{curl}}}
\newcommand{\curls}{\operatorname{\rm curl}}
\newcommand{\Cinf}{\mathscr{C}^\infty}

\renewcommand\P{{\mathbb P}}
\newcommand\C{{\mathbb C}}
\newcommand\R{{\mathbb R}}
\newcommand\N{{\mathbb N}}
\newcommand\T{\mathbb{T}}
\newcommand\Z{{\mathbb Z}}

\newcommand{\Rin}{R_{\mathrm{in}}}
\newcommand{\Rout}{R_{\mathrm{out}}}

\newcommand\A{{\mathscr A}}
\newcommand\B{{\mathcal B}}
\newcommand\BB{{\mathscr B}}
\newcommand\sC{{\mathscr C}}
\newcommand\CC{{\mathcal C}}
\newcommand\D{{\mathcal D}}
\newcommand\DD{{\mathscr D}}
\newcommand\E{{\mathcal E}}
\newcommand\F{{\mathscr F}}
\newcommand\G{{\mathscr G}}
\newcommand\GG{{\mathcal G}}
\newcommand\I{{\mathcal I}}
\newcommand\J{{\mathcal J}}
\newcommand\K{{\mathcal K}}
\renewcommand\L{{\mathscr L}}
\newcommand{\Mt}{\,\widetilde{\!M\!}\,}
\newcommand\OO{{\mathcal O}}
\newcommand\PP{{\mathscr P}}
\newcommand\QQ{{\mathcal Q}}
\newcommand\RR{{\mathcal R}}
\renewcommand\SS{{\mathcal S}}
\newcommand\VV{{\boldsymbol V}}
\newcommand\WW{{\boldsymbol W}}
\newcommand\XX{{\boldsymbol X}}

\newcommand{\ba}{\boldsymbol{a}}
\newcommand{\bb}{\boldsymbol{b}}
\newcommand{\bu}{\boldsymbol{u}}
\newcommand{\bv}{\boldsymbol{v}}
\newcommand{\bw}{\boldsymbol{w}}
\newcommand{\bx}{{\boldsymbol{x}}}
\newcommand{\by}{{\boldsymbol{y}}}

\renewcommand\H{{\boldsymbol H}}

\newcommand\ttau{\boldsymbol{\tau}}

\newcommand\sep{\hskip-0.2ex,\hskip0.15ex}
 
\newcommand {\gF}{\mathfrak{F}}
\newcommand  {\gK}{{\mathfrak K}}
\newcommand  {\gS}{{\mathfrak S}}
\newcommand  {\gZ}{{\mathfrak Z}}
\newcommand{\CCC}{\mathfrak{C}}
\newcommand{\FFF}{\mathfrak{F}}

\newcommand{\inff}{\mathop{\operatorname{\vphantom{p}inf}}}
\newcommand{\xix}{\boldsymbol{\xi}}
\newcommand{\ii}{\mathrm{i}}
\newcommand {\Id}{\mathbb I}
\newcommand{\bc}{\boldsymbol{c}}
\newcommand{\comp}{\mathrm{comp}}
\newcommand{\diam}{\mathrm{diam}}
\newcommand{\dist}{\mathrm{dist}}
\newcommand{\meas}{\mathrm{meas}}
\newcommand{\HP}{\mathsf{HP}}

\newcommand{\DU}{\Delta^{-1}_U}

\newcommand{\di}{\displaystyle}

\renewcommand{\Re}{\operatorname{\rm Re}}
\renewcommand{\Im}{\operatorname{\rm Im}}

\begin{document}

\title{\bf On the inequalities of Babu\v{s}ka--Aziz, Friedrichs and Horgan--Payne}  

\author{Martin Costabel and Monique Dauge}

\address{IRMAR UMR 6625 du CNRS, Universit\'{e} de Rennes 1 }

\address{Campus de Beaulieu,
35042 Rennes Cedex, France \bigskip\bigskip}

\email{martin.costabel@univ-rennes1.fr}
\urladdr{http://perso.univ-rennes1.fr/martin.costabel/ \bigskip}

\email{monique.dauge@univ-rennes1.fr}
\urladdr{http://perso.univ-rennes1.fr/monique.dauge/}


\keywords{LBB condition, inf-sup constant, star-shaped domain}

\subjclass{30A10, 35Q35}

\begin{abstract}
The equivalence between the inequalities of Babu\v{s}ka--Aziz and Friedrichs for sufficiently smooth bounded domains in the plane has been shown by Horgan and Payne 30 years ago. We prove that this equivalence, and the equality between the associated constants, is true without any regularity condition on the domain. For the Horgan--Payne inequality, which is an upper bound of the Friedrichs constant for plane star-shaped domains in terms of a geometric quantity known as the Horgan--Payne angle, we show that it is true for some classes of domains, but not for all bounded star-shaped domains. We prove a weaker inequality that is true in all cases.
\end{abstract}

\maketitle


\section{Introduction}

In 1983, Horgan and Payne published a paper \cite{HorganPayne1983} that has since become a classical reference, in which they proved equivalence of three inequalities pertaining to plane domains: the Korn inequality from linear elasticity, the Friedrichs inequality for conjugate harmonic functions, and the Babu\v{s}ka--Aziz inequality that quantifies the inf-sup condition for the divergence. After finding equations between the constants in these inequalities, they estimate the constant in the Friedrichs inequality for star-shaped domains. The estimate involves the minimal angle between the radius vector and the tangent on the boundary, later sometimes called ``Horgan--Payne angle'' \cite{Stoyan2001}.

The present paper evolved from trying to understand the precise hypotheses on the domain that are needed for the proofs in the paper \cite{HorganPayne1983}. On one hand, in \cite{HorganPayne1983} it is said that ``we assume that the domain is simply-connected, with $C^{1}$ boundary. It will be clear from our arguments that the results hold for simply-connected Lipschitz domains.'' Some of the proofs use even higher regularity, however. On the other hand, recently the Babu\v{s}ka--Aziz inequality has been proved \cite{AcostaDuranMuschietti2006}
for the class of John domains, which is a larger class than Lipschitz domains, including unions of Lipschitz domains, weakly Lipschitz domains, and even some domains with a fractal boundary. It is therefore desirable to know whether the equivalence between the inequalities of Friedrichs and of Babu\v{s}ka--Aziz persists for this larger class of domains. We show that, indeed, this equivalence holds without any regularity assumption on the domain.

For star-shaped domains, we prove that the Horgan--Payne estimate of the Friedrichs constant holds for some domains, including all triangles, rectangles and regular polygons, but that to be true in general, it has to be replaced by a more complicated estimate. We give a counterexample of a domain for which the Horgan--Payne estimate is not true. Finally, using the approach of Horgan and Payne, we obtain, for the case of plane star-shaped domains, an improvement of the Babu\v{s}ka--Aziz inequality shown by Dur\'an \cite{Duran2012} for bounded domains in any dimension.

\section{The inequalities}\label{S:Ineq}

\subsection{Notation}
Let $\Omega$ be a bounded domain in $\R^{d}$, $d\ge2$. Thus we will assume throughout that $\Omega$ is bounded and connected, but we will not impose any a-priori regularity hypothesis. We use the standard definitions of the space of square integrable functions $L^{2}(\Omega)$ and of the Sobolev space $H^{1}(\Omega)$. The norm and scalar product in $L^{2}(\Omega)$ will be denoted by $\DNorm{\cdot}{0,\Omega}$ and $\langle\cdot,\cdot\rangle_\Omega\,$. We will need the subspace of functions of mean value zero
\[
 L^2_\circ(\Omega) = \Big\{q\in L^{2}(\Omega) \mid \int_{\Omega} q(\bx)\,d\bx = 0\Big\}\,.
\]
The space $H^1_0(\Omega)$ is the closure of $\Cinf_{0}(\Omega)$ in the norm of $H^{1}(\Omega)$. On account of the Poincar\'e inequality, the $H^{1}$ seminorm is a norm on $H^1_0(\Omega)$, which we will denote by $\Norm{\cdot}{1,\Omega}$. The dual space of $H^{1}_{0}(\Omega)$ with $L^{2}(\Omega)$ as pivot space is $H^{-1}(\Omega)$. The dual norm to $\Norm{\cdot}{1,\Omega}$ is $\DNorm{\cdot}{-1,\Omega}$, and the duality is again denoted by $\langle\cdot,\cdot\rangle_\Omega\,$. We will also use the natural extension of these notations to vector functions, so that for instance for $\bv=(v_{1},\dots,v_{d})\in H^1_0(\Omega)^{d}$
\[
   \Norm{\bv}{1,\Omega} = \DNorm{\grad \bv}{0,\Omega} =
   \Big(\sum_{k=1}^d \sum_{j=1}^d \DNormc{\partial_{x_j}v_k}{0,\Omega} \Big)^{1/2} .
\]
If no misunderstanding is possible, we will simply write $\DNorm{\cdot}{0}$ and $\Norm{\cdot}{1}$ for $\DNorm{\cdot}{0,\Omega}$ and $\Norm{\cdot}{1,\Omega}$. Most of the discussion of this paper will concern plane domains, but one of the new technical tools proved later on (see Lemma~\ref{L:etrang}) will be valid for any dimension $d\ge2$. 

\subsection{The Babu\v{s}ka--Aziz inequality}

In \cite[Lemma 5.4.3, p.\ 172]{BabuskaAziz} Babu\v{s}ka--Aziz prove for bounded Lipschitz domains $\Omega$ in dimension $d=2$ that there is a finite constant $C$ such that for any $q\in L^{2}_{\circ}(\Omega)$ there exists a solution $\bu\in H^{1}_{0}(\Omega)^{2}$ of the equation
\[
  \div\bu = q
\]
satisfying the estimate
\begin{equation}
\label{E:BA}
 \Normc{\bu}{1,\Omega} \le C\, \DNormc{q}{0,\Omega}\,.
\end{equation}
Following \cite{HorganPayne1983}, we call \eqref{E:BA} the \emph{Babu\v{s}ka--Aziz inequality} and the smallest possible constant $C$ in \eqref{E:BA}, which we will denote by $C(\Omega)$, the \emph{Babu\v{s}ka--Aziz constant} of the domain $\Omega$. 

For smooth domains, estimates such as \eqref{E:BA} have been shown as early as 1961 by Cattabriga \cite{Cattabriga1961} in the the context of boundary value problems for the Stokes system, using even $L^{p}$ norms with $p\not=2$.

Applying duality and basic Hilbert space theory, one finds the well known \cite{Brezzi1974} equivalence between the Babu\v{s}ka--Aziz inequality and the a-priori estimate for the gradient with a constant $\beta>0$
\begin{equation}
\label{E:grad}
 \forall q\in L^{2}_{\circ}(\Omega)\,:\quad
  \DNorm{\grad  q}{-1,\Omega} \ge \beta \DNorm{q}{0,\Omega}\,,
\end{equation}
as well as the \emph{inf-sup condition} $\beta(\Omega)>0$, where
\begin{equation}
\label{E:infsup}
    \beta(\Omega) = \ {\inff_{q\ee\in\ee L^2_\circ(\Omega)}} \ \ 
   {\sup_{\bv\ee\in\ee
   H^1_0(\Omega)^2}} \ \ 
   \frac{\big\langle \div\bv,  q \big\rangle_\Omega}{\Norm{\bv}{1,\Omega}\,\DNorm{q}{0,\Omega}} \ .
\end{equation}
The relation between the \emph{inf-sup constant} $\beta(\Omega)$, which is also the best possible  constant in \eqref{E:grad}, and the Babu\v{s}ka--Aziz constant is
\begin{equation}
\label{E:Cbeta}
  C(\Omega) = \frac1{\beta(\Omega)^{2}}\,.
\end{equation}
The gradient estimate \eqref{E:grad} is one of the standard tools in the proof of the Korn inequality and has been proved in this context for bounded Lipschitz domains in any dimension by Ne\v{c}as \cite[Chap.~3, Lemme 3.7.1]{Necas1967}. It is sometimes associated with the name of Lions, see \cite{CiarletCiarlet2005} and \cite[Note (27) p.~320]{MagenesStampacchia1958}. 

The inf-sup condition plays an important role for the pressure stability in hydrodynamics \cite{Dobrowolski2005}, for the rate of convergence of iterative methods such as the Uzawa algorithm \cite{Crouzeix1997,Stoyan2001} and, in a discrete version, for the finite element approximation of the Stokes equation. In  the context of mixed variational formulations and their approximation this has been explored since Brezzi's fundamental paper \cite{Brezzi1974}. In this context, is often referred to as Babu\v{s}ka--Brezzi or \emph{Ladyzhenskaya-Babu\v{s}ka-Brezzi} condition and the inf-sup constant $\beta(\Omega)$ as \emph{LBB constant}, see \cite{Malkus1981,OdenKikuchiSong82} and many later references. In the paper \cite{LadyzhenskayaSolonnikov1978}, Ladyzhenskaya and Solonnikov discuss the validity of this estimate --- but in the form of the Babu\v{s}ka--Aziz estimate \eqref{E:BA} --- for a class of domains larger than the class of Lipschitz domains.

In a series of recent papers, Dur\'an, Muschietti and coauthors extended the validity of the inf-sup condition to the class of John domains, which contains among others finite unions of bounded Lipschitz domains, weakly Lipschitz domains, and even some domains with fractal boundary, see \cite{AcostaDuranMuschietti2006,DuranMuschiettiRussTchamitchian2010,Duran2012}.

\subsection{The Friedrichs inequality}

The Friedrichs inequality is an $L^{2}$ estimate between conjugate harmonic functions in dimension $d=2$. Friedrichs proved it in \cite{Friedrichs1937} for a class of piecewise smooth domains and discussed its relation with the Cosserat eigenvalue problem of plane elasticity theory and the Korn inequality. 
It can be formulated using holomorphic functions in $\Omega$, where $\R^{2}$ is identified with the complex plane. If $w$ is holomorphic in $\Omega$, $w=h+ig$ with real-valued $h$ and $g$, then $h$ and $g$ are conjugate harmonic functions and satisfy $\grad h = \curl g$. One considers the space $\gF_\circ(\Omega)$ of complex valued holomorphic functions that are square integrable on $\Omega$ and of mean value zero.

The \emph{Friedrichs inequality} is satisfied for $\Omega$ if there is a finite constant $\Gamma$ such that for all $h+ig\in\gF_\circ(\Omega)$
\begin{equation}
\label{E:Fr}
   \DNormc{h}{0,\Omega} \le \Gamma\, \DNormc{g}{0,\Omega}\,.
\end{equation}
The smallest possible constant is the \emph{Friedrichs constant} of the domain and will be denoted by $\Gamma(\Omega)$. Friedrichs also gave a counter-example of a domain with an exterior cusp for which $\Gamma(\Omega)$ is infinite. 

\subsection{The Horgan--Payne inequality}

Whereas we followed Horgan--Payne \cite{HorganPayne1983} for the naming of the inequalities of Babu\v{s}ka--Aziz and Friedrichs, we will now introduce an inequality that appears in \cite{HorganPayne1983}, but has not so far been named, as far as we know. It involves a geometric quantity $\omega(\Omega)$ that has been called \emph{Horgan--Payne angle} \cite{Stoyan2001}. This angle is defined for a domain $\Omega\subset\R^{2}$ that is star-shaped with respect to a ball with center $\bx_{0}$. In this case, the boundary is Lipschitz continuous, has a tangent almost everywhere, and the ray from $\bx_{0}$ passing through $\bx\in\partial\Omega$ has a positive angle $\omega(\bx)\le\frac\pi2$ with the tangent. The quantity
\begin{equation}
\label{E:HPangle}
 \omega(\Omega) = \inf_{\bx\in\partial\Omega}\omega(\bx)
\end{equation}
is also strictly positive. Note that $\omega(\Omega)$ depends not only on the domain $\Omega$, but also on the center $\bx_{0}$.

The \emph{Horgan--Payne inequality} is the estimate for the inf-sup constant
\begin{equation}
\label{E:HP}
 \beta(\Omega) \ge \sin\frac{\omega(\Omega)}2\,.
\end{equation}
In \cite[Eq.\ (6.29)]{HorganPayne1983}, this inequality is formulated as an estimate for the Friedrichs constant
\begin{equation}
\label{E:HPGamma}
 \Gamma(\Omega) \le \sup_{\bx\in\partial\Omega}
   \bigg(\frac{1}{\cos\gamma(\bx)}+\sqrt{\frac1{\cos^{2}\gamma(\bx)}-1}\, \bigg)^{2} 
\end{equation}
where $\gamma(\bx)=\frac\pi2-\omega(\bx)$ is the positive angle between the ray from $\bx_{0}$ passing through $\bx\in\partial\Omega$ and the normal in $\bx$. In view of the relation $C(\Omega)=\Gamma(\Omega)+1$, see Theorem~\ref{T:FrBA} below, and \eqref{E:Cbeta}, the estimates \eqref{E:HPGamma} and \eqref{E:HP} are equivalent, see also \cite[Lemma 1]{Stoyan2001}. 

\subsection{The main results}

We can now formulate the main results of this paper. The remaining sections will be devoted to the proofs, including some technical lemmas that may be of independent interest, and some examples and counter-examples.

\begin{theorem}
\label{T:FrBA}
Let $\Omega\subset\R^{2}$ be a bounded domain. Then the Babu\v{s}ka--Aziz constant $C(\Omega)$ is finite if and only if the Friedrichs constant $\Gamma(\Omega)$ is finite, and
\begin{equation}
\label{E:CGamma}
 C(\Omega) = \Gamma(\Omega) + 1\,.
\end{equation}
\end{theorem}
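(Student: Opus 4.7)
The plan hinges on the integration-by-parts identity
\begin{equation*}
 |\bv|_{1,\Omega}^{2} = \DNormc{\div\bv}{0,\Omega}+\DNormc{\curls\bv}{0,\Omega}
 \qquad\forall\,\bv\in H^{1}_{0}(\Omega)^{2},
\end{equation*}
which follows from the symmetry $\langle\partial_{1}v_{1},\partial_{2}v_{2}\rangle_{\Omega}=\langle\partial_{2}v_{1},\partial_{1}v_{2}\rangle_{\Omega}$ valid on $\Cinf_{0}(\Omega)^{2}$ and extends to $H^{1}_{0}(\Omega)^{2}$ by density, with no regularity of $\partial\Omega$ required. I would prove the two bounds $C(\Omega)\le\Gamma(\Omega)+1$ and $\Gamma(\Omega)\le C(\Omega)-1$ separately; their conjunction yields the equality and the equivalence of the finiteness hypotheses (if one constant were finite and the other infinite, the relevant direction would give a contradiction).

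For $C(\Omega)\le\Gamma(\Omega)+1$, I would use \eqref{E:Cbeta} and estimate $\beta(\Omega)$ from below. Given $q\in L^{2}_{\circ}(\Omega)$, let $\boldsymbol{\phi}\in H^{1}_{0}(\Omega)^{2}$ be characterised by $\langle\grad\boldsymbol{\phi},\grad\bv\rangle_{\Omega}=-\langle q,\div\bv\rangle_{\Omega}$ for every $\bv\in H^{1}_{0}(\Omega)^{2}$; then $-\Delta\boldsymbol{\phi}=\grad q$ in $\DD'(\Omega)^{2}$ and $\DNormc{\grad q}{-1,\Omega}=\Normc{\boldsymbol{\phi}}{1,\Omega}=-\langle q,\div\boldsymbol{\phi}\rangle_{\Omega}$. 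Set $r=q+\div\boldsymbol{\phi}$ and $\omega=\curls\boldsymbol{\phi}$, both $L^{2}$ mean-zero. Applying $\div$ and $\curls$ to $\Delta\boldsymbol{\phi}=-\grad q$ gives $\Delta r=\Delta\omega=0$, while the componentwise identities $\Delta\phi_{j}=-\partial_{j}q$ translate into the Cauchy--Riemann equations $\partial_{1}r=\partial_{2}\omega$ and $\partial_{2}r=-\partial_{1}\omega$. Thus $r+i\omega\in\gF_{\circ}(\Omega)$, and the Friedrichs inequality delivers $\DNormc{r}{0,\Omega}\le\Gamma(\Omega)\DNormc{\omega}{0,\Omega}$. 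Expanding the key identity applied to $\boldsymbol{\phi}$ together with the definition of $r$ yields
\[
 \DNormc{\grad q}{-1,\Omega}=\DNormc{q}{0,\Omega}-\DNormc{r}{0,\Omega}-\DNormc{\omega}{0,\Omega}
 \quad\text{and}\quad
 \DNormc{r}{0,\Omega}+\DNormc{\omega}{0,\Omega}=\langle q,r\rangle_{\Omega}.
\]
Cauchy--Schwarz $\langle q,r\rangle_{\Omega}\le\DNorm{q}{0,\Omega}\DNorm{r}{0,\Omega}$ together with Friedrichs, after a short elimination, gives $\DNormc{r}{0,\Omega}+\DNormc{\omega}{0,\Omega}\le\frac{\Gamma(\Omega)}{\Gamma(\Omega)+1}\DNormc{q}{0,\Omega}$, whence $\DNormc{\grad q}{-1,\Omega}\ge\frac{1}{\Gamma(\Omega)+1}\DNormc{q}{0,\Omega}$.

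For $\Gamma(\Omega)\le C(\Omega)-1$, take any $h+ig\in\gF_{\circ}(\Omega)$ and apply the Babu\v{s}ka--Aziz inequality to $q=h\in L^{2}_{\circ}(\Omega)$ to produce $\bu\in H^{1}_{0}(\Omega)^{2}$ with $\div\bu=h$ and $\Normc{\bu}{1,\Omega}\le C(\Omega)\DNormc{h}{0,\Omega}$. Setting $\omega=\curls\bu$, the key identity forces $\DNormc{\omega}{0,\Omega}\le(C(\Omega)-1)\DNormc{h}{0,\Omega}$. The orthogonality linking $\omega$ to $g$ is obtained by computing $\int_{\Omega}\bu\cdot\grad h\,\rd\bx$ in two ways: direct integration by parts via $\div\bu=h$ yields $-\DNormc{h}{0,\Omega}$, whereas substituting $\grad h=(\partial_{2}g,-\partial_{1}g)$ from the Cauchy--Riemann equations (valid because $h$ is harmonic, hence smooth in the open set $\Omega$) and then integrating by parts against $\bu\in H^{1}_{0}$ yields $\langle\omega,g\rangle_{\Omega}$. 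Hence $\langle\omega,g\rangle_{\Omega}=-\DNormc{h}{0,\Omega}$, and Cauchy--Schwarz gives $\DNormc{h}{0,\Omega}\le\DNorm{\omega}{0,\Omega}\DNorm{g}{0,\Omega}\le\sqrt{C(\Omega)-1}\,\DNorm{h}{0,\Omega}\DNorm{g}{0,\Omega}$, so $\DNormc{h}{0,\Omega}\le(C(\Omega)-1)\DNormc{g}{0,\Omega}$.

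The main obstacle is noticing that the auxiliary vector field $\boldsymbol{\phi}=(-\Delta)^{-1}\grad q$ produces, through its divergence and curl, a holomorphic function $r+i\omega\in\gF_{\circ}(\Omega)$; once this is extracted, both directions reduce to one application of Cauchy--Schwarz combined either with Friedrichs or with Babu\v{s}ka--Aziz. The two arguments are perfectly symmetric in spirit, which also handles the infinite-constant case without additional work.
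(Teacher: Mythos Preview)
Your proof is correct and follows essentially the same route as the paper: the same integration-by-parts identity, the same auxiliary field $\boldsymbol{\phi}=(-\Delta)^{-1}\grad q$ (the paper's $-\bu$), and the same observation that its curl together with the defect $q+\div\boldsymbol{\phi}$ form a conjugate harmonic pair in $\gF_{\circ}(\Omega)$. Your final algebra in the $C(\Omega)\le\Gamma(\Omega)+1$ direction is packaged a little differently---you bound $\DNormc{r}{0}+\DNormc{\omega}{0}$ directly via $\langle q,r\rangle_{\Omega}\le\DNorm{q}{0}\DNorm{r}{0}$ combined with Friedrichs, whereas the paper first derives the companion inequality $\DNormc{\omega}{0}\le\Gamma(\Omega)\DNormc{\div\boldsymbol{\phi}}{0}$ and then expands $\DNormc{q}{0}$---but the two computations are algebraically equivalent and yield the identical constant.
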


This identity was proved in \cite[Sec.\ 5]{HorganPayne1983} under additional hypotheses. The proof there requires that $\Omega$ is simply connected and satisfies some implicit regularity assumption that amounts basically to $C^{2}$ regularity. In Section~\ref{S:FrBA}, we give a different proof that does not need any assumptions on $\Omega$. As a corollary we obtain that the Friedrichs inequality is true for the same class of domains as the inf-sup condition for the divergence, in particular for John domains.

In Section~\ref{S:HP} we revisit the proof of Horgan--Payne \cite[Section 6]{HorganPayne1983} and prove that it gives a weaker, more complicated estimate than \eqref{E:HPGamma}, less amenable to a simple geometric interpretation. Nevertheless we show in Section~\ref{S:Ex} that for a collection of simple plane domains the two estimates coincide. In the opposite direction, we prove an upper bound for the inf-sup constant for domains allowing a ``small cut'', Lemma~\ref{L:etrang}. This can be used to disprove the Horgan--Payne inequality for some domains. Such domains can be constructed from logarithmic spirals, or from segments and circular arcs, or even as polygons. 

\begin{theorem}
\label{T:HPholds}
(i) Let $\Omega\subset\R^{2}$ be any triangle, rectangle, rhombus or regular polygon. Then with respect to its barycenter, the Horgan--Payne inequality holds. (ii) There exist domains $\Omega\subset\R^{2}$ star-shaped with respect to a ball such that the Horgan--Payne inequality \eqref{E:HP} is not satisfied. 
\end{theorem}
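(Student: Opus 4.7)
The plan is to treat the two parts separately: for (i), I would exploit the refined estimate promised in Section~\ref{S:HP} and verify that it collapses to the clean form $\sin(\omega(\Omega)/2)$ for each listed class; for (ii), I would build a bottleneck domain and apply the cut-based upper bound of Lemma~\ref{L:etrang} to defeat the right-hand side of \eqref{E:HP}.

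For part (i), the strategy is to compute, for each class of polygons with $\bx_0$ at the barycenter, both the Horgan--Payne angle $\omega(\Omega)$ and the quantity arising from the refined estimate, and to show that the two agree. In each case the angle function $\bx\mapsto\omega(\bx)$ is piecewise smooth, with $\omega(\bx)=\pi/2$ at the foot of the perpendicular from $\bx_0$ to each edge and decreasing monotonically toward the vertices, so the infimum in \eqref{E:HPangle} is attained at the vertices. For a regular $n$-gon the full dihedral symmetry reduces the analysis to a single half-edge; for a rhombus the $\Z/2\Z\times\Z/2\Z$ symmetry plays the analogous role and one obtains a closed-form expression in terms of the acute vertex angle. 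For rectangles I would parameterize a single edge (the estimate depending only on the aspect ratio) and check the inequality directly. For a general triangle, where the barycenter lacks point symmetries, I would work vertex by vertex, using the classical fact that the centroid divides each median in ratio $2{:}1$ to get an explicit formula for the angle at each vertex and to evaluate the refined bound.

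For part (ii), the plan is to produce a star-shaped domain in which $\sin(\omega(\Omega)/2)$ stays bounded below by a fixed positive constant while $\beta(\Omega)$ is made arbitrarily small. A logarithmic spiral is a natural choice, since such a curve meets every ray through the origin at a fixed angle, so the Horgan--Payne angle along a spiral boundary is constant and explicit in terms of the pitch. By letting the spiral wind sufficiently many times around the origin, successive turns come arbitrarily close and produce a narrow channel through which the domain communicates. Closing up the region with a short radial segment and/or a circular arc yields a bounded domain still star-shaped with respect to a small ball around the origin. Lemma~\ref{L:etrang} applied to the narrow channel delivers an upper bound $\beta(\Omega)\le\varepsilon$ that can be made as small as we wish, while the Horgan--Payne side of \eqref{E:HP} remains fixed, contradicting the inequality. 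Replacing the spiral by a polygonal or circular approximation (as indicated in the introduction) yields the variants mentioned.

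The main obstacle in (i) will be verifying rigorously that the refined boundary supremum really does coincide with the simple $\sin(\omega(\Omega)/2)$ for each polygon class; this is essentially a monotonicity statement along each edge combined with a symmetry reduction, but the rectangle case requires a genuine one-parameter calculation. The main obstacle in (ii) will be organizing the spiral construction so that the resulting region is star-shaped with respect to an honest ball: the reference point must lie strictly inside the channel region, and every ray from it must hit the boundary exactly once, which forces some care at the two endpoints of the spiral where the closing arcs are glued.
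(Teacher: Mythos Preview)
Your outline for part~(i) is essentially the paper's approach: one shows that the refined upper bound $M(\Omega)$ of Section~\ref{S:HP} coincides with the Horgan--Payne quantity $m(\Omega)$ for these polygons, whence Theorem~\ref{th:2} gives \eqref{E:HPGamma} and hence \eqref{E:HP}. The paper's organization is cleaner than a vertex-by-vertex computation, however: it observes that triangles and rhombi are \emph{circumscribed} polygons (all $d_j$ equal) while rectangles and regular polygons are \emph{cyclic} (all $r_j$ equal), and in either case the polygon formulas \eqref{E:Mpolyg}--\eqref{E:Mpolyg2} collapse to the single-parameter minimization \eqref{E:Mdalpha}, which matches \eqref{E:mpolyg} exactly. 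Your centroid/median plan for triangles would be working against the grain here; the natural center for the argument is the incenter.

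Your plan for part~(ii) has a genuine gap. A domain that is star-shaped with respect to a ball centered at $\bx_0$ has a boundary given by a single-valued polar parametrization $r=f(\theta)$, $\theta\in\T$; there is no room for a spiral to wind ``sufficiently many times around the origin'', and hence no successive turns to squeeze together into a channel. Closing a one-turn spiral with a radial segment, as you suggest, introduces a boundary arc along which $\omega(\bx)=0$, destroying strict star-shapedness. The paper's construction is different in an essential way: the logarithmic spiral $r=e^{-c\theta}$ is used only on a quarter-turn $0\le\theta\le\tfrac\pi2$ and the boundary is completed by reflection about both coordinate axes (the ``Cupid's bow''). The bottleneck is then the vertical cut $\Sigma$ at $\theta=\pm\tfrac\pi2$, of length $2e^{-c\pi/2}$, separating two mirror-image halves of comparable area. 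In this picture the Horgan--Payne side of \eqref{E:HP} is \emph{not} held fixed; it decays like $1/(2c)$. The contradiction comes from the fact that Lemma~\ref{L:etrang} forces $\beta(\Omega)$ to decay \emph{exponentially} in $c$, which eventually undercuts any polynomial lower bound. So both the geometry of the bottleneck and the asymptotic comparison need to be rethought along these lines.
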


In Section~\ref{S:rho}, we use the idea of Horgan--Payne's proof of their inequality to obtain an explicit lower bound of the inf-sup constant for star-shaped domains.
\begin{theorem}
\label{T:Rrho}
 Let $\Omega\subset\R^{2}$ be a domain contained in a ball of radius $R$, star-shaped with respect to a concentric ball of radius $\rho$. Then
\begin{equation}
\label{E:betaRrho}
 \beta(\Omega) \ge 
 \frac{\rho}{\sqrt{2}R} \,
 \bigg(1+\sqrt{1-\frac{\rho^{2}}{R^{2}}}\,\bigg)^{-\frac12} \ge \frac{\rho}{2R}\,.
\end{equation}
\end{theorem}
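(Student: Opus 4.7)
My strategy would be to follow the Horgan--Payne construction of a solution to $\div\bu=q$, but to exploit the stronger hypothesis that $\Omega$ is star-shaped with respect to an entire ball, not merely a point.

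First, I would establish a geometric bound on the Horgan--Payne angle. The star-shape of $\Omega$ with respect to $B(\bx_{0},\rho)$ forces, at every $\bx\in\partial\Omega$ where the boundary has a tangent, the convex hull of $\{\bx\}\cup B(\bx_{0},\rho)$ to lie in $\overline\Omega$; consequently the tangent to $\partial\Omega$ at $\bx$ cannot meet the open ball $B(\bx_{0},\rho)$. The perpendicular distance from $\bx_{0}$ to this tangent is therefore at least $\rho$, and equals $|\bx-\bx_{0}|\sin\omega(\bx)$. Combined with $|\bx-\bx_{0}|\le R$, this yields the uniform estimate
\[
\sin\omega(\bx)\ge\frac{\rho}{R}\quad\Longleftrightarrow\quad\cos\gamma(\bx)\ge\frac{\rho}{R}\qquad\forall\bx\in\partial\Omega.
\]

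Next, I would adapt the radial Horgan--Payne construction revisited in Section~\ref{S:HP}, based at $\bx_{0}$. Although Theorem~\ref{T:HPholds}(ii) shows that the original inequality \eqref{E:HP} need not hold in general, the ball-in-ball hypothesis should preclude the thin ``cut'' configurations exploited by the counterexamples (and quantified by Lemma~\ref{L:etrang}), so I expect the Horgan--Payne argument to recover the clean pointwise bound underlying \eqref{E:HPGamma}, namely
\[
\Gamma(\Omega)+1\ \le\ \frac{2}{1-\sin\gamma(\bx)}\ =\ \frac{1}{\sin^{2}(\omega(\bx)/2)},
\]
for every $\bx\in\partial\Omega$. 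Taking the infimum over $\bx$, invoking the half-angle identity $\sin^{2}(\omega/2)=(1-\cos\omega)/2$ together with $C(\Omega)=\Gamma(\Omega)+1=1/\beta(\Omega)^{2}$ from Theorem~\ref{T:FrBA} and \eqref{E:Cbeta}, yields
\[
\beta(\Omega)^{2}\ \ge\ \frac{1-\sqrt{1-\rho^{2}/R^{2}}}{2}\ =\ \frac{\rho^{2}/R^{2}}{2\bigl(1+\sqrt{1-\rho^{2}/R^{2}}\bigr)},
\]
the last equality by rationalization. Extracting square roots produces the first bound in \eqref{E:betaRrho}, and the second follows at once from $1+\sqrt{1-\rho^{2}/R^{2}}\le 2$.

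The principal obstacle lies in the middle step. One cannot simply combine \eqref{E:HP} with the angle bound $\omega(\Omega)\ge\arcsin(\rho/R)$ from Step~1, because by Theorem~\ref{T:HPholds}(ii) the inequality \eqref{E:HP} may fail for general star-shaped domains. The real work is to verify that the ball-in-ball hypothesis furnishes enough quantitative control---essentially by ruling out the narrow ``cuts'' of Lemma~\ref{L:etrang}---for the Section~\ref{S:HP} refinement to collapse back to the classical Horgan--Payne bound \eqref{E:HPGamma} in this setting. Concretely, this requires revisiting the one-dimensional radial energy estimates of \cite[Sec.~6]{HorganPayne1983} with basepoint $\bx_{0}$ and checking that the additional terms arising in the general refinement are controlled by the inner radius $\rho$.
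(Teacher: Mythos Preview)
Your Step~1 is correct and is essentially the paper's Lemma~\ref{L:rhomax}: the distance $d(\theta)=f(\theta)\cos\gamma(\theta)$ from the origin to the tangent line satisfies $d(\theta)\ge\rho$, and together with $f(\theta)\le R$ this gives $\cos\gamma(\theta)\ge\rho/R$.

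The gap is in Step~2. Your hope that the ball-in-ball hypothesis ``should preclude the thin cut configurations'' and thereby restore the Horgan--Payne inequality \eqref{E:HP} is unfounded: \emph{every} strictly star-shaped domain --- in particular each of the counterexamples in Section~\ref{S:Ex} --- already satisfies the ball-in-ball hypothesis for some $\rho>0$. The hypothesis is not an extra assumption here; it is the standing assumption of the whole discussion. So the middle step you propose, namely recovering $\Gamma(\Omega)\le m(\Omega)$ from star-shapedness with respect to a ball, is impossible in general. What actually happens in the counterexamples is that $\psi(\Omega)=\arcsin(\rho/R)$ is much smaller than $\omega(\Omega)$ (for Cupid's bow, $\omega(\Omega)$ decays like $1/c$ while $\psi(\Omega)$ decays like $e^{-c\pi/2}$), so the target inequality \eqref{E:betaRrho} is genuinely weaker than \eqref{E:HP} and survives where \eqref{E:HP} fails.

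The paper's resolution is not to salvage the bound $\Gamma(\Omega)\le m(\Omega)$ but to bound $M(\Omega)$ directly, and the key is to use the two pieces of geometric information \emph{separately} rather than collapsing them into the single ratio $\rho/R$. Rewriting the Horgan--Payne integrand as
\[
  P(\alpha,\theta)=\frac{d(\theta)^{-2}-\alpha}{\alpha\bigl(1-\alpha f(\theta)^{2}\bigr)},
\]
one applies $d(\theta)\ge\rho$ in the numerator and $f(\theta)\le R$ in the denominator (after normalizing $R=1$) to obtain the $\theta$-independent bound $P(\alpha,\theta)\le\dfrac{\rho^{-2}-\alpha}{\alpha(1-\alpha)}$. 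Now the $\inf_{\alpha}\sup_{\theta}$ defining $M(\Omega)$ collapses to a one-variable minimization already computed in \eqref{E:Mdalpha}--\eqref{E:alpha}, yielding $M(\Omega)\le\bigl(R/\rho+\sqrt{R^{2}/\rho^{2}-1}\bigr)^{2}$. Combined with the proven bound $\Gamma(\Omega)\le M(\Omega)$ of Theorem~\ref{th:2} and the identity $\beta(\Omega)^{2}=1/(1+\Gamma(\Omega))$, this gives \eqref{E:betaRrho}. The moral: the information you discarded by forming the quotient $d(\theta)/f(\theta)\ge\rho/R$ is exactly what makes the argument go through.
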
 
This estimate improves a recent result of Dur\'an \cite{Duran2012} for the case of dimension 2, where the bound from below has the form (see \cite[Remark 3.1]{Duran2012})
\[
   \beta(\Omega) \ge c\,\frac{\rho}{R} \,\Big|\log \frac{\rho}{R}\, \Big|^{-1}.
\]
Inequality \eqref{E:betaRrho} takes a form like \eqref{E:HP} if we introduce the angles $\tau(\Omega)=\arccos\frac{\rho}{R}$ with best possible $(\rho, R)$, and $\psi(\Omega)=\frac\pi2-\tau(\Omega)$, as replacement of $\gamma(\Omega)$ and $\omega(\Omega)$: There holds $\beta(\Omega)\ge\sin\frac{\psi(\Omega)}{2}$. In contrast with the Horgan--Payne angle, the angle $\psi(\Omega)$ has a global nature.

\section{Equivalence between Babu\v{s}ka--Aziz and Friedrichs} \label{S:FrBA}

In this section, we prove Theorem~\ref{T:FrBA}. The proof is divided into two parts.

\medskip\noindent
{\em (i)} In a first step, we assume that $\Omega$ is a domain in $\R^2$ such that $C(\Omega)$ is finite. We will show that then $\Gamma(\Omega)$ is finite and
\begin{equation}
\label{E:Gamma<=C-1}
   \Gamma(\Omega) \le C(\Omega)-1.
\end{equation}
This part of the proof is basically the same as in \cite{HorganPayne1983}.

Let $h+ig\in\gF_\circ(\Omega)$. Thus $h$ and $g$ are conjugate harmonic functions in $L^{2}_{\circ}(\Omega)$, satisfying 
\[
   \Delta h=0,\quad \Delta g=0, \quad\mbox{and}\quad \grad h = \curl g \quad\mbox{in}\quad\Omega.
\]
Here $\curl g=(\partial_{x_{2}}g,-\partial_{x_{1}}g)$. The adjoint of the $\curl$ operator is the scalar curl: 
$\curls\bu=\partial_{x_{1}}u_{2}-\partial_{x_{2}}u_{1}$. It follows from integration by parts for all $\bu\in H^{1}_{0}(\Omega)^{2}$
\begin{equation}
\label{E:curl+div}
 \Normc{\bu}{1} = \DNormc{\div\bu}{0} + \DNormc{\curls\bu}{0}\,.
\end{equation}
Note that no regularity for $\Omega$ is needed here: One has \eqref{E:curl+div} first on $\Cinf_{0}(\Omega)^{2}$ and then by continuity on $H^{1}_{0}(\Omega)^{2}$.

From the Babu\v{s}ka--Aziz inequality we get the existence of $\bu\in H^{1}_{0}(\Omega)^{2}$ such that 
\[
   \div\bu = h\quad\mbox{ and }\quad 
   \DNormc{\curls\bu}{0} = \Normc{\bu}{1} - \DNormc{\div\bu}{0}
    \le (C(\Omega)-1)\,\DNormc h 0 \,.
\]
We find
\[
 \DNormc h 0   = \big\langle h,\div\bu\big\rangle_\Omega 
               = - \big\langle \grad h,\bu\big\rangle_\Omega
               = - \big\langle \curl g,\bu\big\rangle_\Omega
               = - \big\langle g,\curls\bu\big\rangle_\Omega\,.
\]
With the Cauchy-Schwarz inequality and the estimate of $\curls\bu$, we deduce
\[
   \DNormc h 0 \le \sqrt{C(\Omega)-1}\, \DNorm h 0 \,\DNorm g 0\,,
\]
hence the estimate
\[
   \DNormc h 0 \le (C(\Omega)-1) \, \DNormc g 0\,,
\]
which proves \eqref{E:Gamma<=C-1}.

\medskip\noindent
{\em (ii)} In a second step, we assume that $\Omega$ is such that $\Gamma(\Omega)$ is finite. We will show that $C(\Omega)$ is finite and
\begin{equation}
\label{E:C<=Gamma+1}
   C(\Omega) \le \Gamma(\Omega)+1.
\end{equation}
This part of our proof is different from the one given in \cite{HorganPayne1983}.

Let $p\in L^2_\circ(\Omega)$ be given and define $\bu\in H^1_0(\Omega)^{2}$ as the solution of
$\Delta \bu = \grad p$, that is $\bu$ satisfies
\begin{equation}
\label{E:u=w(p)}
  \forall \bv\in H^1_0(\Omega)^2: \ \big\langle\grad\bu,\grad\bv\big\rangle_\Omega 
  = \big\langle p, \div\bv\big\rangle_\Omega\,.
\end{equation}
We set $q=\div\bu$ and $g=\curls\bu$ and observe the following relations as consequences of \eqref{E:u=w(p)}:
\begin{alignat}{2}
\label{E:pqg}
 \langle p,q\rangle\dd\Omega &= \Normc\bu1 &\,&=\DNormc q0 + \DNormc g0\\
 \Delta q &= \div\Delta \bu && =\Delta p\\
 \Delta g &= \curls\Delta\bu &&=0\\
 \curl g -\grad q &= -\Delta\bu && = - \grad p\\
\label{E:g2=(q,p-q)}
 \DNormc g0 &= \langle p,q\rangle\dd\Omega -\DNormc q0 && = \langle q,p-q\rangle\dd\Omega\,.
\end{alignat}
It follows that $g$ and $q-p$ are conjugate harmonic functions. Note that both belong to 
$L^2_\circ(\Omega)$, so that we can use the Friedrichs inequality:
\begin{equation}
\label{E:Fri}
  \DNormc{p-q}0 \le \Gamma(\Omega)\, \DNormc g0\,.
\end{equation}
Then we have with \eqref{E:g2=(q,p-q)} 
\[
  \DNormc g0 \le \DNorm q0 \DNorm{p-q}0 \le \DNorm q0 \sqrt{\Gamma(\Omega)}\DNorm g0\,,
\]
hence
\begin{equation}
\label{E:Fri2}
 \DNormc g0 \le \Gamma(\Omega)\DNormc q0\,.
\end{equation}
Now we estimate, using \eqref{E:pqg} and both \eqref{E:Fri} and \eqref{E:Fri2}:
\begin{align*}
 \DNormc p0 &= \DNormc{p-q}0 -\DNormc q0 + 2\langle p,q\rangle\dd\Omega\\
   &= \DNormc{p-q}0 + \DNormc g0 + \DNormc q0 +\DNormc g0\\
   &\le \Gamma(\Omega) \DNormc g0 + \Gamma(\Omega) \DNormc q0 + \DNormc q0 +\DNormc g0\\
   &= \big(\Gamma(\Omega)+1\big)\, \Normc\bu{1} \,.
\end{align*}
Now \eqref{E:u=w(p)} shows that the Laplacian is an isometry from $H^{1}_{0}(\Omega)^{2}$ to $H^{-1}(\Omega)^{2}$, and 
$\Norm\bu1=\DNorm{\grad p}{-1}$. This gives the estimate
\[
  \DNormc p0 \le \big(\Gamma(\Omega)+1\big)\, \DNormc{\grad p}{-1}\,,
\] 
which is the dual or ``Lions'' version \eqref{E:grad} of the Babu\v{s}ka--Aziz inequality. Together with \eqref{E:Cbeta} this gives the desired inequality \eqref{E:C<=Gamma+1}.

\smallskip
Theorem~\ref{T:FrBA} is proved.

\section{Strictly star-shaped domains and the Horgan--Payne inequality}\label{S:HP}

We say that $\Omega$ is {\em strictly star-shaped} if there is an open ball $B\subset\Omega$ such that any segment with one end in $B$ and the other in $\Omega$, is contained in $\Omega$. Let $\bx_0$ be the center of $B$ and $(r,\theta)$ be polar coordinates centered at $\bx_0$. Let $\theta\mapsto r=f(\theta)$ be the polar parametrization of the boundary $\partial\Omega$, defined on the torus $\T=\R/2\pi\Z$. Then $f$ is Lipschitz continuous in virtue of a result by {\sc Maz'ya} \cite[Lemma 1.1.8]{Mazya_Sob2011}.

In this section we follow \cite[\S6]{HorganPayne1983} to construct an upper bound for $\Gamma(\Omega)$ depending on the values of $f$ and its first derivative $f'$ only.
Since $\Gamma(\Omega)$ is invariant by dilation, we may normalize $f$ by the condition
\begin{equation}
\label{E:max}
   \framebox{\ $\di\vpha\max_{\theta\in\T} f(\theta)=1$\ }
\end{equation} 
We introduce $P=P(\alpha,\theta)$ as the function defined on $\R_+\times\T$ by
\begin{equation}
\label{E:P}
   P(\alpha,\theta) = \frac{1}{\alpha f(\theta)^2} 
   \bigg(1+\frac{f'(\theta)^2}{f(\theta)^2-\alpha f(\theta)^4}\bigg)\ .
\end{equation}

We denote by $M(\Omega)$ and $m(\Omega)$ the following candidates for an upper bound:

\begin{notation}
Under condition \eqref{E:max}, let $M(\Omega)$ and $m(\Omega)$ be the following two positive numbers
\begin{equation}
\label{E:MmHP}
   M(\Omega) = \inff_{\alpha\in(0,1)} \Bigg\{\ \sup_{\theta\in\T} \ P(\alpha,\theta) \Bigg\}
\quad\mbox{and}\quad
   m(\Omega) = \sup_{\theta\in\T} 
   \Bigg\{ \inf_{\alpha\in\big(0,\frac{1}{f(\theta)^{2}}\big)} P(\alpha,\theta)\Bigg\}\,.
\end{equation}
\end{notation}

Note that, unlike $\Gamma(\Omega)$, the quantities $M(\Omega)$ and $m(\Omega)$ depend on the choice of the origin $\bx_0$ of polar coordinates chosen to parametrize the boundary.

\begin{lemma}\label{lem:Mm}
For any strictly star-shaped domain $\Omega$ with center $\bx_0$, there holds
\begin{equation}
\label{E:Mm}
   M(\Omega) \ge m(\Omega).
\end{equation}
\end{lemma}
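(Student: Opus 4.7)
The plan is to recognize Lemma~\ref{lem:Mm} as essentially the standard weak minimax inequality $\sup_{\theta}\inf_{\alpha}P(\alpha,\theta)\le\inf_{\alpha}\sup_{\theta}P(\alpha,\theta)$, complicated only by the fact that the admissible sets for $\alpha$ in $M(\Omega)$ and in $m(\Omega)$ are not literally the same: in $M(\Omega)$ one ranges over $(0,1)$, whereas in $m(\Omega)$ one ranges over the larger interval $\bigl(0,1/f(\theta)^{2}\bigr)$. The normalization \eqref{E:max} implies $f(\theta)\le1$ and hence $1\le 1/f(\theta)^{2}$ for every $\theta\in\T$, so that the inclusion $(0,1)\subseteq\bigl(0,1/f(\theta)^{2}\bigr)$ holds uniformly in $\theta$. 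This is what makes the comparison go in the direction claimed.

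First, I would check that $P(\alpha,\theta)$ is well defined and positive on the relevant region, i.e.\ whenever $0<\alpha<1/f(\theta)^{2}$: the denominator $f(\theta)^{2}-\alpha f(\theta)^{4}=f(\theta)^{2}\bigl(1-\alpha f(\theta)^{2}\bigr)$ is then strictly positive. In particular $P$ is well defined on $(0,1)\times\T$.

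Next I would carry out the minimax step itself. For any pair $(\alpha_{0},\theta_{0})\in(0,1)\times\T$ we trivially have
\[
 \sup_{\theta\in\T} P(\alpha_{0},\theta)\;\ge\; P(\alpha_{0},\theta_{0})\;\ge\; \inf_{\alpha\in\bigl(0,1/f(\theta_{0})^{2}\bigr)} P(\alpha,\theta_{0}),
\]
where the second inequality uses $\alpha_{0}\in(0,1)\subseteq\bigl(0,1/f(\theta_{0})^{2}\bigr)$. The left-hand side depends only on $\alpha_{0}$ and the right-hand side only on $\theta_{0}$, so taking the infimum over $\alpha_{0}\in(0,1)$ on the left and the supremum over $\theta_{0}\in\T$ on the right yields
\[
 M(\Omega)=\inff_{\alpha\in(0,1)}\sup_{\theta\in\T} P(\alpha,\theta)\;\ge\;\sup_{\theta\in\T}\inff_{\alpha\in(0,1/f(\theta)^{2})} P(\alpha,\theta)=m(\Omega),
\]
which is exactly \eqref{E:Mm}.

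There is no real obstacle here: the argument is the one-line weak minimax lemma, and the only point requiring a moment's attention is verifying that the admissible range $(0,1)$ in $M(\Omega)$ sits inside the $\theta$-dependent range $(0,1/f(\theta)^{2})$ appearing in $m(\Omega)$, which is a direct consequence of the normalization \eqref{E:max}. No star-shapedness, Lipschitz continuity of $f$, or finer property of $P$ beyond its mere definition is used; those will surely enter only later when $M(\Omega)$ is shown to bound $\Gamma(\Omega)$ from above.
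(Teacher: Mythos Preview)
Your proof is correct and is essentially the same weak minimax argument as the paper's: both rely on the inclusion $(0,1)\subseteq\bigl(0,1/f(\theta)^{2}\bigr)$ from the normalization \eqref{E:max} to compare $P(\alpha,\theta)$ with its $\theta$-wise infimum. The only difference is that the paper additionally establishes, via convexity of $\alpha\mapsto P(\alpha,\theta)$, the existence of a unique minimizer $\alpha(\theta)$; this is not needed for \eqref{E:Mm} itself but sets up notation used in the proof of the next lemma.
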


\begin{proof}
Let us choose $\theta\in\T$ and define $P_\theta$ as the function $\alpha\mapsto P(\alpha,\theta)$ for $\alpha\in\big(0,\frac{1}{f(\theta)^{2}}\big)$. Calculating the second derivative of $P_\theta$, we find that $P_\theta$ is strictly convex. The function $P_\theta$ tends to $+\infty$ as $\alpha\to0$, and if $f'(\theta)\neq0$,  as $\alpha\to \frac{1}{f(\theta)^2}$. 

\noindent
In any case, there exists a unique $\alpha(\theta)$ in $\big(0,\frac{1}{f(\theta)^{2}}\big]$ such that
$P(\alpha(\theta),\theta)$ coincides with $\inf_{\alpha\in(0,\frac{1}{f(\theta)^{2}})} P(\alpha,\theta)$.
So, 
\begin{equation}
\label{E:mPat}
   m(\Omega) = \sup_{\theta\in\T} P(\alpha(\theta),\theta)\,.
\end{equation}
Since, in particular, for all $\alpha\in(0,1)$ and $\theta\in\T$, $P(\alpha(\theta),\theta)\le P(\alpha,\theta)$, we find \eqref{E:Mm}.
\end{proof}

The quantity $m(\Omega)$ is the original bound introduced by Horgan--Payne in \cite{HorganPayne1983}, cf.\ \eqref{E:HPGamma}:

\begin{lemma}
For any strictly star-shaped domain $\Omega$ with center $\bx_0$, there holds
\begin{equation}\label{E:mcos}
   m(\Omega) = \sup_{\bx\in\partial\Omega}
   \bigg(\frac{1}{\cos\gamma(\bx)}+\sqrt{\frac1{\cos^{2}\gamma(\bx)}-1}\,\bigg)^{2} 
\end{equation}
where we recall that $\gamma(\bx)$ is the angle between the ray $[\bx_{0},\bx]$ and the normal at $\partial\Omega$ in $\bx$.
\end{lemma}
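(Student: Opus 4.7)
The strategy is to compute the inner infimum $\inf_{\alpha} P(\alpha,\theta)$ in closed form for each fixed $\theta$, then translate the resulting expression, which depends only on the ratio $f'(\theta)/f(\theta)$, into the geometric quantity $\cos\gamma(\bx)$.

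First I would simplify the algebra by the substitution $t=\alpha f(\theta)^{2}\in(0,1)$ and the abbreviation $c=f'(\theta)^{2}/f(\theta)^{2}\ge 0$. Since $f(\theta)^{2}-\alpha f(\theta)^{4}=f(\theta)^{2}(1-t)$, formula \eqref{E:P} becomes
\[
  P(\alpha,\theta) \;=\; \frac1t\left(1+\frac{c}{1-t}\right).
\]
Then I would differentiate in $t$ and set the derivative to zero; this gives the equation $c\,t^{2}=(1+c)(1-t)^{2}$, whose positive root is $t_{*}=\sqrt{1+c}/(\sqrt{c}+\sqrt{1+c})$ (and $1-t_{*}=\sqrt{c}/(\sqrt{c}+\sqrt{1+c})$). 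A short calculation with this $t_{*}$ factorizes the result and gives
\[
  \inf_{\alpha\in(0,1/f(\theta)^{2})} P(\alpha,\theta) \;=\; \bigl(\sqrt{1+c}+\sqrt{c}\,\bigr)^{2}.
\]
In the degenerate case $f'(\theta)=0$ (where the infimum is attained at the endpoint $t=1$, as already discussed in Lemma~\ref{lem:Mm}) the formula still returns the correct value $1$.

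Next I would carry out the geometric identification. Parametrizing $\partial\Omega$ in polar coordinates as $\bx(\theta)=(f(\theta)\cos\theta,f(\theta)\sin\theta)$, the tangent vector is $T=(f'\cos\theta-f\sin\theta,\,f'\sin\theta+f\cos\theta)$, with $|T|=\sqrt{f^{2}+f'^{2}}$, while the radial unit vector along $[\bx_{0},\bx]$ is $R=(\cos\theta,\sin\theta)$. A direct computation gives $T\cdot R=f'$, so the angle $\omega(\bx)$ between the ray and the tangent satisfies $\sin\omega(\bx)=f/\sqrt{f^{2}+f'^{2}}$, hence
\[
  \cos\gamma(\bx)=\sin\omega(\bx)=\frac{f(\theta)}{\sqrt{f(\theta)^{2}+f'(\theta)^{2}}},
  \qquad
  \tan^{2}\gamma(\bx)=c,\qquad \sec^{2}\gamma(\bx)=1+c.
\]
Substituting these identities into the closed form from the first step yields
\[
 \inf_{\alpha} P(\alpha,\theta)\;=\;\bigl(\sqrt{1+c}+\sqrt{c}\,\bigr)^{2}\;=\;\Bigl(\frac{1}{\cos\gamma(\bx)}+\sqrt{\frac{1}{\cos^{2}\gamma(\bx)}-1}\Bigr)^{2}.
\]
Taking the supremum over $\theta\in\T$, which is the same as the supremum over $\bx\in\partial\Omega$, and invoking the definition \eqref{E:mPat} of $m(\Omega)$, one obtains the claimed identity \eqref{E:mcos}.

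There is no real obstacle; the only points that require a little care are (a) verifying that $t_{*}\in(0,1)$ and that the critical point is indeed a minimum (both follow from the strict convexity of $P_{\theta}$ already noted in the proof of Lemma~\ref{lem:Mm}), and (b) handling the degenerate case $f'(\theta)=0$, where the unique minimizer from Lemma~\ref{lem:Mm} lies at $\alpha(\theta)=1/f(\theta)^{2}$ and the formula still yields the correct value $1=(\sqrt{1}+\sqrt{0})^{2}$, consistent with $\gamma(\bx)=0$ at such a point.
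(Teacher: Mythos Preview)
Your proposal is correct and follows essentially the same route as the paper: substitute $\zeta=\alpha f(\theta)^{2}$, minimize the resulting one-variable function to obtain $(\sqrt{1+c}+\sqrt{c})^{2}$ with $c=f'(\theta)^{2}/f(\theta)^{2}$, and then identify $c=\tan^{2}\gamma(\bx)$ via the polar parametrization. The only cosmetic differences are that you work with $c=(f'/f)^{2}$ rather than the signed $t=f'/f$ (which neatly sidesteps a sign issue), and you derive the relation $\tan\gamma=f'/f$ explicitly instead of invoking it as a classical formula.
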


\begin{proof}
To prove the lemma, relying on \eqref{E:mPat}, it suffices to establish that for any $\theta\in\T$ 
\begin{equation}
\label{E:Patcos}
   P(\alpha(\theta),\theta) =
   \bigg(\frac{1}{\cos\gamma(\bx)}+\sqrt{\frac1{\cos^{2}\gamma(\bx)}-1} \,\bigg)^{2}\,,
\end{equation}
where $\bx=\bx_0+(f(\theta)\cos\theta,f(\theta)\sin\theta)$. For this we calculate the value $\alpha(\theta)$ which realizes the minimum of $P(\alpha,\theta)$ for $\alpha\in(0,1/f(\theta)^2]$: Setting \[
   t(\theta) = \frac{f'(\theta)}{f(\theta)}
\]
we find
\[
   P(\alpha,\theta) = \frac{1}{\alpha f(\theta)^2} 
   \bigg(1+\frac{t(\theta)^2}{1-\alpha f(\theta)^2}\bigg)
\]
and
\[
   \partial_\alpha P(\alpha,\theta) = -\frac{1}{\alpha^2 f(\theta)^2} 
   \bigg(1+\frac{t(\theta)^2}{1-\alpha f(\theta)^2}\bigg) +
   \frac{1}{\alpha f(\theta)^2} 
   \,\frac{t(\theta)^2f(\theta)^2}{(1-\alpha f(\theta)^2)^2}\,.
\]
Setting $\zeta=\alpha f(\theta)^2$, we see that $\partial_\alpha P(\alpha,\theta)=0$ if and only if
\begin{equation}
\label{E:zeta}
   \zeta^2 - 2(1+t(\theta)^2)\zeta + 1 + t(\theta)^2 = 0.
\end{equation}
Since we look for $\zeta\in(0,1]$, the convenient root of equation \eqref{E:zeta} is
\begin{align}
   \alpha(\theta) f(\theta)^2 = \zeta &= 1+t(\theta)^2 - t(\theta)\sqrt{1+t(\theta)^2} \,.
   \nonumber
\end{align}
Hence we find
\begin{equation*}
   P(\alpha(\theta),\theta) = \frac{1}{\big( \sqrt{1+t(\theta)^2}-t(\theta)\big)^2}
   = \big( \sqrt{1+t(\theta)^2}+t(\theta)\big)^2\,.
\end{equation*}
Now \eqref{E:Patcos} is a consequence of the latter identity and of the classical formula
\[
   t(\theta) = \frac{f'(\theta)}{f(\theta)} = \tan\gamma(\bx) 
\]
valid for the polar parametrization.
\end{proof}

The quantity $M(\Omega)$ is our modified Horgan--Payne like bound.

\begin{theorem}[{Estimate (6.24) in \cite{HorganPayne1983}}]
\label{th:2}
Let $\Omega$ be a bounded strictly star-shaped domain. Its Friedrichs constant  satisfies the bound
\begin{equation}
\label{E:b1}
   \Gamma(\Omega)\le M(\Omega) \ .
\end{equation}
\end{theorem}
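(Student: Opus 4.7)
\emph{Proof proposal.} My plan is to follow the method of Horgan--Payne \cite[\S6]{HorganPayne1983} while keeping the parameter $\alpha\in(0,1)$ free rather than committing to its pointwise-in-$\theta$ optimum; this should naturally produce the bound $M(\Omega)$ in place of the stronger $m(\Omega)$ of Horgan--Payne. By a standard density argument I may assume $w=h+ig\in\gF_\circ(\Omega)$ extends holomorphically to a neighborhood of $\overline\Omega$. In the polar coordinates $(r,\theta)$ at $\bx_0$ already introduced in the paper, the domain is $\Omega=\{r<f(\theta)\}$; holomorphy of $w$ is equivalent to the polar Cauchy--Riemann system
\[
   \partial_\theta h = -r\,\partial_r g,\qquad \partial_\theta g = r\,\partial_r h,
\]
and the mean-zero hypothesis reads $\int_\T\!\int_0^{f(\theta)} h\,r\,dr\,d\theta=0$ and similarly for $g$.

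Fix $\alpha\in(0,1)$. The core step is a radial integration by parts with weight $1-\alpha r^2$, exploiting the identity $\frac{d}{dr}[\tfrac12 r^2(1-\alpha r^2)] = r(1-2\alpha r^2)$. Writing $\int_0^{f(\theta)} h^2\,r\,dr$ against this weight and then using $\partial_r h=r^{-1}\partial_\theta g$ from the Cauchy--Riemann relations, followed by Cauchy--Schwarz applied to the cross-term $\int_0^{f(\theta)} h\,\partial_\theta g\,r\,dr$ with weight $\alpha r^2/(1-\alpha r^2)$, should yield the prefactor $1/(\alpha f(\theta)^2)$ in $P(\alpha,\theta)$. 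The remaining boundary contribution at $r=f(\theta)$ produces a term proportional to $f(\theta)^2 h(f(\theta),\theta)^2$, which after differentiating the $\theta$-integrated boundary trace (bringing in $f'(\theta)$) and a second ray-wise Cauchy--Schwarz converts into the second summand $f'(\theta)^2/(f(\theta)^2-\alpha f(\theta)^4)$ inside $P(\alpha,\theta)$. Assembling both contributions leads to an inequality of the form
\[
   \int_0^{f(\theta)} h^2\,r\,dr \le P(\alpha,\theta)\int_0^{f(\theta)} g^2\,r\,dr + R(\alpha,\theta),
\]
where the remainder $R(\alpha,\theta)$ is a cross-term that vanishes after integration over $\T$ by periodicity combined with the mean-zero hypothesis on $h$ and $g$. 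Integrating in $\theta$, bounding $P(\alpha,\theta)$ by its supremum, and taking the infimum over $\alpha\in(0,1)$ yields $\DNormc{h}{0,\Omega}\le M(\Omega)\DNormc{g}{0,\Omega}$, hence $\Gamma(\Omega)\le M(\Omega)$.

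I expect the main obstacle to be the precise algebraic bookkeeping that recovers exactly the two summands of $P(\alpha,\theta)$ in \eqref{E:P}: the choice of weight must balance the $\alpha$-scaling of the bulk Cauchy--Schwarz against the boundary contribution, so that the denominator $f^2-\alpha f^4=f^2(1-\alpha f^2)$ emerges from the boundary piece and the prefactor $1/(\alpha f^2)$ from the bulk piece, while the constraint $\alpha\in(0,1)$, coming from the normalization $\max f=1$, is exactly what ensures integrability of the weight $1/(1-\alpha r^2)$ on every ray. Once the identity is correctly set up, no regularity of the boundary beyond Lipschitz continuity of $f$ (which holds by Maz'ya's lemma for strictly star-shaped domains, as already noted in the paper) is required.
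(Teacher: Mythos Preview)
Your high-level strategy---keep $\alpha\in(0,1)$ fixed throughout the estimate, bound by $\sup_\theta P(\alpha,\theta)$, and only then minimize over $\alpha$---is exactly what produces $M(\Omega)$ rather than $m(\Omega)$, and it is what the paper does. But the analytic machinery you sketch for the ray-wise estimate misses the key device, and as written it does not close. The paper (following Horgan--Payne) does \emph{not} apply the Cauchy--Riemann relations to $h,g$ directly; it squares the holomorphic function, setting $H=h^2-g^2$, $G=2hg$, so that $\partial_r\widetilde H=\tfrac1r\,\partial_\theta\widetilde G$. After normalizing $h(\bx_0)=0$ (not mean zero), one integrates $\widetilde H-H(\bx_0)$ radially, divides by $f(\theta)^2$, integrates over $\Omega$, switches the order of the $r,\rho$ integrals, and then integrates by parts in $\theta$. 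The crucial point is that the resulting weight $(f(\theta)^2-\rho^2)/(\rho^2 f(\theta)^2)$ \emph{vanishes on $\partial\Omega$}, so the $\theta$-integration by parts has no boundary term. This yields the exact identity
\[
   \int_\Omega \frac{h^2}{f(\theta)^2}\,d\bx
   =\int_\Omega\frac{g^2-g(\bx_0)^2}{f(\theta)^2}\,d\bx
    -2\int_\Omega\frac{t(\theta)\,h\,g}{f(\theta)^2}\,d\bx,
   \qquad t=\frac{f'}{f}\,.
\]
The parameter $\alpha$ enters only afterwards, through the Young inequality $2|t\,hg|\le(1-\alpha f^2)h^2+\tfrac{t^2}{1-\alpha f^2}g^2$, which immediately gives $\int_\Omega h^2\le\sup_\theta P(\alpha,\theta)\int_\Omega g^2$.

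Your route instead integrates $h^2$ by parts radially and invokes $\partial_r h=r^{-1}\partial_\theta g$. This leaves a cross-term $\int_0^{f(\theta)} h\,\partial_\theta g\,r\,dr$ (involving $\partial_\theta g$, not $g$) together with the boundary trace $f(\theta)^2 h(f(\theta),\theta)^2$. To pass from $\partial_\theta g$ to $g$ you must integrate by parts in $\theta$, but with the variable upper limit $f(\theta)$ this generates an additional boundary term $f'(\theta)f(\theta)\,h\big(f(\theta),\theta\big)\,g\big(f(\theta),\theta\big)$ that neither cancels the trace of $h^2$ nor is controlled by $\DNorm{g}{0,\Omega}$. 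No weight of the type you describe makes these boundary contributions vanish; the cancellation in the paper comes precisely from the $H,G$ combination and the weight $(f^2-\rho^2)/f^2$. Likewise, your claim that a remainder $R(\alpha,\theta)$ integrates to zero ``by periodicity combined with the mean-zero hypothesis'' is not justified: periodicity in $\theta$ yields nothing when the integrand depends on $\theta$ through the endpoint $f(\theta)$, and the condition $\int_\Omega h=0$ is far too weak to annihilate a boundary integral of $h^2$ or $hg$. In short, the missing idea is the squaring trick $w\mapsto w^2$ together with the boundary-vanishing weight; once you have the identity above, the rest is the one-line Young inequality, with no Cauchy--Schwarz, no boundary traces, and no appeal to mean zero.
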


The proof of this theorem is due to Horgan and Payne. Unfortunately, instead of simply concluding that $M(\Omega)$ is an upper bound for $\Gamma(\Omega)$, they try to show that $M(\Omega)$ coincides with $m(\Omega)$ and this part of their argument is flawed and invalid, in general. For the convenience of the reader we reproduce here the correct part of \cite[\S6]{HorganPayne1983} leading to the proof of the bound \eqref{E:b1}.

\begin{proof}
We assume for simplicity that the origin $\bx_0$ of polar coordinates coincides with the origin ${\bf0}$ of Cartesian coordinates.
Let $h\in L^{2}(\Omega)$ and $g\in L^{2}(\Omega)$ be a conjugate harmonic functions such that $\grad h = \curl g$. We normalize $h$ such that $h({\bf0})=0$. If we bound the $L^2(\Omega)$ norm of $h$, we bound a fortiori the $L^2(\Omega)$ norm of $h-\frac{1}{|\Omega|}\int_\Omega h$ which is the harmonic conjugate of $g$ in $L^2_\circ(\Omega)$, hence with minimal $L^2(\Omega)$ norm.

Since $h+ig$ is holomorphic, its square is holomorphic, too, and therefore the functions $H:=h^2-g^2$ and  $G:=2gh$ are harmonic conjugate. The equation $\grad H = \curl G$ leads to the relation in polar coordinates
\[
   \partial_\rho \widetilde H = \frac{1}{\rho} \partial_\theta \widetilde G
\] 
where $\widetilde H(r,\theta)=H(\bx)$ and $\widetilde G(r,\theta)=G(\bx)$ for $\bx=(r\cos\theta,r\sin\theta)$. Thus for any $\theta\in\T$ and $r\in(0,f(\theta))$ we have
\[
   \widetilde H(r,\theta) - H({\bf0}) = \int_0^r \partial_\rho \widetilde H(\rho,\theta)\,\rd\rho
   = \int_0^r \frac{1}{\rho}\partial_\theta \widetilde G(\rho,\theta)\,\rd\rho \,.
\]
 We divide by $f(\theta)^2$ and integrate for $\theta\in\T$ and $r\in(0,f(\theta))$:
\begin{align*}
   \int_\T \int_0^{f(\theta)} \frac{\widetilde H(r,\theta) - H({\bf0})}{f(\theta)^2}\,r\rd r\rd\theta 
   &=
   \int_\T \int_0^{f(\theta)} \frac{1}{f(\theta)^2} \bigg\{
   \int_0^r \frac{1}{\rho}\partial_\theta \widetilde G(\rho,\theta)\,\rd\rho \bigg\}
   \,r\rd r\rd\theta \\
   &=
   \int_\T \int_0^{f(\theta)} \frac{1}{f(\theta)^2} 
   \frac{1}{\rho}\partial_\theta \widetilde G(\rho,\theta)\bigg\{
   \int_\rho^{f(\theta)} r\rd r \bigg\}
   \,\rd\rho\rd\theta \\
   &=\frac{1}{2} \int_\T \int_0^{f(\theta)} \frac{f(\theta)^2-\rho^2}{\rho^2f(\theta)^2} 
   \partial_\theta\widetilde G(\rho,\theta)
   \,\rho\rd\rho\rd\theta \,.   
\end{align*}
Since the function $f(\theta)^2-\rho^2$ is $0$ on the boundary, integration by parts yields
\[
   \int_\T \int_0^{f(\theta)} \frac{\widetilde H(r,\theta) - H({\bf0})}{f(\theta)^2} \,r\rd r\rd\theta  =
   - \int_\T \int_0^{f(\theta)} \frac{f'(\theta)}{f(\theta)^3} \,
   \widetilde G(\rho,\theta)
   \,\rho\rd\rho\rd\theta \,.
\]
We recall the notation
$t(\theta) = \frac{f'(\theta)}{f(\theta)}$.
Coming back to $h$ and $g$ and Cartesian variables $\bx\in\Omega$ we find:
\begin{equation}
\label{E:HP1}
   \int_\Omega \frac{h(\bx)^2}{f(\theta)^2} \,\rd\bx =
   \int_\Omega \bigg\{ \frac{g(\bx)^2-g({\bf0})^2}{f(\theta)^2} \,\rd\bx - 
   2 \int_\Omega \frac{t(\theta)h(\bx)g(\bx)}{f(\theta)^2} \bigg\} \,\rd\bx.
\end{equation}
In order to take the best advantage of the previous identity we introduce a parameter
\[
   \alpha\in(0,1)
\]
and write for any $\theta\in\T$ (here we use condition \eqref{E:max} which ensures that $1-\alpha f(\theta)^2>0$)
\[
   2\big|t(\theta)h(\bx)g(\bx)\big| \le \big\{1-\alpha f(\theta)^2\big\} h(\bx)^2 + 
   \frac{t(\theta)^2}{1-\alpha f(\theta)^2} \, g(\bx)^2
\]
and deduce from \eqref{E:HP1} that (note that the same $\alpha$ has to be used for all $\theta$)
\[
   \alpha \int_\Omega h(\bx)^2 \,\rd\bx \le 
   \int_\Omega  \frac{g(\bx)^2}{f(\theta)^2} +
   \frac{t(\theta)^2}{1-\alpha f(\theta)^2}\, \frac{g(\bx)^2}{f(\theta)^2} \,\rd\bx \,.
\]
Thus, for any $\alpha\in(0,1)$
\[
   \int_\Omega h(\bx)^2 \,\rd\bx \le 
   \sup_{\theta\in\T} \Big\{\frac{1}{\alpha f(\theta)^2} 
   \Big(1+\frac{t(\theta)^2}{1-\alpha f(\theta)^2}\Big)\Big\}
   \int_\Omega  g(\bx)^2 \,\rd\bx \,.
\]
Optimizing on $\alpha\in(0,1)$ and coming back to the definition of $t$ and $P$, we find
\[
   \int_\Omega h(\bx)^2 \,\rd\bx \le 
   \inff_{\alpha\in(0,1)} \Big\{\sup_{\theta\in\T} P(\alpha,\theta) \Big\}
   \int_\Omega  g(\bx)^2 \,\rd\bx,
\]
which is nothing else than $\DNormc{h}{0,\Omega} \le M(\Omega) \DNormc{g}{0,\Omega}$, whence the theorem.
\end{proof}


\section{Examples and counterexamples}\label{S:Ex}
\subsection{Examples}
In this section we exhibit classes of domains $\Omega$ for which the Horgan--Payne inequality \eqref{E:HP} is valid, because the equality $m(\Omega)=M(\Omega)$ holds.

\begin{theorem}
\label{th:m=M}
The equality $m(\Omega)=M(\Omega)$ holds for the following classes of domains $\Omega$
\begin{enumerate}
\item Ellipses, with $\bx_0$ at the center of the domain,
\item Cyclic polygons containing the center $\bc$ of their circumscribed circle, with $\bx_0=\bc$,
\item Circumscribed polygons, with $\bx_0$ at the center of the inscribed circle.
\end{enumerate}
\end{theorem}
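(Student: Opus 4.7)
The strategy in each case is to produce an admissible (possibly boundary) value $\alpha^{\ast}\in(0,1]$ such that the profile $\theta\mapsto P(\alpha^{\ast},\theta)$ is controlled and its supremum equals $m(\Omega)$. Combined with Lemma~\ref{lem:Mm} this pinches $M(\Omega)=m(\Omega)$. The workhorse for the polygonal cases is the observation that on a straight edge supported by a line $r\cos(\theta-\theta_{0,i})=d_{i}$ one has $f(\theta)=d_{i}/\cos(\theta-\theta_{0,i})$, and writing $c=\cos(\theta-\theta_{0,i})$ a direct simplification of \eqref{E:P} yields
\[
   P(\alpha,\theta)=\frac{c^{2}(1-\alpha d_{i}^{2})}{\alpha d_{i}^{2}(c^{2}-\alpha d_{i}^{2})}.
\]
Differentiation in $c^{2}$ gives $\partial_{c^{2}}P=-(1-\alpha d_{i}^{2})/(c^{2}-\alpha d_{i}^{2})^{2}<0$ whenever $\alpha d_{i}^{2}<1$. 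Hence $P(\alpha,\cdot)$ is strictly decreasing in $c^{2}$ on each edge, so its edge-maximum is reached at the vertex where $c$ is smallest, and the global supremum over $\theta$ is a maximum over finitely many vertex-values.

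In case~(3) (circumscribed polygon, $\bx_{0}$ the incentre) all $d_{i}$ coincide with the inradius $r$, so after the normalisation $\max f=1$ the smallest $c$ across all edges equals $r$ and is attained at the farthest vertex $V^{\ast}$. Substitution yields $\sup_{\theta}P(\alpha,\theta)=\alpha^{-1}\bigl(1+t_{\ast}^{2}/(1-\alpha)\bigr)$ with $t_{\ast}^{2}=(1-r^{2})/r^{2}$, whose infimum over $\alpha\in(0,1)$ equals $(\sqrt{1+t_{\ast}^{2}}+t_{\ast})^{2}$. Evaluating \eqref{E:Patcos} at $V^{\ast}$ gives exactly the same value for $m(\Omega)$. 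In case~(2) (cyclic polygon, $\bx_{0}$ the circumcentre) the normalisation $R=1$ makes $f\equiv 1$ at every vertex; on side $i$, $c\in[d_{i},1]$, so the edge-maximum is $\alpha^{-1}\bigl(1+t_{i}^{2}/(1-\alpha)\bigr)$ with $t_{i}^{2}=(1-d_{i}^{2})/d_{i}^{2}$, and the global maximum selects $d_{\ast}=\min_{i}d_{i}$. The identification $M(\Omega)=m(\Omega)=(\sqrt{1+t_{\ast}^{2}}+t_{\ast})^{2}$ follows in the same way.

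For case~(1), place the ellipse $\{x_{1}^{2}+x_{2}^{2}/b^{2}<1\}$ with $b\le 1$ centred at $\bx_{0}=\mathbf{0}$; then $f(\theta)^{2}=b^{2}/D(\theta)$ with $D(\theta)=a_{0}-a_{1}\cos 2\theta$, $a_{0}=(1+b^{2})/2$, $a_{1}=(1-b^{2})/2$. Setting $y=a_{1}\cos 2\theta$, an elementary rewriting gives
\[
  P(\alpha,\theta)=\frac{1}{\alpha b^{2}}\Bigl(a_{0}-y+\frac{a_{1}^{2}-y^{2}}{A-y}\Bigr),\quad A:=a_{0}-\alpha b^{2},
\]
and a short computation reduces the numerator of $\partial_{y}P$ to $-(A-a_{1})(A+a_{1})=-b^{2}(1-\alpha)(1-\alpha b^{2})$, which is strictly negative for every $\alpha\in(0,1)$. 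Hence $\sup_{\theta}P(\alpha,\theta)$ is realised at $y=-a_{1}$, i.e.\ at the minor-axis endpoints $\theta=\pm\pi/2$, where $f=b$ and $t=0$, giving $\sup_{\theta}P(\alpha,\theta)=1/(\alpha b^{2})$ and $M(\Omega)=1/b^{2}$ in the limit $\alpha\uparrow 1$. A separate elementary computation using $t^{2}=a_{1}^{2}\sin^{2}2\theta/D^{2}$ locates $\max|t|$ at $\cos 2\theta=a_{1}/a_{0}$ with value $(1-b^{2})/(2b)$, and \eqref{E:Patcos} then gives $m(\Omega)=\bigl(\tfrac{1+b^{2}}{2b}+\tfrac{1-b^{2}}{2b}\bigr)^{2}=1/b^{2}$, so $m=M=1/b^{2}$.

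The main obstacle is the ellipse: the optimising $\alpha^{\ast}=1$ lies on the boundary of the admissible range $(0,1)$, so $M(\Omega)$ must be approached as a limit, and the $\theta$-maximiser of $P(\alpha,\cdot)$ migrates from the natural candidate $\theta^{\ast}$ (where $|t|$ is largest, suggested by the per-$\theta$ analysis leading to $m(\Omega)$) to the minor-axis endpoints (where $t$ vanishes). Establishing the sign of $\partial_{y}P$ uniformly in $\alpha\in(0,1)$ is therefore the delicate step; once settled, the coincidence of $M$ and $m$ follows algebraically. The polygon cases, by contrast, reduce to a single one-variable monotonicity along each straight edge followed by a finite combinatorial comparison.
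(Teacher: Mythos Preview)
Your proof is correct. For the polygonal cases (2) and (3) it is essentially the paper's argument: the paper derives the same edge-wise formula $P(\alpha,\theta)=\frac{1}{\alpha d_j^2}\,\frac{1-\alpha d_j^2}{1-\alpha f(\theta)^2}$, observes that the edge-maximum is attained at the farthest vertex, and then uses the special structure (all $r_j=1$ for cyclic, all $d_j=d$ for circumscribed) to collapse $\sup_\theta P(\alpha,\theta)$ to the single expression $\alpha^{-1}(d^{-2}-\alpha)/(1-\alpha)$, whose minimum over $\alpha$ coincides with $m(\Omega)$. Your monotonicity in $c^{2}$ is simply the paper's monotonicity in $f(\theta)^{2}$ rewritten via $f=d_i/c$.

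For ellipses the paper does not give a proof in-line but refers to \cite[\S5.1]{Dauge_etalJaca13} for the identities $m(\Omega)=M(\Omega)=b^{2}/a^{2}$. Your direct computation---rewriting $P$ as a function of $y=a_{1}\cos 2\theta$ and checking that $\partial_{y}P$ has the sign of $-(A^{2}-a_{1}^{2})=-b^{2}(1-\alpha)(1-\alpha b^{2})$---is a clean self-contained argument that the paper omits. Your remark that the optimum $\alpha^{\ast}=1$ is a boundary limit is also a worthwhile observation; the paper's statement $M(\Omega)=b^{2}/a^{2}$ hides this feature.
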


\begin{example}
Here are examples corresponding to the three classes above.
\begin{enumerate}
\item Discs realize the minimum value $1$ of $\Gamma(\Omega)$ out of all plane domains.
\item Cyclic polygons: Rectangles, and all {\em regular} (convex) polygons.
\item Circumscribed polygons: Triangles, rhombi, (and again, regular polygons).
\end{enumerate}
\end{example} 

We are going to prove Theorem \ref{th:m=M} for each class of domain, successively. We will give formulas for $m(\Omega)=M(\Omega)$ and the corresponding bound of $\beta(\Omega)$.

\subsubsection{Ellipses}
The canonical form of the equation of an ellipse is 
\[
   \frac{x^2}{a^2} + \frac{y^2}{b^2} = 1
\]
with positive coefficients $a\le b$. We take the center $\bx_0$ of polar coordinates at the origin (center of the ellipse). One can prove the identities, see details in \cite[\S5.1]{Dauge_etalJaca13}
\[
   m(\Omega) = \frac{b^2}{a^2} \quad \mbox{and}\quad
   M(\Omega) = \frac{b^2}{a^2}\,.
\]
Moreover the constant $\Gamma(\Omega)$ is analytically known, cf.~\cite{Friedrichs1937}, and finally 
\begin{equation}
\label{E:ellipse-mM}
   m(\Omega)=M(\Omega)=\frac{b^2}{a^2}=\Gamma(\Omega)\,.
\end{equation}
In particular, if $\Omega$ is a disk
$m(\Omega)=M(\Omega)=\Gamma(\Omega)=1$.
Note the corresponding values for the inf-sup constant deduced from the relation $\beta(\Omega)=(1+\Gamma(\Omega))^{-1/2}$, cf \eqref{E:Cbeta} and \eqref{E:CGamma},
\[
   \beta(\Omega) = \frac{1}{\sqrt{1+\frac{b^2}{a^2}}} \ \ \mbox{in general,}\quad\mbox{and}\quad
   \beta(\Omega) = \frac{1}{\sqrt{2}}\ \ \mbox{if $\Omega$ is a disk.}
\]

\subsubsection{Polygons}
Let $\Omega$ be a strictly star-shaped polygon associated with the origin $\bx_0$. Let $\bc_j$, $j=1,\ldots,J$ be its vertices. The sides of $\Omega$ are the segments $[\bc_j,\bc_{j+1}]$  (with the convention $\bc_{J+1}=\bc_1$). We denote by
\begin{itemize}
\item $r_j=\max\{|\bc_j-\bx_0|, |\bc_{j+1}-\bx_0|\}$ 
\item $d_j=\dist(\bx_0,L_j)$ with $L_j$ the line containing the side $[\bc_j,\bc_{j+1}]$.
\end{itemize}
The normalization $\max_{\theta\in\T}f(\theta)=1$ becomes
\[
   \max_{j=1}^J r_j=1\,.
\]
Let $\theta_j\in\T$ the angle corresponding to the vertex $\bc_j$ and $\tilde\theta_j$ the angle corresponding to the point $\tilde\bc_j\in L_j$ such that $d_j=|\tilde\bx_j-\bx_0|$. For $\theta\in(\theta_j,\theta_{j+1})$, we find
\[
   f(\theta) = \frac{d_j}{\cos(\theta-\tilde\theta_j)} \quad\mbox{and}\quad
   \gamma(\bx) = \theta-\tilde\theta_j.
\]
We deduce the formula for $P$ (see also \cite[\S5.2]{Dauge_etalJaca13})
\[
   P(\alpha,\theta) = \frac{1}{\alpha d_j^2} \,
   \frac{1-\alpha d_j^2}{1-\alpha f(\theta)^2} \quad\mbox{for}\quad 
   \theta\in(\theta_j,\theta_{j+1}).
\]
For any $\alpha\in(0,1)$ and $\theta\in(\theta_j,\theta_{j+1})$, the maximal value of $P$ is attained for $f(\theta)$ maximal, i.e., at the most distant end of the segment $[\bc_j,\bc_{j+1}]$. Hence
\begin{align}
\label{E:Mpolyg}
   M(\Omega) &= \inf_{\alpha\in(0,1)} \max_{j=1}^J \frac{1}{\alpha d_j^2} \,
   \frac{1-\alpha d_j^2}{1-\alpha r_j^2} \\
\label{E:Mpolyg2}
   &= \inf_{\alpha\in(0,1)} \max_{j=1}^J \frac{1}{\alpha r_j^2} \,
   \frac{r_j^2d_j^{-2}-\alpha r_j^2}{1-\alpha r_j^2}\,.
\end{align}
To calculate $m(\Omega)$, we use \eqref{E:mcos} and find
\begin{equation}
\label{E:mpolyg}
   m(\Omega) = \max_{j=1}^J \bigg(\frac{r_j}{d_j} + \sqrt{\frac{r_j^2}{d_j^2} -1 }\ \bigg)^2.
\end{equation}
The maximum is attained when $r_j/d_j$ is maximal.

\begin{lemma}
Let $\Omega$ be a strictly star-shaped polygon associated with the center $\bx_0$ and the normalization $\max_jr_j=1$. Let $d=\min_jd_j$. If $\Omega$ is cyclic or circumscribed (with respect to the center $\bx_0$), then
\[
   m(\Omega) = M(\Omega) = \bigg(\frac{1}{d} + \sqrt{\frac{1}{d^2} -1} \,\bigg)^2 \,.
\]
\end{lemma}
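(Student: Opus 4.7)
\medskip
\noindent
\textbf{Plan of proof.}
The strategy is to reduce both cases to the \emph{same} one-parameter optimization problem by observing that the quantities $r_j$ and $d_j$ degenerate to constants in the two situations considered.

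First I would make the following reductions. If $\Omega$ is cyclic with respect to $\bx_0$, then all vertices lie at a common distance from $\bx_0$, so $r_j$ does not depend on $j$; the normalization $\max_j r_j=1$ then forces $r_j=1$ for every $j$, while $d=\min_j d_j$. If $\Omega$ is circumscribed with center $\bx_0$, then every side is tangent to the inscribed circle, so $d_j$ does not depend on $j$ and equals the inradius $d$, while the normalization gives $\max_j r_j=1$. Under either hypothesis, the ratio $r_j/d_j$ attains its maximum $1/d$, so formula \eqref{E:mpolyg} yields immediately
\[
   m(\Omega) = \Bigl(\tfrac{1}{d} + \sqrt{\tfrac{1}{d^{2}} - 1}\,\Bigr)^{2},
\]
which is the value claimed.

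Next I would compute $M(\Omega)$ from \eqref{E:Mpolyg}. In the cyclic case with $r_j=1$, the factor $(1-\alpha)^{-1}$ is independent of $j$ and the remaining factor $(1-\alpha d_j^{2})/(\alpha d_j^{2})=1/(\alpha d_j^{2})-1$ is monotone decreasing in $d_j$, so the max over $j$ is attained at $d_j=d$. In the circumscribed case with $d_j=d$, only the factor $(1-\alpha r_j^{2})^{-1}$ depends on $j$, and it is maximized at $r_j=1$ (note $\alpha r_j^{2}<1$ for all $\alpha\in(0,1)$ since $r_j\le1$). In both cases one obtains
\[
   M(\Omega) \;=\; \inff_{\alpha\in(0,1)} h(\alpha),
   \qquad h(\alpha):=\frac{1-\alpha d^{2}}{\alpha d^{2}(1-\alpha)}.
\]

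It remains to solve this one-variable minimization. A routine computation shows $h'(\alpha)=0$ is equivalent to $\alpha^{2}d^{2}-2\alpha+d^{2}=0$, so the unique critical value in $(0,1)$ is $\alpha_{*}=(1-\sqrt{1-d^{2}})/d^{2}$. Substituting back and rationalizing,
\[
   h(\alpha_{*}) \;=\; \frac{d^{2}}{(1-\sqrt{1-d^{2}})^{2}}
   \;=\; \frac{(1+\sqrt{1-d^{2}})^{2}}{d^{2}}
   \;=\; \Bigl(\tfrac{1}{d}+\sqrt{\tfrac{1}{d^{2}}-1}\,\Bigr)^{2},
\]
which matches the value of $m(\Omega)$. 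Combined with the a priori bound $M(\Omega)\ge m(\Omega)$ from Lemma~\ref{lem:Mm}, this proves the equality and the common value.

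The only substantive step is the calculus computation of $\alpha_{*}$ and $h(\alpha_{*})$; everything else is a geometric bookkeeping that uses the very definitions of ``cyclic'' and ``circumscribed'' to kill one of the two indices $r_j,d_j$. I do not anticipate any real obstacle; the key observation is simply that both geometric hypotheses collapse the two-parameter max in \eqref{E:Mpolyg} to the same explicit function of $\alpha$.
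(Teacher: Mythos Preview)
Your approach is essentially identical to the paper's: reduce the cyclic/circumscribed hypotheses to $r_j\equiv1$ or $d_j\equiv d$, observe that the $\max_j$ in \eqref{E:Mpolyg} collapses to the same one-variable function $h(\alpha)=\dfrac{1-\alpha d^2}{\alpha d^2(1-\alpha)}$, and optimize in $\alpha$. One slip: the critical equation should read $\alpha^2 d^2 - 2\alpha + 1 = 0$ (constant term $1$, not $d^2$); your stated root $\alpha_*=(1-\sqrt{1-d^2})/d^2$ and the evaluation $h(\alpha_*)$ are nonetheless correct, so this is just a typo that does not propagate.
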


\begin{proof}
If $\Omega$ is cyclic, all $r_j$ coincide, so are equal to $1$. If $\Omega$ is circumscribed, all $d_j$ coincide, so are equal to $d$. In both situations we deduce from \eqref{E:Mpolyg2} and \eqref{E:Mpolyg} respectively, that
\begin{equation}
\label{E:Mdalpha}
   M(\Omega) =
   \inf_{\alpha\in(0,1)}  \frac{1}{\alpha} \,
   \frac{d^{-2}-\alpha}{1-\alpha}\,.
\end{equation}
Since $d<1$, there is one value $\alpha_0$ of $\alpha$ realizing the minimum
\begin{equation}
\label{E:alpha}
   \alpha_0 = \frac{1}{d^2} - \sqrt{\frac{1}{d^4}-\frac{1}{d^2}} \in (0,1).
\end{equation}
This leads to the formula of the lemma for $M(\Omega)$. The formula for $m(\Omega)$ is a consequence of \eqref{E:mpolyg}. 
\end{proof}

This concludes the proof of Theorem \ref{th:m=M} and provides for cyclic or circumscribed polygons the associate lower bound on $\beta(\Omega)=(1+\Gamma(\Omega))^{-1/2}$ in the form
\begin{equation}
\label{E:betad}
 \beta(\Omega) \ge 
 \frac{d}{\sqrt{2}} \,
 \big(1+\sqrt{1-d^{2}}\,\big)^{-\frac12} \,.
\end{equation}

\medskip

Whereas we have described several classes of polygons for which the two upper bounds $m(\Omega)$ and $M(\Omega)$ coincide, in general they are different. It is indeed not difficult to find domains, even polygons, for which $m(\Omega)\not=M(\Omega)$. Among other examples, a simple convex hexagon that has this property is analyzed in \cite{Dauge_etalJaca13}. 

Now if $m(\Omega)\not=M(\Omega)$, then the proven inequality $\Gamma(\Omega)\le M(\Omega)$ \eqref{E:b1} is weaker than the Horgan--Payne inequality $\Gamma(\Omega)\le m(\Omega)$ \eqref{E:HPGamma}, but this does not yet imply that the latter is not true. In the following section we analyze examples of domains for which the inequality of Horgan--Payne does indeed not hold.

\subsection{Counterexamples}

We will now give examples of strictly star-shaped domains in $\R^{2}$ that do not satisfy the Horgan--Payne inequality \eqref{E:HP}. We present three examples, a ``Cupid's bow'' where the boundary is composed of logarithmic spirals, a ``double stadium'' where the boundary is composed of straight segments and circular arcs, and a polygonal (octagonal) version of the ``Cupid's bow''. The examples have a common feature, a small passage between two halves of the domain. This means that the domain can be separated into two equal-sized parts by a very short straight cut. Or again, there are points on the boundary where the distance to the origin is much smaller than the Horgan--Payne angle. 

The proof that the Horgan--Payne inequality is not satisfied uses a new upper bound for the inf-sup constant proved in Lemma~\ref{L:etrang} below. In the three examples, the domains depend on a small parameter, and we will show that as the parameter tends to zero, the upper bound tends to zero much faster than the lower bound of the Horgan--Payne inequality. This shows that for sufficiently small values of the parameter, the inequality cannot be true. Since our upper bound features an explicit constant, we can provide explicit values of the parameter for which the Horgan--Payne inequality is disproved.

We begin by proving an upper bound for the inf-sup constant $\beta(\Omega)$ in the situation where the bounded domain $\Omega$ is separated into two subdomains $\Omega_{+}$ and $\Omega_{-}$ by a plane cut $\Sigma$. Since this estimate may be of independent interest (it can be used to show that $\beta(\Omega)=0$ for a large class of domains with outward cusps, for example), we prove it in any dimension $d\ge2$.

Without loss of generality, we can assume that $\Sigma$ lies in the plane 
$\{x_d=0\}$. Thus we assume with $\R^d_{\pm}=\{\bx\in\R^{d}\mid x_{d}\gtrless0\}$
\[
 \Omega\cap(\R^{d-1}\times\{0\})=\Sigma\times\{0\}\ne\emptyset
 \quad\mbox{ and }\quad
 \Omega_{\pm} = \Omega\cap\R^d_{\pm}\,.
\]
For simplicity, we assume that $\Sigma$ is connected.
We denote by $|\Omega|$ the $d$-dimensional measure of $\Omega$ and by $|\Sigma|$ the $d-1$-dimensional measure of $\Sigma$. By $L$ we denote the width of $\Sigma$, that is the minimal distance of two parallel $d-2$-dimensional hypersurfaces in $\R^{d-1}$ that contain $\Sigma$ between them. If $d=2$ and $\Sigma$ is an interval, then $|\Sigma|=L$, the length of the interval.

\begin{figure}[ht]
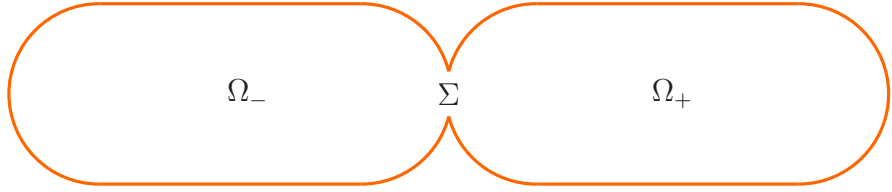

    \figinit{12mm}
    \def\epsil{0.25}
    \def\radius{0.9682}  
    \def\unsureps{3.8730} 
    \figpt 0:(  0, 0)
    \figpt 1:(  \radius, 0)
    \figpt 2:( -\radius, 0)
    \figpt 3:(  \radius, 1)
    \figpt 4:( -\radius, 1)
    \figpt 5:(  \radius, -1)
    \figpt 6:( -\radius, -1)
    \figpt 7:( 0, \epsil)
    \figpt 8:( 0, -\epsil)
    \figpt 11:(  \unsureps, 0)
    \figpt 12:( -\unsureps, 0)
    \figpt 13:(  \unsureps, 1)
    \figpt 14:( -\unsureps, 1)
    \figpt 15:(  \unsureps, -1)
    \figpt 16:( -\unsureps, -1)
    \figpt 27:( 0, 1)
    \figpthom 28 : = 7 /1, 1.8/
    \figptrot 29 : = 28 /7, 270/

    \def\MyPSfile{}
    \figdrawbegin{\MyPSfile}
    \figset (width=1.2)
    \figset (color=\CoulB)
    \figdrawarccircP 1 ; 1 [8,5]
    \figdrawline[5,15]
    \figdrawarccircP 11 ; 1 [15,13]
    \figdrawline[13,3]
    \figdrawarccircP 1 ; 1 [3,7]
    \figdrawarccircP 2 ; 1 [6,8]
    \figdrawline[6,16]
    \figdrawarccircP 12 ; 1 [14,16]
    \figdrawline[14,4]
    \figdrawarccircP 2 ; 1 [7,4]
  \figdrawend

    \figvisu{\figbox}{}{%
    \figinsert{\MyPSfile}
    \figwritec[0]{$\Sigma$}
    \figwritege 0 :{$\Omega_+$}(2.25,0)
    \figwritegw 0 :{$\Omega_-$}(2.,0)
    }
\centerline{\box\figbox}
\caption{Example of configuration for Lemma \ref{L:etrang}: the double stadium }
\label{F:Stadium}
\end{figure}

\begin{lemma}
\label{L:etrang}
  There exists a constant $c_d$ depending only on the dimension $d$ such that
\begin{equation}
\label{E:cut_maj}
   \beta(\Omega) \le 
   c_d\, \left(\frac{|\Omega|}{|\Omega_{+}||\Omega_{-}|}  L|\Sigma|\right)^{\frac12}\,.
\end{equation}
For $d=2$, we can take $c_2=\frac{\sqrt{8}}{\sqrt{3}}$, so that 
\begin{equation}
\label{E:cut_maj2}
   \beta(\Omega)\le 
   \left( \frac{8}3 \frac{|\Omega|}{|\Omega_{+}||\Omega_{-}|}\right)^{\frac12} \! L\,.
\end{equation}
\end{lemma}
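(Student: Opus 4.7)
The plan is to bound $\beta(\Omega)$ from above via the dual characterization $\beta(\Omega) \le \|\nabla q\|_{-1,\Omega}/\|q\|_{0,\Omega}$, applied to a $q \in L^2_\circ(\Omega)$ that encodes the geometric separation of $\Omega$ by the cut $\Sigma$.

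Specifically, I would take $q = |\Omega_-|\,\mathbf{1}_{\Omega_+} - |\Omega_+|\,\mathbf{1}_{\Omega_-}$. A direct computation gives $\|q\|_{0,\Omega}^2 = |\Omega_+|\,|\Omega_-|\,|\Omega|$, and applying Gauss' theorem separately in $\Omega_+$ and $\Omega_-$ (using that $\bv$ vanishes on $\partial\Omega$, and that the outward normal to $\Omega_+$ along $\Sigma$ is $-e_d$) yields
\[
\langle \div \bv, q\rangle_\Omega \;=\; -|\Omega|\int_\Sigma v_d\,d\sigma, \qquad \bv\in H^1_0(\Omega)^d.
\]
Hence the lemma reduces to the uniform trace-type estimate $K^2 \le c_d^2\,L\,|\Sigma|$, where $K = \sup_{\bv}\,|\int_\Sigma v_d|/\|\bv\|_{1,\Omega}$, since then $\beta(\Omega) \le \sqrt{|\Omega|/(|\Omega_+||\Omega_-|)}\,K$.

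For the estimate on $K$, I would construct a divergence-free vector field $\bw$ on $\R^d$ with $w_d|_\Sigma = -1$ and $\|\bw\|_{L^2(\R^d_+)}^2 \le c_d^2\,L\,|\Sigma|$; then a single integration by parts on $\Omega_+$ (justified by density and by the identities $\bv = 0$ on $\partial\Omega \cap \overline{\Omega_+}$ and $\bw\cdot\boldsymbol{n}_+ = 1$ on $\Sigma$) produces
\[
\int_\Sigma v_d\,d\sigma \;=\; \int_{\Omega_+}\bw\cdot\grad v_d\,d\bx,
\]
and Cauchy--Schwarz closes the argument. After translating and rotating so that $\Sigma$ lies in the slab $\{|x_1|\le L/2\}\cap\{x_d = 0\}$, the two-dimensional model is the stream function $\Psi(x_1,x_2) = \xi(x_1)\,\eta(x_2)$, where $\xi$ is the continuous piecewise linear function equal to $x_1$ on $[-L/2,L/2]$ that returns linearly to zero on $[-L,-L/2]\cup[L/2,L]$ (so that $\xi'=1$ on $(-L/2,L/2)$), and $\eta(x_2) = \max(1-|x_2|/H,0)$ is an even tent of half-width $H$. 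The field $\bw = (\partial_2\Psi,-\partial_1\Psi)$ is divergence-free, satisfies $w_2(x_1,0)=-\xi'(x_1)=-1$ on $\Sigma$, and its $L^2$ norm $\|\xi\|^2\|\eta'\|^2 + \|\xi'\|^2\|\eta\|^2 = L^3/(3H)+4LH/3$ minimises at $H = L/2$, delivering the explicit constant $c_2$. In dimension $d\ge 3$ one multiplies $\bw$ by a cutoff $\chi(x_2,\dots,x_{d-1})$ equal to one on the transverse projection of $\Sigma$; this preserves the divergence-free property, and integrating in the $(d-2)$ transverse variables introduces the additional factor $|\Sigma|/L$ needed to produce $L\,|\Sigma|$.

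The main obstacle is the $L^2$ estimate for $\bw$. Because $\Omega_+$ can be arbitrarily wide in directions transverse to $\Sigma$, no global Poincaré inequality on $\Omega_+$ is available, so $\bw$ must be essentially supported in a slab of thickness comparable to $L$ around $\Sigma$. The divergence-free constraint -- imposed precisely to eliminate the $\|u\|_{L^2(\Omega_+)}$ term that would otherwise be uncontrollable in terms of $\|\bv\|_{1,\Omega}$ alone -- is what forces the stream-function construction in two dimensions and its product analogue in higher dimension; the balancing of the two widths in the tent functions is the step that yields the explicit numerical value $c_2 = \sqrt{8/3}$.
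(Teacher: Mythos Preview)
Your reduction---the piecewise constant $q$ and the resulting bound $\beta(\Omega)\le K\,(|\Omega|/|\Omega_+||\Omega_-|)^{1/2}$ with $K=\sup_{\bv}|\int_\Sigma v_d|/|\bv|_{1,\Omega}$---is exactly what the paper does. The difference is in how the trace quantity $K$ is estimated. The paper proves an explicit $H^{1/2}$ trace inequality on $\R^{d-1}$, then uses the vanishing of the trace outside $\Sigma$ to convert this into a weighted $L^2$ bound with weight $w(x)=\int_{\R^{d-1}\setminus\Sigma}|x-y|^{-d}\,dy$, and finally applies Cauchy--Schwarz with $\int_\Sigma w^{-1}$. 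Your duality via an explicit divergence-free $\bw$ is more direct in $d=2$ and in fact sharper: at $H=L/2$ one gets $\|\bw\|_{L^2(\R^2_+)}^2=\tfrac12\big(\tfrac{L^3}{3H}+\tfrac{4LH}{3}\big)=\tfrac23 L^2$, so $c_2=\sqrt{2/3}$, a factor $2$ smaller than the paper's value (your displayed expression is the $L^2(\R^2)$ norm rather than the $L^2(\R^2_+)$ norm, but the slip is in the harmless direction).

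The gap is in $d\ge3$. Your cutoff $\chi(x')$ must equal $1$ on the transverse projection $\pi_{x'}(\Sigma)$, so $\|\chi\bw\|_{L^2(\R^d_+)}^2\ge c\,L^2\,|\pi_{x'}(\Sigma)|$. You assert this equals $c\,L\,|\Sigma|$, but Fubini gives only $|\pi_{x'}(\Sigma)|\ge|\Sigma|/L$, and the reverse inequality can fail by an arbitrary factor: take $\Sigma$ to be a thin tubular neighbourhood of a zigzag curve inside the slab $\{|x_1|\le L/2\}$; this is connected with width $L$, yet $L\,|\pi_{x'}(\Sigma)|/|\Sigma|$ is of the order (slab width)/(tube width), which is unbounded. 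The paper's weighted approach is immune to this, because the slab condition alone forces $w(x)\ge c_d'/L$ for \emph{every} $x\in\Sigma$ regardless of the shape of $\Sigma$, whence $\int_\Sigma w^{-1}\le L|\Sigma|/c_d'$ directly. To rescue your construction for $d\ge3$ you would need a divergence-free $\bw$ tailored to the geometry of $\Sigma$ rather than a tensor product; the natural candidate $\bw=-\nabla\Phi$ with $\Phi$ harmonic in $\R^d_+$ and $\partial_d\Phi\big|_{x_d=0}=\mathbf{1}_\Sigma$ reduces to estimating the $H^{-1/2}$ norm of $\mathbf{1}_\Sigma$, which is essentially the dual of the paper's argument.
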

Remark: This value of $c_{2}$ is certainly not optimal; more elaborate methods of proof may give smaller values. 
\begin{proof}
We choose a piecewise constant function $q\in L^2_\circ(\Omega)$ as follows:
\[
  q = \frac1{|\Omega_{+}|} \;\mbox{ in }\Omega_{+}\,,\qquad
  q = -\frac1{|\Omega_{-}|} \;\mbox{ in }\Omega_{-}\,,
  \]
and we will obtain an upper bound for $\beta(\Omega)$ from
$$
  \beta(\Omega) \le 
  \sup_{\bv\in\Cinf_0(\Omega)^d} 
  \frac{\int_\Omega  (\div\bv)(\bx)\,q(\bx)\, \rd\bx}{\Norm{\bv}{1,\Omega}\,
        \DNorm{q}{0,\Omega}}\,.
$$
We compute explicitly 
$$
 \DNormc{q}{0,\Omega}=\frac{1}{|\Omega_+|}+\frac{1}{|\Omega_-|} =
 \frac{|\Omega|}{|\Omega_{+}||\Omega_{-}|}
$$
and for $\bv\in\Cinf_0(\Omega)^d$
$$
   \int_\Omega  (\div\bv)(\bx)\,q(\bx)\, \rd\bx 
   = - \int_\Sigma v_d(x,0)\,\rd x \,\Big(\frac{1}{|\Omega_+|}+\frac{1}{|\Omega_-|} \Big)\,.
$$
This implies
$$
  \beta(\Omega) \le \Big(\frac{|\Omega|}{|\Omega_{+}||\Omega_{-}|}\Big)^{\frac12}
  \sup_{\bv\in\Cinf_0(\Omega)^d}
  \frac{\big|\int_\Sigma v_d(x,0)\,\rd x\big|}{\Norm{\bv}{1,\Omega}}\,.
$$
Thus, as soon as we can get an estimate of the mean value for the trace on $\Sigma$ for any $u\in\Cinf_0(\Omega)$
\begin{equation}
\label{E:L1trace}
   \big|\int_\Sigma u(x,0)\,\rd x\big| \le \tilde c(\Sigma)\, \Norm{u}{1,\Omega}\,,
\end{equation}
we will have an upper bound for $\beta(\Omega)$
\begin{equation}
\label{E:betafromL1}
   \beta(\Omega) \le \tilde c(\Sigma)\,\Big(\frac{|\Omega|}{|\Omega_{+}||\Omega_{-}|}\Big)^{\frac12}\,.
\end{equation}
The rest of the proof is dedicated to the $L^{1}$ estimate \eqref{E:L1trace}. This estimate will be obtained in three steps : 
First we show a precise version \eqref{E:cut_trace} of the $H^{1/2}$ estimate of the standard trace lemma.
Then, given that $u(\cdot,0)$ vanishes outside of $\Sigma$, we deduce from \eqref{E:cut_trace} a weighted $L^{2}$ estimate \eqref{E:L2w}.
Finally, the Cauchy-Schwarz inequality gives the $L^{1}$ estimate \eqref{E:L1trace}.

\medskip
The first step is a version with explicit (though not optimal) constant of the standard $H^{1/2}$ estimate of the trace lemma.
Namely, we will show: There holds for all $u\in\Cinf_0(\R^d)$
\begin{equation}
\label{E:cut_trace}
 \int_{\R^{d-1}}\int_{\R^{d-1}} 
   \frac{|u(x,0)-u(y,0)|^2}{|x-y|^d}\,\rd x\,\rd y \le 16\,\omega_{d-1} \Normc{u}{1,\R^d_+}\,
\end{equation}
where $\omega_{d-1}$ is the surface of the unit sphere in $\R^{d-1}$; $\omega_1=2$ for $d=2$.
In order to keep control of the constants, we present a short proof of this classical result, see \cite{Miyazaki08} for this and other variants of the proof.

One writes $h=(y-x)/2$ and 
$$
 u(x,0)-u(y,0)=\big(u(x,0)-u(x+h,|h|)\big)\,-\,\big(u(y,0)-u(y-h,|h|)\big).
$$
We only need to estimate the first term on the right hand side, the second term being of the same form.
For $x,h\in\R^{d-1}$ we have
$$
\begin{aligned}
 |u(x,0)-u(x+h,|h|)| &=|\int_0^1 \grad u(x+sh,s|h|)\cdot\binom{h}{|h|}\,\rd s|\\
   & \le \sqrt2|h|\int_0^1 |\grad u(x+sh,s|h|)| \,\rd s\,.
\end{aligned}
$$
Integrating in $x$ on $\R^{d-1}$, this implies
$$
\begin{aligned}
  \|u(\cdot,0)-u(\cdot+h,|h|)\|_{L^2(\R^{d-1})} 
  & \le  \sqrt2|h|\int_0^1 \|\grad u(\cdot+sh,s|h|)\|_{L^2(\R^{d-1})} \,\rd s\\
  & =  \sqrt2|h|\int_0^1 \|\grad u(\cdot,s|h|)\|_{L^2(\R^{d-1})} \,\rd s\,.
\end{aligned}
$$
Integrating now in $h$ on $\R^{d-1}$, we obtain
\begin{multline}
\label{E:cut_1}
\left\||h|^{-\frac d2} \|u(\cdot,0)-u(\cdot+h,|h|)\|_{L^2(\R^{d-1})} \right\|_{L^2(\R^{d-1},\rd h)}\\
 \le \sqrt2 \int_0^1 \left\||h|^{1-\frac d2} \|\grad u(\cdot,s|h|))\|_{L^2(\R^{d-1})} \right\|_{L^2(\R^{d-1},\rd h)} \,\rd s\,.
\end{multline}
Using polar coordinates $(|h|,\frac{h}{|h|})$ for the integral in $h$:
$$
\begin{aligned}
  \!\left\||h|^{1-\frac d2} \|\grad u(\cdot,s|h|))\|_{L^2(\R^{d-1})} \right\|_{L^2(\R^{d-1}\!,\ee\rd h)}^2
  \!\!\!\!\!
  &=
  \int_{\R^{d-1}}\!\int_{\R^{d-1}}\! |h|^{2-d} |\grad u(x,s|h|))|^2 \,\rd x\,\rd h \\
  &=\omega_{d-1}\int_{\R^{d-1}}\int_0^\infty |\grad u(x,s|h|))|^2\,\rd |h|\,\rd x\\
  &=\omega_{d-1}\,s^{-1}\|\grad u\|_{L^2(\R^{d}_+)}^2\,.
\end{aligned}
$$
Inserting this into \eqref{E:cut_1}, we obtain with $\int_0^1s^{-1/2}\rd s=2$
$$
   \int_{\R^{d-1}}\int_{\R^{d-1}} |h|^{-d} |u(x,0)-u(x+h,|h|)|^2\,\rd x\,\rd h
   \le  
   8\, \omega_{d-1}\, \|\grad u\|_{L^2(\R^{d}_+)}^2
$$
and, using $2|x-y|^{-d}\rd y=|h|^{-d}\rd h$, finally \eqref{E:cut_trace}.

\medskip
Next we consider $u\in\Cinf_0(\R^d)$ such that $u(y,0)=0$ whenever $y\not\in \Sigma$ (which is the case for $u\in\Cinf_0(\Omega)$). 
Then we find from \eqref{E:cut_trace} the weighted $L^2$ estimate
\begin{equation}
\label{E:L2w}
  \int_{\R^{d-1}} w(x) |u(x,0)|^2\,\rd x \le 
  \int_{\R^{d-1}}\int_{\R^{d-1}} \!\!\!\!
   \frac{|u(x,0)-u(y,0)|^2}{|x-y|^d}\,\rd x\,\rd y 
   \le 
   16\,\omega_{d-1} \Normc{u}{1,\R ^d_+} \hskip-0.5ex
\end{equation}
with the weight function
$$
   w(x) = \int_{\R^{d-1}\setminus \Sigma} \frac{\rd y}{|x-y|^d}.
$$

\medskip
Finally, from the Cauchy-Schwarz inequality,
\begin{equation*}
  \Big( \int_{\Sigma} |u(x,0)|\,\rd x\Big)^{2} \le 
  \Big( \int_{\Sigma} \frac{\rd x}{w(x)}\Big)\, \int_{\R^{d-1}} w(x) |u(x,0)|^2\,\rd x \,,
\end{equation*}
we obtain the $L^1$ estimate
\begin{equation}
\label{E:cut_L1}
 \Big( \int_{\Sigma} |u(x,0)|\,\rd x\Big)^{2} \le 
 \Big( \int_{\Sigma} \frac{\rd x}{w(x)}\Big)\, 16\,\omega_{d-1} \Normc{u}{1,\R ^d_+}\,.
\end{equation}
Noting that the same estimate holds with $\Normc{u}{1,\R ^d_+}$ replaced by $\Normc{u}{1,\R ^d_-}$ and using 
$$\Normc{u}{1,\R ^d_+}+\Normc{u}{1,\R ^d_-}=\Normc{u}{1,\R ^d}\,,$$ 
we have proved
\eqref{E:L1trace} with $\tilde c(\Sigma)= \big(8\,\omega_{d-1}\int_\Sigma \frac{\rd x}{w(x)}\big)^{1/2}$.
It remains to estimate $\int_\Sigma \frac{\rd x}{w(x)}$. 

\medskip
In dimension $d=2$, when $\Sigma$ is the interval $(0,L)$, we can compute $w(x)$ explicitly:
$$
  w(x) = \int_{-\infty}^0 \frac{\rd y}{(y-x)^2} + \int_{L}^\infty \frac{\rd y}{(y-x)^2} 
  =\frac1x+\frac1{L-x},
$$
hence
\[
  \int_\Sigma \frac{\rd x}{w(x)} = \frac{L^2}6,
\]
and we find \eqref{E:cut_maj} with $c_2=\sqrt{8\,\omega_1/6}=\sqrt{8}/\sqrt{3}$. 

For general $d\ge2$,  if  $\Sigma$ lies between two hyperplanes of distance $L$, then it is not hard to see that for all $x\in \Sigma$, $w(x)\ge c_d'/L$ with some constant $c_d'$ independent of $x$, $\Sigma$ and $L$. 
This gives $\int_\Sigma \frac{\rd x}{w(x)}\le L\,|\Sigma|/c_{d}'$,
whence  \eqref{E:cut_maj} with 
$c_d=\sqrt{8\omega_{d-1}/c_d'}$.
\end{proof}

\subsubsection{First counterexample: Cupid's bow}

Choose a constant $c>0$ and define the logarithmic spiral by the polar parametrization
\[
  r=f(\theta) = \re^{-c\theta}\,.
\]
To define the domain $\Omega$, we use the logarithmic spiral in the first quadrant and complete the boundary curve by reflections about the $x$ and $y$ axes. Thus the polar parametrization of the boundary curve can be written as
\[
  f(\theta) = \re^{-c(\frac\pi2 - |\frac\pi2 - |\theta||)},\quad -\pi\le\theta\le\pi\,.
\]

\begin{figure}[ht]
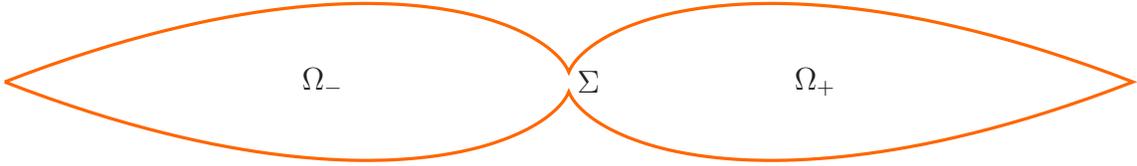

    \figinit{75mm}
    \figpt 0:(  0, 0)

    \figdrawbegin{}
    \figset (width=1.2)
    \figset (color=\CoulB)
    \figdrawlineF{filelogspic2-58.tex}
    \figdrawend

    \figvisu{\figbox}{}{%
    \figwritege 0 :{$\Sigma$}(0.015,0)
    \figwritege 0 :{$\Omega_+$}(0.4,0)
    \figwritegw 0 :{$\Omega_-$}(0.4,0)
  }
\centerline{\box\figbox}
\caption{Cupid's bow with $c=2.58$}
\label{F:Cupid}
\end{figure}

The important observation is that the angle $\gamma(\theta)$ is constant along the boundary curve, satisfying
\[
  \tan\gamma(\theta) = \frac{|f'(\theta)|}{f(\theta)} = c\,.
\]
Therefore the Horgan--Payne angle is 
$\omega(\Omega)=\frac\pi2-\gamma(\theta)= \arctan\frac1c.$
The Horgan--Payne inequality in this case amounts to
\begin{equation}
\label{E:HPCupid}
 \beta(\Omega)^{2} \ge \sin^{2}\frac{\omega(\Omega)}2 = 
   \frac{\sqrt{c^{2}+1}-c}{2\sqrt{c^{2}+1}} = \frac1{4c^{2}}+O(c^{-4})
   \quad \mbox{ as } c\to\infty \,.
\end{equation}
Now we look at our upper bound from Lemma~\ref{L:etrang}. The main observation here is that $\Omega$ is separated into equal left and right halves by a vertical cut $\{0\}\times\Sigma$ with 
\[
 \Sigma = (-\re^{-c\frac\pi2}, \re^{-c\frac\pi2})
\]
which is exponentially small. The quantities appearing in the estimate \eqref{E:cut_maj} are as follows:
\[
 |\Omega_{+}|=|\Omega_{-}|= 2\int_{0}^{\frac\pi2}\int_{0}^{f(\theta)}r\,\rd r\,\rd\theta
   = \frac{1-\re^{-c\pi}}{2c}\,,\quad
 |\Omega|=2|\Omega_{+}|\,,\quad
 L = 2\re^{-c\frac\pi2}\,.
\]
Therefore the estimate \eqref{E:cut_maj2} implies
\begin{equation}
\label{E:etrangCupid}
  \beta(\Omega)^{2} \le \frac83 \frac{4c}{1-\re^{-c\pi}} (2\re^{-c\frac\pi2})^{2}
  = \frac{128}3 \frac{c\,\re^{-c\pi}}{1-\re^{-c\pi}}\,.
\end{equation}
Clearly, for $c$ large enough, the proven upper bound \eqref{E:etrangCupid} contradicts the Horgan--Payne inequality \eqref{E:HPCupid}. Concretely, for $c=2.58$ we find numerically for the upper bound
\[
  \frac{128}3 \frac{c\,\re^{-c\pi}}{1-\re^{-c\pi}} < 0.0333
\]
which is smaller than the lower bound in \eqref{E:HPCupid}
\[
 \frac{\sqrt{c^{2}+1}-c}{2\sqrt{c^{2}+1}} > 0.0337 \,.
\]
In this example, without using Lemma~\ref{L:etrang}, one can also see that the Friedrichs constant $\Gamma(\Omega)$ must be exponentially large, thus contradicting the version \eqref{E:HPGamma} of the Horgan--Payne inequality which has a right hand side growing only quadratically in $c$.
Indeed, let $\varepsilon=\re^{-c\frac\pi2}$ and define the holomorphic function 
\[
 w(z) = \log\frac{i\varepsilon-z}{i\varepsilon+z},
\]
which is holomorphic in $\C$ minus two vertical branch cuts $[-i\infty,-i\varepsilon]$ and $[i\varepsilon,i\infty]$. We choose the branch that satisfies $w(0)=0$. For symmetry reasons, both real and imaginary parts of $w$ belong to $L^{2}_{\circ}(\Omega)$, but otherwise these conjugate harmonic functions behave very differently.
$\Im w = \arg\frac{i\varepsilon-z}{i\varepsilon+z}$ tends to $\pi$ in the right half-plane and to $-\pi$ in the left half-plane, on a length scale of the size of $\varepsilon$. Therefore 
\[
 \DNormc{\Im w}{0,\Omega} \sim \pi^{2}|\Omega| \sim \frac{\pi^{2}}c\quad \mbox{ as } c\to\infty\,. 
\]
On the other hand, $\Re w = \log|1+\frac{2i\varepsilon}{z+i\varepsilon}|$ is of the order of $\varepsilon$ outside of any disk with a fixed radius $>2\varepsilon$. It is not hard to see that  
$\DNormc{\Re w}{0,\Omega}$ tends to zero exponentially fast as $c\to\infty$ and therefore
$\Gamma(\Omega)\ge\DNormc{\Im w}{0,\Omega}/\DNormc{\Re w}{0,\Omega}$ grows exponentially, too.

\subsubsection{Second counterexample: Double stadium}
\let\eps=\varepsilon
We choose a positive number $\eps$ and construct our domain $\Omega$ as follows:
Take the union of the rectangle $(\sqrt{1-\eps^{2}}, \frac{\sqrt{1-\eps^{2}}}\eps)\times(-1,1)$ and the two circles of radius $1$ with centers $(\sqrt{1-\eps^{2}},0)$ and $(\frac{\sqrt{1-\eps^{2}}}\eps,0)$. This is the ``stadium''. The domain $\Omega$ is the union of the stadium and its reflection with respect to the vertical axis, see Figure \ref{F:Stadium} in which we have set $\varepsilon=0.25$. This produces a small passage between the left and right half, and $\Omega$ is cut into two by a vertical cut $\{0\}\times\Sigma$ with 
\[
 \Sigma=(-\eps,\eps)\,.
\]
To determine the Horgan--Payne angle, we notice that the minimal value of $\omega(\bx)$ is attained at the points $(0,\eps)$ and $(\frac{\sqrt{1-\eps^{2}}}\eps,1)$. In both cases it satisfies
\[
  \sin\omega(\bx) = \eps.
\]
The Horgan--Payne inequality amounts to
\begin{equation}
\label{E:HP2Stad}
 \beta(\Omega)^{2}\ge \sin^{2}\frac{\omega(\Omega)}2 \sim \frac{\eps^{2}}4
  \quad\mbox{ as } \eps\to0\,.
\end{equation}
To determine the upper bound resulting from \eqref{E:cut_maj2}, we compute
\[
 |\Omega_{+}| \sim 2\sqrt{1-\eps^{2}}(\frac1\eps -1) + \pi \sim \frac2\eps
  \quad\mbox{ as } \eps\to0\,,
  \qquad L=2\eps.
\]
This leads to an upper bound
\begin{equation}
\label{E:etrang2Stad}
 \beta(\Omega)^{2} \le \frac83\, 4\eps^{2} \frac2{|\Omega_{+}|} \sim  \frac{32}3 \eps^{3}
  \quad\mbox{ as } \eps\to0\,.
\end{equation} 
It is clear that for sufficiently small $\eps$, the upper bound \eqref{E:etrang2Stad} is incompatible with the Horgan--Payne inequality \eqref{E:HP2Stad}.

\subsubsection{Third counterexample: Octagon}
We choose a positive number $q$ and
define $\Omega$ as an octagon with the corners at distance $1$ for $\theta\in\{0,\pi\}$, at distance $q$ for $\theta\in\{\frac\pi4,\frac{3\pi}4,\frac{5\pi}4,\frac{7\pi}4\}$, and at distance $q^{2}$ for $\theta\in\{\frac\pi2,\frac{3\pi}2\}$, see Figure \ref{F:Octagon}. Thus the boundary curve is a polygonal interpolation of the Cupid's bow example if we set $q=\re^{-c\frac\pi4}$.
\begin{figure}[ht]
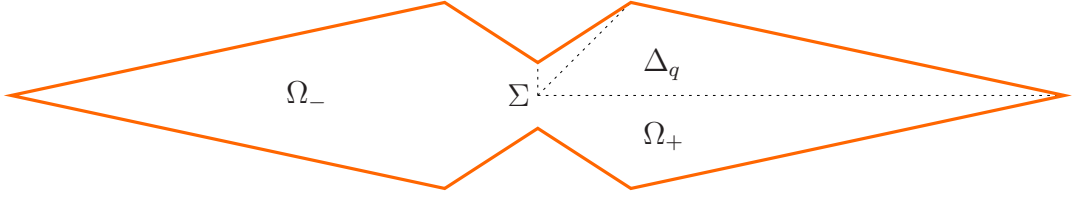

    \figinit{70mm}
    \figpt 0:(  0, 0)
    \def\qq{0.25}
    \def\qqq{0.0625}  
    \def\sqrq{0.1768} 
    \figpt 1:(  1, 0)
    \figpt 2:(  \sqrq, \sqrq)
    \figpt 3:(  0, \qqq)
    \figpt 4:( -\sqrq, \sqrq)
    \figpt 5:( -1, 0)
    \figpt 6:( -\sqrq, -\sqrq)
    \figpt 7:(  0, -\qqq)
    \figpt 8:( \sqrq, -\sqrq)    
    
    \figdrawbegin{}
    \figset (width=0.2)
    \figset (dash=5)
    \figdrawline[0,1]
    \figdrawline[0,2]
    \figdrawline[0,3]
    \figset (width=1.2)
    \figset (dash=1)
    \figset (color=\CoulB)
    \figdrawline[1,2,3,4,5,6,7,8,1]
    \figdrawend

    \figvisu{\figbox}{}{%
    \figwritegw 0 :{$\Sigma$}(0.015,0)
    \figwritege 0 :{$\Omega_+$}(0.2,-0.05)
    \figwritege 0 :{$\Delta_q$}(0.2,0.03)
    \figwritegw 0 :{$\Omega_-$}(0.4,0)
  }
\centerline{\box\figbox}
\caption{Octagon with $q=0.25$}
\label{F:Octagon}
\end{figure}
 We see that $\Omega$ is composed of $8$ triangles that are similar to the triangle $\Delta_{q}$ with corners $(0,0)$, $(1,0)$, $(\frac{q}{\sqrt2},\frac{q}{\sqrt2})$. The Horgan--Payne angle is easy to find: It satisfies 
\[
 \tan\omega(\Omega)=\frac{q/\sqrt2}{1-q/\sqrt{2}}\,.
\]
Hence the Horgan--Payne inequality amounts to
\begin{equation}
\label{E:HP8}
 \beta(\Omega)^{2} \ge \sin^{2}\frac{\omega(\Omega)}2 \sim \frac{q^{2}}8
 \quad\mbox{ as } q\to0\,.
\end{equation}
For the quantities in the upper bound \eqref{E:cut_maj2} we obtain with the area 
$|\Delta_{q}|=\frac{q}{2\sqrt2}$
\[
  |\Omega_{+}|=2(1+q^{2})|\Delta_{q}|=\frac{q(1+q^{2})}{\sqrt2},\qquad
  L=2q^{2}\,.
\]
Hence the upper bound is
\begin{equation}
\label{E:etrang8}
 \beta(\Omega)^{2} \le \frac83 \frac{2\sqrt2}{q(1+q^{2})}4q^{4} \sim \frac{64\sqrt2}{3}q^{3}
 \quad\mbox{ as } q\to0\,.
\end{equation}
Clearly \eqref{E:etrang8} contradicts \eqref{E:HP8} if $q$ is small enough.

\section{Estimate involving the ratio of radii}\label{S:rho}

Let $\Omega\subset\R^{2}$ be a bounded domain star-shaped with respect to a ball centered at the origin. Let $r=f(\theta)$, $\theta\in\T$, be the Lipschitz parametrization of the boundary in polar coordinates that exists according to Maz'ya's Lemma quoted at the beginning of Section~\ref{S:HP}. We need a kind of quantitative version of that Lemma, namely a characterization in terms of this parametrization of the largest ball with respect to which $\Omega$ is star-shaped.

\begin{lemma}
\label{L:rhomax}
 Let $\rho_{\max}$ be the radius of the largest open disk centered at the origin with respect to which $\Omega$ is star-shaped. Then
\begin{equation}
\label{E:rho-f}
 \rho_{\max}
 =
 \inf_{\theta\in\T} \frac{f(\theta)^{2}}{\sqrt{f(\theta)^2 + f'(\theta)^2}}\,.
\end{equation}  
\end{lemma}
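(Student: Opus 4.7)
The plan is to characterize $\rho_{\max}$ via a tangent-line condition on $\partial\Omega$: $\Omega$ is star-shaped with respect to the open disk $B_{\rho}$ of radius $\rho$ centered at the origin if and only if, for almost every boundary point $\bx\in\partial\Omega$, the distance from the origin to the tangent line to $\partial\Omega$ at $\bx$ is at least $\rho$. A direct computation in polar coordinates then turns this into \eqref{E:rho-f}.

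First, I compute the tangent-line distance. At an a.e.\ differentiable point $\bx(\theta)=(f(\theta)\cos\theta,f(\theta)\sin\theta)$, the tangent vector is $\bx'(\theta)=(f'\cos\theta-f\sin\theta,\,f'\sin\theta+f\cos\theta)$, so a unit outward normal is
\[
 \nu(\theta)=\frac{1}{\sqrt{f(\theta)^{2}+f'(\theta)^{2}}}\bigl(f\cos\theta+f'\sin\theta,\;f\sin\theta-f'\cos\theta\bigr).
\]
A straightforward expansion gives
\[
 \bx(\theta)\cdot\nu(\theta)=\frac{f(\theta)^{2}}{\sqrt{f(\theta)^{2}+f'(\theta)^{2}}},
\]
which is precisely the (signed) distance from the origin to the tangent line at $\bx(\theta)$.

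Second, I prove the tangent-line characterization. For the ``only if'' direction, assume $\Omega$ is star-shaped with respect to $B_{\rho}$ and let $\bx\in\partial\Omega$ be a smooth boundary point. For any $\bx_{0}\in B_{\rho}$ the half-open segment $[\bx_{0},\bx)$ lies in $\Omega$, which forces $(\bx-\bx_{0})\cdot\nu(\bx)\ge 0$; letting $\bx_{0}\to\rho\,\nu(\bx)$ yields $\bx\cdot\nu(\bx)\ge\rho$. For the converse, the tangent condition yields $f(\theta)\ge f(\theta)^{2}/\sqrt{f^{2}+f'^{2}}\ge\rho$, so $B_{\rho}\subset\Omega$. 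Fix $\bx_{0}\in B_{\rho}$ and $\bx\in\Omega$, and parametrize the segment by $\gamma(t)=\bx_{0}+t(\bx-\bx_{0})$, $t\in[0,1]$. If $\gamma$ were to exit $\Omega$, then, since $\gamma(1)=\bx\in\Omega$, it would have to re-enter at a boundary point $\by=\gamma(t_{2})$; at a smooth such re-entry point the velocity satisfies $(\bx-\bx_{0})\cdot\nu(\by)\le 0$, so $\by\cdot\nu(\by)\le\bx_{0}\cdot\nu(\by)\le|\bx_{0}|<\rho$, contradicting $\by\cdot\nu(\by)\ge\rho$.

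Combining the two directions, $\rho_{\max}$ equals the essential infimum of $f^{2}/\sqrt{f^{2}+f'^{2}}$ over $\theta\in\T$, which (since $f$ is Lipschitz, so $f'$ exists a.e.\ and is bounded) coincides with the infimum displayed in \eqref{E:rho-f}. I expect the main obstacle to be making the ``if'' direction rigorous for merely Lipschitz $\partial\Omega$, where re-entry may occur at a non-smooth point; the cleanest fix is to mollify $f$ to obtain a sequence of smooth star-shaped domains $\Omega_{\varepsilon}$ for which the tangent-line characterization is standard, and then pass to the limit in $\varepsilon$ using continuity of both $\rho_{\max}$ and the right-hand side of \eqref{E:rho-f} under $W^{1,\infty}$-convergence of the boundary parametrization.
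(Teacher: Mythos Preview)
Your identification of $f(\theta)^{2}/\sqrt{f(\theta)^{2}+f'(\theta)^{2}}$ as the distance from the origin to the tangent line is correct and is exactly the starting point of the paper's proof (written there as $f(\theta)\cos\gamma(\theta)$). Your ``only if'' direction is also fine and is, in fact, a slightly slicker version of the paper's triangle/difference-quotient argument.

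The gap is in the ``if'' direction, and the mollification patch does not rescue it as stated. Two concrete obstructions: (a) mollification of a Lipschitz function does \emph{not} converge in $W^{1,\infty}$ unless the limit is $C^{1}$, so your continuity claim for the right-hand side of \eqref{E:rho-f} under ``$W^{1,\infty}$-convergence'' is not available; (b) even accepting weaker convergence, the map $(f,f')\mapsto f^{2}/\sqrt{f^{2}+f'^{2}}$ is nonlinear, and in fact the equivalent bound $|f'|\le f\sqrt{f^{2}/\rho^{2}-1}$ involves a \emph{convex} function of $f$ on the right, so Jensen's inequality goes the wrong way and the tangent-distance lower bound need not survive mollification of $f$. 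Hence you cannot simply approximate by smooth domains that still satisfy the hypothesis.

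The paper avoids all this by a change of variable that linearizes the differential inequality. Setting $G(r)=\arccos(\rho/r)$, the condition $f^{2}/\sqrt{f^{2}+f'^{2}}\ge\rho$ is equivalent to $|(G\circ f)'|\le1$ a.e., i.e.\ $G\circ f$ is $1$-Lipschitz. This is a pointwise bound that integrates directly (Lipschitz $\Rightarrow$ absolutely continuous), yielding $G\circ f(\theta)\ge G\circ f(0)-\theta$ and hence $f(\theta)\ge \rho/\cos(\tau-\theta)$ on the relevant interval; the right-hand side is precisely the polar equation of the critical tangent line. This shows, without any smoothness beyond Lipschitz and without approximation, that the boundary cannot cross the tangent line, so the convex hull of $B_{\rho}\cup\{\bx\}$ lies in $\Omega$ for every $\bx\in\partial\Omega$. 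If you want to salvage an approximation argument, the clean route is to mollify $G\circ f$ (which \emph{does} preserve the $1$-Lipschitz bound) and then invert $G$; but once you have $G$, the direct integration is shorter.
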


\begin{proof}
If we introduce the angle $\gamma(\theta)$ between the radius vector and the normal as in Section~\ref{S:HP}, so that
$
 \tan\gamma(\theta)=\frac{f'(\theta)}{f(\theta)}\,,
$
then \eqref{E:rho-f} can be written as 
\begin{equation}
\label{E:rhomax}
  \rho_{\max} = \inf_{\theta\in\T}\, f(\theta) \cos\gamma(\theta)\,.
\end{equation}
Considering that $f(\theta) \cos\gamma(\theta)$ is the distance to the origin of the tangent at the boundary point $(r,f(\theta))$, the equality appears rather plausible. We think a detailed proof is still needed, however.

Assume then that $\Omega$ is star-shaped with respect to an open ball $B_{\rho}$ of radius $\rho$ centered at the origin. Fix a point $\bx$ on $\partial\Omega$. Without loss of generality, we can assume that $\bx$ corresponds to $\theta=0$, that is in Cartesian coordinates
\[
 \bx = (f(0),0)\,.
\]
Then the open triangle $\Delta_{\rho}$ with corners ${\bf0}=(0,0)$, $\ba=(\rho\cos\tau,\rho\sin\tau)$, and $\bx$ is contained in $\Omega$ and thus does not contain any point on $\partial\Omega$. Here the angle $\tau$ is such that $0<\tau<\pi/2$ and 
$$\rho=f(0)\cos\tau,$$
see Figure~\ref{F:Triangles4proof}.
 The side of $\Delta_{\rho}$ from $\ba=(\rho\cos\tau,\rho\sin\tau)$ to $\bx$ satisfies the equation in polar coordinates
\begin{equation}
\label{E:cathetus}
 r\cos(\tau-\theta)=\rho, \quad 0<\theta<\tau\,.
\end{equation}
For any $\theta\in(0,\tau)$, from the fact that the boundary point $r=f(\theta)$ lies outside of $\Delta_{\rho}$, we therefore get the inequality
\begin{equation}
\label{E:f>r}
 f(\theta)\ge\frac\rho{\cos(\tau-\theta)} , \quad 0<\theta<\tau\,,
\end{equation}
hence 
\begin{equation}
\label{E:fdiff}
 \frac{f(0)-f(\theta)}\theta \le 
 \frac\rho{\cos\tau\cos(\tau-\theta)}\frac{\cos(\tau-\theta)-\cos\tau}\theta, \quad
 0<\theta<\tau\,.
\end{equation}
If $f$ is differentiable in $\theta=0$, it follows
\[
  - f'(0) \le \frac{\rho\sin\tau}{\cos^{2}\tau}  
\]
and from symmetrizing we get
\begin{equation}
\label{E:f'(0)}
  |f'(0)| \le \frac{\rho\sin\tau}{\cos^{2}\tau}
  = f(0)\sqrt{\frac{f(0)^{2}}{\rho^{2}}-1} \,. 
\end{equation}
Since this is true for any boundary point $\bx$ where $f$ is differentiable, we get
 our final estimate, valid for almost every $\theta\in\T$
\begin{equation}
\label{E:f'}
  |f'(\theta)| \le f(\theta)\sqrt{\frac{f(\theta)^{2}}{\rho^{2}}-1} \,. 
\end{equation}
This inequality \eqref{E:f'} is equivalent to 
\begin{equation}
\label{E:rho}
 \rho
 \le
  \frac{f(\theta)^{2}}{\sqrt{f(\theta)^2 + f'(\theta)^2}}\,,
\end{equation}  
and we have thus shown one half of the relation \eqref{E:rho-f}. 
(Note that the Lipschitz continuity of $f$ is also a consequence of \eqref{E:fdiff}.)

\begin{figure}[ht]
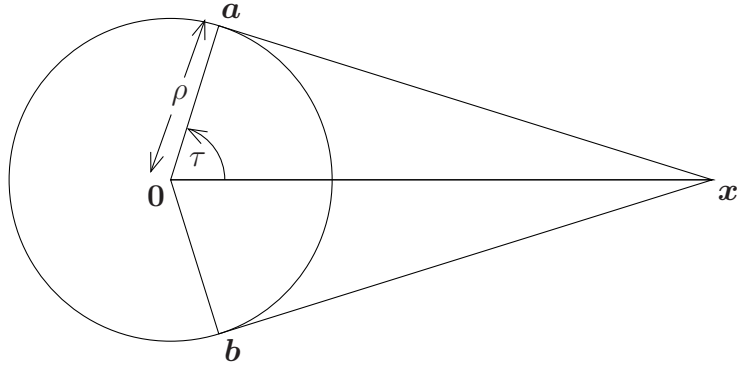

    \figinit{0.9mm}
    \figpt 1:(  0,  0)
    \figpt 2:( 80,  0)
    \figpt 3:(  0, 25)
    \figptorthoprojline  4 := 1 /2, 3/
    \figptssym 5 = 4 /1, 2/
    \figget distance=\ro[1,4]
    \figpttraC 6:= 1 /-3, 1/
    \figpttraC 7:= 4 /-2, 1/
    \figvectP 10 [6,7]
    \figpttra 8 := 6 /0.38, 10/
    \figpttra 9 := 6 /0.62, 10/


	\figdrawbegin{}

	\figdrawcirc 1(\ro)
	\figdrawline[1,2,4,1]
	\figdrawline[1,2,5,1]
	\figdrawarrowcircP 1 ; 8 [2,4]
	\figdrawarrow [8,6]
	\figdrawarrow [9,7]
	
	\figdrawend
    \def\disti{3pt}
    \figvisu{\figbox}{}{%
	\figwritesw 1:$\mathbf{0}$(\disti)
	\figwritege 1:$\tau$(7pt,5pt)
	\figwritese 2:$\bx$(\disti)
	\figwritege 8:$\rho$(0.1pt,3.4pt)
	\figwritege 7:$\ba$(6pt,0.4pt)
	\figwritege 5:$\bb$(2pt,-2pt)
    }
\centerline{\box\figbox}
\caption{Triangles appearing in the proof of Lemma \ref{L:rhomax} }
\label{F:Triangles4proof}
\end{figure}

It remains to show that if $\rho$ satisfies \eqref{E:rho} for almost all $\theta$, then $\Omega$ is indeed star-shaped with respect to $B_{\rho}$. 
For this, it is sufficient to show that for all $\bx\in\partial\Omega$ and $\by\in B_{\rho}$  the open segment between $\by$ and $\bx$ is contained in $\Omega$. We can again assume that $\bx$ corresponds to $\theta=0$, so that we are in the same configuration as in the first part of the proof. More precisely, we assume that the inequality \eqref{E:f'} is satisfied almost everywhere on $\T$, and we have to show that the domain $\Omega_{0}$ which is the interior of the convex hull of $B_{\rho}\cup\{\bx\}$ is contained in $\Omega$. This domain $\Omega_{0}$ is the union of $B_{\rho}$ and the interior of the triangle $\tilde\Delta_{\rho}$ with corners $\ba=(\rho\cos\tau,\rho\sin\tau)$, $\bb=(\rho\cos\tau,-\rho\sin\tau)$, and $\bx$. Note that the upper half of $\tilde\Delta_{\rho}$ is the triangle $\Delta_{\rho}$ considered in the first part of the proof, and the line joining $\ba=(\rho\cos\tau,\rho\sin\tau)$ to $\bx$ satisfies the equation \eqref{E:cathetus}.

We will show that for $0<\theta<\tau$, the boundary curve $r=f(\theta)$ does not cross the line \eqref{E:cathetus}. By symmetry for $0>\theta>-\tau$ and using the fact that $f(\theta)\ge\rho$ for all $\theta$, this will imply that the boundary curve does not enter $\Omega_{0}$, which gives the desired result $\Omega_{0}\subset\Omega$. 

Let $r=g(\theta)$ describe the line \eqref{E:cathetus}, i.e. 
$$
 g(\theta) = \dfrac{\rho}{\cos(\tau-\theta)}\,.
$$
We want to show that $f(\theta)\ge g(\theta)$ for $\theta\in(0,\tau)$.
For this purpose, define the function 
\[
  G: [\rho,\infty)\to [0,\frac\pi2)\,;\quad G(r)=\arccos\frac{\rho}{r} \,.
\]
Then $G$ is increasing, satisfies $G\circ g(\theta)=\tau-\theta$ for $\theta\in(0,\tau)$ and has the derivative
\[
 G'(r)=\frac{1}{r\sqrt{\frac{r^{2}}{\rho^{2}}-1}} \,.
\]
The inequality \eqref{E:f'} is equivalent to 
$
  |(G\circ f)'(\theta)|\le1\,.
$
From 
\[
 (G\circ f)'(\theta)\ge -1 \quad \mbox{ and } (G\circ f)(0)=\tau
\]
we deduce for $\theta\in(0,\tau)$
\[
 G\circ f(\theta)\ge \tau-\theta = G\circ g(\theta)\,.
\] 
Due to the monotonicity of $G$, this implies $f(\theta)\ge g(\theta)$, and the proof is complete. 
\end{proof}

\begin{theorem}
\label{T:MRrho}
Let $\rho_{\max}$ be the radius of the largest open disk centered at the origin with respect to which $\Omega$ is star-shaped. Let $R_{\min}$ the radius of the smallest disk centered at the origin containing $\Omega$. Let
\begin{equation}
\label{E:tau}
   \tau(\Omega) = \arccos \frac{\vpha\rho_{\max}}{R_{\min}} \ .
\end{equation}
Then
\begin{equation}
\label{E:Mtau}
   M(\Omega) \le 
   \bigg(\frac{R_{\min}}{\rho_{\max}} + \sqrt{\frac{R_{\min}^2}{\rho_{\max}^2} -1} \,\bigg)^2 =
   \bigg(\frac{1}{\cos\tau(\Omega)} + \sqrt{\frac{1}{\cos\tau(\Omega)^2} -1} \,\bigg)^2 \,.
\end{equation}
\end{theorem}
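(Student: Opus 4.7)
The plan is to combine the quantitative star-shapedness estimate from Lemma~\ref{L:rhomax} with the explicit minimization already carried out in the polygon case. Since $M(\Omega)$ is invariant under dilation, we may assume the normalization $\max_{\theta\in\T}f(\theta)=1$, which means $R_{\min}=1$ (the smallest concentric disk containing $\Omega$ is precisely the one with radius equal to $\max f$). Under this normalization the parameter in the theorem satisfies $\rho_{\max}=\cos\tau(\Omega)$, and it suffices to prove
\[
   M(\Omega) \le \bigg(\frac{1}{\rho_{\max}} + \sqrt{\frac{1}{\rho_{\max}^2}-1}\,\bigg)^{2}.
\]

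First I would translate Lemma~\ref{L:rhomax} into the pointwise bound
\[
   f'(\theta)^{2} \le f(\theta)^{2}\bigg(\frac{f(\theta)^{2}}{\rho_{\max}^{2}}-1\bigg)
   \quad\text{for a.e. }\theta\in\T,
\]
which is exactly inequality \eqref{E:f'}. Inserting this into the definition \eqref{E:P} of $P(\alpha,\theta)$ and simplifying the numerator algebraically gives
\[
   P(\alpha,\theta) \le \frac{1}{\alpha f(\theta)^{2}}\bigg(1+\frac{f(\theta)^{2}-\rho_{\max}^{2}}{\rho_{\max}^{2}(1-\alpha f(\theta)^{2})}\bigg)
   = \frac{1-\alpha\rho_{\max}^{2}}{\alpha\rho_{\max}^{2}\,(1-\alpha f(\theta)^{2})}.
\]
This is the key simplification: the dependence on $\theta$ collapses into the single factor $1-\alpha f(\theta)^{2}$.

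Next, for any $\alpha\in(0,1)$, this bound is a monotonically increasing function of $f(\theta)^{2}\in[\rho_{\max}^{2},1]$, so the supremum over $\theta$ is attained at $f(\theta)=1$ (which is realized by the normalization), giving
\[
   \sup_{\theta\in\T} P(\alpha,\theta) \le \frac{1-\alpha\rho_{\max}^{2}}{\alpha\rho_{\max}^{2}(1-\alpha)}
   = \frac{1}{\alpha}\cdot\frac{\rho_{\max}^{-2}-\alpha}{1-\alpha}.
\]
Taking the infimum over $\alpha\in(0,1)$ is then the very same minimization performed in \eqref{E:Mdalpha}: the optimal value $\alpha_{0}=\rho_{\max}^{-2}-\sqrt{\rho_{\max}^{-4}-\rho_{\max}^{-2}}$ yields
\[
   \inff_{\alpha\in(0,1)}\frac{1}{\alpha}\cdot\frac{\rho_{\max}^{-2}-\alpha}{1-\alpha}
   = \bigg(\frac{1}{\rho_{\max}}+\sqrt{\frac{1}{\rho_{\max}^{2}}-1}\,\bigg)^{2},
\]
which is the claimed bound.

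I expect no serious obstacle: the quantitative inequality \eqref{E:f'} from Lemma~\ref{L:rhomax} does exactly the work of eliminating the dependence on $f'$, and the remaining one-dimensional minimization has already been solved in Section~\ref{S:Ex}. The only points requiring care are the verification that the $\theta$-dependence indeed disappears in a monotone way (so that the sup is reached at a point where $f(\theta)=R_{\min}$), and the bookkeeping of the normalization so that $1/\rho_{\max}$ is correctly identified with $R_{\min}/\rho_{\max}=1/\cos\tau(\Omega)$ in the final formula.
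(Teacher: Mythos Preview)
Your proposal is correct and follows essentially the same route as the paper. The only cosmetic difference is that the paper rewrites $P(\alpha,\theta)$ using $d(\theta)=f(\theta)\cos\gamma(\theta)$ and then invokes the form \eqref{E:rhomax} of Lemma~\ref{L:rhomax} (i.e.\ $d(\theta)\ge\rho_{\max}$), whereas you substitute the equivalent inequality \eqref{E:f'} directly into \eqref{E:P}; both lead to the identical bound $P(\alpha,\theta)\le(\rho_{\max}^{-2}-\alpha)/[\alpha(1-\alpha f(\theta)^2)]$ and the same one-variable minimization \eqref{E:Mdalpha}.
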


\begin{proof}
Without restriction we assume that $R_{\min}=1$ and we consider the polar coordinates parametrization of $\partial\Omega$ by $f$. The function $P$ leading to $M(\Omega)$ defined in \eqref{E:P} can be written as
\[
   P(\alpha,\theta) = \frac1{\alpha f(\theta)^2} 
   \Big(1+\frac{\tan^2\gamma(\theta)}{1-\alpha f(\theta)^2}\Big).
\]
There holds
\[
   P(\alpha,\theta) = \frac1{\alpha f(\theta)^2} 
   \Big(\frac{1+\tan^2\gamma(\theta) - \alpha f(\theta)^2}{1-\alpha f(\theta)^2}\Big).
\]
Defining $d(\theta)$ as the distance to the origin of the line tangent to $\partial\Omega$ at the point of polar coordinates $(f(\theta),\theta)$, we find the relation
\[
   d(\theta)= f(\theta)\cos\gamma(\theta).
\]
Thus
\[
   P(\alpha,\theta) =  
   \frac{d(\theta)^{-2} - \alpha }{\alpha(1-\alpha f(\theta)^2)}.
\]
Let $d$ be the infimum on $\theta\in\T$ of $d(\theta)$. We deduce that for all $\theta\in\T$ and $\alpha\in(0,1)$
\[
   P(\alpha,\theta) \le 
   \frac{d^{-2} - \alpha }{\alpha(1-\alpha f(\theta)^2)} \le
   \frac{d^{-2} - \alpha }{\alpha(1-\alpha )}.
\]
Therefore $M(\Omega)$ satisfies
\[
   M(\Omega) \le \inf_{\alpha\in(0,1)} \frac{d^{-2} - \alpha }{\alpha(1-\alpha )}.
\]
This expression has already been found before, see \eqref{E:Mdalpha}, and the optimal value for $\alpha$ is given by \eqref{E:alpha}, leading to
\[
   M(\Omega) \le \bigg(\frac{1}{d} + \sqrt{\frac{1}{d^2} -1} \,\bigg)^2 \,.
\]
As the identity \eqref{E:rhomax} yields
\[
   d = \rho_{\max} \,,
\]
the theorem is proved.
\end{proof}

As a consequence of the relation $\beta(\Omega)=(1+\Gamma(\Omega))^{-1/2}$, we deduce Theorem \ref{T:Rrho} from Theorem \ref{T:MRrho}, compare with formula \eqref{E:betad}. Now if we define the new angle
\begin{equation}
\label{E:psi}
   \psi(\Omega) = \frac\pi2 - \tau(\Omega)
\end{equation}
we obtain a bound from below for $\beta(\Omega)$ which has the same form as the Horgan--Payne inequality:
\begin{equation}
\label{E:CD}
   \beta(\Omega) \ge \sin\frac{\psi(\Omega)}2 \,.
\end{equation}

\section{Final remarks}\label{S:Fin}

The article by Horgan and Payne \cite{HorganPayne1983} has the title ``On Inequalities of Korn, Friedrichs and Babu\v{s}ka--Aziz''. We discussed in Sections~\ref{S:Ineq} and~\ref{S:FrBA} the equivalence between the inequalities of Friedrichs and Babu\v{s}ka--Aziz and the equation $C(\Omega)=\Gamma(\Omega)+1$ between the associated constants that were shown by Horgan--Payne.

In this paper, we have not mentioned \emph{Korn's inequality}, although Horgan--Payne showed a corresponding equivalence between the inequalities of Korn and of Babu\v{s}ka--Aziz and an equality
\begin{equation}
\label{E:KornBA}
 K(\Omega) = 2C(\Omega)
\end{equation}
between the associated constants. The reason is that we do not know whether this equivalence holds in general. In \cite{HorganPayne1983}, the proof of this equivalence was reduced to the equivalence between two elliptic eigenvalue problems, an argument that is only valid for smooth domains (at least $C^{2}$). For more general domains, it is known that the Babu\v{s}ka--Aziz inequality implies Korn's inequality; this proof of Korn's inequality from the inf-sup condition of the divergence is quite standard. It is, however, an open problem if the converse implication holds, in general, too. It is also an open problem if the equality \eqref{E:KornBA} is true for non-smooth domains, even for Lipschitz domains where both inequalities are known to be satisfied.

Let us finally mention another famous problem, which, to our knowledge, is still open: The exact value of the Babu\v{s}ka--Aziz constant $C(\Omega)$ --- or, equivalently, the LBB constant $\beta(\Omega)$ or the Friedrichs constant $\Gamma(\Omega)$ --- if $\Omega\subset\R^{2}$ is a \emph{square}. In \cite{HorganPayne1983}, Horgan--Payne pronounced the conjecture that 
\begin{equation}
\label{E:BAsquareconj}
 C(\Omega) = 7/2 \qquad\mbox{ for a square. }
\end{equation}
That this is overly optimistic has been known for quite some time, due to the presence of a continuous spectrum in a related spectral problem already mentioned by Friedrichs \cite{Friedrichs1937}, see \cite{Crouzeix1997,Stoyan1999}. The explicit knowledge of this continuous spectrum gives a lower bound 
\begin{equation}
\label{E:BAsquare}
 C(\Omega) \ge \Big(\frac12 -\frac1\pi\Big)^{-1}=5.5.. \qquad\mbox{ for a square. }
\end{equation}
It is not known, however, whether the inequality \eqref{E:BAsquare} is strict. The current conjecture is rather that \eqref{E:BAsquare} is an equality.

\medskip 

\par\noindent
\emph{Acknowledgment. }
We want to thank our colleagues Michel Crouzeix, Christine Bernardi, Vivette Girault and F\'ed\'eric Hecht for stimulating discussions.

\def\cprime{$'$}

\bigskip\bigskip

\end{document}